\newtheorem{thm}{Theorem}[section]
\newtheorem{cor}[thm]{Corollary}
\newtheorem{lemma}[thm]{Lemma}
\newtheorem{prop}[thm]{Proposition}
\newtheorem{conj}[thm]{Conjecture}
\newtheorem{que}[thm]{Question}
\newtheorem{claim}[thm]{Claim}
\newtheorem{problem}[thm]{Problem}
\newtheorem{remark}[thm]{Remark}
\theoremstyle{definition}
\newtheoremstyle{cases}
  {12pt plus 6 pt}
  {2pt}
  {\bfseries}   
  {}
  {\bfseries}
  {.}
  {.5em}
  {}
\theoremstyle{cases}
\newtheorem{case}{Case}
\newtheorem{subcase}{Subcase}
\numberwithin{subcase}{case} 
\numberwithin{subsubcase}{subcase}
\numberwithin{equation}{subsection} 
\begin{document}

\title{On definite strongly quasipositive links and L-space branched covers\footnotetext{2000 Mathematics Subject Classification. Primary 57M25, 57M50, 57M99}}

\author{Michel Boileau}  
\thanks{Michel Boileau was partially supported by ANR projects 12-BS01-0003-01 and 12-BS01-0004-01.}
\address{Aix Marseille Univ, CNRS, Centrale Marseille, I2M, Marseille, France, 39, rue F. Joliot Curie, 13453 Marseille Cedex 13}
\email{michel.boileau@cmi.univ-mrs.fr }
 
\author[Steven Boyer]{Steven Boyer}
\thanks{Steven Boyer was partially supported by NSERC grant RGPIN 9446-2013}
\address{D\'epartement de Math\'ematiques, Universit\'e du Qu\'ebec \`a Montr\'eal, 201 avenue du Pr\'esident-Kennedy, Montr\'eal, QC H2X 3Y7.}
\email{boyer.steven@uqam.ca}
\urladdr{http://www.cirget.uqam.ca/boyer/boyer.html}

\author[Cameron McA. Gordon]{Cameron McA. Gordon}
\thanks{Cameron Gordon was partially supported by NSF grant DMS-130902.}
\address{Department of Mathematics, University of Texas at Austin, 1 University Station, Austin, TX 78712, USA.}
\email{gordon@math.utexas.edu}
\urladdr{http://www.ma.utexas.edu/text/webpages/gordon.html}

\begin{abstract} 
We investigate the problem of characterising the family of strongly quasipositive links which have definite symmetrised Seifert forms and 
apply our results to the problem of determining 
when such a link can have an L-space cyclic branched cover. In particular, we show that if $\delta_n = \sigma_1 \sigma_2 \ldots \sigma_{n-1}$ 
is the dual Garside element and  
$b = \delta_n^k P \in B_n$ is a strongly quasipositive braid whose braid closure $\widehat b$ is definite, then $k \geq 2$ implies that $\widehat b$ is one of the torus links 
$T(2, q), T(3,4), T(3,5)$ or pretzel links $P(-2, 2, m), P(-2,3,4)$. Applying \cite[Theorem 1.1]{BBG} we deduce 
that if one of the standard cyclic branched covers of $\widehat b$ is an L-space, 
then $\widehat b$ is one 
of these links. We show by example that there are strongly quasipositive braids $\delta_n P$ whose closures are definite but 
not one of these torus or pretzel links. We also determine 
the family of definite strongly quasipositive $3$-braids and show that their closures coincide with the family of strongly quasipositive 
$3$-braids with an L-space branched cover.  \\

\noindent {\it Keywords}: Strongly quasipositive; L-space; Cyclic branched cover. 
\end{abstract}

\maketitle

\begin{center}
\today 
\end{center}

\section{Introduction} \label{sec: introduction} 

We assume throughout that links are oriented and contained in the $3$-sphere. To each link $L$ and integer $n \geq 2$, we associate the canonical $n$-fold cyclic cover $\Sigma_n(L) \to S^3$ branched over $L$. 

We are interested in links that are fibred and strongly quasipositive. By theorems of Giroux and Rudolph (see \cite[102.1]{Ru2}) these are precisely the links that can be obtained from the unknot by plumbing and deplumbing positive Hopf bands. Recall that an {\it L-space} is a rational homology $3$-sphere such that $\mbox{dim}\; \widehat{HF}(M; \mathbb Z/2)) = |H_1(M; \mathbb Z)|$ \cite{OS1}, and an {\it L-space knot} is a knot with a non-trivial L-space Dehn surgery. L-space knots are fibred \cite{Ni1}, strongly quasipositive \cite[Theorem 1.2]{He}, and prime \cite{Krc}. We ask:

\begin{que}\label{que:lspaces}
{\rm For which fibred strongly quasipositive links $L$ is some $\Sigma_n(L)$ an L-space?}
\end{que}
Examples are provided by the following. 

Call a link $L$ {\it simply laced arborescent} if it is the boundary of an oriented surface obtained by plumbing positive Hopf bands according to one of 
the trees $\Gamma$ determined by the simply laced Dynkin diagrams $\Gamma = A_m (m \geq 1), D_m (m \geq 4), E_6, E_7, E_8$. We denote $L$ by $L(\Gamma)$. It is known that  

\indent \hspace{5.5cm} $L(A_m) = T(2, m+1)$

\indent \hspace{5.5cm} $L(D_m) = P(-2, 2, m-2)$

\indent \hspace{5.5cm} $L(E_6) = P(-2,3,3) = T(3,4)$

\indent \hspace{5.5cm} $L(E_7) = P(-2,3,4)$

\indent \hspace{5.5cm} $L(E_8) = P(-2,3,5) = T(3,5)$

where $T(p, q)$ is the $(p,q)$ torus link and $P(p, q, r)$ the $(p,q,r)$ pretzel link. For such a link $L$, $\Sigma_2(L)$ has finite fundamental group and is therefore an L-space \cite{OS1}. 

\begin{conj} 
\label{conj: branched lspace implies simply laced arborescent}
If $L$ is a prime, fibred, strongly quasipositive link for which some $\Sigma_n(L)$ is an L-space, then $L$ is simply laced arborescent. 
\end{conj}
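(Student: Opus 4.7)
The plan is to combine the authors' classification of definite strongly quasipositive braids with the definiteness obstruction coming from the L-space hypothesis. First I would invoke \cite[Theorem 1.1]{BBG}, which the authors already use in the braid-closure version of this result: the hypothesis that some $\Sigma_n(L)$ is an L-space forces the symmetrised Seifert form of $L$ to be definite. This reduces the conjecture to the statement that every prime, fibred, strongly quasipositive, \emph{definite} link whose $n$-fold cyclic cover is an L-space is simply laced arborescent.

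Next I would realise $L = \widehat b$ as the closure of a strongly quasipositive braid $b \in B_n$ and put $b$ in its dual Garside normal form $b = \delta_n^k P$, with $P$ a positive word in the Birman--Ko--Lee band generators. When $k \geq 2$ the paper's main theorem applies directly: $\widehat b$ is then one of $T(2,q)$, $T(3,4)$, $T(3,5)$, $P(-2,2,m)$, $P(-2,3,4)$, each of which is simply laced arborescent.

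The residual case $k \leq 1$ is the crux, because the authors themselves exhibit strongly quasipositive braids $\delta_n P$ whose closures are definite but not on the above list. To dispose of these I would pass to the branched double cover: since $L$ is fibred strongly quasipositive, its fibre surface is a plumbing of positive Hopf bands along some tree $\Gamma$, and $\Sigma_2(L)$ bounds the corresponding negative-definite plumbed $4$-manifold $X_\Gamma$. Donaldson's diagonalisation theorem forces the intersection lattice of $X_\Gamma$ to embed into a standard diagonal lattice, and the L-space condition imposes the further Ozsv\'ath--Szab\'o $d$-invariant constraint on $X_\Gamma$. A lattice-theoretic analysis should then show that the only trees $\Gamma$ compatible with both the strong quasipositivity of the monodromy and the L-space property of $\Sigma_2$ are the simply laced Dynkin diagrams. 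For higher-degree covers $\Sigma_n$ with $n > 2$, one would iterate using that cyclic branched covers of fibred links are themselves fibred, with a canonically related plumbing structure supporting the same Donaldson--$d$-invariant argument.

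The main obstacle is precisely this lattice-embedding step in the case $k = 1$: definiteness alone is insufficient, so the added strength of the L-space hypothesis must be extracted either from refined $d$-invariant constraints on the negative-definite plumbing or from a combinatorial enumeration of the plumbing trees whose closures admit L-space branched covers. Absent such additional input, the L-space assumption would collapse to bare definiteness, contradicting the existence of the authors' exotic $\delta_n P$ examples.
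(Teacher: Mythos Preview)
The statement you are trying to prove is presented in the paper as a \emph{conjecture}, not a theorem; the paper does not contain a proof of it. What the paper does prove are partial results: Theorem~\ref{thm: def baskets} settles the case $k(L)\ge 2$, Theorem~\ref{thm: definite 3-braids} settles braid index at most $3$, and the general statement is explicitly reduced to the open Conjecture~\ref{conj: branched lspace implies simply laced arborescent 2} (the case $k(L)\le 1$). So your first two paragraphs, which invoke \cite{BBG} for definiteness and then apply the $k\ge 2$ classification, are fine and track exactly what the paper establishes; but from that point on you are not reconstructing a proof the paper gives---you are proposing one for an open problem.

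Your proposed attack on the residual case has a concrete error and a structural gap. The error is the assertion that the fibre surface of a fibred strongly quasipositive link ``is a plumbing of positive Hopf bands along some tree $\Gamma$''. This is false: by Giroux--Rudolph such surfaces arise from plumbing \emph{and deplumbing} Hopf bands, and even among genuine basket links the plumbing pattern need not be a tree. The paper's own examples $L(C_m,p)$ in \S\ref{sec: cycle baskets} are baskets whose incidence graph is an $m$-cycle; for $p>1$ odd these are prime, fibred, strongly quasipositive, definite, and \emph{not} simply laced arborescent (Theorem~\ref{thm: p odd}). They are ruled out not by any tree-lattice embedding but by a direct taut-foliation/fractional-Dehn-twist argument on $\Sigma_2$ (Theorem~\ref{thm: intro not a counterexample}). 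The structural gap is that your Donaldson/$d$-invariant step is a hope rather than an argument: you yourself identify it as the ``main obstacle'' and give no mechanism by which the L-space hypothesis sharpens definiteness enough to force a simply laced Dynkin tree. Since the paper exhibits definite non-arborescent examples with $k(L)=1$, bare lattice definiteness cannot close this gap, and nothing in your outline supplies the extra input.
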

\vspace{-.3cm} 
We say that a link $L$ of $m$ components is 
{\it definite} if $\vert \sigma(L) \vert = 2g(L) + (m-1)$, where $\sigma(L)$ is the signature of $L$ and $g(L)$ is its genus, and {\it indefinite} otherwise. One of the main results in our earlier paper  \cite{BBG} is 

\begin{thm} {\rm (\cite[Theorem 1.1(1)]{BBG})}
\label{thm: bbg definite}
A strongly quasipositive link $L$ for which some $\Sigma_n(L)$ is an L-space is definite.  
\end{thm}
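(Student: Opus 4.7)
The plan is to sandwich $|\sigma(L)|$ between matching upper and lower bounds of $2g(L)+(m-1)$. The upper bound is automatic and requires no hypothesis: if $F$ is a Seifert surface of minimal genus $g(L)$ with $m$ boundary components, then the symmetrised Seifert form is a symmetric bilinear form on $H_1(F;\mathbb{R})$, a space of dimension $2g(L)+(m-1)$, so its signature is bounded in absolute value by this dimension. The content of the theorem is therefore to establish the matching lower bound $|\sigma(L)| \ge 2g(L)+(m-1)$ under the strong quasipositivity and L-space branched cover hypotheses.

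For the lower bound I would argue via Ozsv\'ath--Szab\'o correction terms. The L-space hypothesis rigidly constrains the Heegaard Floer homology of $\Sigma_n(L)$, and through Manolescu--Owens type formulae for $n=2$ and their extensions to higher cyclic branched covers (Grigsby--Ruberman--Strle, Jabuka--Naik, and others), the correction terms $d(\Sigma_n(L),\mathfrak{s})$ can be expressed in terms of the Tristram--Levine signatures $\sigma_\omega(L)$ at $n$-th roots of unity. Strong quasipositivity then supplies the classical inputs needed to promote this into a bound on the ordinary signature: Rudolph's theorem yields $g(L)=g_4(L)$, so the Murasugi--Tristram slice-genus inequality $|\sigma_\omega(L)| \le 2g(L)+(m-1)$ is saturable, while positivity of the Bennequin surface pins the sign $\sigma_\omega(L) \le 0$. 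Combining these with the Heegaard Floer identity forces some $\sigma_\omega(L)$ to realise the extreme value $-(2g(L)+(m-1))$, and the fact that a fibred SQP link has a monic Alexander polynomial with constrained coefficient pattern controls the jumps of $\sigma_\omega$ on the unit circle enough to propagate this extremal value to $\omega=-1$, yielding $|\sigma(L)|=2g(L)+(m-1)$.

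The main obstacle I expect is exactly this propagation step. The $d$-invariant formulae naturally pin down $\sigma_\omega(L)$ only at the specific roots of unity dictated by $n$, whereas definiteness is defined in terms of the ordinary signature $\sigma(L)=\sigma_{-1}(L)$. Bridging this gap uniformly over $n$, using only the SQP plus L-space hypotheses, is the technical heart of the argument, and carefully tracking the $(m-1)$ correction for multi-component links through every step adds further bookkeeping delicacy.
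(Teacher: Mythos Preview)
This theorem is not proved in the present paper; it is quoted from the authors' earlier work \cite{BBG}. So there is no in-paper proof to compare your proposal against directly. That said, the way the result is \emph{used} later in the paper (see \S\ref{subsec: k > 0} and \S\ref{subsubsec: bounding pqr}) reveals the shape of the actual argument, and your proposal diverges from it in a way that leaves a genuine gap.

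The approach in \cite{BBG}, as reflected in this paper, does not pass through $d$-invariants and Manolescu--Owens formulae. Instead it shows directly that if $\Sigma_n(L)$ is an L-space then the Hermitian form $\mathcal{S}_F(\zeta)=(1-\zeta)\mathcal{S}_F+(1-\bar\zeta)\mathcal{S}_F^T$ on the quasipositive Seifert surface $F$ is definite for \emph{every} $\zeta$ in the closed arc $\overline{I}_-(\zeta_n)$ containing $-1$. Specialising to $\zeta=-1$ gives definiteness of $\mathcal{S}_F+\mathcal{S}_F^T$ on a genus-realising surface, hence $|\sigma(L)|=2g(L)+(m-1)$. There is no propagation step: definiteness on the whole arc is what is established.

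Your proposal, by contrast, aims to pin down $\sigma_\omega(L)$ only at the $n$-th roots of unity and then propagate to $\omega=-1$ by controlling the jumps of the signature function. You correctly flag this as the crux, but the mechanism you suggest---``a fibred SQP link has a monic Alexander polynomial with constrained coefficient pattern''---does not work here. First, the theorem does \emph{not} assume $L$ is fibred (indeed the paper applies it to the non-fibred Montesinos links $M(1;1/p,1/q,1/r)$ in Theorem~\ref{thm: definite 3-braids}), so you cannot invoke monicity or any fibred-specific constraint on $\Delta_L$. Second, even for fibred SQP links, monicity alone does not control the \emph{location} of the roots of $\Delta_L$ on $S^1$, so it does not bound the jump points of $\sigma_\omega$ away from the arc between $\zeta_n$ and $-1$. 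Without that, the extremal value at a single $n$-th root of unity cannot be transported to $-1$.
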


For some subclasses of prime, fibred, strongly quasipositive links this immediately leads to a proof of Conjecture \ref{conj: branched lspace implies simply laced arborescent}. 
For example, the simply laced arborescent links are all definite positive braid links, i.e. closures of positive braids, and  Baader has shown:

\begin{thm} \label{thm: baader} {\rm (\cite[Theorem 2]{Baa})}
Let $L$ be a prime positive braid link. Then $L$ is simply laced arborescent if and only if it is definite. 
\end{thm}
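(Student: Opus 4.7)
The forward direction (simply laced arborescent implies definite) is a direct calculation. If $L = L(\Gamma)$, take a basis of $H_1(F(\Gamma))$ given by the cores of the plumbed Hopf bands; in this basis the symmetrised Seifert form $V + V^T$ equals (up to sign) $2I - A_\Gamma$, where $A_\Gamma$ is the adjacency matrix of $\Gamma$. For $\Gamma$ a simply laced Dynkin diagram, $2I - A_\Gamma$ is the Cartan matrix, hence positive definite of full rank $2g(L(\Gamma)) + m - 1$, forcing $|\sigma(L(\Gamma))| = 2g(L(\Gamma)) + m - 1$.

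For the converse, write $L = \widehat{\beta}$ with $\beta = \sigma_{i_1} \cdots \sigma_{i_c}$ a positive braid word on $n$ strands, and apply Seifert's algorithm to the closed braid diagram to obtain the Bennequin fiber surface $F$. This surface has $n$ Seifert disks joined by $c$ positively twisted bands, so $H_1(F)$ has rank $c - n + 1 = 2g(L) + m - 1$. The plan is to identify a geometrically natural basis of $H_1(F)$ of this rank, indexed by a combinatorial structure read off from $\beta$, and to compute $V + V^T$ in this basis. A direct calculation with the Bennequin surface should show that, up to an overall sign, $V + V^T$ has the shape $2I - A_G$, where $A_G$ is the adjacency matrix of a simple graph $G$ determined by the placement of consecutive repeated generators in $\beta$.

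With this in place, definiteness of $L$ forces $2I - A_G$ to be positive definite. By the classical classification of positive-definite integral symmetric forms $2I - A$ with $A$ the adjacency matrix of a simple graph, $G$ must be a disjoint union of simply laced Dynkin diagrams of type $A_m, D_m, E_6, E_7, E_8$. Primality of $L$ excludes a disconnection of $G$, since such a disconnection corresponds to a Murasugi desumming of $F$ along a disk, forcing $L$ to be a connected sum. Hence $G$ is a single ADE diagram, and a case-by-case identification of $\widehat{\beta}$ then recovers the links $T(2, m+1)$, $P(-2, 2, m-2)$, $T(3, 4)$, $P(-2, 3, 4)$, $T(3, 5)$, exhibiting $L$ as simply laced arborescent.

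The main technical obstacle is the explicit computation of the symmetrised Seifert form and the extraction of the graph $G$. Distinct positive braid representatives of the same link should produce isomorphic or stably equivalent forms, and this invariance must be verified, for instance by checking that positivity-preserving Markov moves translate into graph-theoretic operations preserving the ADE-type. A second subtle point is the translation of primality of $L$ into connectedness of $G$, which relies on understanding how Murasugi decompositions of the Bennequin surface are visible in $\beta$.
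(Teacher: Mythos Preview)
The paper does not prove this theorem; it is quoted from Baader's paper \cite{Baa} and used as input. So there is no ``paper's own proof'' to compare against, and your proposal must be judged on its own terms and against Baader's original argument.

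Your forward direction is fine. For the converse, however, what you have written is a plan rather than a proof, and the plan has a genuine gap at the final step. Suppose you succeed in computing the symmetrised Seifert form on the Bennequin surface and find that it is (congruent to) the $E_7$ form. You then assert that ``a case-by-case identification of $\widehat\beta$ recovers'' $P(-2,3,4)$. But the symmetrised Seifert form does not determine the link, even among fibred links: what you would need is that the \emph{fibre surface} itself is the $E_7$ Hopf plumbing, and that does not follow from the form alone. Concretely, your graph $G$ is attached to the braid word $\beta$, not to $L$; different positive representatives of $L$ produce different graphs, and you have not explained why any of them being ADE forces $\widehat\beta$ to be the specific ADE link rather than some other positive braid link that happens to have the same symmetrised form. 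Your remark about ``positivity-preserving Markov moves translating into graph-theoretic operations preserving the ADE-type'' is exactly the missing content, and it is the hard part.

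Baader's actual proof proceeds in the opposite direction from what you sketch: rather than computing the form abstractly and then trying to reconstruct the link from it, he identifies explicit local configurations in the brick diagram (or linking graph) of a positive braid word whose presence forces indefiniteness --- these correspond to the affine diagrams $\tilde A_n, \tilde D_n, \tilde E_6, \tilde E_7, \tilde E_8$ sitting inside the linking graph. He then shows, by direct manipulation of positive braid words, that any positive braid avoiding all such configurations is conjugate (after destabilisation) to one of a short explicit list of braids, whose closures are the ADE links. So the identification of the link is carried out at the level of braid words, not deduced from the form. Your outline would become a proof only after supplying an analogous reduction, which is essentially the whole content of \cite{Baa}.
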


Thus Conjecture \ref{conj: branched lspace implies simply laced arborescent} holds for positive braid links. In the same way, it 
was shown in \cite{BBG} that the conjecture holds if $L$ is prime and either a divide knot, a fibred strongly quasipositive knot which is 
either alternating or Montesinos, or an arborescent knot which bounds a surface obtained by plumbing positive Hopf bands along a tree. See 
\cite[Corollary 1.5 and Proposition 9.3]{BBG} and the remarks which follow the proof of the former. However, Theorem \ref{thm: bbg definite} is not sufficient to 
prove Conjecture \ref{conj: branched lspace implies simply laced arborescent} in general; there are prime, definite, fibred strongly quasipositive links that are not simply laced arborescent (see \cite{Mis} and Theorem \ref{thm: p odd}). The examples in Theorem \ref{thm: p odd} are basket links; see \cite{Ru1}.

A useful point of view concerning strongly quasipositive links is obtained through the consideration of Birman-Ko-Lee (BKL) positive braids \cite{BKL}. Here, the authors introduce a 
presentation for the braid group $B_n$ with generators the strongly quasipositive braids $a_{rs}$, $1 \leq r < s \leq n$, given by 
\begin{equation} \label{exprn for ars} 
a_{rs} = (\sigma_r \sigma_{r+1} \ldots \sigma_{s-2}) \sigma_{s-1}  (\sigma_r \sigma_{r+1} \ldots \sigma_{s-2})^{-1}
\end{equation}
Figure \ref{fig: steve 1} depicts the associated geometric braid.  

An element of $B_n$ is called {\it BKL-positive} if it can be expressed as a word in positive powers of the generators $a_{rs}$. The family of 
BKL-positive elements in $B_n$ coincides with the family of strongly quasipositive $n$-braids. The BKL-positive element 
\begin{equation} \label{def: deltan}
\delta_n = \sigma_1 \sigma_2 \ldots \sigma_{n-1} \in B_n,
\end{equation}
called the {\it dual Garside element}, plays a particularly important role below. 

Let $L$ be a strongly quasipositive link, so $L$ is the closure of a BKL-positive braid, and define the {\it{BKL-exponent}} of $L$ to be     
\begin{equation} \label{def: kl} 
k(L) = \max \{k : L \hbox{ is the closure of } \delta_n^k P \hbox{ where $n \geq 2, k \geq 0$, $P \in B_n$ is BKL-positive}\}
\end{equation}  
It is clear that $k(L) \geq 0$ and we show in Lemma \ref{lemma: k(L) finite}  that $ k(L) < \infty$.  
Consideration of the table in \S \ref{subsec: symm seifert forms of baskets} 
shows that $k(L) \geq 2$ when $L$ is simply laced arborescent.

The BKL-exponent $k(L)$ can be used to characterise the simply laced arborescent 
links amongst prime strongly quasipositive links.

\begin{thm} \label{thm: def baskets}
Let $L$ be a prime strongly quasipositive link. Then $L$ is simply laced arborescent if and only if it is definite and  $k(L) \geq 2$. 
\end{thm}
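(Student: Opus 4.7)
The plan is to prove both directions, with the heart of the argument being the classification theorem stated in the abstract (that a definite strongly quasipositive braid of the form $\delta_n^k P$ with $k \geq 2$ has closure in an explicit finite list).

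For the forward direction, I would assume that $L$ is prime, strongly quasipositive, and simply laced arborescent, so $L = L(\Gamma)$ for some $\Gamma \in \{A_m, D_m, E_6, E_7, E_8\}$. Definiteness is classical: the symmetrised Seifert form of a Hopf plumbing along $\Gamma$ is (up to sign) the Cartan matrix of $\Gamma$, which is definite; equivalently, one can invoke Baader's Theorem \ref{thm: baader}, since each $L(\Gamma)$ is a prime positive braid link. For $k(L(\Gamma)) \geq 2$, I would point to the table in \S\ref{subsec: symm seifert forms of baskets}, which exhibits for each $\Gamma$ an explicit BKL-positive word of the form $\delta_n^k P$ with $k \geq 2$ whose closure is $L(\Gamma)$.

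For the reverse direction, assume $L$ is prime, strongly quasipositive, definite, and $k(L) \geq 2$. By the definition of $k(L)$ there exist $n \geq 2$, $k \geq 2$, and a BKL-positive $P \in B_n$ with $L = \widehat{\delta_n^k P}$. The classification theorem from the abstract (the main technical result of the paper, which will be proved independently) then forces $L$ to be one of $T(2,q)$, $T(3,4)$, $T(3,5)$, $P(-2,2,m)$ or $P(-2,3,4)$; these are precisely the simply laced arborescent links listed in the introduction, so $L = L(\Gamma)$ for some simply laced Dynkin diagram $\Gamma$.

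The main obstacle is, of course, not the formal two-direction argument above, but rather the classification result it rests on; that will require a careful analysis of what definiteness of $\widehat{\delta_n^k P}$ imposes on $P$, together with reductions using Markov-type moves adapted to the BKL presentation. At the level of Theorem \ref{thm: def baskets} itself, the only small subtlety is to make sure the table in \S\ref{subsec: symm seifert forms of baskets} really exhibits $k \geq 2$ for every simply laced $\Gamma$ (in particular one should check $A_1$, where the unknot case is degenerate and one needs $k(L(A_1)) \geq 2$ via a suitable stabilised presentation) and to confirm that the list output by the classification theorem matches, up to the identifications $T(3,4) = P(-2,3,3) = L(E_6)$ and $T(3,5) = P(-2,3,5) = L(E_8)$, the full family of simply laced arborescent links.
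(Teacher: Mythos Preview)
Your proposal is essentially correct and matches the paper's approach, but there is one point of clarification worth making: the ``classification theorem from the abstract'' that you invoke for the reverse direction is not stated or proved as a separate result in the paper---it \emph{is} the content of Theorem \ref{thm: def baskets}. The paper establishes the reverse direction by a case analysis on the braid index $n$: for $n \leq 3$ it uses Theorem \ref{thm: definite 3-braids}(2), and for $n = 4$ and $n \geq 5$ it proves (Propositions \ref{prop: definite strongly quasipositive 4-braids} and \ref{prop: definite strongly quasipositive n-braids}) that a definite $\delta_n^2 P$ is conjugate to a \emph{positive} braid. Baader's Theorem \ref{thm: baader} then finishes the job, since a prime definite positive braid link is simply laced arborescent. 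So the logical route is ``definite with $k(L)\geq 2$ $\Rightarrow$ conjugate to a positive braid $\Rightarrow$ (Baader) simply laced arborescent,'' and the explicit list in the abstract is the \emph{output} of that chain rather than an independent input. You correctly identify that this technical reduction is where all the work lies.

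One minor slip: your concern about $A_1$ is misplaced. We have $L(A_1) = T(2,2)$, the Hopf link (not the unknot), and the table gives $\delta_2^{m+1}$ for $A_m$, so $L(A_1) = \widehat{\delta_2^2}$ already exhibits $k(L(A_1)) \geq 2$ with no need for stabilisation.
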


The condition $k(L) \geq 2$ on the 
BKL-exponent cannot be relaxed as there are prime strongly quasipositive definite links with $k(L) = 1$. See Theorem \ref{thm: p odd}. 

Theorems \ref{thm: bbg definite} and \ref{thm: def baskets} give a complete answer to Question \ref{que:lspaces} for prime strongly quasipositive links with BKL-exponent $k(L) \geq 2$.

\begin{cor}\label{cor:lspace cover} Let $L$ be a prime strongly quasipositive link with BKL-exponent $k(L) \geq 2$. Then $\Sigma_n(L)$ is an L-space for some $n \geq 2$  if and only if  $L$ is simply laced arborescent.
\end{cor}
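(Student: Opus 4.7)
The plan is to deduce this simply by combining Theorem \ref{thm: bbg definite} with Theorem \ref{thm: def baskets}, together with the observation made just after the definition of $L(\Gamma)$ that the double branched covers of simply laced arborescent links are spherical space forms and hence L-spaces. The statement is really a two-line consequence of results already stated, so no substantial new work is needed; the ``obstacle'', such as it is, is only to note that the hypothesis $k(L) \geq 2$ is preserved in both directions and to identify the correct $n$ in the backward implication.

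For the forward direction, I would suppose $\Sigma_n(L)$ is an L-space for some $n \geq 2$. Then Theorem \ref{thm: bbg definite} implies that $L$ is definite. Since by hypothesis $L$ is prime, strongly quasipositive and satisfies $k(L) \geq 2$, Theorem \ref{thm: def baskets} applies and yields that $L$ is simply laced arborescent. No case analysis is required here; the two theorems just compose.

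For the backward direction, I would suppose that $L = L(\Gamma)$ for some $\Gamma \in \{A_m, D_m, E_6, E_7, E_8\}$. As recalled in the introduction, in each case $\Sigma_2(L)$ has finite fundamental group (a spherical space form coming from the binary polyhedral group associated with $\Gamma$), and every spherical space form is an L-space by \cite{OS1}. Thus $\Sigma_2(L)$ is an L-space, so taking $n = 2$ gives the required branched cover. This completes the equivalence.
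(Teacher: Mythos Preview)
Your proposal is correct and matches the paper's own approach exactly: the corollary is stated immediately after the sentence ``Theorems \ref{thm: bbg definite} and \ref{thm: def baskets} give a complete answer to Question \ref{que:lspaces} for prime strongly quasipositive links with BKL-exponent $k(L) \geq 2$'', and no further proof is given. Your two-line deduction (Theorem \ref{thm: bbg definite} then Theorem \ref{thm: def baskets} for the forward direction, and the finite-$\pi_1$ observation about $\Sigma_2(L(\Gamma))$ for the converse) is precisely what the paper intends.
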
 

It  is worth noting that by \cite[Theorem 4.2]{Ban}, a strongly quasipositive link $L$ with $k(L) \geq 1$ is fibred. (Banfield also showed in \cite[Theorem 5.2]{Ban} 
that the family of strongly quasipositive links $L$ with $k(L) \geq 1$ coincides with the family of basket links; see \cite{Ru1} and the discussion below.) 
In particular, the strongly quasipositive links arising in Theorem \ref{thm: def baskets} and Corollary \ref{cor:lspace cover} are fibred. This reduces 
Conjecture \ref{conj: branched lspace implies simply laced arborescent} to the following statement.

\begin{conj} 
\label{conj: branched lspace implies simply laced arborescent 2}
If $L$ is a prime, fibred, strongly quasipositive link with BKL-exponent $k(L) \leq 1$, then no $\Sigma_n(L)$ is an L-space. 
\end{conj}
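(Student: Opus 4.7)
My strategy is to reduce the conjecture to a structural classification of the prime, fibred, strongly quasipositive links with $k(L) \leq 1$ that are definite, and then to exhibit an obstruction to L-space cyclic branched covers for each one. The first step is immediate: by Theorem \ref{thm: bbg definite}, any $L$ satisfying the hypothesis of the conjecture that admits an L-space cyclic branched cover is automatically definite. It therefore suffices to rule out L-space covers for definite prime fibred strongly quasipositive $L$ in the two subcases $k(L) = 1$ and $k(L) = 0$.

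In the case $k(L) = 1$, Banfield's theorem (\cite[Theorem 5.2]{Ban}) tells us that $L$ is a basket link, so its fibre surface is a plumbing of positive Hopf bands along a planar forest. Combined with Theorem \ref{thm: def baskets}, the definite basket links with $k(L) = 1$ are precisely those plumbings whose plumbing pattern is not an $A$-$D$-$E$ graph, a class that includes the examples of Theorem \ref{thm: p odd}. I would first carry out an explicit classification of those forbidden plumbing patterns that yield definite links, for instance by a sign-bilinear-form analysis of the intersection form of the plumbed surface, hoping to obtain a finite list of families (presumably generalising the pretzel examples already in the paper). For each family the branched double cover is Seifert fibred or a graph manifold whose Heegaard Floer type can be analysed directly, and I would attempt to construct an equivariant taut foliation descending from $S^3 \setminus L$ to each $\Sigma_n(L)$ to conclude that no such cyclic cover is an L-space.

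The case $k(L) = 0$ is more delicate, since no analogue of Banfield's structure theorem applies. Here I would try to combine the definiteness identity $|\sigma(L)| = 2g(L) + (m-1)$ with the impossibility of extracting a single $\delta_n$ factor from any BKL-positive braid representative of $L$ to derive a contradiction, most likely via a signature-versus-BKL-word-length inequality extending the $3$-braid analysis of the paper to arbitrary braid index by induction. The main obstacle is precisely the absence of structural input for $k(L) = 0$: one needs either a new signature/genus constraint that forces $k(L) \geq 1$ for definite strongly quasipositive links, or a Heegaard Floer obstruction directly applicable to the cyclic covers without a geometric model of the fibre surface. I expect that a complete proof will ultimately require invoking the L-space conjecture relating L-spaces to the non-existence of taut foliations or to left-orderable fundamental groups, and that real progress on Conjecture \ref{conj: branched lspace implies simply laced arborescent 2} may require new input on one of these sides.
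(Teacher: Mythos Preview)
The statement you are addressing is a \emph{conjecture} in the paper, not a theorem; the paper does not prove it and explicitly leaves it open, offering only partial evidence (Theorem \ref{thm: definite 3-braids} for braid index $\leq 3$, and Theorem \ref{thm: intro not a counterexample} for the cyclic basket links). Your proposal is not a proof but a sketch of a strategy, and you say as much in your final paragraph. That is an honest assessment, but several of the intermediate steps you outline are not merely incomplete---they rest on misconceptions or on problems the paper itself identifies as open.

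First, in the $k(L)=1$ case you write that the fibre surface of a basket link is ``a plumbing of positive Hopf bands along a planar forest''. This is not correct: a basket is a plumbing of Hopf bands onto a single disk along arcs, and the incidence graph of the chord diagram can be an arbitrary graph, not a tree or forest. Section \ref{sec: cycle baskets} of the paper is devoted precisely to baskets whose incidence graph is an $m$-cycle, and these already furnish the non-arborescent definite examples of Theorem \ref{thm: p odd}. Your hoped-for ``finite list of families'' of definite baskets with $k(L)=1$ is exactly the content of Problem 1.14 and the unnumbered Problem in \S\ref{sec: cycle baskets}, which the authors pose as open; there is no reason to expect the list to be finite or tractable, and the paper makes no such claim.

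Second, in the $k(L)=0$ case your plan to derive a contradiction from a ``signature-versus-BKL-word-length inequality'' has no supporting lemma in the paper or elsewhere, and you do not indicate what form such an inequality would take. You also do not address whether the fibred hypothesis already forces $k(L)\geq 1$ (Banfield's result only gives the implication $k(L)\geq 1 \Rightarrow$ fibred, not the converse), so it is not even clear this case is nonvacuous. Your concluding remark that progress may require the L-space conjecture is reasonable, but that is an acknowledgement of a gap, not a step toward closing it.
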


We completely determine the definite strongly quasipositive links of braid index $3$ or less, a family which includes the simply laced arborescent links (cf. the 
table in \S \ref{subsec: symm seifert forms of baskets}).  

\begin{thm}
\label{thm: definite 3-braids}
Suppose that $L$ is a non-split, non-trivial, strongly quasipositive link of braid index $2$ or $3$. 

$(1)$ If $L$ is prime, then the following statements are equivalent.
\vspace{-.2cm} 
\begin{itemize}
\item[{\rm (a)}] $L$ is definite;

\vspace{.2cm} \item[{\rm (b)}] $\Sigma_2(L)$ is an L-space;

\vspace{.2cm} \item[{\rm (c)}] $L$ is simply laced arborescent or a Montesinos link $M(1; 1/p,1/q,1/r)$ for some positive integers $p, q, r$.
\end{itemize}

$(2)$ If $L$ is definite, then it is fibred if and only if it is conjugate to a positive braid. 

\end{thm}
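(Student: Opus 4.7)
The plan is to run a case analysis on the BKL-exponent $k(L)$, combined with classical $3$-braid conjugacy reduction. The braid index $2$ case is immediate since $L$ must be a torus link $T(2,q) = L(A_{q-1})$, which is at once simply laced arborescent, definite, has a lens space (hence L-space) double branched cover, and is already the closure of the positive braid $\sigma_1^q$; so all claims hold trivially in that case. So from now on fix a prime strongly quasipositive link $L$ of braid index $3$. Using the standard observation that a strongly quasipositive link of braid index $n$ admits a BKL-positive $n$-braid representative, one may write $L = \widehat{\beta}$ with $\beta = \delta_3^{k} Q$ realising $k = k(L)$, where $Q$ is BKL-positive in the generators $a_{12}, a_{23}, a_{13}$ and cannot be rewritten to expose a further $\delta_3$-factor. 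Recall the dual Garside identities $a_{12}a_{23} = a_{23}a_{13} = a_{13}a_{12} = \delta_3$.

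For Part (1), the implication (b) $\Rightarrow$ (a) is Theorem \ref{thm: bbg definite}. The substance is (a) $\Rightarrow$ (c), which I would split by $k$. If $k \geq 2$, Theorem \ref{thm: def baskets} already identifies $L$ as simply laced arborescent. If $k = 0$, an analysis of BKL-positive $3$-braids with no extractable $\delta_3$ (rewriting and conjugating $Q$ via the dual Garside relations) forces $\widehat{\beta}$ to split off a trivial strand or drop to braid index $\le 2$, contradicting the hypotheses; hence $k = 1$. In that case, further combinatorial reduction of $Q$ using the dual Garside relations and the maximality of $k$ produces a short canonical form parametrised by three positive integers $p, q, r$, and an explicit isotopy of the associated geometric braid identifies the closure $\widehat{\delta_3 Q}$ with the Montesinos link $M(1; 1/p, 1/q, 1/r)$. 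A signature-versus-genus check, via a Goeritz matrix diagonalisation or Murasugi's signature formula for Montesinos links, then confirms that every such link is definite, completing the equivalence (a) $\Leftrightarrow$ (c). To close the loop with (b): the simply laced arborescent links have spherical double branched covers and are therefore L-spaces, while the double branched covers of the Montesinos family $M(1; 1/p, 1/q, 1/r)$ are small Seifert fibred and fall within the L-space region characterised by Lisca-Stipsicz and Boyer-Gordon-Watson.

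For Part (2), one direction is Stallings' theorem that every positive braid closure is fibred. Conversely, a definite $L$ of braid index $\le 3$ lies on the list in Part (1)(c). Every simply laced arborescent link is already a positive braid closure (by Baader's Theorem \ref{thm: baader} combined with their explicit description as plumbings, each admits a concrete positive braid representative). For the remaining Montesinos family $M(1; 1/p, 1/q, 1/r)$, compute the Euler characteristic of the quasipositive Seifert surface supplied by the BKL-positive word $\delta_3 Q$ and compare with the Seifert genus of $L$ (or apply a fibredness criterion for Montesinos links): equality is equivalent to $\beta$ being rewritable as a positive word in $\sigma_1, \sigma_2$, which in turn forces $L$ to collapse onto the simply laced arborescent portion of the list. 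Thus fibredness forces $L$ to be conjugate to a positive braid.

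The main obstacle will be the $k = 1$ case in Part (1): pinning down the exact normal form of $Q$, matching its closure with $M(1; 1/p, 1/q, 1/r)$, and verifying definiteness uniformly in $p, q, r \ge 1$. The combinatorial rewriting through the dual Garside relations is elementary, but the signature/genus bookkeeping and the isotopy to the Montesinos picture require care, as does the fibredness comparison at the end of Part (2).
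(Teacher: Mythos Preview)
Your plan has two genuine problems that would make the argument fail.

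First, invoking Theorem~\ref{thm: def baskets} to handle the case $k(L) \geq 2$ is circular within the paper's logical structure. The proof of Theorem~\ref{thm: def baskets} is organised as an induction on the minimal $n$ for which $L$ is the closure of some $\delta_n^2 P$; the base cases $n \leq 3$ are deduced \emph{from} Theorem~\ref{thm: definite 3-braids}(2) together with Baader's theorem and Banfield's fibredness criterion (see the remark immediately after the statement of Theorem~\ref{thm: definite 3-braids} and the opening of \S\ref{sec: n = 5}). So you cannot cite Theorem~\ref{thm: def baskets} here.

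Second, and more concretely, your case analysis on $k$ is factually wrong. The Montesinos links $M(1;1/p,1/q,1/r)$ arise as closures of $\sigma_1^{p}a_{13}^{q}\sigma_2^{r}$, and this braid has $k = 0$, not $k = 1$: none of the subwords $\sigma_1\sigma_2$, $\sigma_2 a_{13}$, $a_{13}\sigma_1$ (the three BKL expressions for $\delta_3$) appear, and no conjugation or rewriting produces one. Equivalently, these links are not fibred (Corollary~\ref{cor: definiteness, fibredness, and forms}(1), via \cite{Ni2},\cite{Stoi}), so Banfield's theorem forces $k(L) = 0$. Your claim that $k = 0$ forces a split or destabilisable closure is therefore false; $\sigma_1^{2}a_{13}^{2}\sigma_2^{2}$ is an explicit counterexample. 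Conversely, the $k = 1$ case does \emph{not} give the Montesinos family: the only definite prime closures there are the torus links $T(2,m+1)$ (Corollary~\ref{cor: definiteness, fibredness, and forms}(2)).

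The paper's actual route is quite different and avoids both issues: it puts an arbitrary strongly quasipositive $3$-braid into a minimal representative $\delta_3^{k}P$ (Lemma~\ref{lemma: minimal form}), rewrites it into Murasugi normal form $\delta_3^{3d}\sigma_1^{-1}\sigma_2^{a_1}\cdots$ (Proposition~\ref{prop: rewrite}), applies the Murasugi--Erle signature formula for $3$-braid closures (Proposition~\ref{prop: signatures}), and compares $\sigma(\widehat b)$ directly against $b_1(F(b))$ to enumerate the definite cases (Proposition~\ref{prop: when definite} and Corollary~\ref{cor: definiteness, fibredness, and forms}). The L-space and fibredness claims in (1)(b) and (2) then follow by inspection of that finite list. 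If you want to salvage your outline, the missing ingredient is precisely this signature computation; the BKL-exponent alone does not separate the definite links from the indefinite ones.
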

Part (2) of Theorem \ref{thm: definite 3-braids} 
combines with Theorem \ref{thm: baader} and \cite[Theorem 4.2] {Ban} to prove Theorem \ref{thm: def baskets} in the case of strongly quasipositive links 
of braid index $2$ or $3$. Part (1) of Theorem \ref{thm: definite 3-braids} resolves Question \ref{que:lspaces} for non-split strongly quasipositive links of braid index 
$2$ or $3$, even in the non-fibred case. Further, since 
the Montesinos links $M(1; 1/p,1/q,1/r)$ are not fibred (\cite[Theorem 1.1]{Ni2}, \cite[Theorem 3.3]{Stoi}), Theorem \ref{thm: bbg definite} combines with  
part (1) of the theorem to imply the following corollary. 

\begin{cor}
Conjecture \ref{conj: branched lspace implies simply laced arborescent} holds for fibred strongly quasipositive links of braid index $2$ or $3$. 
\qed
\end{cor}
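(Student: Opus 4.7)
The plan is to deduce the corollary directly from Theorem \ref{thm: bbg definite}, Theorem \ref{thm: definite 3-braids}(1), and the cited non-fibredness of the Montesinos links $M(1; 1/p, 1/q, 1/r)$. All three ingredients are already in place, so the argument is a short bookkeeping exercise.

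Specifically, let $L$ be a prime, fibred, strongly quasipositive link of braid index $2$ or $3$, and assume some $\Sigma_n(L)$ is an L-space; I want to show that $L$ is simply laced arborescent. First, I would apply Theorem \ref{thm: bbg definite} to conclude that $L$ is definite. Next, since $L$ is prime, it is in particular non-split and non-trivial, so the hypotheses of Theorem \ref{thm: definite 3-braids}(1) are satisfied; invoking the equivalence (a)$\Leftrightarrow$(c) of that theorem, $L$ must be either simply laced arborescent or a Montesinos link of the form $M(1; 1/p, 1/q, 1/r)$. Finally, I would appeal to \cite[Theorem 1.1]{Ni2} and \cite[Theorem 3.3]{Stoi} to rule out the second possibility: the Montesinos links $M(1; 1/p, 1/q, 1/r)$ are not fibred, whereas $L$ is assumed to be fibred. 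Hence $L$ is simply laced arborescent, which is exactly the conclusion of Conjecture \ref{conj: branched lspace implies simply laced arborescent} in this restricted setting.

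There is no real obstacle in this proof, since the substantive content lives in the preceding Theorems \ref{thm: bbg definite} and \ref{thm: definite 3-braids}. The only minor point to confirm is that the notion of `prime' used in the statement of the conjecture implies both non-split and non-trivial, matching the standing hypotheses of Theorem \ref{thm: definite 3-braids}; this is a standard convention and requires no additional argument.
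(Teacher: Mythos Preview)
Your proposal is correct and matches the paper's own argument essentially verbatim: the paper explicitly states that since the Montesinos links $M(1;1/p,1/q,1/r)$ are not fibred, Theorem~\ref{thm: bbg definite} combines with Theorem~\ref{thm: definite 3-braids}(1) to yield the corollary. Your extra care in checking that primeness gives the non-split and non-trivial hypotheses of Theorem~\ref{thm: definite 3-braids} is a harmless (and correct) observation.
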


We noted above that L-space knots are prime, fibred, and strongly quasipositive, so Theorem \ref{thm: bbg definite} and Theorem \ref{thm: definite 3-braids} also imply

\begin{cor}
If $K$ is a non-trivial L-space knot of braid index $2$ or $3$ and some $\Sigma_n(K)$ is an L-space, then $K$ is one of the torus knots $T(2,m)$ 
where $m \geq 3$ is odd, $T(3,4)$, or $T(3,5)$. 
\qed 
\end{cor}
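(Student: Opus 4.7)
The plan is to chain the structural properties of L-space knots together with Theorem~\ref{thm: bbg definite} and the classification in Theorem~\ref{thm: definite 3-braids}(1), and then enumerate which simply laced arborescent links happen to be knots.

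First, L-space knots are prime \cite{Krc}, fibred \cite{Ni1}, and strongly quasipositive \cite[Theorem 1.2]{He}, so $K$ is a non-split, non-trivial, prime, strongly quasipositive knot of braid index $2$ or $3$. Since some $\Sigma_n(K)$ is an L-space, Theorem~\ref{thm: bbg definite} forces $K$ to be definite, and then the equivalence (a) $\Leftrightarrow$ (c) of Theorem~\ref{thm: definite 3-braids}(1) implies that $K$ is either simply laced arborescent or a Montesinos link $M(1; 1/p, 1/q, 1/r)$. As noted in the paragraph just above the corollary, no link $M(1; 1/p, 1/q, 1/r)$ is fibred, so this Montesinos possibility is ruled out by the fibredness of $K$. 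Hence $K = L(\Gamma)$ for some simply laced Dynkin diagram $\Gamma$.

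It remains to enumerate which $L(\Gamma)$ are knots. The torus cases $L(A_{2k}) = T(2, 2k+1)$ for $k \geq 1$, $L(E_6) = T(3, 4)$, and $L(E_8) = T(3, 5)$ are all knots, and their braid indices are $2$, $3$, and $3$ respectively. The remaining cases $L(D_m) = P(-2, 2, m-2)$ for $m \geq 4$ and $L(E_7) = P(-2, 3, 4)$ each have two even twist parameters, so by the standard component-count formula for a three-strand pretzel link they have two or three components and so are not knots. This yields exactly the list $T(2, m)$ with $m \geq 3$ odd, $T(3, 4)$, and $T(3, 5)$ claimed in the statement.

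The main substantive step is the appeal to Theorem~\ref{thm: definite 3-braids}(1), which is the central new classification of the paper; the only remaining minor point is the parity argument eliminating the $D_m$ and $E_7$ pretzel links. I do not expect any real obstacle once Theorem~\ref{thm: definite 3-braids}(1) is in hand.
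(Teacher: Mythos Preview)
Your proof is correct and follows exactly the approach the paper intends (the paper simply notes that L-space knots are prime, fibred, and strongly quasipositive, then invokes Theorems~\ref{thm: bbg definite} and~\ref{thm: definite 3-braids} and writes \qed). One tiny omission in your final enumeration: you should also note that $L(A_m) = T(2, m+1)$ for $m$ odd is a two-component torus link, not a knot, so those $A_m$ cases are excluded along with $D_m$ and $E_7$.
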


Our next corollary follows immediately from Theorem \ref{thm: bbg definite} and the equivalence of (a) and (b) in part (1) of Theorem \ref{thm: definite 3-braids}. 

\begin{cor}
\label{cor: n implies 2} 
Suppose that $L$ is a strongly quasipositive link of braid index $2$ or $3$. If $\Sigma_n(L)$ is an L-space for some $n \geq 2$, then $\Sigma_2(L)$ is an L-space.
\qed
\end{cor}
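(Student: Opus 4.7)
The plan is to derive the corollary by combining Theorem~\ref{thm: bbg definite} with the equivalence (a)~$\Leftrightarrow$~(b) in Theorem~\ref{thm: definite 3-braids}(1), after dealing with the possibility that $L$ is composite by an elementary braid-index argument.

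First, by Theorem~\ref{thm: bbg definite}, the hypothesis that $\Sigma_n(L)$ is an L-space for some $n \geq 2$ forces $L$ to be definite. Since an L-space is a rational homology sphere, $L$ is non-split, and since the unknot has braid index $1$, $L$ is non-trivial. If $L$ is prime, then all the hypotheses of Theorem~\ref{thm: definite 3-braids}(1) are satisfied, and the equivalence (a)~$\Leftrightarrow$~(b) immediately yields that $\Sigma_2(L)$ is an L-space. This is the main case.

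Now suppose $L$ is composite; write $L = L_1 \# L_2$ with each $L_i$ prime and non-trivial. The additivity of braid index under connected sum, $b(L_1 \# L_2) = b(L_1) + b(L_2) - 1$, combined with $b(L_i) \geq 2$ and $b(L) \leq 3$, forces $b(L_1) = b(L_2) = 2$, so each $L_i$ is a non-trivial closure of a $2$-braid and is hence a torus link $T(2, q_i)$ with $|q_i| \geq 2$. The branched double cover $\Sigma_2(T(2, q_i))$ is a lens space, and in particular an L-space. Since $\Sigma_2$ of a connected sum of links is the connected sum of the $\Sigma_2$'s of the summands, $\Sigma_2(L) = \Sigma_2(L_1) \# \Sigma_2(L_2)$ is a connected sum of L-spaces, hence itself an L-space.

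The only potential obstacle is the composite case, but it dissolves immediately because non-trivial $2$-braid closures are torus links $T(2, q)$ with lens-space branched double covers; no deeper structural result beyond the additivity of braid index is required.
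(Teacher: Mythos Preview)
Your argument is correct and follows essentially the same route as the paper, which simply remarks that the corollary follows immediately from Theorem~\ref{thm: bbg definite} together with the equivalence (a)~$\Leftrightarrow$~(b) in Theorem~\ref{thm: definite 3-braids}(1). The one point you handle more carefully than the paper is the primeness hypothesis in Theorem~\ref{thm: definite 3-braids}(1): you dispose of the composite case via additivity of braid index (Birman--Menasco), reducing to connected sums of $T(2,q)$'s whose branched double covers are lens spaces. The paper glosses over this, but the same composite examples (closures of $\delta_3\sigma_1^p a_{13}^q$, with closure $T(2,p+1)\#T(2,q+1)$) appear explicitly in Proposition~\ref{prop: when definite}(2)(b) and \S\ref{subsec: k > 0}, so the authors clearly had the same easy fix in mind.
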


Experimental evidence suggests that if $L$ is a link for which $\Sigma_n(L)$ is an L-space for some $n \geq 2$, then $\Sigma_r(L)$ is an L-space for each 
$2 \leq r \leq n$. In \S \ref{sec: lspace branched covers 3-braids} we verify this in all but three cases of strongly quasipositive links of braid index $2$ or $3$. 

\begin{prop} 
\label{prop: 1 < r < n}
Suppose that $L$ is a strongly quasipositive link of braid index $2$ or $3$ for which $\Sigma_n(L)$ is an L-space for some $n \geq 2$. If $L$ is not an appropriately oriented version of one of the links $6_2^2, 6_3^2$ or $7_1^3$, then $\Sigma_r(L)$ is an L-space for each $2 \leq r \leq n$. 
\end{prop}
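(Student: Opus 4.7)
The plan is to combine the classification already in hand with a direct analysis of the Seifert-fibered branched covers of a short explicit list of candidate links.

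I begin by reducing to the prime case. The L-space property distributes over connected sum of rational homology spheres and cyclic branched covers distribute over connected sum, so I may assume $L$ is prime; non-split is automatic since $\Sigma_n(L)$ is an L-space and hence a rational homology sphere. Under the hypothesis that $\Sigma_n(L)$ is an L-space, Corollary \ref{cor: n implies 2} gives that $\Sigma_2(L)$ is also an L-space, and then part (1) of Theorem \ref{thm: definite 3-braids} forces $L$ to be either one of the simply laced arborescent links of braid index at most three ($T(2,q)$, $T(3,4)$, $T(3,5)$, $P(-2,2,m)$, $P(-2,3,4)$) or a Montesinos link of type $M(1;1/p,1/q,1/r)$.

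For each torus knot in the list ($T(2,q)$ with $q\geq 3$ odd, $T(3,4)$, $T(3,5)$), every $\Sigma_n(L)$ is a Seifert fibered space of Brieskorn type $\Sigma(a,b,n)$ whose L-space condition reduces (via the Lisca--Stipsicz / Jankins--Neumann criterion on the Seifert invariants) to a single monotone-in-$n$ inequality, equivalent in the homology sphere case to $1/a + 1/b + 1/n > 1$. Because this inequality is monotone decreasing in $n$, it persists to every $2 \leq r \leq n$, so $\Sigma_r(L)$ is also an L-space. An entirely analogous Seifert-invariant computation from the Montesinos description disposes of the pretzel knot $P(-2,3,4)$ and the Montesinos knots $M(1;1/p,1/q,1/r)$; no exceptions arise in any knot case.

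The delicate situation is the multi-component members of the list, namely $T(2,2k)$ and the appropriate $P(-2,2,m)$ with $m$ even, together with the possibility of $T(3,3)$. Here the number of components of $\Sigma_n(L)$, and in particular whether $\Sigma_n(L)$ is a rational homology sphere at all, depends on the orientations of the components of $L$ and on the divisibility relations between $n$ and the Seifert/pretzel parameters; reversing the orientation on a component changes the linking matrix and therefore $H_1$ of the branched cover. My plan is to enumerate the finitely many admissible orientation and parameter choices compatible with the hypothesis (finiteness being forced by the requirement that some $\Sigma_n(L)$ be an L-space, cf.\ the estimates used in Step 2), compute the Seifert invariants of $\Sigma_n(L)$ and of each $\Sigma_r(L)$ directly, and apply the Jankins--Neumann L-space criterion at each intermediate $r$. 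The main obstacle, where I expect the real work to concentrate, is this last verification: checking that the only configurations in which the L-space property fails to descend from $n$ to some $2 \leq r < n$ are precisely the three stated links $6_2^2$, $6_3^2$ and $7_1^3$ with their anomalous reversed orientations, and that in every other multi-component case the orientation conventions force the Seifert invariants of $\Sigma_r(L)$ to inherit the L-space condition from those of $\Sigma_n(L)$.
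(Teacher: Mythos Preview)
Your reduction to the prime case and the use of Theorem \ref{thm: definite 3-braids} to put $L$ on the finite list is fine, and the Brieskorn/Seifert argument for the torus knots is correct in outline. But there are two genuine gaps after that.

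First, the method you propose for the Montesinos links $M(1;1/p,1/q,1/r)$ does not work. Only the \emph{double} branched cover of such a link is Seifert fibered; for $n\geq 3$ the covers $\Sigma_n(M(1;1/p,1/q,1/r))$ are typically hyperbolic, so there is no Seifert-invariant or Jankins--Neumann criterion to apply. The paper's argument here is entirely different: it uses the constraint from \cite[Theorem 1.1]{BBG} that the Alexander polynomial of $\widehat b$ (and of any sub-braid closure) has no root in $\overline I_-(\zeta_n)$ to bound $p,q,r$ in terms of $n$, then computes $\Delta_{\widehat{b(p,q,r)}}$ explicitly for all $1\leq p\leq q\leq r\leq 5$ and checks the root locations case by case. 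For the surviving knot cases such as $5_2=\widehat{b(1,1,2)}$ and $7_5=\widehat{b(1,2,3)}$ the positive verifications that the intermediate $\Sigma_r$ are L-spaces rely on outside computations (Lipshitz, Hori, Peters, Teragaito, Ba), not on any monotone Seifert inequality.

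Second, you have misidentified the three exceptional links. They are not reversed-orientation members of the simply laced arborescent family; they are the Montesinos links $\widehat{b(1,1,3)}=6_2^2$, $\widehat{b(1,2,2)}=6_3^2$, and $\widehat{b(2,2,2)}=7_1^3$. The reason they are excluded is not that the L-space property is known to fail to descend, but that for certain small values of $n$ (e.g.\ $n=3,4$ for $6_2^2$ and $7_1^3$, $n=3,4,5$ for $6_3^2$) it is simply \emph{unknown} whether $\Sigma_n$ is an L-space; see Remark \ref{rmk: to do}. Your plan to ``check that the only configurations in which the L-space property fails to descend'' therefore targets the wrong family and the wrong phenomenon. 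The multi-component simply laced arborescent links $T(2,2k)$, $P(-2,2,m)$, $T(3,3)$ are in fact handled cleanly in \S\ref{subsec: k > 0} via \cite{GLid}, \cite{Nem}, and the Alexander-polynomial root argument, with no exceptions arising there.
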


See \S \ref{subsubsec: proof of proposition} for a discussion of the exceptional cases $L = 6_2^2, 6_3^2, 7_1^3$ and in particular 
Remark \ref{rmk: to do} for a description of what remains to be done to deal with these open cases.

By a {\it basket} we mean a positive Hopf plumbed basket in the sense of \cite{Ru1}. These are the surfaces $F$ in $S^3$ constructed by successively plumbing 
some number of positive Hopf bands onto a disk. The boundary of $F$ is a {\it basket link} $L$, which is fibred with fiber $F$. Also, $F$ is a quasipositive surface, 
and so $L$ is strongly quasipositive (\cite{Ru1}).  By  Banfield (\cite[Theorem 5.2]{Ban}  basket links coincide with strongly quasipositive links $L$ with  exponent $k(L) \geq 1$.

Our next result produces examples of prime definite basket links $L$ which are not  simply laced arborescent. We do this by considering basket links from the point of view of plumbing diagrams and studying the family of {\it cyclic basket links} $L(C_m, p)$ associated to an $m$-cycle $C_m$ incidence graph.

\begin{thm} 
\label{thm: p odd}
Let $m \geq 3$ and $p$ be integers with $p$ odd and $0 < p < m$. 

$(1)$ $L(C_m, p)$ is prime, fibred and definite. 

$(2)$ $L(C_m, p)$ is simply laced arborescent if and only if $p = 1$. 
 
\end{thm}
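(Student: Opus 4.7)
The plan is to compute directly with the basket surface $F = F(C_m, p)$ whose boundary is $L(C_m, p)$, obtained by plumbing $m$ positive Hopf bands onto a disk according to the cyclic incidence pattern $C_m$ with offset $p$. Fibredness is automatic since every basket link fibres with fibre its basket surface \cite{Ru1}. To set up the Seifert form, I would take the core circles of the $m$ Hopf bands as a $\mathbb{Z}$-basis for $H_1(F) \cong \mathbb{Z}^m$, producing a Seifert matrix $V$ whose diagonal entries come from the Hopf twists and whose off-diagonal entries $V_{ij} + V_{ji}$ are non-zero precisely when bands $i$ and $j$ are adjacent in the cycle, i.e. $j \equiv i \pm p \pmod m$. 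The symmetrised form $S = V + V^T$ is therefore a circulant-type $m \times m$ matrix whose structure is entirely determined by $m$ and $p$.

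For part (1), the definiteness claim reduces to showing that $S$ has rank $m$ with all eigenvalues of one sign. Diagonalising via characters of $\mathbb{Z}/m\mathbb{Z}$ gives eigenvalues of the form $\lambda_k = \pm 2 \pm 2\cos(2\pi pk/m)$ for $k = 0, \ldots, m-1$, and a parity argument using that $p$ is odd with $0 < p < m$ should show no $\lambda_k$ vanishes and all share a common sign. The delicate point is tracking the off-diagonal signs arising from band orientations in a cyclic plumbing — this is where the odd parity of $p$ should enter as an obstruction to a zero eigenvalue arising from the mode $kp \equiv m/2 \pmod m$. Once $S$ is shown definite of rank $m$, the computation $\chi(F) = 1 - m$ combined with $\chi(F) = 2 - 2g(L) - c(L)$ gives $2g(L) + c(L) - 1 = m = |\sigma(L)|$, establishing definiteness. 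For primeness I would exploit the $2$-connectedness of the incidence graph $C_m$ to rule out a non-trivial boundary connect-sum decomposition of $F$ (which, together with fibredness, implies primality of the link), with an alternative via irreducibility of the Alexander polynomial $\Delta_L(t) = \det(tV - V^T)$ computed as a circulant product.

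For part (2), the if direction requires recognising $L(C_m, 1)$ as a specific simply laced arborescent link. Here the planar cyclic arrangement with $p = 1$ should be reducible, via Hopf (de)plumbing moves on the fibre surface, to a tree plumbing diagram matching one of $A_m, D_m, E_6, E_7, E_8$; for small $m$ this can be confirmed by direct inspection. For the only-if direction, I would distinguish $L(C_m, p)$ with $p$ odd, $p \geq 3$, from the simply laced arborescent list by comparing the symmetrised Seifert forms: the form of a simply laced arborescent link is (up to sign) the Cartan matrix of its type, whose determinant is one of $m+1, 4, 3, 2, 1$, whereas the circulant determinant $|\det S|$ for $L(C_m, p)$ can be computed as $\prod_k |\lambda_k|$ and shown to grow with $p$, exceeding these Cartan values whenever $p \geq 3$.

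The main obstacle is pinning down the precise sign structure of the Seifert form so that the circulant eigenvalue analysis genuinely produces a definite form for every odd $p$ — in particular ruling out the naive ``middle'' eigenvalue that would otherwise vanish when $m$ is even. Once the sign conventions are fixed, the eigenvalue computation, the determinant comparison with the Cartan determinants, and the Hopf (de)plumbing reduction for $p=1$ should all be mechanical; the heart of the proof is the parity/divisibility bookkeeping that links the odd parity of $p$ to non-degeneracy and definiteness of $S$.
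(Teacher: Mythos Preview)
Your proposal rests on a misreading of the parameter $p$. In $L(C_m,p)$ the incidence graph is always the $m$-cycle, so adjacent bands are those with indices $i$ and $i\pm 1 \pmod m$; the integer $p$ does \emph{not} shift the adjacency to $j\equiv i\pm p$. Rather, $p$ records how many of the ordering signs $\epsilon_i$ equal $+1$, and its only effect on the symmetrised Seifert form is to change the \emph{signs} of certain off-diagonal $\pm 1$ entries. After diagonal sign changes the form becomes the near-circulant matrix $Q_m\big((-1)^{m-p}\big)$ with $2$'s on the diagonal, $1$'s on the super/subdiagonal, and $(-1)^{m-p}$ in the corners. So your eigenvalue formula $\pm 2\pm 2\cos(2\pi pk/m)$ has no basis; $p$ never enters the cosine argument.

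This misunderstanding is fatal for part~(2). One computes $\det Q_m\big((-1)^{m-p}\big)=4$ for \emph{every} odd $p$, so $\mathcal F(C_m,p)\cong D_m$ independently of $p$: the symmetrised Seifert form, and in particular its determinant, cannot distinguish $L(C_m,3)$ from $L(C_m,1)$. Your proposed Alexander-polynomial route to primeness also fails, since $\Delta_{L(C_m,p)}(t)=(t^p-1)(t^{m-p}\pm 1)$ factors nontrivially. What the paper uses instead is the total linking number: a direct count at band crossings gives $lk(L(C_m,p))=2p$ for $m$ odd and $\tfrac{m}{2}+p$ for $m$ even, so $lk$ separates the links for different $p$. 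Combined with the identification $\mathcal F(C_m,p)\cong D_m$ (which forces any simply-laced-arborescent $L(C_m,p)$ to equal $L(D_m)$, with fibre surfaces isotopic), the linking number then pins down $p=1$. Primeness comes from indecomposability of the $D_m$ lattice, and the $p=1$ identification $F(C_m,1)\cong F(D_m)$ is achieved by explicit handle slides rather than abstract deplumbing.
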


It follows from Theorem \ref{thm: def baskets} and \cite[Theorem 5.2]{Ban} that for $p > 1$ odd, the 
BKL-exponent of $L(C_m, p)$, is $1$. Our next result 
shows that even though these $L(C_m, p)$ are fibred, definite and not simply laced arborescent, they do not 
provide counterexamples to Conjecture \ref{conj: branched lspace implies simply laced arborescent 2}. 

\begin{thm} 
\label{thm: intro not a counterexample}
Suppose that $p$ is odd.

$(1)$ If $p > 1$ and $n \geq 3$, then $\Sigma_n(L(C_m, p))$ is not an L-space.

$(2)$ If $p > 1$, then $\Sigma_2(L(C_m, p))$ admits a co-oriented taut foliation and hence is not an L-space.

$(3)$ $\Sigma_n(L(C_m, 1))$ is an L-space if and only if either $n = 2$ or $n = m = 3$.

\end{thm}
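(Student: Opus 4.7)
The plan is to exploit the fact that $L(C_m, p)$ is a fibred basket link whose Seifert surface has plumbing graph the cycle $C_m$. Consequently its symmetrised Seifert form is (up to sign) an $m\times m$ positive definite integral quadratic form supported on $C_m$, and each $n$-fold cyclic branched cover $\Sigma_n(L(C_m, p))$ is the boundary of a natural plumbed $4$-manifold whose intersection form is straightforwardly computed from the Seifert form together with $n$. In particular every $\Sigma_n(L(C_m, p))$ is a graph manifold, generically Seifert fibered over $S^2$, with a convenient cyclic description.

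For part (3), I would use Theorem \ref{thm: p odd}(2) to identify $L(C_m, 1)$ with an explicit simply laced arborescent link by matching its signature, genus, and number of components against the table in \S \ref{subsec: symm seifert forms of baskets}; the expected identification is with a torus link of the form $T(2, k)$ for suitable $k$ (or, possibly, the $D_m$ plumbing for small $m$). Once that identification is made, the L-space status of the higher cyclic branched covers is classical, since these are Brieskorn or spherical space-form quotients determined by the sign of the orbifold Euler characteristic of a triangle group $(2, n, k)$. Positivity of that characteristic holds only for $n=2$ and for $n=m=3$ (a finite quotient), which gives exactly the cases stated in (3).

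For parts (1) and (2), with $p>1$, the strategy is to exhibit a co-oriented taut foliation on $\Sigma_n(L(C_m,p))$ for every $n\ge 2$ and invoke the Ozsv\'ath--Szab\'o obstruction from \cite{OS1}. For $n=2$, the Seifert fibered description of the double branched cover allows me to apply the criterion of Eisenbud--Hirsch--Neumann (later refined by Lisca--Stipsicz) characterising horizontal taut foliations on Seifert fibered rational homology spheres: this reduces to a rational inequality on the Seifert invariants that can be read off directly from the cycle plumbing. I expect this inequality to be satisfied for every odd $p$ with $1 < p < m$ and to fail exactly in the simply laced arborescent case $p=1$, yielding (2). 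For (1), the foliation produced at $n=2$ is horizontal with respect to the Seifert fibration, so it pulls back along the intermediate cyclic cover of the base orbifold to a co-oriented taut foliation on $\Sigma_n(L(C_m,p))$ for all $n\ge 3$. Alternatively, the Seifert inequality reappears for the $n$-fold cover with scaled invariants that only make it easier to satisfy, closing the argument.

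The main obstacle is the Seifert-invariant bookkeeping. The cycle plumbing matrix must be reduced through continued-fraction moves to a standard Seifert form, and one needs to verify that for arbitrary odd $p \in (1, m)$ — in particular across all residues of $p$ modulo $m$, and regardless of $\gcd(p, m)$ — the resulting cover stays Seifert fibered rather than degenerating to a Sol or otherwise exotic graph manifold, and that the taut foliation inequality holds uniformly. I expect a separate analysis for $p$ close to $1$ versus $p$ close to $m-1$ will be required to handle the boundary regimes, but no new conceptual difficulty beyond that.
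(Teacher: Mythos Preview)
Your proposal rests on a structural claim that does not hold: the fact that the fibre surface $F(C_m,p)$ is built by plumbing Hopf bands along a cycle does \emph{not} imply that the branched covers $\Sigma_n(L(C_m,p))$ are boundaries of plumbed $4$-manifolds, nor that they are Seifert fibred. Hopf-band plumbing is an operation on the Seifert surface in $S^3$, not a plumbing of $4$-manifold $2$-handles, and there is no general mechanism converting one into the other. In fact the paper shows (Lemma~\ref{lem: lcm braid}) that $L(C_m,p)$ is a $3$-braid closure, so $\Sigma_2(L(C_m,p))$ is obtained by Dehn filling a once-punctured torus bundle; for generic $m,p$ the monodromy $(\tau_2\tau_1^{-1}\tau_2^{-1})^m(\tau_2\tau_1)^p$ is pseudo-Anosov and $\Sigma_2$ is hyperbolic, so the Eisenbud--Hirsch--Neumann Seifert criterion you propose to invoke is simply unavailable. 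The pullback argument for part~(1) then collapses as well.

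Your identification in part~(3) is also off: $L(C_m,1)$ is not a $(2,k)$ torus link but the pretzel link $P(-2,2,m-2)=L(D_m)$ (this is Theorem~\ref{thm: p = 1}), so the triangle-group analysis does not apply directly.

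The paper's actual argument is quite different and more elementary for (1) and (3): one computes $\Delta_{L(C_m,p)}(t)=(t^p-1)(t^{m-p}\pm 1)$ (Lemma~\ref{lemma: alex poly}) and then uses Theorem~\ref{thm: bbg definite}, which forces all roots of $\Delta$ to lie in the arc $I_+(\zeta_n)$ whenever $\Sigma_n$ is an L-space. For $p>1$ the factor $t^p-1$ already has roots violating this for every $n\ge 3$, giving (1); the same root analysis applied to $P(-2,2,m-2)$ handles (3). Only part~(2) requires producing a taut foliation, and there the paper uses the $3$-braid description to realise $\Sigma_2$ as a filled punctured-torus bundle, computes the fractional Dehn twist coefficient of the monodromy to be $\tfrac12+r>1$ for $p=2r+1$, and invokes Roberts' theorem~\cite{Ro}.
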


By Lemma \ref{lemma: definite iff p odd} the links $L(C_m,p)$ with $p$ odd are the only definite basket links whose incidence graph is an $m$-cycle, $m \ge 3$. Hence Theorems \ref{thm: bbg definite}, \ref{thm: p odd} and \ref{thm: intro not a counterexample} give the following corollary. 

\begin{cor} 
\label{cor: no contradiction} 
If $L$ is a basket link whose incidence graph is an $m$-cycle, $m \geq 3$, then $\Sigma_n(L)$ is an L-space for some $n \geq 2$ if and only if $L$ is simply laced arborescent.
\qed
\end{cor}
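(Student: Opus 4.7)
The corollary is essentially a bookkeeping consequence of the results immediately preceding it, so my plan is to assemble them rather than produce any new geometric content.

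For the nontrivial ("only if") direction, assume $\Sigma_n(L)$ is an L-space for some $n \geq 2$ with $L$ a basket link whose incidence graph is the $m$-cycle $C_m$. Since basket links are strongly quasipositive (\cite{Ru1}; recalled in the paragraph defining baskets), Theorem \ref{thm: bbg definite} immediately gives that $L$ is definite. Invoking Lemma \ref{lemma: definite iff p odd}, which classifies the definite basket links with $m$-cycle incidence graph, forces $L = L(C_m, p)$ for some odd integer $p$ with $0 < p < m$. It now remains to rule out $p > 1$. Parts (1) and (2) of Theorem \ref{thm: intro not a counterexample} do exactly this: they assert that when $p > 1$ is odd, $\Sigma_n(L(C_m, p))$ fails to be an L-space for every $n \geq 2$ (the $n = 2$ case being handled by a co-oriented taut foliation, the $n \geq 3$ case directly). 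So the standing hypothesis that some $\Sigma_n(L)$ is an L-space forces $p = 1$. Finally, Theorem \ref{thm: p odd}(2) identifies $L(C_m, 1)$ as simply laced arborescent, closing the direction.

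For the converse, if $L$ is simply laced arborescent then $\Sigma_2(L)$ has finite fundamental group, so is an L-space by \cite{OS1}, as already noted in the introduction. In our setting one may equivalently appeal to Theorem \ref{thm: intro not a counterexample}(3), which gives that $\Sigma_2(L(C_m, 1))$ is an L-space. Either way, some $\Sigma_n(L)$ is an L-space.

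There is no substantive obstacle: the work has been done in Theorem \ref{thm: bbg definite}, Lemma \ref{lemma: definite iff p odd}, Theorem \ref{thm: p odd}, and Theorem \ref{thm: intro not a counterexample}. The only point of care is confirming that the "basket link" hypothesis places $L$ in the scope of Theorem \ref{thm: bbg definite}; this is immediate from the fact recalled in the paper that basket surfaces are quasipositive, so their boundaries are strongly quasipositive.
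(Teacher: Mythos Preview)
Your proof is correct and follows essentially the same route as the paper's, which simply packages Lemma~\ref{lemma: definite iff p odd} with Theorems~\ref{thm: bbg definite}, \ref{thm: p odd}, and \ref{thm: intro not a counterexample} in the sentence preceding the corollary. The only implicit step you leave to the reader---that every basket link with $m$-cycle incidence graph is some $L(C_m,p)$---is exactly what the paper also assumes, relying on the uniqueness of the chord diagram for $C_m$ and Proposition~\ref{prop: only depends on p}.
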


This leads us to the following problem.

\begin{problem}
{\rm Determine the definite basket links and which of those have L-space branched cyclic covers.}
\end{problem} 

In \S \ref{sec: BKL braids and baskets} we discuss background material on basket links, their BKL braid representations, and their symmetrised Seifert forms. 
In \S \ref{sec: 3-braids} we classify definite strongly quasipositive $3$-braids leading to a proof of Theorem \ref{thm: definite 3-braids} 
and that of Theorem \ref{thm: def baskets} 
for strongly quasipositive links of braid index $3$ or less. Section \ref{sec: seifert forms of baskets} 
describes the Seifert form of a basket link in terms of a BKL-positive braid representation 
$b = \delta_n P$. This is then used to determine conditions guaranteeing that its symmetrised Seifert form is 
indefinite in \S \ref{sec: indefinite bkl pos words} and conditions guaranteeing it is congruent to $E_6, E_7$, or $E_8$ in \S \ref{sec: e6, e7, e8}. Sections \ref{sec: n = 4} 
and \ref{sec: n = 5} prove Theorem \ref{thm: def baskets} 
for strongly quasipositive links of braid index $4$ and $n \geq 5$ respectively. 
In \S \ref{sec: cycle baskets} we introduce cyclic basket links and prove Theorem \ref{thm: p odd} and 
Theorem \ref{thm: intro not a counterexample}. Finally, in \S \ref{sec: lspace branched covers 3-braids} we determine the
pairs $(b, n)$ where $b$ is a strongly quasipositive $3$-braid for which $\Sigma_n(\widehat b)$ is an 
L-space except for a finite number of cases. The third table in \S \ref{subsec: calculating alex polys} lists what is known to us 
and what remains open. Proposition \ref{prop: 1 < r < n} follows from this analysis.

{\bf Acknowledgements}. This paper originated during the authors' visits to the winter-spring 2017 thematic semester {\it Homology theories in low-dimensional topology} held at the Isaac Newton Institute for the Mathematical Sciences Cambridge (funded by EPSRC grant no. EP/K032208/1). They gratefully acknowledge their debt to this institute. They also thank Idrissa Ba for preparing the paper's figures.

\section{BKL-positive braids and basket links}
\label{sec: BKL braids and baskets}
Figure \ref{fig: braid conventions} illustrates our convention for relating topological and algebraic braids, and for the orientations on the components of braid closures. These differ from those of Murasugi \cite{Mu1} by replacing $\sigma_i$ by $\sigma_i^{-1}$ and those of  Birman-Ko-Lee \cite{BKL} and Baldwin \cite{Bld} by replacing $\sigma_i$ by $\sigma_{n-i}$. 

\begin{figure}[!ht]
\centering
 \includegraphics[scale=0.7]{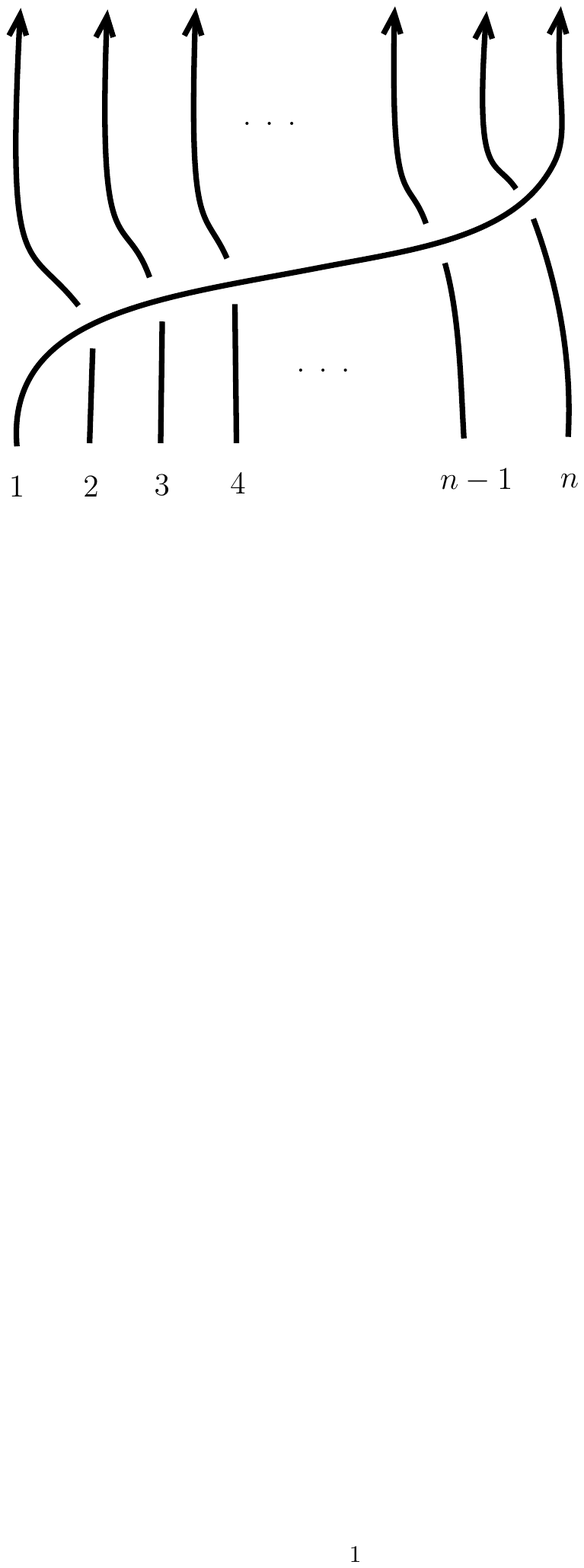} 
\caption{$\delta_n = \sigma_1 \sigma_2 \cdots \sigma_{n-1}$} 
\label{fig: braid conventions}
\end{figure} 

\subsection{BKL-positive braids}

The geometric braid corresponding to the Birman-Ko-Lee generator $a_{rs}$ of $B_n$ (cf. (\ref{exprn for ars})) is depicted in Figure \ref{fig: steve 1}. 
\begin{figure}[!ht]
\centering
 \includegraphics[scale=0.7]{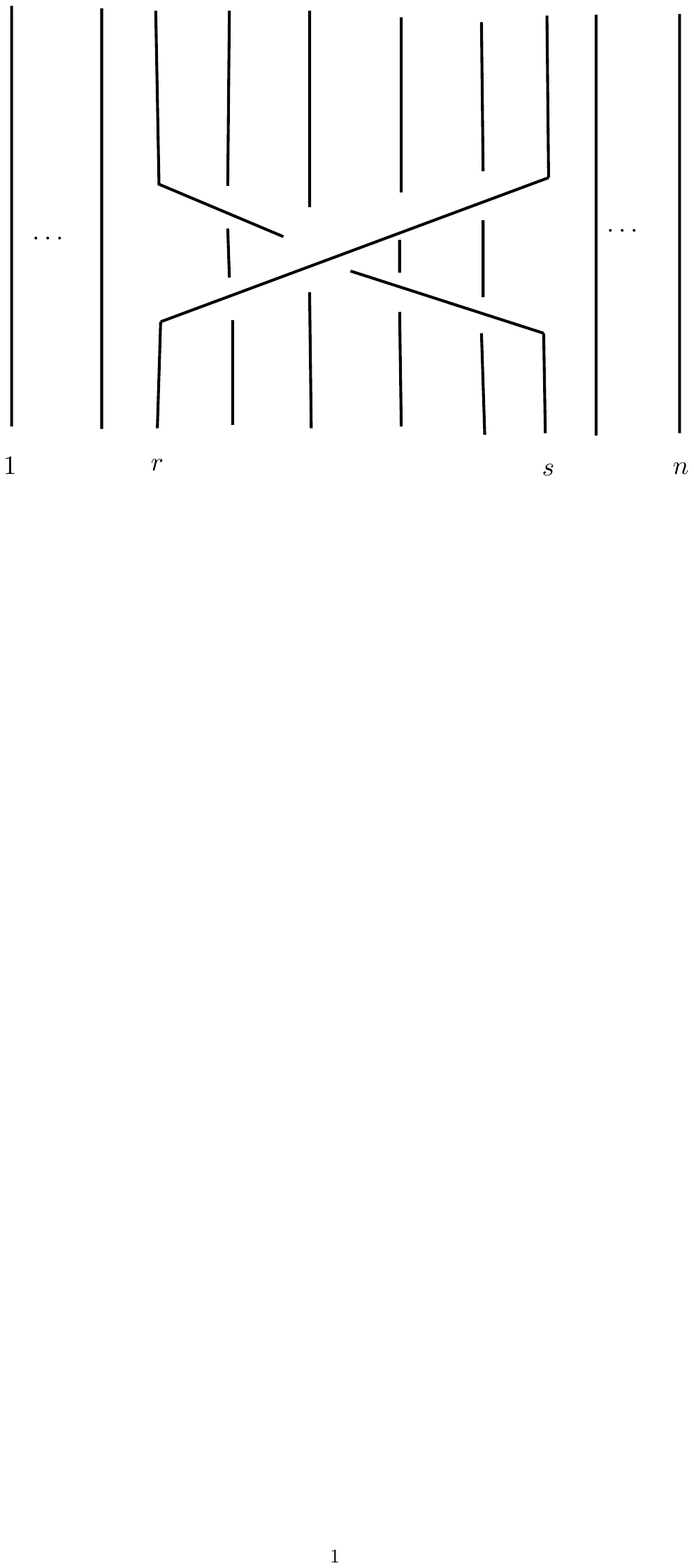}
\caption{$a_{rs}$} 
\label{fig: steve 1}
\end{figure} 
It is clear that $a_{rs} = \sigma_r$ when $s = r+1$. The classic braid relations show that $a_{rs} = (\sigma_r \sigma_{r+1} \ldots \sigma_{s-2}) \sigma_{s-1}  (\sigma_r \sigma_{r+1} \ldots \sigma_{s-2})^{-1}$ has an alternate expression: 
\begin{equation} \label{alt exprn for ars} 
a_{rs} =  (\sigma_{r+1} \sigma_{r+2} \ldots \sigma_{s-1})^{-1} \sigma_{r}  (\sigma_{r+1} \sigma_{r+2} \ldots \sigma_{s-1})
\end{equation} 
Recall the dual Garside element $\delta_n = \sigma_1 \sigma_2 \ldots \sigma_{n-1} \in B_n$ from the introduction. The $n^{th}$ power of $\delta_n$ generates the centre of $B_n$ and if we define $a_{r+1, n+1}$\footnote{For clarity, we will introduce a comma between the two parameters of the BKL generators from time to time.} to be $a_{1, r+1}$, then the reader will verify that for all $1 \leq r < s \leq n$ we have 
\begin{equation} 
\label{conj by delta}
\delta_n a_{rs} \delta_n^{-1} = a_{r+1, s+1} 
\end{equation} 

\begin{remark} 
\label{rem: delta rotn} 
{\rm Identity (\ref{conj by delta}) has a geometric interpretation. Think of the $n$ strands of the trivial braid in $B_n$ as $I$-factors of $S^1 \times I$ based at the $n^{th}$ roots of unity. A general element of $B_n$ is obtained by adding half twists to these numbered strands in the usual way. If $s \leq n-1$, the algebraic identity $\delta_n a_{rs} \delta_n^{-1} = a_{r+1, s+1}$ states that the braid element obtained by conjugating $a_{rs}$ by $\delta_n$ is the geometric braid obtained by rotating the geometric braid $a_{rs}$ around the $S^1$-factor of $S^1 \times I$ by $\frac{2 \pi}{n}$. This remains true even when $s = n$, for in this case if we follow the rotation by an isotopy of the half-twisted band corresponding to $a_{rs}$ through the point at $\infty$ of a plane transverse to $S^1 \times I$, we obtain $a_{1, r+1}$.}
\end{remark} 

The relations in the Birman-Ko-Lee presentation are 
\begin{eqnarray} 
\label{eqn: commuting generators}
a_{rs} a_{tu} = a_{tu} a_{rs} \;\; \mbox{ if } \;\; (t-r)(t-s)(u-r)(u-s) > 0
\end{eqnarray} 
and
\begin{eqnarray} 
\label{eqn: non-commuting generators}
a_{rs} a_{st} = a_{rt} a_{rs} = a_{st} a_{rt}  \;\; \mbox{ if } \;\; 1 \leq r < s < t \leq n
\end{eqnarray} 
We say that two BKL generators $a_{rs}$ and $a_{tu}$ are {\it linked} if either $r < t < s < u$ or $t < r < u < s$. Relation (\ref{eqn: commuting generators}) states that $a_{rs}$ commutes with $a_{tu}$ if $\{r, s\} \cap \{t, u\} = \emptyset$ and they are not linked. The converse holds; if $a_{rs}$ and $a_{tu}$ commute, then $\{r, s\} \cap \{t, u\} = \emptyset$ and they are not linked (\cite{BKL}).

We say that a BKL-positive word $P$ {\it covers} $\sigma_i$ if there is a letter $a_{rs}$ of $P$ for which $r \leq i < s$. 
The reader will verify that if a BKL-positive word $P$ covers $\sigma_i$ and $P' \in B_n$ is a BKL-positive rewriting of 
$P$, then $P'$ covers $\sigma_i$. 

The {\it span} of a BKL-positive letter $a_{rs}$ is $s - r$. 

\subsection{The quasipositive Seifert surface of a basket link}
Let $b \in B_n$ to be a braid of the form
$$b = \delta_n P$$ 
where $P$ is a BKL-positive braid. Since $b$ is a strongly quasipositive braid it determines a quasipositive surface $F(b)$ with oriented boundary $\widehat b$ obtained by attaching negatively twisted bands, one for each letter of $b$, to the disjoint union of $n$ disks in the usual way. See Figure \ref{fig: steve 2}. 

\begin{figure}[!ht] 
\centering
 \includegraphics[scale=0.7]{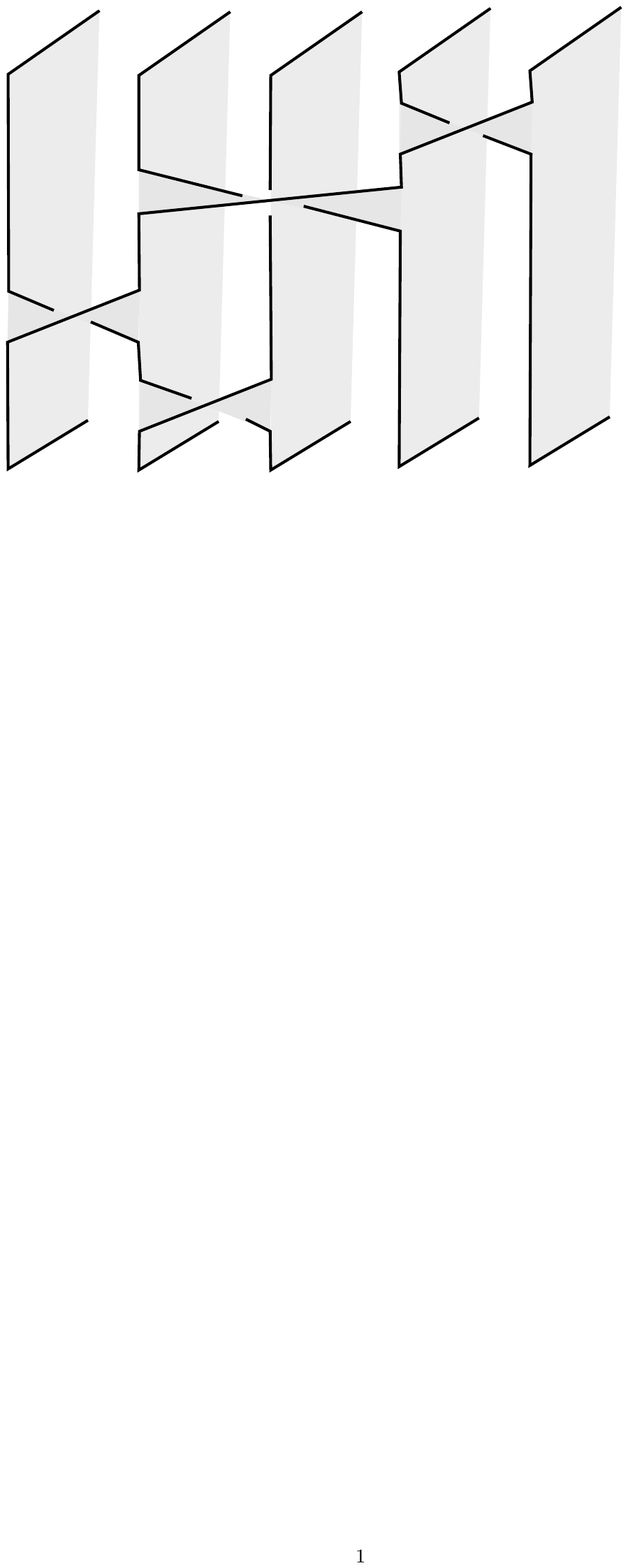}
\caption{Quasipositive surface} 
\label{fig: steve 2}
\end{figure} 

The assumption that $b = \delta_n P$ implies that $F(b)$ is connected. Hence it minimizes the first Betti number of the Seifert surfaces for $\widehat b$. It is readily calculated that $b_1(F(b)) = l_{BKL}(b) - (n-1)$ where $l_{BKL}(b)$ denotes the number of generators $a_{rs}$, counted with multiplicity, which occur in $b$. 

Recall the BKL-exponent $k(L)$ we defined for strongly quasipositive links $L$ (cf. (\ref{def: kl})).  

\begin{lemma} \label{lemma: k(L) finite} 
For $L$ a strongly quasipositive link, $0 \leq k(L) < \infty$.
\end{lemma}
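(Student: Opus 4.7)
The plan is to bound every admissible exponent $k$ in terms of the genus and number of components of $L$, using the combinatorial description of the quasipositive surface $F(b)$ together with its minimal-genus property.

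Fix a representation $L = \widehat b$ with $b = \delta_n^k P$, where $k \geq 1$, $n \geq 2$, and $P \in B_n$ is BKL-positive (the inequality $k(L) \geq 0$ is immediate from the definition). Writing $b = \delta_n(\delta_n^{k-1} P)$ puts $b$ in the form discussed just before the lemma, so $F(b)$ is connected, and by the properties of quasipositive surfaces already invoked in that discussion it minimizes the first Betti number among Seifert surfaces for $L$. Consequently
\[
l_{BKL}(b) - (n-1) \;=\; b_1(F(b)) \;=\; 2g(L) + m - 1,
\]
where $m$ denotes the number of components of $L$.

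Counting BKL letters in the factorization $b = \delta_n^k P$ gives $l_{BKL}(b) = k(n-1) + l_{BKL}(P) \geq k(n-1)$ (each factor $\delta_n = \sigma_1 \sigma_2 \cdots \sigma_{n-1}$ contributes $n-1$ generators). Combining this with the previous displayed equality yields
\[
k(n-1) \;\leq\; 2g(L) + m + n - 2, \qquad \text{so} \qquad k \;\leq\; 1 + \frac{2g(L) + m - 1}{n-1}.
\]
The right-hand side is a decreasing function of $n$ on $\{n \geq 2\}$, so its maximum is attained at $n = 2$, producing the $n$-independent bound $k \leq 2g(L) + m$.

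Since this bound applies to every admissible triple $(n, k, P)$ with $k \geq 1$, we conclude $k(L) \leq 2g(L) + m < \infty$, as required. The only conceptual input is the fact that $F(b)$ realizes the Seifert genus of $\widehat b$, which the paper has already cited for such quasipositive surfaces; I do not anticipate any genuine obstacle, since the remainder of the argument is arithmetic on word length.
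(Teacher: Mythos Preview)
Your proof is correct and follows essentially the same approach as the paper: both arguments use that $F(b)$ realizes the minimal first Betti number of a Seifert surface for $L$, compute $b_1(F(b)) = l_{BKL}(b) - (n-1)$, and bound $k$ from the inequality $k(n-1) \leq l_{BKL}(b)$ together with $n-1 \geq 1$; the resulting bound $k \leq 2g(L)+m$ coincides with the paper's $k \leq b_1(F)+1$. The only cosmetic difference is that the paper invokes Banfield's fibredness result to assert $F(b)$ is independent of the chosen representation, whereas you go directly through the link invariant $2g(L)+m-1$, which makes your version marginally more self-contained.
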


\begin{proof} 
Let $L$ be a strongly quasipositive link, and let $b \in B_n$ be a braid of the form $\delta_n^k P$ where $k \geq 0, n \geq 2$, $P$ is BKL-positive, and $\widehat b = L$. If every such representation of $L$ has $k = 0$ then $k(L) = 0$ and we are done. So suppose that $L = \widehat b$ is as above with $k \geq 1$. Then $F = F(b)$ is a fibre surface for $L$ (\cite{Ban}) and hence independent of the choice of such a representation. From above, the first Betti number $b_1(F)$ of $F$ is given by $l_{BKL}(b) - (n-1) = (k-1)(n-1) + l_{BKL}(P)$, from which we see that $k$ is bounded above by $b_1(F) + 1$. 
\end{proof}

\subsection{Reducing braid indices} 
\label{subsec: reducing indices}
Let $L$ be a basket link for which $k(L) \geq 2$ and set 
$$n(L) = \min\{n : L = \widehat b \hbox{ where } b = \delta_{n}^k P, n \geq 1, k \geq 2, \hbox{ and } P \in B_{n} \hbox{ is BKL-positive}\}$$

\begin{lemma} \label{lemma: not minimal} 
If $b = \delta_n^2 P$ where $P$ does not cover $\sigma_1$ or $\sigma_{n-1}$, then $n(\widehat b) < n$.   
\end{lemma}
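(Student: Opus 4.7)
The plan is to factor a $\sigma_1$ out of $b$ (up to cyclic conjugation) so that a Markov destabilization reduces the braid index. First I would unpack the hypothesis: ``$P$ does not cover $\sigma_1$ or $\sigma_{n-1}$'' means every letter $a_{rs}$ of $P$ satisfies $2 \leq r < s \leq n-1$; equivalently, $P$ lies in the subgroup $\langle \sigma_2, \sigma_3, \ldots, \sigma_{n-1}\rangle \subset B_n$.

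Next I would establish the key algebraic identity
\[
\delta_n^2 \;=\; \sigma_2 \sigma_1 \, A^2, \qquad \text{where } A := \sigma_2 \sigma_3 \cdots \sigma_{n-1},
\]
starting from $\delta_n^2 = (\sigma_1 A)(\sigma_1 A)$, commuting the middle $\sigma_1$ past $\sigma_3,\sigma_4,\ldots,\sigma_{n-1}$ in the first copy of $A$, and then applying the braid relation $\sigma_1 \sigma_2 \sigma_1 = \sigma_2 \sigma_1 \sigma_2$. Finding this decomposition (or some equivalent one isolating a single $\sigma_1$ or $\sigma_{n-1}$) is the main technical step, and is what I expect to be the hardest part of the proof: $\delta_n^2$ itself contains two $\sigma_1$'s and two $\sigma_{n-1}$'s, so no destabilization is available from the naive word.

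Granting the identity, the finish is mechanical. Write $b = \sigma_2 \sigma_1 A^2 P$ and conjugate cyclically to obtain $b \sim (A^2 P \sigma_2)\, \sigma_1$; since $A$, $P$, and $\sigma_2$ all sit in $\langle \sigma_2, \ldots, \sigma_{n-1}\rangle$, so does $A^2 P \sigma_2$. A Markov destabilization at strand~$1$ (the standard low-end analogue of destabilization at strand $n-1$, justified by conjugating with the involution $\sigma_i \mapsto \sigma_{n-i}$ of $B_n$) then yields $\widehat b = \widehat X$, where $X$ is the image of $A^2 P \sigma_2$ in $B_{n-1}$ under the index shift $\sigma_i \mapsto \sigma_{i-1}$. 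Under this shift $A \mapsto \delta_{n-1}$, each letter $a_{rs}$ of $P$ maps to the BKL generator $a_{r-1,s-1}$ of $B_{n-1}$, and the trailing $\sigma_2$ maps to $\sigma_1 = a_{12}$. Hence $X = \delta_{n-1}^2 \, Q$ with $Q$ BKL-positive in $B_{n-1}$, which realises $\widehat b$ as the closure of a braid of the required form on $n-1$ strands and gives $n(\widehat b) \leq n-1 < n$, as claimed.
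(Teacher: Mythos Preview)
Your proof is correct and follows the same route as the paper: the identity $\delta_n^2 = \sigma_2\sigma_1(\sigma_2\cdots\sigma_{n-1})^2$ followed by a destabilization at strand~$1$. One clarification: the hypothesis is disjunctive --- the paper handles ``$P$ does not cover $\sigma_1$'' and ``$P$ does not cover $\sigma_{n-1}$'' as two separate cases --- whereas you unpack it as requiring both; but your argument in fact only uses the former (that alone already gives $P\in\langle\sigma_2,\dots,\sigma_{n-1}\rangle$, without any constraint on $s$), so it establishes that case, and the other follows by the $\sigma_i\mapsto\sigma_{n-i}$ symmetry you invoke.
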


\begin{proof}
Suppose that $P$ does not cover $\sigma_1$. Since
\begin{eqnarray} 
\delta_n^2  & = & (\sigma_1 \sigma_2 \ldots \sigma_{n-1})(\sigma_1 \sigma_2 \ldots \sigma_{n-1}) \nonumber \\
& = & (\sigma_1 \sigma_2 \sigma_1)(\sigma_3 \ldots \sigma_{n-1})(\sigma_2 \ldots \sigma_{n-1}) \nonumber \\
& = & (\sigma_2 \sigma_1 \sigma_2)(\sigma_3 \ldots \sigma_{n-1})(\sigma_2 \ldots \sigma_{n-1}) \nonumber \\
& = & (\sigma_2 \sigma_1) (\sigma_2  \ldots \sigma_{n-1})(\sigma_2 \ldots \sigma_{n-1}), \nonumber
\end{eqnarray} 
we have $b = (\sigma_2 \sigma_1) (\sigma_2  \ldots \sigma_{n-1})(\sigma_2 \ldots \sigma_{n-1}) P$, in which $\sigma_1$ occurs as a letter exactly once. Then $\widehat b = \widehat b'$ where $b' = (\sigma_2  \ldots \sigma_{n-1})^2 P \sigma_2$, which completes the proof. 

A similar argument deals with the case that $P$ does not cover $\sigma_{n-1}$.
\end{proof}

\begin{lemma} 
\label{lemma: innermost commute}
Suppose that $b = \delta_n^2 P$ where $P$ is BKL-positive and contains a letter $a_{rs}$ which commutes with all other letters of $P$ and that there are 
no letters $a_{tu}$ of $P$ such that $r < t < u < s$. Then $n(\widehat b) < n$.
\end{lemma}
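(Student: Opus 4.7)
The plan is to reduce the braid index by one via a Markov destabilization after suitably rotating $b$, generalizing the argument of Lemma~\ref{lemma: not minimal}.

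I would first conjugate $b$ by $\delta_n^{n-s}$. By the rotation identity (\ref{conj by delta}) (with its wrap-around convention $a_{r+1,n+1}=a_{1,r+1}$), this sends the distinguished letter $a_{rs}$ to $a_{n-m+1,n}$, where $m=s-r+1$. A case-by-case check on the three types of letter allowed in $P$ by our hypotheses---disjoint from $[r,s]$ on the left, disjoint on the right, or outer-nesting $a_{rs}$---shows that each is sent by the rotation to a letter whose two indices both lie in $\{1,2,\ldots,n-m\}$. Consequently $b$ is conjugate in $B_n$ to a braid of the form
\[
\delta_n^2\, a_{n-m+1,n}\, P_0,
\]
where $P_0$ is a BKL-positive word in $\langle \sigma_1,\ldots,\sigma_{n-m-1}\rangle$, and $P_0$ commutes with $a_{n-m+1,n}$ (they act on disjoint strands).

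Next I would apply the identity $\delta_n^2=\delta_{n-1}^2\,\sigma_{n-1}\sigma_{n-2}$ (with $\delta_{n-1}=\sigma_1\cdots\sigma_{n-2}$), a direct consequence of the braid relation $\sigma_{n-1}\sigma_{n-2}\sigma_{n-1}=\sigma_{n-2}\sigma_{n-1}\sigma_{n-2}$ and already implicit in the proof of Lemma~\ref{lemma: not minimal}. Combined with the expression
\[
a_{n-m+1,n}=(\sigma_{n-m+1}\cdots\sigma_{n-2})\,\sigma_{n-1}\,(\sigma_{n-m+1}\cdots\sigma_{n-2})^{-1}
\]
and the commutation of $\sigma_{n-1}$ with $P_0$, a sequence of braid-relation applications and cyclic conjugations brings a single $\sigma_{n-1}$ to the end of the braid word while confining the remainder to $\langle \sigma_1,\ldots,\sigma_{n-2}\rangle$. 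A Markov destabilization then removes strand $n$ and exhibits $\widehat b$ as the closure in $B_{n-1}$ of a braid of the form $\delta_{n-1}^2\,P''$ with $P''$ BKL-positive; hence $n(\widehat b)\le n-1<n$.

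The main obstacle is the braid-word rearrangement in the step above. For $m=2$ one has $a_{n-1,n}=\sigma_{n-1}$ and a single application of $\sigma_{n-1}\sigma_{n-2}\sigma_{n-1}=\sigma_{n-2}\sigma_{n-1}\sigma_{n-2}$ suffices, after which cycling produces $\sigma_{n-2}P_0\delta_{n-1}^2\sigma_{n-2}$ at the head of the word---manifestly in $\langle\sigma_1,\ldots,\sigma_{n-2}\rangle$---followed by the single $\sigma_{n-1}$. For $m\ge 3$, however, the product $\sigma_{n-1}\sigma_{n-2}\cdot a_{n-m+1,n}$ contains two occurrences of $\sigma_{n-1}$, one from $\sigma_{n-1}\sigma_{n-2}$ and one from the conjugated $\sigma_{n-1}$ inside $a_{n-m+1,n}$, and merging them into a single terminal $\sigma_{n-1}$ without destroying BKL-positivity of the body requires a careful nested use of the braid relations. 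Verifying this combinatorial simplification is the technical heart of the argument.
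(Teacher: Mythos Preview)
Your setup is correct and is essentially the mirror image of the paper's own reduction: the paper conjugates by $\delta_n^{1-r}$ so that the distinguished letter becomes $a_{1s'}$ and all other letters of $P$ lie in the strands $\{s'+1,\ldots,n\}$, whereas you rotate so that the distinguished letter becomes $a_{n-m+1,n}$ and all other letters lie in the strands $\{1,\ldots,n-m\}$. Both reductions are valid and symmetric. (A minor omission: the letter $a_{rs}$ may occur several times in $P$, so after the rotation you should have $a_{n-m+1,n}^{j}$ for some $j\ge 1$, not a single copy.)

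The genuine gap is exactly where you flag it: the case $m\ge 3$. You write that ``merging [the two occurrences of $\sigma_{n-1}$] into a single terminal $\sigma_{n-1}$ without destroying BKL-positivity of the body requires a careful nested use of the braid relations'' and that ``verifying this combinatorial simplification is the technical heart of the argument''---but you do not carry it out. As written, the proposal is therefore not a proof.

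The paper avoids your head-on simplification of $\sigma_{n-1}\sigma_{n-2}\,a_{n-m+1,n}$ altogether. In its (mirror) notation, with the distinguished letter $a_{1s}$ and $P'$ supported on strands $\{s+1,\ldots,n\}$, the trick for $s\ge 3$ is to write
\[
a_{1s}\;=\;\sigma_1\,a_{2s}\,\sigma_1^{-1},
\]
observe that $\sigma_1$ commutes with $P'$ (since $P'$ lives on strands $\ge s+1\ge 4$), and then cyclically conjugate by $\sigma_1$:
\[
\delta_n^2\,\sigma_1\,a_{2s}^{\,j}\,P'\,\sigma_1^{-1}\;\sim\;\sigma_1^{-1}\delta_n^2\,\sigma_1\,a_{2s}^{\,j}\,P'
\;=\;(\sigma_2\cdots\sigma_{n-1})(\sigma_1\sigma_2\cdots\sigma_{n-1})\,\sigma_1\,a_{2s}^{\,j}\,P'.
\]
A single use of $\sigma_1\sigma_2\sigma_1=\sigma_2\sigma_1\sigma_2$ then leaves exactly one $\sigma_1$ in the word, and after destabilization one gets a BKL-positive braid of the form $(\sigma_2\cdots\sigma_{n-1})^2\,a_{2s}^{\,j}\,P'\,\sigma_3$ in $B_{n-1}$. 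The point is that peeling off a single $\sigma_1$ from $a_{1s}$ (rather than trying to disentangle the full expression for the band generator) immediately isolates the strand to be removed. The mirror of this trick in your setup is to write $a_{n-m+1,n}=\sigma_{n-1}^{-1}a_{n-m+1,n-1}\,\sigma_{n-1}$, which would let you bypass the combinatorics you identify as the obstacle.
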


\begin{proof}
We use ``$\sim$" to denote ``conjugate to" in what follows.

Write $P = P_1 a_{rs} P_2$ where $P_1, P_2$ are BKL-positive. By (\ref{conj by delta}), conjugating $b$ by $\delta_n^{1-r}$ yields 
$$b' = \delta_n^2 P_1' a_{1s'} P_2' = P_1'' \delta_n^2 a_{1s'} P_2' \sim \delta_n^2 a_{1s'} P_2' P_1''$$
where $P_1', P_2', P_1''$ are BKL-positive words and there are no letters $a_{tu}$ of $P_2' P_1''$ such that $1 < t < u < s$.
Hence, without loss of generality $r = 1$. Then $P = a_{1s}^m P'$ where for each letter $a_{tu}$ of $P'$ we have $t, u \in \{s+1, s+2, \ldots, n\}$. 
 
If $s = 2$, then 
\begin{eqnarray} 
b = \delta_n^2 \sigma_1^m P' = \delta_n^2 P' \sigma_1^m& \sim & (\sigma_1^{m+1} \sigma_2 \sigma_3 \ldots \sigma_{n-1}) (\sigma_1 \sigma_2 \ldots \sigma_{n-1}) P' \nonumber \\ 
& = & (\sigma_1^{m+1} \sigma_2 \sigma_1)(\sigma_3 \ldots \sigma_{n-1}) (\sigma_2 \ldots \sigma_{n-1}) P' \nonumber \\ 
& = & (\sigma_2 \sigma_1 \sigma_2^{m+1})(\sigma_3 \ldots \sigma_{n-1}) (\sigma_2 \ldots \sigma_{n-1}) P', \nonumber
\end{eqnarray} 
which has the same closure as 
$$(\sigma_2^{m+2})(\sigma_3 \ldots \sigma_{n-1}) (\sigma_2 \ldots \sigma_{n-1}) P' \sim (\sigma_2\sigma_3 \ldots \sigma_{n-1}) (\sigma_2 \ldots \sigma_{n-1}) P' \sigma_2^{m+1}$$ 
This implies the conclusion of the lemma. 

If $s \geq 3$, then $\sigma_1$ commutes with $P'$ so as $a_{1s} = \sigma_1 a_{2s} \sigma_1^{-1}$, we have 
\begin{eqnarray} 
b = \delta_n^2 \sigma_1 a_{2s}^m \sigma_1^{-1}P' = \delta_n^2 \sigma_1 a_{2s}^m P' \sigma_1^{-1} & \sim & (\sigma_2 \sigma_3 \ldots \sigma_{n-1}) (\sigma_1 \sigma_2  \ldots \sigma_{n-1}) \sigma_1 a_{2s}^j P' \nonumber \\ 
& = & (\sigma_2 \sigma_3 \ldots \sigma_{n-1}) ((\sigma_1 \sigma_2 \sigma_1) \sigma_3 \ldots \sigma_{n-1}) a_{2s}^m P'  \nonumber \\ 
& = & (\sigma_2 \sigma_3 \ldots \sigma_{n-1}) ((\sigma_2 \sigma_1 \sigma_2) \sigma_3  \ldots \sigma_{n-1}) a_{2s}^m P' \nonumber 
\end{eqnarray} 
which has the same braid closure as 
\begin{eqnarray} 
(\sigma_2 \sigma_3 \sigma_4 \ldots \sigma_{n-1}) (\sigma_2^2 \sigma_3 \ldots \sigma_{n-1}) P' & = & ((\sigma_2 \sigma_3 \sigma_2) \sigma_4 \ldots \sigma_{n-1}) (\sigma_2 \ldots \sigma_{n-1}) P' \nonumber \\ 
&=& (\sigma_3 \sigma_2 \sigma_3 \sigma_4 \ldots \sigma_{n-1}) (\sigma_2 \ldots \sigma_{n-1}) P' \nonumber \\ 
&\sim&  (\sigma_2 \sigma_3  \ldots \sigma_{n-1}) (\sigma_2 \ldots \sigma_{n-1}) P' \sigma_3, \nonumber
\end{eqnarray} 
which implies the conclusion of the lemma. 
\end{proof}

\subsection{The symmetrised Seifert forms of basket links} 
\label{subsec: symm seifert forms of baskets} 

Symmetrised Seifert forms are even symmetric bilinear forms. In particular, if $b = \delta_n P$ (as above) is a basket link, the symmetrised Seifert form of the quasipositive surface $F(b)$ is an even symmetric bilinear form which we denote by 
$$\mathcal{F}(b): H_1(F(b)) \times H_1(F(b)) \to \mathbb Z$$
We say that $b$ is {\it definite} if $\mathcal{F}(b)$ is definite. Note that with respect to standard orientation conventions, if $b$ is definite then $\mathcal{F}(b)$ is negative definite\footnote{This holds more generally for the symmetrised Seifert form of the closure of any BKL positive braid.} and in this case we will see in \S \ref{sec: seifert forms of baskets} that $\mathcal{F}(b)$ is a root lattice. See Remark \ref{rem: root lattice}. In particular, it is congruent to an orthogonal sum of the simply laced arborescent forms $A_m, D_m, E_6, E_7$, and $E_8$\footnote{We consider the negative definite versions of these forms in this paper.}. Here are some simple examples of such braids whose associated forms are simply laced arborescent (cf. \cite{Baa}). Braids in the same row are equal modulo rewriting, conjugation, and (de)stabilisation. 

{\small 
\begin{center}
\begin{tabular}{|c||c|c|c|c|c|c|c|} \hline 
Form & $B_2$ & $B_3$ & $B_3$ & $B_4$ & $B_5$&  Braid Closure \\  \hline \hline 
$A_m$ ($m \geq 1$) & $\delta_2^{m+1}$ &$\delta_3 \sigma_1^m$ & $\sigma_1^{m+1} \sigma_2$ &  &&   $T(2,m+1)$ \\  \hline 
$D_m$ ($m \geq 4$)& &$\delta_3^3 \sigma_1^{m-4}$  &  $\sigma_1^{m-2} \sigma_2 \sigma_1^2 \sigma_2$ &  && $P(-2,2,m-2)$ ($= T(3,3)$ when $m = 4$) \\  \hline 
$E_6$ & &$\delta_3^4$ &  $\sigma_1^3 \sigma_2 \sigma_1^3 \sigma_2$ & $\delta_4^3$ &  &   $P(-2,3,3) = T(3,4)$  \\  \hline 
$E_7$ & &$\delta_3^4 \sigma_1$ &  $\sigma_1^4 \sigma_2 \sigma_1^3 \sigma_2$  &&  & $P(-2,3,4)$    \\  \hline 
$E_8$ & &$\delta_3^5$ &  $\sigma_1^5 \sigma_2 \sigma_1^3 \sigma_2$ & & $\delta_5^3$ &    $P(-2,3,5) = T(3,5)$   \\  \hline 
\end{tabular}
\end{center}}

Given two BKL-positive words $b, b' \in B_n$, we say that {\it $b$ contains $b'$ as a subword} if there are BKL-positive words $a, c \in B_n$ such that $b = a b' c$. 

More generally, we say that {\it $b$ contains $b'$}, written $b \supseteq b'$, if $b'$ can be obtained from $b$ by deleting some BKL-positive letters. If $b' \subseteq b$, then $F(b') \subseteq F(b)$ and the inclusion-induced homomorphism $H_1(F(b')) \to H_1(F(b))$ is injective with image a direct summand of $H_1(F(b))$. Consequently, $\mathcal{F}(b')$ is a primitive sublattice of $\mathcal{F}(b)$. 

For $P$ a BKL-positive word, set 
$$r(P) = \max\{s : \delta_n^s \subseteq P\}$$ 
Then if $b = \delta_n^k P$, $b \supseteq \delta_n^{k + r(P)}$ and therefore $\mathcal{F}(b)$ contains $\mathcal{F}(\delta_n^{k + r(P)})$ as a sublattice. 
The braid closure of $\delta_n^{k + r(P)}$ is the $(k+r(P), n)$ torus link $T(k+r(P),n)$ and from the table above we see that $\mathcal{F}(T(k+r(P), n))$ is definite in certain cases: 
$\mathcal{F}(T(2,n)) \cong A_{n-1}$, $\mathcal{F}(T(3,3)) \cong D_4$, $\mathcal{F}(T(3,4)) \cong E_6$, and $\mathcal{F}(T(3,5)) \cong E_8$. 
The converse is also true. 

\begin{lemma} \label{restrictions on krn} {\rm (\cite[Theorem 1]{Baa})} $\;$ 
$\delta_n^{k}$ is definite if and only if $\{k, n\}$ is $\{2, n\}, \{3,3\}, \{3,4\}$, or $\{3, 5\}$. Consequently, if $n \geq 3$ and $b = \delta_n^k P$ is BKL-positive and definite, then $k + r(P) \leq 5$. 
\qed
\end{lemma}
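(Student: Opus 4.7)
The plan is to view $\widehat{\delta_n^k}$ as the torus link $T(k,n)$ and reduce the equivalence to Baader's Theorem~\ref{thm: baader}. For $k, n \geq 2$ the braid $\delta_n^k$ is a positive braid whose closure $T(k,n)$ is prime and non-split, so Baader's classification applies: $T(k,n)$ is definite if and only if it is simply laced arborescent. I would then match this against the list recorded in the introduction and extract those simply laced arborescent links which happen to be torus links. Now $L(A_m) = T(2, m+1)$ exhausts the family $\{T(2, n) : n \geq 2\}$; $L(D_m) = P(-2, 2, m-2)$ is a torus link only when $m = 4$, where $L(D_4) = T(3,3)$; $L(E_6) = T(3,4)$ and $L(E_8) = T(3,5)$; while $L(E_7) = P(-2,3,4)$ is not a torus link. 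This produces exactly the four unordered pairs $\{2, n\}, \{3, 3\}, \{3, 4\}, \{3, 5\}$ and establishes the asserted equivalence. The degenerate cases $k \leq 1$ give the unknot or the split unlink, whose Seifert form is trivial and can be treated directly.

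For the consequence, I would invoke the subword-sublattice principle stated in the paragraph preceding the lemma. By the definition of $r(P)$ we have $\delta_n^{r(P)} \subseteq P$, hence $b = \delta_n^k P \supseteq \delta_n^{k + r(P)}$, and therefore $\mathcal{F}(\delta_n^{k + r(P)})$ embeds as a primitive sublattice of $\mathcal{F}(b)$. Since definiteness of a symmetric bilinear form descends to every sublattice, definiteness of $\mathcal{F}(b)$ forces $\delta_n^{k + r(P)}$ to be definite as well. Setting $m = k + r(P)$ and applying the first part of the lemma under the hypothesis $n \geq 3$ leaves only the possibilities $\{m, n\} \in \{\{2, n\}, \{3, 3\}, \{3, 4\}, \{3, 5\}\}$, together with the trivial cases $m \leq 1$. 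The largest value of $m$ allowed by these pairs is $m = 5$, attained only when $n = 3$ and $\{m, n\} = \{3, 5\}$; in every other case $m \leq 3$. This gives the bound $k + r(P) \leq 5$.

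The only real obstacle is bookkeeping: checking that Baader's hypotheses of primeness and positive-braid representability apply to the torus links $T(k,n)$, both classical, and performing the finite check against the list $L(A_m), L(D_m), L(E_6), L(E_7), L(E_8)$ to read off which members are torus links. Once these are in hand the lemma follows immediately from Baader's theorem combined with the primitive-sublattice principle recalled just before its statement.
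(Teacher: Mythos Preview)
Your approach is sound, and the paper itself gives no proof at all---the lemma is simply a citation of \cite[Theorem~1]{Baa}, with the consequence following from the primitive-sublattice observation made in the paragraph just before it, exactly as you argue. So for the ``consequently'' clause your argument and the paper's coincide.

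For the first assertion you take a different route: rather than invoking Baader's Theorem~1 directly, you derive it from Baader's Theorem~2 (stated here as Theorem~\ref{thm: baader}). This is logically permissible in the context of this paper, since both results are imported as black boxes, but be aware that in Baader's own paper Theorem~2 is built on Theorem~1, so your derivation is in some sense running the argument backwards.

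There is one point where ``bookkeeping'' undersells what is needed. Showing that $L(E_7)=P(-2,3,4)$ is not a torus link is clean: rank $7$ forces $\{k,n\}=\{2,8\}$, and $\mathcal{F}(T(2,8))\cong A_7\not\cong E_7$. But ruling out $L(D_m)=P(-2,2,m-2)$ for $m\ge 5$ as a torus link is not a finite check---you must exclude $T(k,n)=L(D_m)$ for every factorisation $(k-1)(n-1)=m$. For $\min(k,n)=2$ the form is $A_m\not\cong D_m$; but for $k,n\ge 3$ you must argue separately (for instance, $T(3,l)$ with $l\ge 6$ can be shown indefinite by a direct signature computation using Proposition~\ref{prop: signatures}, hence cannot equal the definite $L(D_{2l-2})$, and bridge number bounds handle $\min(k,n)\ge 4$). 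This works, but it is more than a table lookup.
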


\section{Definite strongly quasipositive $3$-braids} 
\label{sec: 3-braids} 

The goal of this section is to prove Theorem \ref{thm: definite 3-braids}. 

\subsection{Minimal representatives of strongly quasipositive $3$-braids} 
\label{subsec: minimal rep}

Recall that strongly quasipositive $3$-braids $P$ are reduced products of words of the form $\sigma_1^p, a_{13}^q, \sigma_2^r$ where $p, q, r > 0$. We define the {\it syllable length} of a particular strongly quasipositive expression for $P$ to be the number of such words in its expression.

Given a strongly quasipositive $3$-braid $b$, choose a braid $\delta_3^k P$ from among its strongly quasipositive conjugates and their strongly quasipositive  rewritings, for which $k$ is maximal and for such maximal $k$, for which $P$ has minimal syllable length. Then $P$ does not contain the subwords $\sigma_1\sigma_2, \sigma_2 a_{13}, a_{13} \sigma_1$ as each of these equals $\delta_3$ which can be moved to the left through $P$ to yield a strongly quasipositive rewriting $b = \delta_3^{k+1} P'$, contrary to our choices. If $P \ne 1$, we can assume that its first letter is $\sigma_1$ after conjugating by an appropriate power of $\delta_3$. Then there are words $w_1, w_2, \ldots, w_s$ ($s \geq 0$) of the form 
$$w_i = \sigma_1^{p_i} a_{13}^{q_i} \sigma_2^{r_i}$$
with $p_i, q_i, r_i > 0$ for each $i$, such that $P = w_1w_2 \ldots w_s P'$ where $P' \in \{1, \sigma_1^{p}, \sigma_1^{p} a_{13}^{q}\}$ for some $p, q \geq 1$. 

We call an expression for $b$ of the sort just described a {\it minimal} representative of its conjugacy class.

\begin{lemma} 
\label{lemma: minimal form}
Let $\delta_3^k P$ be a minimal representative for a strongly quasipositive braid $b$ where $P = w_1w_2 \ldots w_s P'$ with $P' \in \{1, \sigma_1^{p}, \sigma_1^{p} a_{13}^{q}\}$ as above. 

$(1)$ Suppose that $s = 0$. Then $b$ is either $\delta_3^k$, or $\delta_3^k \sigma_1^p$ for some $p \geq 1$, or $\delta_3^k \sigma_1^p a_{13}^q$ where $k \equiv 1$ {\rm (mod $3$)} and $p, q \geq 1$. In all cases, $b$ is conjugate to a positive braid. 

$(2)$ Suppose that $s > 0$. Then 
$$P = \left\{ 
\begin{array}{ll}
 w_1w_2 \ldots w_s   & \hbox{if } k  \equiv 0 \hbox{ {\rm (mod $3$)}} \\
w_1w_2 \ldots w_s \sigma_1^{p_{s+1}} a_{13}^{q_{s+1}} & \hbox{if } k \equiv 1 \hbox{ {\rm (mod $3$)}}  \\
w_1w_2 \ldots w_s\sigma_1^{p_{s+1}}  & \hbox{if } k \equiv 2 \hbox{ {\rm (mod $3$)}}
\end{array} \right.$$
\end{lemma}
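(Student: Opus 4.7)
The argument is a finite case analysis on $k\bmod 3$ and on the shape of $P'$, driven by two computational tools. The first is the automorphism $\phi(g)=\delta_3 g\delta_3^{-1}$ which, by (\ref{conj by delta}), cycles $\sigma_1\mapsto\sigma_2\mapsto a_{13}\mapsto\sigma_1$; iterating it allows one to push $\delta_3^k$ past any BKL-positive word at the cost of a global $\phi^{-k}$-shift applied to each letter. The second is the triple of relations $\sigma_1\sigma_2=a_{13}\sigma_1=\sigma_2 a_{13}=\delta_3$, which are simultaneously the three subwords forbidden by the minimal-representative hypothesis and the only available means of extracting an additional factor of $\delta_3$ from a BKL-positive rewriting. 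Throughout I will apply cyclic conjugation freely and conjugate by suitable powers of $\delta_3$ to restore the normalisation that $P$ begins with $\sigma_1$.

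\textbf{Part (1).} When $P=1$ or $P=\sigma_1^p$ the braid $\delta_3^kP$ is already a positive word in $\sigma_1,\sigma_2$, and no constraint on $k$ arises. For $P=\sigma_1^pa_{13}^q$ with $p,q\geq 1$ I treat the three residues separately. For $k\equiv 0\pmod 3$, centrality of $\delta_3^k$ combined with $a_{13}\sigma_1=\delta_3$ yields $b\sim\delta_3^{k+1}a_{13}^{q-1}\sigma_2^{p-1}$, contradicting maximality of $k$. For $k\equiv 2\pmod 3$, commuting $\delta_3^k$ past $\sigma_1^pa_{13}^q$ via $\phi^{-k}=\phi$ collapses the two syllables into a single power of $\sigma_2$, exhibiting a single-syllable conjugate $\delta_3^k\sigma_1^{p+q}$ that contradicts syllable-length minimality. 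Only $k\equiv 1\pmod 3$ survives, and the direct identity $\delta_3\sigma_1^pa_{13}^q=\sigma_2^p\sigma_1^q\delta_3$ then produces the positive conjugate $\delta_3^k\sigma_2^p\sigma_1^q$, completing Part (1).

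\textbf{Part (2).} For $s\geq 1$ I inspect each of the nine pairs $(k\bmod 3,\,P')$ with $P'\in\{1,\sigma_1^p,\sigma_1^pa_{13}^q\}$. Given such a pair, cyclic conjugation carries $\delta_3^k$ across $P'$ and into the right end of $P$, converting the shifted syllables by $\phi^{-k}$. In each of the six pairs not listed in the lemma, the interface between the converted tail and the first syllable of $w_1$ exhibits one of two failures: either two adjacent syllables of the same generator-type merge, strictly reducing the syllable count while preserving $k$; or one of the forbidden adjacencies $\sigma_1\sigma_2,\,a_{13}\sigma_1,\,\sigma_2a_{13}$ appears, producing a factor of $\delta_3^{k+1}$. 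For instance, with $k\equiv 1\pmod 3$ and $P'=1$, the identity $\sigma_2\delta_3^k=\delta_3^k\sigma_1$ gives $b\sim\delta_3^k\sigma_1^{r_s}w_1\cdots w_{s-1}\sigma_1^{p_s}a_{13}^{q_s}$, and $\sigma_1^{r_s}\sigma_1^{p_1}$ merges. For each of the three pairs listed in the lemma, the same type of cyclic move produces only interfaces in the allowed cyclic order $\sigma_1\to a_{13}\to\sigma_2\to\sigma_1$, so neither merge nor extraction is possible.

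\textbf{Main obstacle.} The delicate point is not any single computation but the systematic bookkeeping across the nine cases, together with the boundary case $s=1$: a Part-(2) reduction can collapse into a Part-(1) configuration, and one must verify that the resulting representative still contradicts the lexicographic minimality ordering (maximal $k$, then minimal syllable length). Tracking which end of $P$ cyclic conjugation brings $\delta_3^k$ across---and thus which power of $\phi$ applies to the shifted syllables---is where care is required.
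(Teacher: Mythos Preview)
Your approach is correct and essentially identical to the paper's: both proofs are driven by cyclic conjugation together with the $3$-cycle $\sigma_1\mapsto\sigma_2\mapsto a_{13}\mapsto\sigma_1$ under $\phi=\delta_3(\cdot)\delta_3^{-1}$, and both rule out the six disallowed pairs $(k\bmod 3,\,P')$ by producing either a syllable merge or an extra $\delta_3$.

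Two minor points. First, in Part~(1) for $k\equiv 2\pmod 3$ you write that the two syllables collapse ``into a single power of $\sigma_2$'' but then correctly record the outcome $\delta_3^k\sigma_1^{p+q}$; the computation is that cycling $a_{13}^q$ to the front gives $a_{13}^q\delta_3^k\sigma_1^p=\delta_3^k\phi^{-k}(a_{13})^q\sigma_1^p=\delta_3^k\sigma_1^{q}\sigma_1^p$, so it is $\sigma_1$, not $\sigma_2$. Second, your Part~(2) is a sketch: the paper writes out all six eliminations explicitly, and while your one worked example (the case $k\equiv 1$, $P'=1$) matches the paper verbatim, a grader would want to see the remaining five, since---as you yourself note under ``Main obstacle''---the bookkeeping of which end of $P$ to cycle and which power of $\phi$ applies is exactly where slips occur.
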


\begin{proof}
First suppose that $s = 0$. Then $b = \delta_3^k P$ where $P \in \{1, \sigma_1^{p}, \sigma_1^{p} a_{13}^{q}\}$. If $P$ is either $1$ or $\sigma_1^p$, we are done. Assume then that $P = \sigma_1^{p} a_{13}^{q}$. If $k \equiv 0$ (mod $3$), then $b \sim (a_{13}^q \delta_3^k) \sigma_1^p = (\delta_3^k a_{13}^q) \sigma_1^{p} = \delta_3^k \sigma_1 \sigma_2^q \sigma_1^{p -1} = \delta_3^{k+1} \sigma_2^{q-1} \sigma_1^{p -1}$, which contradicts the maximality of $k$. If $k \equiv 2$ (mod $3$), then $b \sim (a_{13}^q \delta_3^k) \sigma_1^p = (\delta_3^k \sigma_1^{q}) \sigma_1^{p} = \delta_3^k \sigma_1^{p + q}$, which contradicts the minimality of the syllable length of $P$. Thus, $k \equiv 1$ (mod $3$). Finally observe that $\delta_3 (\delta_3^k \sigma_1^{p} a_{13}^{q}) \delta_3^{-1}$ is the positive braid $\delta_3^k \sigma_2^{p} \sigma_1^{q}$. 

Next suppose that $s> 0$. 

If $k \equiv 0$ (mod $3$) and $P' = \sigma_1^{p} a_{13}^{q}$ for some $p, q \geq 1$, then 
$$\delta_3^k P \sim \delta_3^k \sigma_1^{p} a_{13}^{q} w_1w_2 \ldots w_s = \delta_3^k \sigma_1^{p} \delta_3 \sigma_2^{q-1} w_1' w_2 \ldots w_s 
= \delta_3^{k+1} a_{13}^p \sigma_2^{q-1}w_1' w_2 \ldots w_s,$$
where $w_1' = \sigma_1^{p_1 - 1} a_{13}^{q_1} \sigma_2^{r_1}$, which contradicts the maximality of $k$. 
On the other hand, if $k \equiv 0$ (mod $3$) and $P' = \sigma_1^{p}$ then 
$$\delta_3^k P \sim \sigma_1^{p} \delta_3^k w_1w_2 \ldots w_s = \delta_3^k \sigma_1^{p} w_1w_2 \ldots w_s = \delta_3^k w_1' w_2 \ldots w_s $$
where $w_1' = \sigma_1^{p_1 + p} a_{13}^{q_1} \sigma_2^{r_1}$, which contradicts the minimality of the syllable length of $P$. Thus $P' = 1$.

If $k \equiv 1$ (mod $3$) and $P' = \sigma_1^{p}$ for some $p \geq 1$ then 
\begin{eqnarray} 
\delta_3^k P \sim \sigma_1^{p} \delta_3^k w_1w_2 \ldots w_s = \delta_3^k a_{13}^{p} w_1w_2 \ldots w_s & = & \delta_3^{k+1} \sigma_2^{p-1} \sigma_1^{p_1-1} a_{13}^{q_1} \sigma_2^{r_1}  w_2 \ldots w_s, \nonumber
\end{eqnarray} 
which contradicts the maximality of $k$. On the other hand, if $k \equiv 1$ (mod $3$) and $P' = 1$ then 
$$\delta_3^k P \sim \sigma_2^{r_s} \delta_3^k w_1 \ldots w_{s-1} \sigma_1^{p_s} a_{13}^{q_s} = \delta_3^k \sigma_1^{r_s} w_1w_2 \ldots w_{s-1} \sigma_1^{p_s} a_{13}^{q_s} = \delta_3^k w_1' w_2 \ldots w_{s-1} \sigma_1^{p_s} a_{13}^{q_s}$$ 
where $w_1' = \sigma_1^{r_s + p_1} a_{13}^{q_1} \sigma_2^{r_1}$, which contradicts the minimality of the syllable length of $P$. Thus $P' = \sigma_1^{p} a_{13}^{q}$ for some $p, q \geq 1$. 

Finally, if $k \equiv 2$ (mod $3$) and $P' = 1$ then 
\begin{eqnarray} 
\delta_3^k P \sim \sigma_2^{r_s} \delta_3^k w_1w_2 \ldots w_{s-1} \sigma_1^{p_s} a_{13}^{q_s} & = & \delta_3^k a_{13}^{r_s} w_1 \ldots w_{s-1} \sigma_1^{p_s} a_{13}^{q_s}  \nonumber \\
& = & \delta_3^{k+1} \sigma_2^{r_s-1} \sigma_1^{p_1-1} a_{13}^{q_1} \sigma_2^{r_1}  w_2 \ldots w_{s-1} \sigma_1^{p_s} a_{13}^{q_s}, \nonumber
\end{eqnarray} 
which contradicts the maximality of $k$. On the other hand, if $k \equiv 2$ (mod $3$) and $P' = \sigma_1^{p} a_{13}^{q}$ for some $p, q \geq 1$ then 
$$\delta_3^k P \sim a_{13}^q \delta_3^k w_1 \ldots w_{s} \sigma_1^{p} = \delta_3^k \sigma_1^q w_1w_2 \ldots w_{s}\sigma_1^{p}  = \delta_3^k w_1' w_2 \ldots w_{s} \sigma_1^{p}$$ 
where $w_1' = \sigma_1^{q + p_1} a_{13}^{q_1} \sigma_2^{r_1}$, which contradicts the minimality of the syllable length of $P$. Thus $P' = \sigma_1^p$ for some $p \geq 1$.  
\end{proof}

 \subsection{The Murasugi normal form of strongly quasipositive $3$-braids} 

In order to calculate the signature of the closure of $\delta_3^k P$, we determine the Murasugi normal form for its conjugacy class. 

\begin{lemma} 
\label{lemma: simple rewrite}
Suppose that $p,q, r$ are three positive integers. 

$(1)$ $\sigma_1^p = \sigma_2^{-1} (\sigma_1^{-1} \sigma_2^{p-1} \delta_3) \sigma_1$.

$(2)$  $\sigma_1^p a_{13}^q = \sigma_2^{-1} (\sigma_1^{-1} \sigma_2^{p-1} \sigma_1^{-1} \sigma_2^{q-1}\delta_3^2) a_{13}$

$(3)$ $\sigma_1^p a_{13}^q \sigma_2^r =  \sigma_2^{-1} (\delta_3^3 \sigma_1^{-1} \sigma_2^{p-1} \sigma_1^{-1} \sigma_2^{q-1} \sigma_1^{-1} \sigma_2^{r-1})\sigma_2$. 

\end{lemma}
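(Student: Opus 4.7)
All three identities are algebraic equations in $B_3$, and I would prove them by direct manipulation. The tools are the classical braid relation $\sigma_1\sigma_2\sigma_1 = \sigma_2\sigma_1\sigma_2$, the two defining formulas $a_{13} = \sigma_1\sigma_2\sigma_1^{-1} = \sigma_2^{-1}\sigma_1\sigma_2$, the factorisation $\delta_3 = \sigma_1\sigma_2$ (so that $\delta_3^{-1} = \sigma_2^{-1}\sigma_1^{-1}$), the special cases
\[
\delta_3\sigma_1 = \sigma_2\delta_3,\qquad \delta_3\sigma_2 = a_{13}\delta_3,\qquad \delta_3 a_{13} = \sigma_1\delta_3
\]
of (\ref{conj by delta}), and centrality of $\delta_3^3$ in $B_3$. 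A useful preliminary consequence of these conjugation rules, to be used repeatedly, is $\delta_3^2 a_{13} = \sigma_2\delta_3^2$.

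For (1), the parenthesised word on the right starts with $\sigma_2^{-1}\sigma_1^{-1} = \delta_3^{-1}$, so the right-hand side equals $\delta_3^{-1}\sigma_2^{p-1}\delta_3\cdot\sigma_1$. Since the conjugation rules above give $\delta_3^{-1}\sigma_2\delta_3 = \sigma_1$, iteration produces $\delta_3^{-1}\sigma_2^{p-1}\delta_3 = \sigma_1^{p-1}$, and the identity follows immediately.

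For (2), I would apply $\sigma_2^{-1}\sigma_1^{-1} = \delta_3^{-1}$ and $\delta_3^2 a_{13} = \sigma_2\delta_3^2$ to convert the right-hand side to $\delta_3^{-1}\sigma_2^{p-1}\sigma_1^{-1}\sigma_2^q\delta_3^2$. Pushing the $\delta_3^{-1}$ successively through $\sigma_2^{p-1}$, $\sigma_1^{-1}$, and $\sigma_2^q$ via the conjugation rules (inversely: $\delta_3^{-1}\sigma_2 = \sigma_1\delta_3^{-1}$, $\delta_3^{-1}\sigma_1 = a_{13}\delta_3^{-1}$) turns this expression into $\sigma_1^{p-1}a_{13}^{-1}\sigma_1^q\delta_3$. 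Finally, substituting $a_{13}^{-1} = \sigma_1\sigma_2^{-1}\sigma_1^{-1}$ and $\delta_3 = \sigma_1\sigma_2$, and using $\sigma_2^{-1}\sigma_1^q\sigma_2 = a_{13}^q$, collapses this to $\sigma_1^p a_{13}^q$, as required.

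For (3), I would use identity (2) to replace $\sigma_1^p a_{13}^q$ on the left-hand side, and use centrality of $\delta_3^3$ on the right. After cancelling the common prefix $\sigma_2^{-1}\sigma_1^{-1}\sigma_2^{p-1}\sigma_1^{-1}\sigma_2^{q-1}$ from the two sides, the identity reduces to $\delta_3^2 a_{13}\sigma_2^r = \sigma_1^{-1}\sigma_2^r\delta_3^3$, and after substituting $\delta_3^2 a_{13} = \sigma_2\delta_3^2$ and $\delta_3^3 = \sigma_1\sigma_2\cdot\delta_3^2$ and cancelling, this becomes $\sigma_1\sigma_2\sigma_1^r = \sigma_2^r\sigma_1\sigma_2$, which is a consequence of the braid relation (proved by induction on $r$: $\sigma_1\sigma_2\sigma_1^r = \sigma_2^{r-1}(\sigma_1\sigma_2\sigma_1)\sigma_1^{r-1}/\ldots = \sigma_2^r\sigma_1\sigma_2$) and is essentially equivalent to (1). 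The main obstacle is clerical bookkeeping: each identity reduces by routine manipulation either to the braid relation or to centrality of $\delta_3^3$, but the intermediate expressions are dense with $\delta_3$'s and $a_{13}$'s that must be moved carefully past one another.
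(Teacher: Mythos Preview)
Your proposal is correct and takes essentially the same approach as the paper: both arguments reduce the identities to the conjugation rules $\delta_3\sigma_1\delta_3^{-1}=\sigma_2$, $\delta_3\sigma_2\delta_3^{-1}=a_{13}$, $\delta_3 a_{13}\delta_3^{-1}=\sigma_1$ (equivalently $\delta_3^{-1}=\sigma_2^{-1}\sigma_1^{-1}$) and, for (3), centrality of $\delta_3^3$. The only organisational difference is that the paper works left-to-right by inserting $\sigma_2^{-1}\sigma_2$ and rewriting $\sigma_1=\delta_3^{-1}\sigma_2\delta_3$, $a_{13}=\delta_3^{-2}\sigma_2\delta_3^2$, whereas you simplify right-to-left and bootstrap (3) from (2); both routes are straightforward and equivalent in content.
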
 

\begin{proof} We have, 
$$\sigma_1^p = \sigma_2^{-1} \sigma_2 \sigma_1^p  = \sigma_2^{-1} (\sigma_2 \delta_3^{-1} \sigma_2^{p} \delta_3 )  = \sigma_2^{-1} (\sigma_1^{-1} \sigma_2^{p-1} \delta_3 \sigma_1)  = \sigma_2^{-1} (\sigma_1^{-1} \sigma_2^{p-1} \delta_3) \sigma_1,$$ 
and 
$$\sigma_1^p a_{13}^q = \sigma_2^{-1} (\sigma_2 \sigma_1^p a_{13}^q)  =  \sigma_2^{-1} (\sigma_2 \delta_3^{-1} \sigma_2^{p} \delta_3^{-1} \sigma_2^q\delta_3^2) = \sigma_2^{-1} (\sigma_1^{-1} \sigma_2^{p-1} \sigma_1^{-1} \sigma_2^{q-1}\delta_3^2) a_{13},$$ 
and finally 
\begin{eqnarray}
\sigma_1^p a_{13}^q \sigma_2^r = \sigma_2^{-1} (\sigma_2 \sigma_1^p a_{13}^q \sigma_2^{r-1})\sigma_2 & = & \sigma_2^{-1} (\delta_3 \sigma_1 a_{13}^p \sigma_2^q \delta_3^{-1} \sigma_2^{r-1})\sigma_2 \nonumber  \\ 
&  = &  \sigma_2^{-1} (\delta_3^2 a_{13} \sigma_2^p \delta_3 ^{-1} \sigma_2^q \delta_3^{-1} \sigma_2^{r-1})\sigma_2 \nonumber  \\
& = & \sigma_2^{-1} (\delta_3^3 \sigma_2 \delta_3^{-1} \sigma_2^p \delta_3^{-1} \sigma_2^q \delta_3^{-1} \sigma_2^{r-1})\sigma_2  \nonumber   \\ 
& = & \sigma_2^{-1} (\delta_3^3 \sigma_1^{-1} \sigma_2^{p-1} \sigma_1^{-1} \sigma_2^{q-1} \sigma_1^{-1} \sigma_2^{r-1})\sigma_2  \nonumber  
\end{eqnarray}
\end{proof}

\begin{prop} 
\label{prop: rewrite}
Let $\delta_3^k P$ be a minimal representative for a strongly quasipositive $3$-braid $b$ where $P = w_1w_2 \ldots w_s P'$ with $P' \in \{1, \sigma_1^{p}, \sigma_1^{p} a_{13}^{q}\}$ as above. Suppose that $s > 0$. 

$(1)$ If $k = 3r$, then $\displaystyle b \sim \delta_3^{3(r+s)} \prod_{i=1}^s \sigma_1^{-1} \sigma_2^{p_i-1} \sigma_1^{-1}\sigma_2^{q_i-1} \sigma_1^{-1} \sigma_2^{r_i-1}$;

$(2)$ If $k = 3r + 1$, then $\displaystyle b \sim \delta_3^{3(r +s+1)} \big(\prod_{i=1}^s \sigma_1^{-1} \sigma_2^{p_i-1} \sigma_1^{-1}\sigma_2^{q_i-1} \sigma_1^{-1} \sigma_2^{r_i-1}\big) \sigma_1^{-1}  \sigma_2^{p_{s+1}-1}  \sigma_1^{-1}  \sigma_2^{q_{s+1}-1}$; 

$(3)$ If $k = 3r + 2$, then $\displaystyle b \sim \delta_3^{3(r + s+1)} \big(\prod_{i=1}^s \sigma_1^{-1} \sigma_2^{p_i-1} \sigma_1^{-1}\sigma_2^{q_i-1} \sigma_1^{-1} \sigma_2^{r_i-1}\big) \sigma_1^{-1}  \sigma_2^{p_{s+1} - 1}$.

\end{prop}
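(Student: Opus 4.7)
The strategy is to apply the three rewriting identities of Lemma~\ref{lemma: simple rewrite} to the individual syllables of $P$, then use centrality of $\delta_3^3$ in $B_3$ together with the commutation relations
$$\delta_3 \sigma_1 = \sigma_2 \delta_3, \qquad \delta_3^2 a_{13} = \sigma_2 \delta_3^2$$
(both immediate consequences of $\delta_3 \sigma_i \delta_3^{-1} = \sigma_{i+1}$) to collect $\delta_3$-factors on the left and cancel the spurious outer $\sigma_2^{\pm 1}$'s by cyclic conjugation.

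First I would rewrite each $w_i = \sigma_1^{p_i} a_{13}^{q_i} \sigma_2^{r_i}$ via Lemma~\ref{lemma: simple rewrite}(3) as $\sigma_2^{-1} \delta_3^{3} \alpha_i \sigma_2$, where $\alpha_i = \sigma_1^{-1} \sigma_2^{p_i-1} \sigma_1^{-1} \sigma_2^{q_i-1} \sigma_1^{-1} \sigma_2^{r_i-1}$. The trailing $\sigma_2$ of each $w_i$ then cancels the leading $\sigma_2^{-1}$ of $w_{i+1}$, and the central factors $\delta_3^3$ migrate to the left, giving
$$w_1 w_2 \cdots w_s \;=\; \sigma_2^{-1}\, \delta_3^{3s}\, \alpha_1 \alpha_2 \cdots \alpha_s\, \sigma_2.$$
In case~(1), $P' = 1$ and $k = 3r$, so $b = \delta_3^{3(r+s)} \sigma_2^{-1} \alpha_1 \cdots \alpha_s \sigma_2$; cyclic conjugation by $\sigma_2$ (legitimate since $\delta_3^{3(r+s)}$ is central) strips the outer $\sigma_2^{\pm 1}$, yielding the stated normal form.

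For case~(2), I would apply Lemma~\ref{lemma: simple rewrite}(2) to the tail $\sigma_1^{p_{s+1}} a_{13}^{q_{s+1}}$; the leading $\sigma_2^{-1}$ it produces annihilates the trailing $\sigma_2$ above, so
$$b \;=\; \delta_3^{3r+3s+1}\, \sigma_2^{-1}\, \alpha_1 \cdots \alpha_s\, \beta\, \delta_3^2 a_{13}, \qquad \beta := \sigma_1^{-1}\sigma_2^{p_{s+1}-1}\sigma_1^{-1}\sigma_2^{q_{s+1}-1}.$$
Replacing $\delta_3^2 a_{13}$ by $\sigma_2\,\delta_3^2$, then cyclically shifting this $\delta_3^2$ to the left (raising the $\delta_3$-exponent to $3(r+s+1)$, which is central), and finally cyclically conjugating by $\sigma_2$ removes the outer $\sigma_2^{\pm 1}$ and delivers the desired expression. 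Case~(3) is entirely analogous: one uses Lemma~\ref{lemma: simple rewrite}(1) on the tail $\sigma_1^{p_{s+1}}$ together with $\delta_3 \sigma_1 = \sigma_2 \delta_3$, contributing the single extra $\delta_3$ needed to reach exponent $3(r+s+1)$.

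The main bookkeeping check is that the identities of Lemma~\ref{lemma: simple rewrite} produce exactly the right number of $\delta_3$-factors in each congruence class — namely $0$, $2$, and $1$ additional copies of $\delta_3$ contributed by the tail in cases (1), (2), (3) respectively — and Lemma~\ref{lemma: minimal form} guarantees that $P'$ has precisely the shape that calls for the matching identity. No deeper obstacle arises; once these three elementary identities and the centrality of $\delta_3^3$ are in hand, the whole argument is a telescoping calculation.
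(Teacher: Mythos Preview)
Your proposal is correct and follows essentially the same route as the paper's proof: both arguments apply Lemma~\ref{lemma: simple rewrite} syllable by syllable, telescope the resulting $\sigma_2^{\pm 1}$ factors, and use centrality of $\delta_3^3$ to collect the $\delta_3$-powers on the left. Your final clean-up via the identities $\delta_3^2 a_{13} = \sigma_2 \delta_3^2$ and $\delta_3 \sigma_1 = \sigma_2 \delta_3$ amounts to exactly the conjugations by $\delta_3^2 a_{13}$ and $\delta_3 \sigma_1$ that the paper performs in cases~(2) and~(3), so there is no substantive difference.
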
 

\begin{proof}
Suppose that $k = 3r$. Then Lemma \ref{lemma: minimal form}(2) and Lemma \ref{lemma: simple rewrite}(3) show that 
\vspace{-.2cm}
\begin{eqnarray} 
b =  \delta_3^k \prod_{i=1}^s \sigma_1^{p_i} a_{13}^{q_i} \sigma_2^{r_i} & = & \delta_3^{3r} \prod_{i=1}^s \sigma_2^{-1} \big(\delta_3^3 \sigma_1^{-1} \sigma_2^{p_i-1}  \sigma_1^{-1} \sigma_2^{q_i-1}  \sigma_1^{-1} \sigma_2^{r_i-1}\big) \sigma_2 \nonumber \\ 
& = & \sigma_2^{-1} \big(  \delta_3^{3(r+s)}\prod_{i=1}^s \sigma_1^{-1} \sigma_2^{p_i-1}  \sigma_1^{-1} \sigma_2^{q_i-1}  \sigma_1^{-1} \sigma_2^{r_i-1}\big) \sigma_2, \nonumber 
\end{eqnarray}
which is Assertion (1). 

Suppose that $k = 3r + 1$. Lemma \ref{lemma: minimal form}(2) and Lemma \ref{lemma: simple rewrite}(2) show that
\vspace{-.2cm}
\begin{eqnarray} 
b & = & \delta_3^{3r+1} \big(\prod_{i=1}^s \sigma_1^{p_i} a_{13}^{q_i} \sigma_2^{r_i}\big)\sigma_1^{p_{s+1}} a_{13}^{q_{s+1}} \nonumber \\
& = & \delta_3^{3r+1}  \big(\sigma_2^{-1} \big( \delta_3^{3s} \prod_{i=1}^s \sigma_1^{-1} \sigma_2^{p_i-1}  \sigma_1^{-1} \sigma_2^{q_i-1}  \sigma_1^{-1} \sigma_2^{r_i-1}\big) \sigma_2\big) \big(\sigma_2^{-1}   \big(\sigma_1^{-1}  \sigma_2^{p_{s+1}-1}  \sigma_1^{-1}  \sigma_2^{q_{s+1}-1} \delta_3^2 \big) a_{13}  \big)\nonumber \\ 
& = & a_{13}^{-1} \big(\delta_3^{3r + 3s+1} \prod_{i=1}^s \sigma_1^{-1} \sigma_2^{p_i-1}  \sigma_1^{-1} \sigma_2^{q_i-1}  \sigma_1^{-1} \sigma_2^{r_i-1}\big)\big(\sigma_1^{-1}  \sigma_2^{p_{s+1}-1}  \sigma_1^{-1}  \sigma_2^{q_{s+1}-1} \delta_3^2 \big) a_{13} \nonumber \\ 
& = & (\delta_3^2 a_{13})^{-1} \delta_3^{3(r + s+1)} \big( \prod_{i=1}^s \sigma_1^{-1} \sigma_2^{p_i-1}  \sigma_1^{-1} \sigma_2^{q_i-1}  \sigma_1^{-1} \sigma_2^{r_i-1}\big)\big(\sigma_1^{-1}  \sigma_2^{p_{s+1}-1}  \sigma_1^{-1}  \sigma_2^{q_{s+1}-1} \big)  (\delta_3^2 a_{13})\nonumber  
\end{eqnarray}
This is Assertion (2). 

Finally, suppose that $k = 3r + 2$. Lemma \ref{lemma: minimal form}(2) and Lemma \ref{lemma: simple rewrite}(1) show that
\vspace{-.2cm}
\begin{eqnarray} 
b & = & \delta_3^{3r + 2} \big(\prod_{i=1}^s \sigma_1^{p_i} a_{13}^{q_i} \sigma_2^{r_i}\big)\sigma_1^{p_{s+1}} \nonumber \\
& = & \delta_3^{3r+2}  \big(\sigma_2^{-1} \big( \delta_3^{3s} \prod_{i=1}^s \sigma_1^{-1} \sigma_2^{p_i-1}  \sigma_1^{-1} \sigma_2^{q_i-1}  \sigma_1^{-1} \sigma_2^{r_i-1}\big) \sigma_2\big) \big(\sigma_2^{-1}   \big(\sigma_1^{-1}  \sigma_2^{p_{s+1}-1}  \delta_3 \big) \sigma_1  \big)\nonumber \\ 
& = & \sigma_1^{-1} \big(\delta_3^{3r + 3s+2} \prod_{i=1}^s \sigma_1^{-1} \sigma_2^{p_i-1}  \sigma_1^{-1} \sigma_2^{q_i-1}  \sigma_1^{-1} \sigma_2^{r_i-1}\big)\big(\sigma_1^{-1}  \sigma_2^{p_{s+1}-1}  \delta_3 \big) \sigma_1\nonumber \\ 
& = & (\delta_3 \sigma_1)^{-1} \delta_3^{3(r + s+1)} \big( \prod_{i=1}^s \sigma_1^{-1} \sigma_2^{p_i-1}  \sigma_1^{-1} \sigma_2^{q_i-1}  \sigma_1^{-1} \sigma_2^{r_i-1}\big)\big(\sigma_1^{-1}  \sigma_2^{p_{s+1}-1} \big) (\delta_3 \sigma_1)  \nonumber  
\end{eqnarray}
This completes the proof. 
\end{proof}

\subsection{Definite strongly quasipositive $3$-braids} 

Murasugi \cite{Mu1} and Erle \cite{Erle} have determined the signatures of the closures of $3$-braids. 

\begin{prop}  
\label{prop: signatures} 
Suppose that 
$b = \delta_3^{3d} \sigma_1^{-1} \sigma_2^{a_1} \sigma_1^{-1} \sigma_2^{a_2} \cdots \sigma_1^{-1} \sigma_2^{a_n}$ where each $a_j \geq 0$. Then there is an 
integer $\epsilon(d) \in \{-1, 0, 1\}$ congruent to $d$ (mod $2$) such that   
$$\sigma(\widehat b) = \left\{ \begin{array}{ll} n - 4d  - 1 + \epsilon(d) & \mbox{ if each $a_j = 0$} \\ n - 4d - \sum_{i=1}^n a_i & \mbox{ if some $a_j \ne 0$}   \end{array} \right.$$  
\qed 
\end{prop}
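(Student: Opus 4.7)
The plan is to reduce the computation to the signature formulas for $3$-braid closures established by Murasugi \cite{Mu1} and Erle \cite{Erle}, taking into account the sign convention recorded in Section 2 (our $\sigma_i$ is the inverse of Murasugi's). Recall that $\Delta^{2} = \delta_3^3$ generates the centre of $B_3$, so $\delta_3^{3d} = \Delta^{2d}$, and Murasugi's ``generic'' normal form for a $3$-braid is $\Delta^{2d}\,\sigma_1^{-p_1}\sigma_2^{q_1}\cdots \sigma_1^{-p_s}\sigma_2^{q_s}$ with all $p_i,q_i \geq 1$, together with a short list of exceptional forms that includes $\Delta^{2d}\sigma_1^{-n}$.

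First I would dispose of the generic case, in which every $a_j \geq 1$. Here $b$ is already in Murasugi's normal form with $s = n$, $p_i = 1$ and $q_i = a_i$, so the Murasugi--Erle formula directly returns $\sigma(\widehat b) = n - 4d - \sum_{i=1}^{n}a_i$, matching the asserted value.

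Next, when some but not all $a_j$ vanish, I would rewrite $b$ in normal form by contracting maximal runs of consecutive zero exponents: a run $\sigma_1^{-1}\sigma_2^{0}\sigma_1^{-1}\sigma_2^{0}\cdots\sigma_1^{-1}$ accounting for $\ell+1$ copies of $\sigma_1^{-1}$ collapses to $\sigma_1^{-(\ell+1)}$. After this reduction the word takes the form $\Delta^{2d}\sigma_1^{-p_1}\sigma_2^{q_1}\cdots\sigma_1^{-p_s}\sigma_2^{q_s}$ with $s$ equal to the number of indices $j$ for which $a_j>0$, the $q_i$ exactly the nonzero $a_j$'s, and $\sum p_i = n$. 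Plugging this into Murasugi's formula again gives $\sigma(\widehat b) = n - 4d - \sum a_i$, as required.

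The main obstacle, and the only part requiring genuine care, is the exceptional case in which every $a_j = 0$, where $b = \Delta^{2d}\sigma_1^{-n}$ contains no $\sigma_2$ letters and hence falls outside the main branch of the Murasugi--Erle formula. In that case I would apply Murasugi's treatment of the exceptional families, computing $\sigma(\widehat{\Delta^{2d}\sigma_1^{-n}})$ by comparison with the $(3,3d)$-torus link $\widehat{\Delta^{2d}}$, whose signature is known explicitly and whose behaviour depends on $d \bmod 2$ through the change in the number of components: $\widehat{\Delta^{2d}}$ is a knot when $\gcd(3,3d)=1$ fails in the expected way and a $3$-component link otherwise. Adding $n$ negative half-twists on the first two strands shifts the signature by $n$ and may change the component count by $\pm 1$, which produces the correction $-1+\epsilon(d)$ with $\epsilon(d) \in \{-1,0,1\}$. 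The parity condition $\epsilon(d) \equiv d \pmod 2$ is precisely what records the knot-versus-link dichotomy for $\widehat{\Delta^{2d}}$ and so the parity of the rank of its symmetrised Seifert form; a short verification against small cases (say $d=0,1,2$) then pins down $\epsilon(d)$ and completes the proof.
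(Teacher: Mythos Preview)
The paper does not prove this proposition at all: it is stated with a \qed\ and attributed to Murasugi \cite{Mu1} and Erle \cite{Erle} in the sentence preceding it. So there is no argument in the paper to compare yours against; your task is simply to extract the stated formula from those references, and that is exactly the strategy you propose.

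Your treatment of the generic case (all $a_j\ge 1$) and the mixed case (some but not all $a_j=0$) is correct. In both cases the word is, up to conjugacy, in Murasugi's normal form $\Delta^{2d}\sigma_1^{-p_1}\sigma_2^{q_1}\cdots\sigma_1^{-p_s}\sigma_2^{q_s}$ with $\sum p_i=n$ and $\sum q_i=\sum a_i$, and Erle's formula returns $\sum p_i-4d-\sum q_i$, independently of $s$. That gives the second line of the proposition.

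Your exceptional case (all $a_j=0$, so $b=\Delta^{2d}\sigma_1^{-n}$) is where the write-up becomes shaky. The remark that $\widehat{\Delta^{2d}}=T(3,3d)$ is ``a knot when $\gcd(3,3d)=1$ fails in the expected way'' is garbled: $\gcd(3,3d)=3$ for every $d\ge 1$, so $T(3,3d)$ is always a $3$-component link. And the assertion that adding $n$ negative half-twists ``shifts the signature by $n$'' is not a general fact; crossing changes alter the signature by $0$ or $\pm 2$, not by $\pm 1$ per twist. These heuristics do not add up to a proof. The cleanest fix is simply to quote Murasugi's explicit signature computation for the exceptional conjugacy class $\Delta^{2d}\sigma_2^{-m}$ (equivalently $\Delta^{2d}\sigma_1^{-m}$) from \cite{Mu1}; that formula has exactly the shape $n-4d-1+\epsilon(d)$ with $\epsilon(d)\in\{-1,0,1\}$, and its dependence on the parity of $d$ comes from the parity of the first Betti number of the closure, which yields $\epsilon(d)\equiv d\pmod 2$. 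Since the proposition only asserts the \emph{existence} of such an $\epsilon(d)$, this citation finishes the argument without further computation.
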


\begin{prop} 
\label{prop: when definite}
Let $\delta_3^k P$ be a minimal representative for a strongly quasipositive $3$-braid $b$ where $P = w_1w_2 \ldots w_s P'$ with $P' \in \{1, \sigma_1^{p}, \sigma_1^{p} a_{13}^{q}\}$ as above. 

$(1)$ If $P = 1$, then $\widehat b = T(3, k)$. In particular, $\widehat b$ is definite if and only if $b = \delta_3^k$ where $k \leq 5$.

$(2)$ If $P \ne 1$ and $s = 0$, then $b$ is definite if and only if either 

$(a)$ $b = \delta_3^k \sigma_1^p$ for some $p \geq 1$ where $k \leq 3$ or $k = 4$ and $p \in \{1, 2\}$.

$(b)$ $b = \delta_3 \sigma_1^p a_{13}^q$ where $p, q \geq 1$. In this case, $\widehat b = T(2, p+1) \# T(2, q+1)$. 

$(3)$ If $P \ne 1, s > 0$, and $p_i = q_i = r_i = 1$ for all values of $i$, then $b$ is definite if and only if $b = \sigma_1 a_{13} \sigma_2$.

$(4)$ If $k = 3r, s> 0$, and $\max\{p_i, q_i, r_i\} \geq 2$, then $b$ is definite if and only if $b = \sigma_1^{p_1} a_{13}^{q_1} \sigma_2^{r_1}$.  

$(5)$ If $k = 3r + 1, s >0$, and $\max\{p_i, q_i, r_i\} \geq 2$, then $b$ is indefinite. 

$(6)$ If $k = 3r + 2, s> 0$, and $\max\{p_i, q_i, r_i\} \geq 2$, then $b$ is indefinite. 
\end{prop}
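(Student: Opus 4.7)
The plan is to use the inequality $|\sigma(\widehat b)| \leq b_1(F(b)) = 2k + l_{\mathrm{BKL}}(P) - 2$ (a consequence of the negative semi-definiteness of $\mathcal{F}(b)$) and to characterize when equality holds, which is equivalent to definiteness of $\widehat b$. The Betti number is read off directly from the BKL word length, and the signature is computed by putting $b$ in Murasugi normal form $\delta_3^{3d}\sigma_1^{-1}\sigma_2^{a_1}\cdots\sigma_1^{-1}\sigma_2^{a_n}$ and then applying Proposition~\ref{prop: signatures}.

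Part (1) is immediate from Lemma~\ref{restrictions on krn}. For parts (3)--(6), Proposition~\ref{prop: rewrite} already provides the required normal form. Under the hypothesis $\max\{p_i,q_i,r_i\} \geq 2$ of parts (4)--(6), some $a_j \neq 0$ and the second alternative of Proposition~\ref{prop: signatures} yields, after a direct arithmetic computation,
\[
b_1(F(b)) - |\sigma(\widehat b)| = \begin{cases} 2(r+s) - 2 & \text{in part (4)}, \\ 2(r+s) & \text{in parts (5) and (6)}. \end{cases}
\]
Since $s \geq 1$, the first vanishes iff $(r,s) = (0,1)$, forcing $b = \sigma_1^{p_1} a_{13}^{q_1} \sigma_2^{r_1}$; the second is always strictly positive, so indefiniteness holds. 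For part (3), all $a_j = 0$ and the first alternative of Proposition~\ref{prop: signatures} applies; the resulting expression is $2(r+s) - 3 + \epsilon(r+s)$ or $2(r+s) - 1 + \epsilon(r+s+1)$ (depending on $k \bmod 3$), and the constraints $\epsilon(d) \in \{-1, 0, 1\}$ together with $\epsilon(d) \equiv d \pmod 2$ leave only the solution $k \equiv 0 \pmod 3$, $(r,s) = (0, 1)$, $\epsilon(1) = 1$, which gives $b = \sigma_1 a_{13} \sigma_2$. Its closure is identified as the Hopf link via $\sigma_1 a_{13} \sigma_2 \sim \sigma_1^2 \sigma_2$ and Markov destabilization.

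For part (2), Proposition~\ref{prop: rewrite} does not apply (as $s = 0$), so I rewrite by hand using Lemma~\ref{lemma: simple rewrite}. In (2b), the identities $\delta_3 \sigma_1^p = \sigma_2^p \delta_3$ and $\delta_3 a_{13}^q = \sigma_1^q \delta_3$ (from the cyclic action of $\delta_3$ on the BKL generators) give $\delta_3 \sigma_1^p a_{13}^q \sim \sigma_2^{p+1} \sigma_1^{q+1}$, whose closure is $T(2, p+1) \# T(2, q+1)$ and manifestly definite. For $k \equiv 1 \pmod 3$ with $k \geq 4$, Lemma~\ref{lemma: simple rewrite}(2) together with conjugation by $a_{13}$ yields $b \sim \delta_3^{k+2} \sigma_1^{-1} \sigma_2^{p-1} \sigma_1^{-1} \sigma_2^{q-1}$, and Proposition~\ref{prop: signatures} then gives $b_1 - |\sigma| > 0$, proving indefiniteness. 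In (2a), the small cases $k = 1, 2, 3$ are identified via direct rewriting with the simply laced arborescent links $T(2, p+1)$, $T(2, p+3)$, and $P(-2, 2, p+2)$ respectively; $(k, p) = (4, 1)$ gives $E_7$; and the equivalence $\delta_3^4 \sigma_1^2 \sim \delta_3^5$ (which I verify via Lemma~\ref{lemma: simple rewrite}(1) followed by the collapse $a_{13} \sigma_2^{-1} \sigma_1^{-1} \sigma_2 = 1$) places $(k, p) = (4, 2)$ under $T(3, 5) = E_8$. For all other $(k, p)$, analogous rewriting plus Proposition~\ref{prop: signatures} show $|\sigma| < b_1$.

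The hardest step will be the parity bookkeeping of $\epsilon(d)$ in part (3), together with the non-obvious equivalence $\delta_3^4 \sigma_1^2 \sim \delta_3^5$ in part (2a), which is needed to account for the borderline definite case $(k, p) = (4, 2)$.
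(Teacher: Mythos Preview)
Your proposal is correct and follows essentially the same strategy as the paper: compute $b_1(F(b))$ directly from the BKL word length, put $b$ into Murasugi normal form, apply Proposition~\ref{prop: signatures}, and compare. Parts (1) and (3) are handled identically. For parts (4)--(6) your direct general-$s$ computation yielding $b_1-|\sigma| = 2(r+s)-2$ (respectively $2(r+s)$) is in fact cleaner than the paper's argument, which first reduces to $s=1$ via the observation that $(\sigma_1 a_{13}\sigma_2)^2$ is indefinite (Remark~\ref{rmk: only s = 1}) and then, in case (5), passes to a sub-braid lying in case (6).

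The only substantive difference is in part (2a). The paper handles the definite cases $k\le 3$ and $(k,p)=(4,1),(4,2)$ by direct rewriting (just as you do), but for the indefinite cases $k=4,\,p\ge 3$ and $k=5,\,p\ge 1$ it rewrites $b$ as a positive braid and cites Baader \cite{Baa} rather than computing signatures. Your route via Murasugi normal form is more self-contained and does work: for instance $\delta_3^5\sigma_1^p \sim \delta_3^5\sigma_2^p = \delta_3^6\sigma_2^{-1}\sigma_1^{-1}\sigma_2^p \sim \delta_3^6\sigma_1^{-1}\sigma_2^{p-1}$, giving $b_1-|\sigma|=2$ for $p\ge 2$ and $b_1-|\sigma|=1$ for $p=1$; the case $k=4,\,p\ge 3$ then follows from your equivalence $\delta_3^4\sigma_1^p\sim\delta_3^5\sigma_1^{p-2}$. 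However, ``analogous rewriting'' understates the amount of case analysis needed here (the reduction to normal form when $s=0$ is not covered by Proposition~\ref{prop: rewrite} and must be done by hand for each residue of $k$ mod $3$), and for $k\ge 6$ you should simply invoke $\delta_3^6\subset b$ and Lemma~\ref{restrictions on krn} rather than Murasugi. These are matters of exposition, not correctness.
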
 
 \begin{proof}
(1) This is well known. 
 
(2) Lemma \ref{lemma: minimal form}(1) implies that $b$ is either $\delta_3^k \sigma_1^p$ for some $p \geq 1$ or $\delta_3^k \sigma_1^p a_{13}^q$ where $k \equiv 1$ {\rm (mod $3$)} and $p, q \geq 1$. Suppose that $b$ is definite. If $b = \delta_3^k \sigma_1^p$ for some $p \geq 1$, then $k \leq 5$ as $b \supset \delta_3^k$. One easily verifies that $b$ is always definite if $k = 0$. Further,
$$b \sim \left\{
\begin{array}{ll}
\sigma_1^{p+1} \sigma_2 & \mbox{ if } k = 1 \\
\sigma_1^{p+3} \sigma_2 & \mbox{ if } k = 2 \\
\sigma_1^{p+2} \sigma_2 \sigma_1^2 \sigma_2 & \mbox{ if } k = 3 \\
\sigma_1^{p+3} \sigma_2 \sigma_1^3 \sigma_2 & \mbox{ if } k = 4 \\
\sigma_1^{p+2} \sigma_2 \sigma_1^3 \sigma_2\sigma_1^{2} \sigma_2 & \mbox{ if } k = 5 \\
\end{array} \right.$$
The symmetrised Seifert form of $\widehat b$ is congruent to $A_{p}, A_{p+2}, D_{p+4}$ when $k = 1,2,3$, so is always definite. When $k = 4$, it is $E_7, E_8$ when $p = 1,2$ (cf. \cite[page 351]{Baa}), and therefore definite, and is indefinite for $p \geq 3$ (cf. fifth paragraph of \cite[page 356]{Baa}). The form is always indefinite when $k = 5$, as is noted in the second paragraph of \cite[page 356]{Baa}. Thus $b = \delta_3^k \sigma_1^p$ is definite if and only if $k \leq 3$ or $k = 4$ and $p \in \{1, 2\}$.

Next suppose that $b = \delta_3^k \sigma_1^p a_{13}^q$ where $k \equiv 1$ {\rm (mod $3$)} and $p, q \geq 1$. Write $k = 3r +1$ where $r$ is a non-negative integer. If $r \geq 2$, then $b \supset \delta_3^6$, so is indefinite. If $r = 0$, then $b = \delta_3 \sigma_1^p a_{13}^q \sim \delta_3 \sigma_2^p \sigma_1^q \sim \sigma_1^{q+1} \sigma_2^{p+1}$. Then $\widehat b = T(2, q+1) \# T(2, p+1)$, so is definite. If $r =1$, then 
\begin{eqnarray} 
b = \delta_3^4 \sigma_1^p a_{13}^q  \sim  \delta_3^3 \sigma_1^p \delta_3 \sigma_2^q =  \delta_3^3 \sigma_1^{p+1} \sigma_2^{q+1} & \sim & \delta_3^2 \sigma_1^{p+1} (\sigma_2^{q+1} \sigma_1 \sigma_2) \nonumber  \\ 
& = & \delta_3^2 \sigma_1^{p+1} (\sigma_1 \sigma_2 \sigma_1^{q+1})  \nonumber  \\
& = & \delta_3^2 \sigma_1^{p+2} \sigma_2 \sigma_1^{q+1} \nonumber  \\ 
& \sim &  \sigma_1^{p+2} \sigma_2 \sigma_1^{q+2} \sigma_2 \sigma_1 \sigma_2 \nonumber \\
& =&  \sigma_1^{p+2} \sigma_2 \sigma_1^{q+3} \sigma_2 \sigma_1   \nonumber \\
& \sim &  \sigma_1^{p+3} \sigma_2 \sigma_1^{q+3} \sigma_2  \nonumber 
\end{eqnarray}
Since $p+3, q+3 \geq 4$, $b$ is indefinite by the fifth paragraph of \cite[page 356]{Baa}. Thus when $b = \delta_3^k \sigma_1^p a_{13}^q$ where $k \equiv 1$ {\rm (mod $3$)} and $p, q \geq 1$, it is definite if and only if $k = 1$. 

(3) Suppose that $P \ne 1, s > 0$, and $p_i = q_i = r_i = 1$ for all values of $i$. Then Lemma \ref{lemma: minimal form}(2) implies that 
\begin{eqnarray} 
b_1(F(b)) & = & \left\{ \begin{array}{ll}  6r + 3s - 2 & \mbox{ if } k = 3r \\ 6r + 3s  + 2 & \mbox{ if } k = 3r + 1 \\ 6r + 3s + 3 & \mbox{ if } k = 3r + 2 \end{array} \right. \nonumber 
\end{eqnarray}  
On the other hand, it follows from Proposition \ref{prop: rewrite} that $b \sim \delta_3^{3d(b)} \sigma_1^{-e(b)}$ where 
$$d(b) = \left\{ \begin{array}{ll}  s + r & \mbox{ if } k = 3r \\ s + r + 1 & \mbox{ otherwise} \end{array} \right. \; \; \mbox{ and } \;\; 
e(b) = \left\{ \begin{array}{ll}  3s & \mbox{ if } k = 3r \\ 3s + 2 & \mbox{ if } k = 3r + 1 \\ 3s + 1 & \mbox{ if } k = 3r + 2 \end{array} \right.$$
Then Proposition \ref{prop: signatures} shows that 
\begin{eqnarray} 
\sigma(\widehat b) = e(b) - 4d(b) - 1 + \epsilon(d(b)) & = & \left\{ \begin{array}{ll} 3s - 4(s+r)  - 1 + \epsilon(s+r) & \mbox{ if } k = 3r \\ 
(3s + 2) - 4(s+r+1) - 1  + \epsilon(s+r+ 1) & \mbox{ if } k = 3r + 1 \\ (3s + 1) - 4(s+r+1) - 1 + \epsilon(s+r+ 1) & \mbox{ if } k = 3r + 2 \end{array} \right. \nonumber \\
& = & \left\{ \begin{array}{ll}  -(s + 4r) - 1 + \epsilon(s+r) & \mbox{ if } k = 3r \\ -(s + 4r) - 3 + \epsilon(s+r+ 1) & \mbox{ if } k = 3r + 1 \\ -(s + 4r) - 4 + \epsilon(s+r+ 1) & \mbox{ if } k = 3r + 2 \end{array} \right. \nonumber 
\end{eqnarray}  
Then $\widehat b$ is definite if and only if 
\begin{eqnarray} 
\label{eqn: r, s}
0 = b_1(F(b)) + \sigma(\widehat b) =  \left\{ \begin{array}{ll}  2(r+s) - 3 + \epsilon(s+r) & \mbox{ if } k = 3r \\ 2(r+s) - 1 + \epsilon(s+r+1) & \mbox{ if } k = 3r + 1 \\ 2(r+s) - 1 + \epsilon(s+r+ 1)& \mbox{ if } k = 3r + 2 \end{array} \right. 
\end{eqnarray}  
Now if $b \supset \delta_3^k$ is definite, we have $k \leq 5$. We assume this below. Then writing $k = 3r + t$ where $t = 0, 1, 2$, we have $r = 0, 1$. 

First suppose that $r \not \equiv s$ (mod $2$). Since $\epsilon(d) \equiv d$ (mod $2$), (\ref{eqn: r, s}) implies that $k = 3r \in \{0, 3\}$ and $2(r+s) = 3 \pm 1 \in \{2, 4\}$. But $r + s$ is odd by assumption, so $r + s = 1$. Since $s \geq 1$ it follows that $r = 0, s = 1$. Thus under our assumptions, when $r \not \equiv s$ (mod $2$), $\widehat b$ is definite if and only if $b = \sigma_1 a_{13} \sigma_2$.

Next suppose that $r \equiv s$ (mod $2$). In this case (\ref{eqn: r, s}) implies that $k \in \{1, 2, 4, 5\}$ and
$$2(r+s) = 1 - \epsilon(s+r+1)$$
Since $r + s$ is even, it follows that $r+s = 0$. But this is impossible as $r \geq 0$ and $s \geq 1$. Thus this case does not arise. 

\begin{remark}
\label{rmk: only s = 1}
{\rm It follows from (2) that the braid $(\sigma_1 a_{13} \sigma_2)^2$ is indefinite. Thus so are the braids considered in (4), (5) and (6) whenever $s > 1$.}
\end{remark}

(4) Suppose that $k = 3r, s> 0$, and $\max\{p_i, q_i, r_i\} \geq 2$. We know that $b$ is indefinite if $s > 1$ by Remark \ref{rmk: only s = 1}, so by Lemma \ref{lemma: minimal form}(2) suppose that $b = \sigma_1^{p_1} a_{13}^{q_1} \sigma_2^{r_1}$ where $\max\{p_1, q_1, r_1\} \geq 2$. Then 
$$b_1(F(b)) = -2 + 6r  + (p_1 + q_1 + r_1)$$ 
On the other hand, $b \sim \delta_3^{3(1 + r)} \sigma_1^{-1} \sigma_2^{p_1-1} \sigma_1^{-1}\sigma_2^{q_1-1} \sigma_1^{-1} \sigma_2^{r_1-1}$ by Proposition \ref{prop: rewrite} so it follows from Proposition \ref{prop: signatures} that 
$$\sigma(\widehat b) = 2 -4r  - (p + q + r)$$ 
Then $\widehat b$ is definite if and only if $0 = b_1(F(b)) + \sigma(\widehat b) = 2(r+1) -2 = 2r$. This occurs if and only if $r = 0$. Equivalently, $b = \sigma_1^{p_1} a_{13}^{q_1} \sigma_2^{r_1}$.

(5) Suppose that $k = 3r + 1, s >0$, and $\max\{p_i, q_i, r_i\} \geq 2$. Since $b$ is indefinite for $s > 1$, one can suppose by Lemma \ref{lemma: minimal form}(2) that $b = \sigma_1^{p_1} a_{13}^{q_1} \sigma_2^{r_1} \sigma_1^{p_2} a_{13}^{q_2}$. As $b \supset \sigma_1^{p_1} a_{13}^{q_1} \sigma_2^{r_1} \sigma_1^{p_2}$, part (5) is a consequence of part (6).

(6) Suppose that $k = 3r + 2, s >0$, and $\max\{p_i, q_i, r_i\} \geq 2$. As in the previous two cases, by Lemma \ref{lemma: minimal form}(2) we can suppose that $b = \sigma_1^{p_1} a_{13}^{q_1} \sigma_2^{r_1} \sigma_1^{p_2}$. In this case  
$$b_1(F(b)) = 6r + 4 + (p_1 + q_1 + r_1 + p_{2})$$ 
On the other hand, $b \sim \delta_3^{3(r + 2)} \sigma_1^{-1} \sigma_2^{p_1-1} \sigma_1^{-1}\sigma_2^{q_1-1} \sigma_1^{-1} \sigma_2^{r_1-1} \sigma_1^{-1}  \sigma_2^{p_{2}-1}$ by Proposition \ref{prop: rewrite} so it follows from Proposition \ref{prop: signatures} that 
$$\sigma(\widehat b) = - 4r - (p_1 + q_1 + r_1 + p_{2})$$ 
Then $\widehat b$ is definite if and only if $0 = b_1(F(b)) + \sigma(\widehat b) = 2(r+2)$. Since $r \geq 0$, this is impossible. Thus $\widehat b$ is indefinite. 
 \end{proof}

 \begin{cor} 
 \label{cor: definiteness, fibredness, and forms} 
 Let $b = \delta_3^k P$ be as above represent a non-trivial, non-split, prime link. Then $b$ is definite if and only if $b$ is conjugate to one of 
 
 $(1)$ $\sigma_1^p a_{13}^q \sigma_2^r$ where $p, q, r \geq 1$. In this case $\widehat b$ is the Montesinos link $M(1; 1/p,1/q,1/r)$ and is not a fibred link. 
  
 $(2)$ $\delta_3 \sigma_1^m$ where $m \geq 1$. In this case $\widehat b$ is the torus link $T(2,m+1)$ and is fibred. Further, $\mathcal{F}(b) \cong A_m$. 
 
 $(3)$ $\delta_3^3 \sigma_1^{m-4}$ where $m \geq 4$. In this case $\widehat b$ is the pretzel link $P(-2,2,m-2)$ and is fibred. Further, $\mathcal{F}(b) \cong D_m$. 
 
 $(4)$ $\delta_3^4$. In this case $\widehat b$ is the torus knot $T(3,4)$ and is fibred. Further, $\mathcal{F}(b) \cong E_6$. 
 
 $(5)$ $\delta_3^4 \sigma_1$. In this case $\widehat b$  is the pretzel link $P(-2,3,4)$ and is fibred. Further, $\mathcal{F}(b) \cong E_7$. 
 
 $(6)$ $\delta_3^5$. In this case $\widehat b$ is the torus knot $T(3,5)$ and is fibred. Further, $\mathcal{F}(b) \cong E_8$. 
 
 \end{cor}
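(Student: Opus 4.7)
The plan is to apply Proposition \ref{prop: when definite} to enumerate the conjugacy classes of definite strongly quasipositive $3$-braids, prune them using the hypotheses that $\widehat b$ is non-trivial, non-split and prime, and then match each surviving class with one of the six families (1)--(6). All link-type and Seifert-form identifications will then be read off from the table in \S\ref{subsec: symm seifert forms of baskets}.

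First I will read the enumeration off Proposition \ref{prop: when definite}: every definite $b \in B_3$ is conjugate to $\delta_3^k$ with $0 \le k \le 5$; $\delta_3^k \sigma_1^p$ with $p \ge 1$ and either $k \le 3$ or $(k,p) \in \{(4,1),(4,2)\}$; $\delta_3 \sigma_1^p a_{13}^q$ with $p,q \ge 1$; $\sigma_1 a_{13}\sigma_2$; or $\sigma_1^{p_1}a_{13}^{q_1}\sigma_2^{r_1}$ with $\max\{p_1,q_1,r_1\} \ge 2$ (parts (5)--(6) of Proposition \ref{prop: when definite} contribute nothing, being indefinite). I will discard $\delta_3^0, \delta_3^1$ and $\sigma_1^p \in B_3$ (which closes to $T(2,p) \sqcup \text{unknot}$) as trivial or split. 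I will also discard $\delta_3 \sigma_1^p a_{13}^q$: after rewriting $a_{13} = \sigma_2^{-1}\sigma_1\sigma_2$ and cyclically conjugating, the braid breaks into two commuting factors and its closure is the connected sum $T(2,p+1) \# T(2,q+1)$, hence non-prime.

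Next I will collapse the surviving families onto (1)--(6) using $B_3$-conjugacies. A single application of $\sigma_1\sigma_2\sigma_1 = \sigma_2\sigma_1\sigma_2$ gives $\sigma_2 \delta_3^2 \sigma_2^{-1} = \sigma_2\sigma_1\sigma_2\sigma_1 = \sigma_1\sigma_2\sigma_1^2 = \delta_3\sigma_1^2$, so $\delta_3^2 \sim \delta_3\sigma_1^2$ and, more generally, $\delta_3^2\sigma_1^p \sim \delta_3\sigma_1^{p+2}$; this absorbs $k=2$ in both $\delta_3^k$ and $\delta_3^k\sigma_1^p$ into case (2). Centrality of $\delta_3^3$ in $B_3$ then yields $\delta_3^4\sigma_1^2 = \delta_3^3(\delta_3\sigma_1^2) \sim \delta_3^3 \cdot \delta_3^2 = \delta_3^5$, absorbing $(k,p)=(4,2)$ into case (6). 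The remaining matches are direct: $\delta_3\sigma_1^p$ is case (2); $\delta_3^3$ and $\delta_3^3\sigma_1^p$ are case (3) with $m=4$ and $m=p+4$; $\delta_3^4$, $\delta_3^4\sigma_1$ and $\delta_3^5$ are cases (4), (5), (6); and the braids $\sigma_1 a_{13}\sigma_2$ and $\sigma_1^{p_1}a_{13}^{q_1}\sigma_2^{r_1}$ together give case (1).

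Finally I will verify the link-level and Seifert-form assertions. Cases (2)--(6) are each conjugate to the positive $3$-braids listed in the table of \S\ref{subsec: symm seifert forms of baskets} (for instance via the rewritings in the proof of Proposition \ref{prop: when definite}), so they are fibred, their closures are identified there as the stated torus and pretzel links, and their symmetrised Seifert forms are the root lattices $A_m, D_m, E_6, E_7, E_8$. For case (1), the surface $F(\sigma_1^p a_{13}^q\sigma_2^r)$ is a plumbing of three Hopf bands of multiplicities $p,q,r$ arranged on a triangle, which is the standard presentation of the Montesinos link $M(1;1/p,1/q,1/r)$; non-fibredness follows from the Ni--Stoimenow theorem \cite[Theorem 1.1]{Ni2}, \cite[Theorem 3.3]{Stoi} already invoked in the introduction. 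I expect the main obstacle to be the bookkeeping for the $B_3$-conjugacies that collapse Proposition \ref{prop: when definite}'s list onto the six families---in particular $\delta_3^2\sigma_1^p \sim \delta_3\sigma_1^{p+2}$ and $\delta_3^4\sigma_1^2 \sim \delta_3^5$---though all of these ultimately reduce to the single identity $\delta_3^2 \sim \delta_3\sigma_1^2$ together with centrality of $\delta_3^3$; a secondary subtlety, worth spelling out, is the topological identification of the triangle-plumbing as the stated Montesinos link.
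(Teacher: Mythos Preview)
Your proposal is correct and follows essentially the same route as the paper: enumerate via Proposition~\ref{prop: when definite}, discard the trivial, split and non-prime closures, then collapse the remaining list onto cases (1)--(6) by explicit $B_3$-conjugacies, and finally read off the link types and root lattices from the table in \S\ref{subsec: symm seifert forms of baskets}. Your argument for $\delta_3^4\sigma_1^2 \sim \delta_3^5$ via centrality of $\delta_3^3$ and the identity $\sigma_2\delta_3^2\sigma_2^{-1}=\delta_3\sigma_1^2$ is actually slightly cleaner than the paper's.

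One small point of care: the ``more generally, $\delta_3^2\sigma_1^p \sim \delta_3\sigma_1^{p+2}$'' does \emph{not} follow from the single conjugation $\sigma_2\delta_3^2\sigma_2^{-1}=\delta_3\sigma_1^2$ alone, since that conjugation does not fix $\sigma_1^p$. The paper handles this by a short chain of cyclic conjugations and one application of the braid relation, namely $\delta_3^2\sigma_1^p \sim \sigma_1^{p+1}(\sigma_2\sigma_1\sigma_2) = \sigma_1^{p+2}\sigma_2\sigma_1 \sim \sigma_1^{p+3}\sigma_2 = \sigma_1^{p+2}\delta_3 \sim \delta_3\sigma_1^{p+2}$. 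This is exactly the ``bookkeeping'' you flagged, so just spell it out.
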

 
 \begin{proof}
It is easy to verify that the closures of the braids listed in the corollary are as stated. 
Further $\widehat b$ is not fibred in case (1) and fibred otherwise (\cite[Theorem 1.1]{Ni2}, \cite[Theorem 3.3]{Stoi}). The symmetrised Seifert forms 
of the associated quasipositive surfaces are listed in \S \ref{subsec: symm seifert forms of baskets}.

To complete the proof of the corollary, we need to show that the corollary gives a complete list of conjugacy class representatives of definite strongly quasipositive 
$3$-braids whose closures are non-trivial, non-split, and prime. 
 
Proposition \ref{prop: when definite} provides a complete list of conjugacy class representatives of definite strongly quasipositive $3$-braids:
 \vspace{-.2cm} 
 \begin{itemize}

\item $\sigma_1^p a_{13}^q \sigma_2^r$ where $p, q, r \geq 1$; 

\vspace{.2cm} \item $\delta_3^k$ where $k \leq 5$; 

\vspace{.2cm} \item $\delta_3^k \sigma_1^p$ where $k \leq 3$ and $p \geq 1$ or $k = 4$ and $p \in \{1, 2\}$; 

\vspace{.2cm} \item $\delta_3 \sigma_1^p a_{13}^q$ where $p, q \geq 1$. 

\end{itemize}
Thus each of the braids listed in the corollary is definite. Suppose that $b$ is one of the definite braids listed in the proposition whose closure is non-trivial, non-split, and prime. Since $\widehat b$ is prime, the case that $b \sim \delta_3 \sigma_1^p a_{13}^q$ is excluded:
$$b \sim \sigma_1^{p+1} \sigma_2^{q+1} \Rightarrow \widehat b = T(p+1, 2) \# T(q+1, 2)$$ 
We also exclude the braids $1$ and $\delta_3$ by the non-split and non-triviality conditions. It remains to examine the braids $\delta_3^2 \sigma_1^p$ where $p \geq 0$ and $\delta_3^4 \sigma_1^2$. But if $b = \delta_3^2 \sigma_1^p$ where $p \geq 0$, then $b = \sigma_1 \sigma_2 \sigma_1 \sigma_2 \sigma_1^p \sim \sigma_1^{p+1} (\sigma_2 \sigma_1 \sigma_2) = \sigma_1^{p+1} (\sigma_1 \sigma_2 \sigma_1) = \sigma_1^{p+2} \sigma_2 \sigma_1 \sim  \sigma_1^{p+3} \sigma_2 =  \sigma_1^{p+2} \delta_3 \sim \delta_3 \sigma_1^{p+2}$, which is listed in (2). And if $b = \delta_3^4 \sigma_1^2$, then $b \sim \sigma_1 \delta_3^4 \sigma_1 = \delta_3^4 a_{13} \sigma_1 = \delta_3^4 \delta_3 = \delta_3^5$, which is (6). 
 \end{proof}
 
 \subsection{The proof of Theorem \ref{thm: definite 3-braids}}
 \label{subsec: proof of thm on 3-braids}
 
Suppose that $L$ is a non-split, non-trivial, strongly quasipositive link of braid index $3$. Stoimenow has shown that strongly quasipositive links of 
braid index $3$ are the closures of strongly quasipositive $3$-braids (\cite[Theorem 1.1]{Stoi}), so without loss of generality we can suppose that 
$L = \widehat b$ where $b = \delta_3^k P$ is a minimal representative of a conjugacy class of strongly quasipositive $3$-braids.

We know that $\widehat b$ is definite if and only if it is one of the links listed in (1) 
through (6) of Corollary \ref{cor: definiteness, fibredness, and forms}, which shows the equivalence of statements (a) and (c) of part (1) of the theorem. 
In each of the six cases $\Sigma_2(\widehat b)$ is an L-space: the links in (1) are alternating (\cite[Proposition 3.3]{OS2}) while 
those in (2), (3), (4), (5), and (6) have $2$-fold branched covers with finite fundamental groups. Conversely, Theorem \ref{thm: bbg definite} shows that 
$b$ is definite if $\Sigma_2(\widehat b)$ is an L-space. Thus (a) and (b) of part (1) of the theorem are equivalent, which completes the proof of (1). 

Positive braids are fibred \cite{Sta}, which is one direction of (2). For the other direction, suppose that $\widehat b$ is fibred. 
Then the minimal representative of $b$ has $k \geq 1$ (\cite[Theorem 1.1]{Ni2}, \cite[Theorem 3.3]{Stoi}). If it is also definite, 
Proposition \ref{prop: when definite} implies that  it is conjugate to a braid of the form $\delta_3^k \sigma_1^{m}$, for some 
$k \geq 1$ and $m \geq 0$, or $\delta_3  \sigma_1^{p_1} a_{13}^{q_1} =  \sigma_2^{p_1} \sigma_1^{q_1} \delta_3$, each of 
which is a positive braid. This proves (2).

\section{Seifert forms of basket links}
\label{sec: seifert forms of baskets}

Suppose that $b = \delta_n P$ is a BKL-positive word. Each letter $a_{rs}$ of $P$ determines a primitive element $\alpha$ of $H_1(F(b))$ as depicted in Figure \ref{fig: alphars}. 

\begin{figure}[!ht]
\centering
 \includegraphics[scale=0.7]{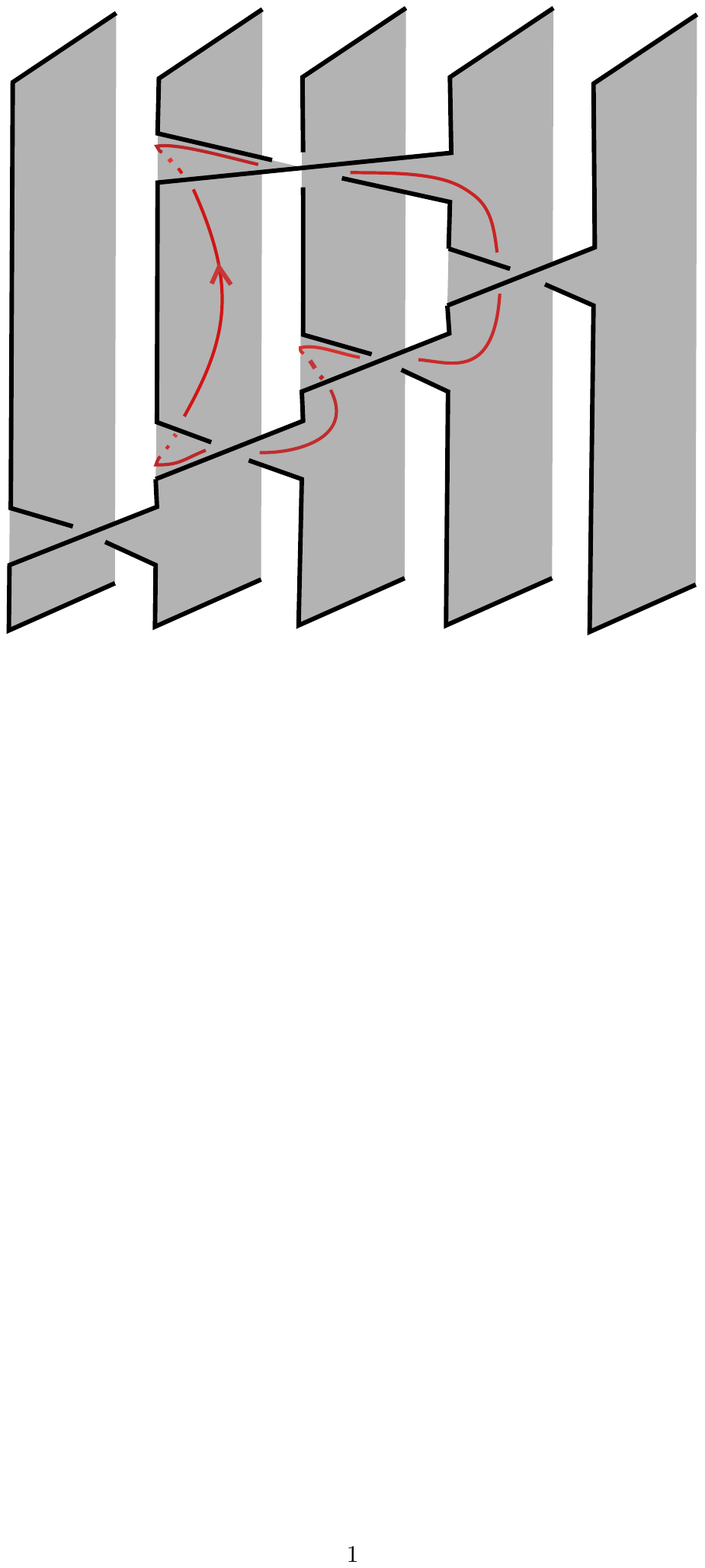} 
\caption{The class associated to $a_{rs}$} 
\label{fig: alphars}
\end{figure} 
Here, a representative cycle for $\alpha$ consists of an arc which passes over the band in $F(b)$ corresponding to $a_{rs}$ and then descends to pass over the bands corresponding to the letters $\sigma_{r}, \sigma_{r+1}, \ldots, \sigma_{s-1}$ of the initial $\delta_n$ factor of $b$. 
The set of all such $\alpha$ forms a basis for $H_1(F(b))$. 

\begin{remark}  
\label{rem: alpha to alpha'}
{\rm Recall that $\delta_n b \delta_n^{-1} = \delta_n P'$ where a letter $a_{rs}$ of $P$ is converted to $a_{r+1, s+1}$ if $s \leq n-1$ and $a_{1, r+1}$ if $s = n$ (cf. Identity (\ref{conj by delta})). We noted in Remark \ref{rem: delta rotn} that the geometric braid $b' = \delta_n P'$ can be obtained by rotating the geometric braid of $\delta_n P$ through an angle of $\frac{2 \pi}{n}$. It's easy to see that $F(b')$ can also be obtained from $F(b)$ by a rotation of $\frac{2 \pi}{n}$. This rotation takes the class $\alpha$ associated to $a_{rs}$ described above to the class associated to $a_{r+1, s+1}$ if $s \leq n-1$ and the {\it negative} of the class associated to $a_{1, r+1}$ if $s = n$.
}
\end{remark}

We are interested in the Seifert form on $H_1(F(b))$ and as such we need to calculate the linking numbers $lk(\alpha^+, \beta)$ where $\alpha, \beta \in H_1(F(b))$ are classes corresponding to letters of $P$. Here, $\alpha^+$ is represented by the cycle obtained by pushing a representative cycle for $\alpha$ to the positive side of  $F(b)$. Given our orientation convention for braid closures (cf. Figure \ref{fig: braid conventions}), the side of $F(b)$ which is shaded in Figure \ref{fig: alphars} is its negative side.

\begin{lemma} 
\label{lemma: linking info} 
Suppose that $b = \delta_n P$ is a BKL-positive word and that $P = a_1 a_2 \ldots a_m$ where each $a_i$ is one of the BKL generators. Let 
$\alpha_1, \alpha_2, \ldots, \alpha_m$ be the associated basis elements of $H_1(F(b))$. Fix $1 \leq i, j \leq m$ and suppose that $\alpha_i$ corresponds to the 
letter $a_{rs}$ while $\alpha_j$ corresponds to $a_{tu}$. Then
$$lk(\alpha_i^+, \alpha_j) = \left\{
\begin{array}{rl}
-1 & \hbox{if either } i = j  \mbox{ or }   i > j \mbox{ and } r \leq  t < s \leq u \\ 
1 & \hbox{if }  i > j \mbox{ and } t < r \leq u < s \\ 
 0 & \hbox{otherwise}  
\end{array} \right.$$
\end{lemma}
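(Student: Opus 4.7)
The plan is to compute $lk(\alpha_i^+,\alpha_j)$ directly from the braid diagram of $b = \delta_n P$, using the explicit representative for each $\alpha_k$ displayed in Figure~\ref{fig: alphars}. In every case the linking number equals the algebraic sum of signed crossings between $\alpha_i^+$ and $\alpha_j$ in a planar projection; the relevant orientations are fixed by our convention (Figure~\ref{fig: braid conventions}) and by the identification of the shaded negative side of $F(b)$.

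For the diagonal case $i = j$ with $a_i = a_{rs}$, the cycle $\alpha_i$ traverses the negatively half-twisted band of $a_{rs}$ exactly once, contributing a single negative crossing between the positive pushoff $\alpha_i^+$ and $\alpha_i$. The descent arcs through $\sigma_r,\ldots,\sigma_{s-1}$ can be realized as nearby parallel copies on the disk fibres and contribute nothing to the self-linking. This yields $lk(\alpha_i^+,\alpha_i) = -1$.

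For $i \neq j$, decompose $\alpha_k = \beta_k \cup \gamma_k$, where $\beta_k$ is the arc across the band of $a_k$ and $\gamma_k$ is the descent through the $\delta_n$-bands. By bilinearity,
$$lk(\alpha_i^+,\alpha_j) \;=\; lk(\beta_i^+,\beta_j) + lk(\beta_i^+,\gamma_j) + lk(\gamma_i^+,\beta_j) + lk(\gamma_i^+,\gamma_j).$$
Each of these four pairings depends only on how the endpoints of the chords $(r,s)$ and $(t,u)$ are arranged on the $n$ strands; the relation $i>j$ fixes the over/under information by placing the band of $a_{rs}$ strictly below that of $a_{tu}$. The case analysis then reduces to examining the relative position of the intervals $[r,s]$ and $[t,u]$. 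When $[r,s]\cap[t,u] = \emptyset$, or one interval is strictly nested inside the other, the four contributions cancel in pairs and $lk(\alpha_i^+,\alpha_j) = 0$. When $r \leq t < s \leq u$, the contributions collapse to a single negative crossing, giving $lk(\alpha_i^+,\alpha_j) = -1$. When $t < r \leq u < s$, they collapse to a single positive crossing, giving $lk(\alpha_i^+,\alpha_j) = +1$. A useful preliminary reduction is the rotational symmetry of Remark~\ref{rem: alpha to alpha'}: conjugation by $\delta_n$ takes $\alpha_{rs}$ to $\alpha_{r+1,s+1}$ (up to sign in the wrap-around case), so one may normalize to, say, $r = 1$ and thereby cut down the number of subcases.

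The principal obstacle is the sign bookkeeping in this case analysis. One must match each geometric crossing to its correct sign under our conventions and, more delicately, verify that the four bilinear terms really combine to $0$ or $\pm 1$ rather than to a larger integer, via confirmed pairwise cancellations. Once the rotational normalization is in place, this comes down to a finite diagrammatic check that handles each allowed arrangement of the pairs $(r,s)$ and $(t,u)$.
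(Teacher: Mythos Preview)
Your plan to compute $lk(\alpha_i^+,\alpha_j)$ by signed-crossing counts is reasonable in principle, and the arc decomposition $\alpha_k=\beta_k\cup\gamma_k$ is legitimate for that purpose. But the proposal, as written, asserts the answers rather than establishing them, and in the diagonal case it contains a specific error.

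For $i=j$ you claim that ``the descent arcs through $\sigma_r,\ldots,\sigma_{s-1}$ can be realized as nearby parallel copies on the disk fibres and contribute nothing to the self-linking.'' This is not correct: the representative of $\alpha_i$ in Figure~\ref{fig: alphars} passes \emph{through} the half-twisted bands corresponding to $\sigma_r,\ldots,\sigma_{s-1}$ in the initial $\delta_n$, not alongside them on the disks. Each of those $s-r$ bands is a genuine half-twist, so the pushoff $\gamma_i^+$ crosses $\gamma_i$ there. The total self-linking $-1$ is therefore the result of a cancellation among the $\beta\beta$, $\beta\gamma$, $\gamma\beta$ and $\gamma\gamma$ contributions, not the vanishing of the latter three. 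The paper handles this by a geometric observation you do not have: a core arc through the staircase $\sigma_r\sigma_{r+1}\cdots\sigma_{s-1}$ inside $\delta_n$ has a tubular neighbourhood isotopic to a single half-twisted band $a_{rs}$, so $\alpha_i$ is the core of a positive Hopf band and $lk(\alpha_i^+,\alpha_i)=-1$ follows immediately.

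For $i\neq j$ you write that in the disjoint and nested cases ``the four contributions cancel in pairs'' and in the linked cases they ``collapse to a single'' crossing of the appropriate sign. None of this is verified; it is exactly the content of the lemma. The paper does not attempt this bookkeeping. Instead it argues structurally: in the separated case $r<s<t<u$ the two cycles are split by a $2$-sphere, and the rotational symmetry of Remark~\ref{rem: alpha to alpha'} transports this (up to sign) to the other unlinked configurations. The shared-endpoint case $r<s=t<u$ is read off directly from the picture, and then the genuinely linked case $r<t<s<u$ is reduced to already-known cases by rewriting $a_{tu}=a_{ts}a_{su}$ and using additivity. If you want to complete your crossing-count approach you must actually carry out the four-term computation in each configuration; otherwise, adopt the paper's reductions, which avoid that bookkeeping entirely.
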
 

\begin{proof}
If $r < s < t < u$, consideration of the cycles representing $\alpha_i$ and $\alpha_j$ shows that they are separated by a $2$-sphere in $S^3$. Thus $lk(\alpha_i^+, \alpha_j) = 0$. It then follows that the same holds whenever $t < r < s < u$, or $t < u < r < s$, or $r < t < u < s$. Indeed, in each of these cases we can simultaneously conjugate $a_{rs}$ and $a_{tu}$ by a power of $\delta_n$ to obtain new letters $a_{r's'}$ and $a_{t'u'}$ where $r' < s' < t' < u'$. Remark \ref{rem: alpha to alpha'} shows that this conjugation changes linking numbers at most up to sign, so $lk(\alpha_i^+, \alpha_j) = 0$. 

Next, if $t < u = r < s$ the cycles representing $\alpha_i^+$ and $\alpha_j$ are separated by a $2$-sphere; this is obvious if $i < j$ and is easy to see by considering the cycles representing $\alpha_i$ and $\alpha_j$ if $j < i$. Thus $lk(\alpha_i^+, \alpha_j) = 0$. Remark \ref{rem: alpha to alpha'} then implies, as in the previous paragraph, that $lk(\alpha_i^+, \alpha_j) = 0$ whenever $r < t < s = u$ or $r = t < s < u < s$. 

In the case that $r < s = t < u$, it is easy to see that 
$$lk(\alpha_i^+, \alpha_j) = 
\left\{ \begin{array}{ll} 0 & \mbox{ if } i < j \\ 1 & \mbox{ if } j < i \end{array} \right.$$
Remark \ref{rem: alpha to alpha'} then implies that 
$$lk(\alpha_i^+, \alpha_j) =  \left\{ \begin{array}{rl} 0 & \mbox{ if } i < j \\ -1 & \mbox{ if } j < i \end{array} \right.$$
when $r < t < s = u$ or $r = t < s < u < s$ then  

The final three cases to consider are when $i = j$, or $i \ne j$ and $r < t < s < u$, or $i \ne j$ and $t < r < s < u$. Before dealing with these cases we need to make an observation. 

Let $D_i$ be the $2$-disk in $F(b)$ with boundary the $i^{th}$ component of the trivial $n$-braid and suppose that $b$ contains a word $a_{r_0 r_1} a_{r_1 r_2} \cdots a_{r_{k-1} r_k}$. Let $A$ be a smooth arc in the interior of $F(b)$ obtained by concatenating a core of the half-twisted band in $F(b)$ corresponding to $a_{r_0 r_1}$, an arc properly embedded in $D_{r_1}$, a core of the band corresponding to $a_{r_1 r_2}$, an arc properly embedded in $D_{r_2}$, etc., ending with a core of the $1$-handle corresponding to $a_{r_{k-1} r_k}$. Thinking of $A$ as a properly embedded arc in the union $X$ of $D_{r_0}, D_{r_1}, \ldots, D_{r_k}$ and the bands corresponding to $a_{r_0 r_1}, a_{r_1 r_2}, \ldots a_{r_{k-1} r_k}$, the reader will verify that $A$ has a tubular neighbourhood in $X$ which is isotopic (rel $X \cap (D_{r_0} \cup D_{r_k}$)) to a half-twisted band corresponding to $a_{r_0 r_k}$. It follows that the boundary of a tubular neighbourhood of the cycle corresponding to $a_{rs}$ is a Hopf band whose components have linking number is $-1$ when they are like-oriented. Hence, 
$$lk(\alpha_i^+, \alpha_i) = -1,$$
which is the case $i = j$.

Suppose that $i \ne j$ and $r < t < s < u$. From the previous paragraph we see that if we replace $a_{tu}$ by $a_{ts} a_{su}$ and let $\alpha_j(1), \alpha_j(2)$ correspond to $a_{ts}$ and $a_{su}$ respectively, then 
$$lk(\alpha_i^+, \alpha_j) = lk(\alpha_i^+, \alpha_j(1)) + lk(\alpha_i^+, \alpha_j(2))$$    
Consideration of the cases $r < t < s = u$ and $r < s = t < u$, which were handled above, implies that $lk(\alpha_i^+, \alpha_j(1)) = 0$ is zero while $lk(\alpha_i^+, \alpha_j(2)) = 
\left\{ \begin{array}{ll} 0 & \mbox{ if } i < j \\ 1 & \mbox{ if } j < i \end{array} \right.$. Thus 
$$lk(\alpha_i^+, \alpha_j) = 
\left\{ \begin{array}{ll} 0 & \mbox{ if } i < j \\ 1 & \mbox{ if } j < i \end{array} \right.$$
Finally, an application of Remark \ref{rem: alpha to alpha'} shows that if $i \ne j$ and $t < r < s < u$, then 
$$lk(\alpha_i^+, \alpha_j) = 
\left\{ \begin{array}{rl} 0 & \mbox{ if } i < j \\ -1 & \mbox{ if } j < i \end{array} \right.$$
\end{proof} 

\begin{remark} 
\label{rem: root lattice}
{\rm The lemma shows that $H_1(F(b))$ is generated by elements $\alpha$ for which $\mathcal{F}(b)(\alpha, \alpha) = -2$. Hence, when $\mathcal{F}(b)$ is definite, it is a root lattice.}
\end{remark}

\section{Some indefinite BKL-positive words}
\label{sec: indefinite bkl pos words}

\begin{lemma} 
\label{lemma: bound on spans} 
Suppose that $b = \delta_n^k P$ is a BKL-positive $n$-braid.

$(1)$ If $P$ contains a letter of span $l$ where $4 \leq l \leq n-4$, then $b$ is indefinite. 

$(2)$ If $P$ contains a letter of span $3$ or $n-3$, then 
\vspace{-.2cm} 
\begin{itemize}
\item[{\rm (a)}] $b$ is indefinite if $n \geq 9$.

\vspace{.2cm} \item[{\rm (b)}] $\mathcal{F}(b)$ contains a primitive $E_n$ sublattice for $n = 6, 7, 8$.

\end{itemize}

$(3)$ If $P$ contains the square of a letter of span $2$ or $n-2$, then 
\vspace{-.2cm} 
\begin{itemize}
\item[{\rm (a)}] $b$ is indefinite if $n \geq 8$.

\vspace{.2cm} \item[{\rm (b)}] $\mathcal{F}(b)$ contains a primitive $E_{n+1}$ sublattice for $n = 5, 6, 7$.

\end{itemize}

\end{lemma}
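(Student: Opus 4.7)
The plan is, for each case, to exhibit a subword $b' \subseteq b$ whose Seifert form $\mathcal{F}(b')$ can be recognised as a simply-laced Dynkin or affine-Dynkin $T$-lattice, and to invoke the primitivity of the sublattice inclusion $\mathcal{F}(b') \hookrightarrow \mathcal{F}(b)$. Since a primitive sublattice of a negative definite lattice is itself negative definite, exhibiting a subword whose form is either hyperbolic or admits a nonzero null vector forces $b$ to be indefinite, while exhibiting a subword whose form is an $E_m$-lattice produces a primitive $E_m$-sublattice of $\mathcal{F}(b)$. I work under the standing assumption $k \geq 2$ (the context of Theorem~\ref{thm: def baskets}), and use that conjugation by a power of $\delta_n$ leaves the Seifert form unchanged while rotating the indices of BKL generators as in Remark~\ref{rem: alpha to alpha'}. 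By such a rotation I may assume the distinguished letter of $P$ has the form $a_{1,l+1}$ with $l \leq n/2$, replacing a span-$(n-l)$ letter by a span-$l$ letter if necessary.

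For parts~(1) and (2), take $b' := \delta_n^2 \, a_{1,l+1}$, write $b' = \delta_n \cdot Q$ with $Q = \delta_n \, a_{1,l+1}$, and let $\alpha_1,\ldots,\alpha_{n-1},\beta$ be the basis of $H_1(F(b'))$ indexed by the letters $\sigma_1,\ldots,\sigma_{n-1},a_{1,l+1}$ of $Q$. A direct application of Lemma~\ref{lemma: linking info} gives $\alpha_i\cdot\alpha_i=\beta\cdot\beta=-2$, $\alpha_i\cdot\alpha_{i+1}=1$ (all other $\alpha$-pairings being zero), and $\beta\cdot\alpha_j=-1$ exactly when $j=l$. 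After a sign flip of $\beta$, this is the $T$-lattice $T_{l-1,\,n-1-l,\,1}$ whose underlying graph is a three-armed star with branch vertex $\alpha_l$ and arms of lengths $l-1$, $n-1-l$, and $1$.

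For part~(3), take $b'' := \delta_n^2 \, a_{1,3}^2$ with the additional basis elements $\beta_1,\beta_2$ from the two copies of $a_{1,3}$. Lemma~\ref{lemma: linking info} now yields $\beta_1\cdot\beta_2=-1$ and $\beta_i\cdot\alpha_2=-1$, so the graph of $\mathcal{F}(b'')$ contains an odd triangle on $\{\alpha_2,\beta_1,\beta_2\}$. The key move is the unimodular change of basis $\eta := \beta_1-\beta_2$: a short computation gives $\eta\cdot\eta=-2$, $\eta\cdot\alpha_j=0$ for every $j$, and $\eta\cdot\beta_1=-1$, while all other pairings in the new basis $\{\alpha_1,\ldots,\alpha_{n-1},\beta_1,\eta\}$ are unchanged from before. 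The triangle is resolved into a tree, and the form is identified with $T_{n-3,\,2,\,1}$, with branch vertex $\alpha_2$ and arms of lengths $1$, $n-3$, and $2$.

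To conclude, invoke the standard classification: $T_{p,q,r}$ is negative definite iff $\tfrac{1}{p+1}+\tfrac{1}{q+1}+\tfrac{1}{r+1} > 1$, affine semi-definite (with a nonzero null vector) iff the sum equals $1$, and hyperbolic otherwise; the exceptional identifications $T_{2,2,1}=E_6$, $T_{2,3,1}=E_7$, $T_{2,4,1}=E_8$, $T_{3,3,1}=\tilde E_7$, $T_{2,5,1}=\tilde E_8$ are then all one needs. Applied to $T_{l-1,\,n-1-l,\,1}$ with $l-1,\,n-1-l \geq 3$ the quantity is at most $\tfrac14+\tfrac14+\tfrac12 = 1$, proving~(1); applied to $T_{2,\,n-4,\,1}$ one reads off $E_n$ for $n=6,7,8$ and an affine or hyperbolic lattice for $n\geq 9$, proving~(2); applied to $T_{n-3,\,2,\,1}$ one obtains $E_{n+1}$ for $n=5,6,7$ and an affine or hyperbolic lattice for $n\geq 8$, proving~(3). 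The main obstacle is locating the change of basis in part~(3): the natural first attempt $\eta=\beta_1-\alpha_2$ fails to eliminate the triangle, and one must instead antisymmetrise the two squared copies.
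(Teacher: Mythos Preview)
Your proof is correct and follows essentially the same approach as the paper: extract the subword $\delta_n^2 a_{1,l+1}$ (respectively $\delta_n^2 a_{13}^2$), compute the symmetrised Seifert form on the associated basis via Lemma~\ref{lemma: linking info}, recognise the result as a $T_{p,q,r}$ tree lattice, and appeal to the Dynkin classification. The only difference is presentational: the paper reads the tree directly off geometric figures for the cycle classes, whereas you compute the pairings algebraically and, in part~(3), make the explicit unimodular substitution $\eta=\beta_1-\beta_2$ to resolve the odd triangle into the same tree.
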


\begin{proof}
After conjugating by an appropriate power of $\delta_n$ we can suppose that $b \supseteq \delta_n^2 a_{1, l+1}$. Figure \ref{fig: span3} depicts a schematic of the braid $\delta_n^2 a_{1, l+1}$.

\begin{figure}[!ht] 
\centering
 \includegraphics[scale=0.7]{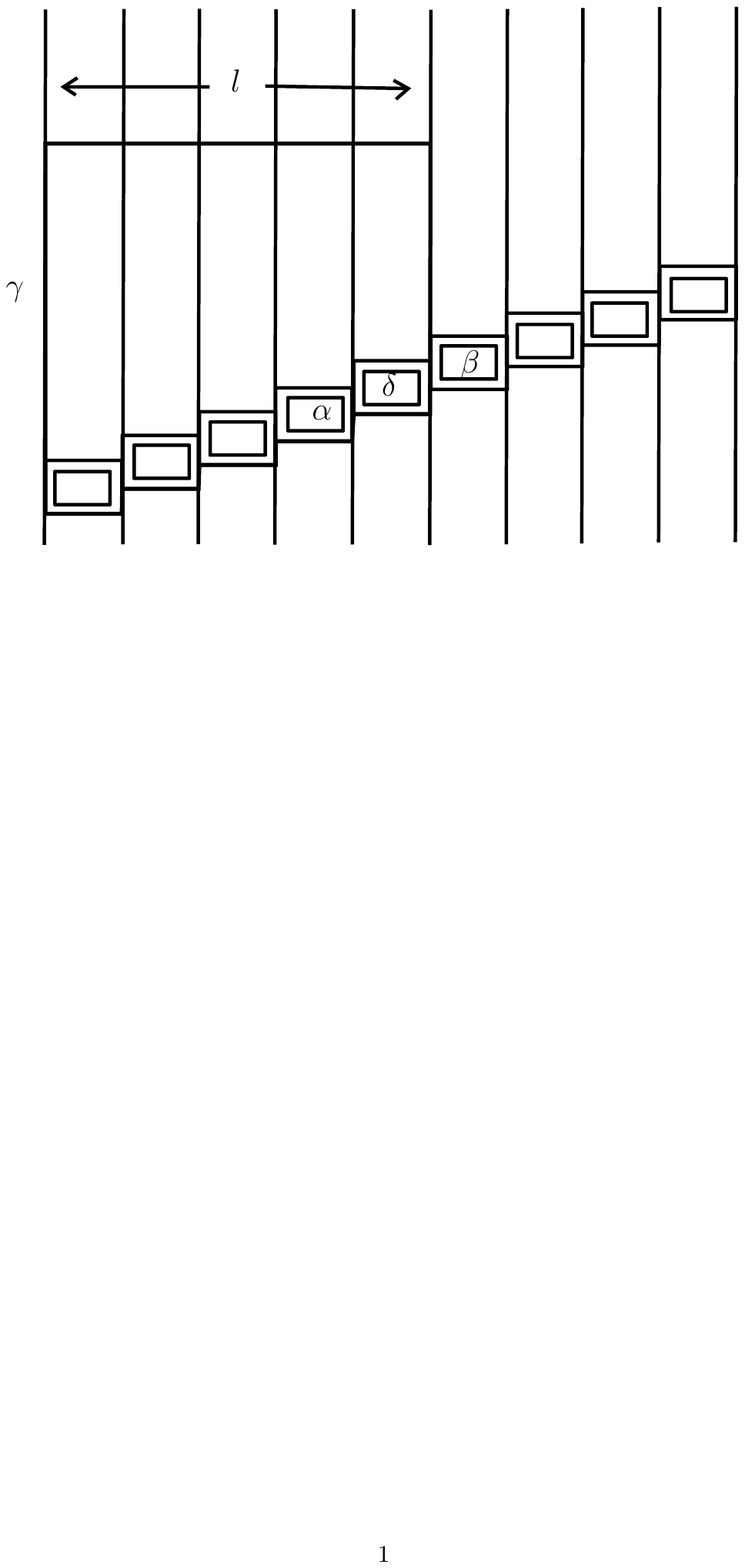} 
\caption{A schematic of the braid $\delta_n^2 a_{1, l+1}$} 
\label{fig: span3}
\end{figure} 
Lemma \ref{lemma: linking info} shows that if $\eta, \zeta$ are any two of the classes shown in Figure \ref{fig: span3} then $\mathcal{F}(\eta, \zeta) = -2$ if $\eta = \zeta$ and is $0$ or $\pm 1$ otherwise. In fact, the classes define a sublattice of $\mathcal{F}(b)$ corresponding to the tree in Figure \ref{fig: e6 tree}. 
\begin{figure}[!ht] 
\centering
  \includegraphics[scale=0.7]{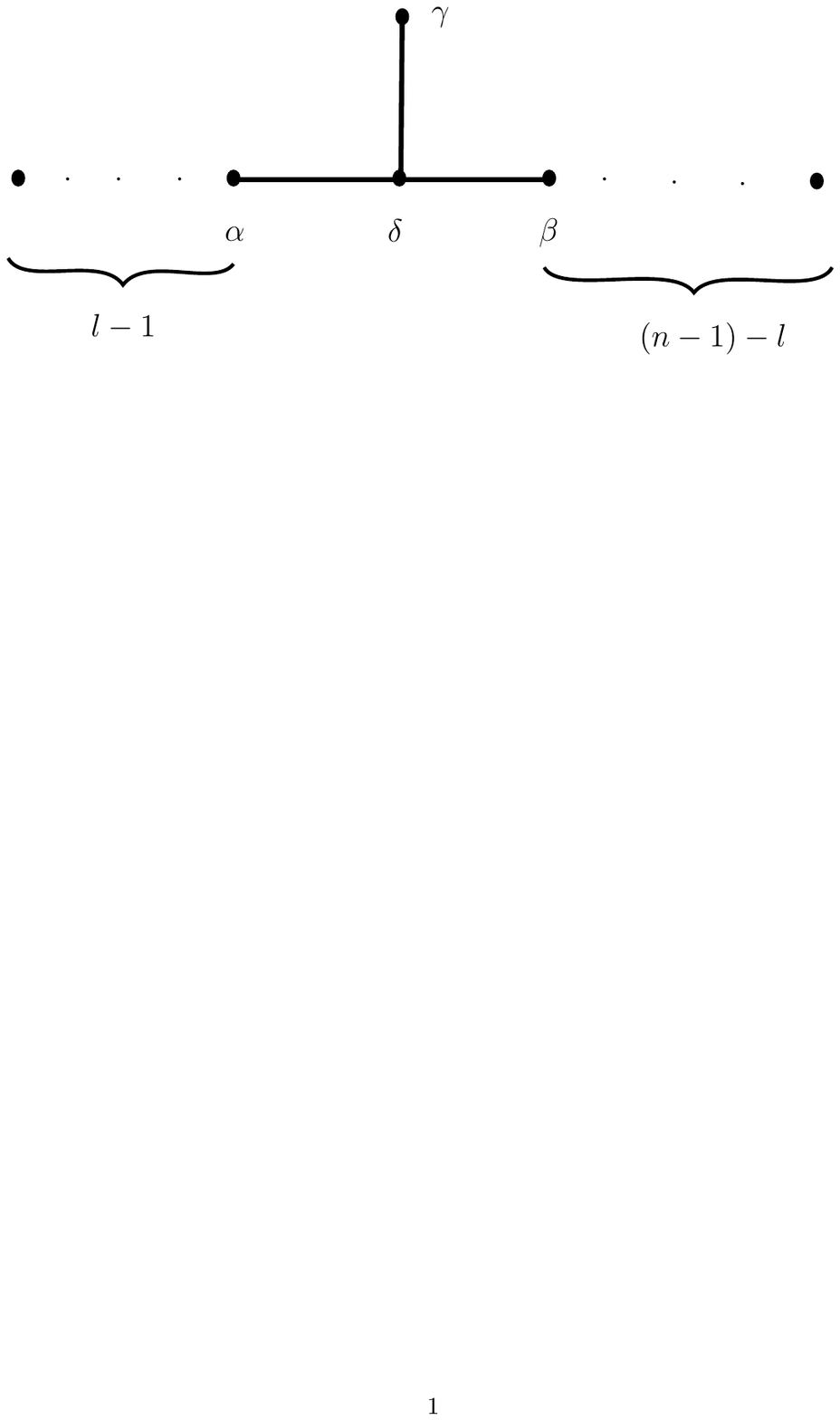}
\caption{} 
\label{fig: e6 tree}
\end{figure} 
The only trees with vertices of weight $-2$ and edges of weight $\pm 1$ whose associated forms are definite correspond to the Dynkin diagrams associated to $A_m, D_m, E_6, E_7$, or $E_8$ (see \cite[pages 61-62]{HNK}). It follows that $\mathcal{F}(b)$ is indefinite if $4 \leq l \leq n-4$ or $l \in \{3, n-3\}$ and $n \geq 9$. If $l \in \{3, n-3\}$ where $n = 6,7$, or $8$, the tree corresponds to a primitive $E_n$ sublattice of $\mathcal{F}(b)$. This completes the proof of parts (1) and (2) of the lemma.  

A similar argument deals with part (3) where we can suppose that $P$ contains $a_{13}^2$. The graph associated to the classes depicted in Figure \ref{fig: square}   
\begin{figure}[!ht] 
\centering
 \includegraphics[scale=0.7]{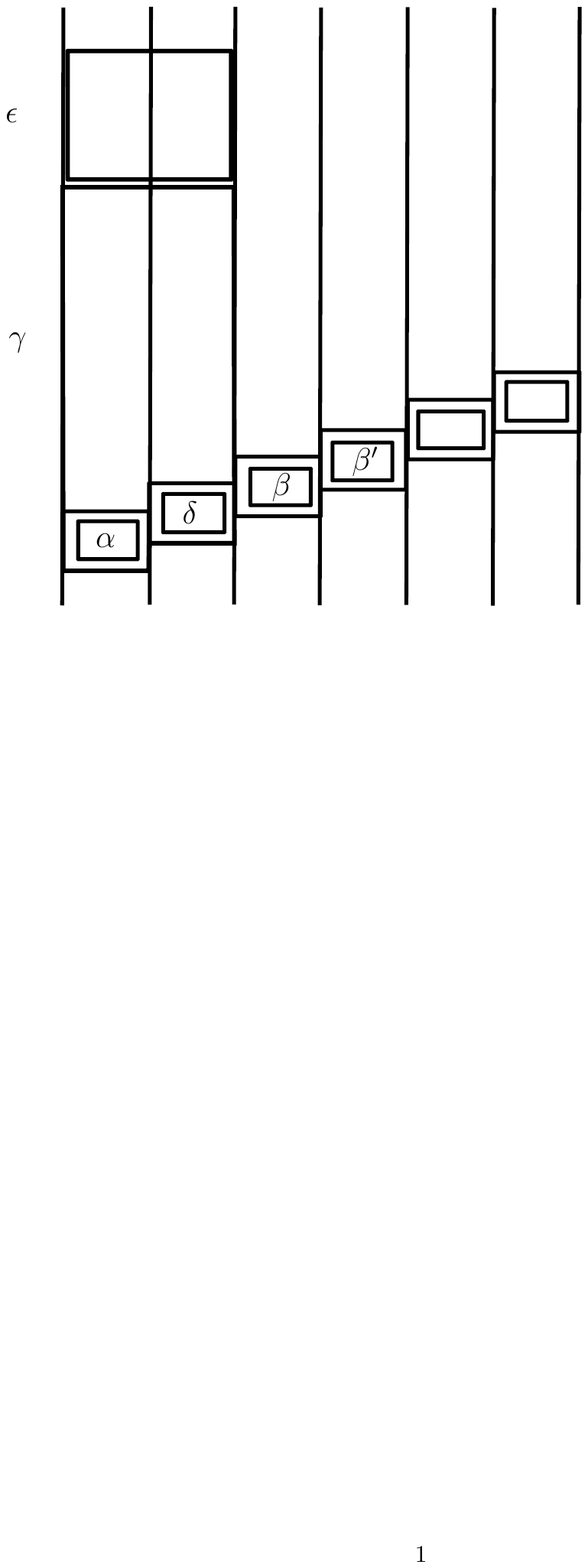} 
\caption{A schematic of the braid $\delta_n^2 a_{13}^2$} 
\label{fig: square}
\end{figure} 
determines a sublattice of $\mathcal{F}(b)$ corresponding to the tree in Figure \ref{fig: square tree}. 
\begin{figure}[!ht] 
\centering
  \includegraphics[scale=0.7]{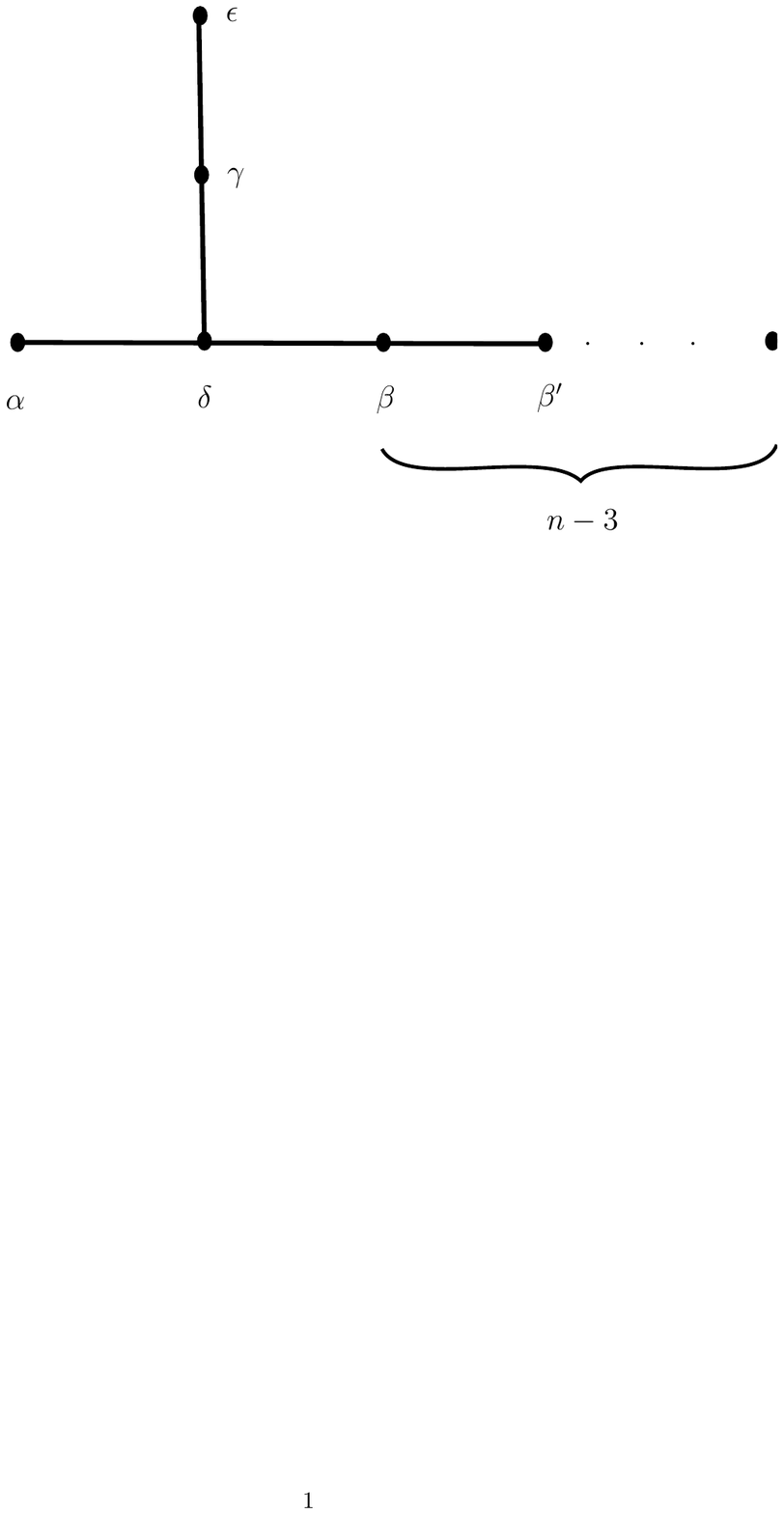}
\caption{} 
\label{fig: square tree}
\end{figure} 
It follows that $b$ is indefinite if $n \geq 8$ and that $\mathcal{F}(b)$ contains a primitive $E_{n+1}$ sublattice for $n = 5, 6, 7$.
\end{proof}

Recall that two BKL generators $a_{rs}$ and $a_{tu}$ are {\it linked} if either $r < t < s < u$ or $t < r < u < s$. 

\begin{lemma} 
\label{lemma: no linking} 
Suppose that $b = \delta_n^2 P$ is a BKL-positive $n$-braid where $P$ contains a pair of linked letters. Then $b$ is indefinite. 
\end{lemma}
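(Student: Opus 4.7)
The strategy will be to exhibit an explicit nonzero null vector of $\mathcal{F}(b)$, which is enough to show that $\mathcal{F}(b)$ is not negative definite. Using the containment principle of \S\ref{subsec: symm seifert forms of baskets} to pass to a sublattice, I will first reduce to the case $b = \delta_n^2 a_{rs} a_{tu}$ where $(a_{rs}, a_{tu})$ is the linked pair. After possibly swapping the two letters and conjugating by a power of $\delta_n$, I will further assume that the linking has the form $r < t < s < u$ with $a_{rs}$ appearing first in the word; the remaining three configurations will be handled by the same type of argument with a slightly adjusted choice of the chain sum $\rho$ below.

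Writing $b = \delta_n(\delta_n a_{rs} a_{tu})$, let $\alpha_1, \ldots, \alpha_{n-1}, \alpha_n, \alpha_{n+1}$ denote the basis of $H_1(F(b))$ from Lemma \ref{lemma: linking info}, where $\alpha_i$ comes from $\sigma_i$ for $1 \leq i \leq n-1$ in the inner $\delta_n$ and $\alpha_n, \alpha_{n+1}$ come from $a_{rs}, a_{tu}$ respectively. A direct application of Lemma \ref{lemma: linking info} will show that $\alpha_1, \ldots, \alpha_{n-1}$ form an $A_{n-1}$-chain and that the only nonzero extra pairings are $\mathcal{F}(\alpha_n, \alpha_{s-1}) = -1$, $\mathcal{F}(\alpha_{n+1}, \alpha_{u-1}) = -1$, $\mathcal{F}(\alpha_{n+1}, \alpha_{t-1}) = +1$, $\mathcal{F}(\alpha_n, \alpha_{n+1}) = +1$, and (if $r \geq 2$) $\mathcal{F}(\alpha_n, \alpha_{r-1}) = +1$. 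Setting $\rho = \alpha_t + \alpha_{t+1} + \cdots + \alpha_{u-1}$, a quick calculation then gives $\mathcal{F}(\rho, \rho) = -2$, and using that $s-1 \in [t, u-1]$ while $r-1 \notin [t, u-1]$ also $\mathcal{F}(\rho, \alpha_n) = -1$ and $\mathcal{F}(\rho, \alpha_{n+1}) = -1$. Consequently the Gram matrix of $\{\rho, \alpha_n, \alpha_{n+1}\}$ is
\[
\begin{pmatrix} -2 & -1 & -1 \\ -1 & -2 & +1 \\ -1 & +1 & -2 \end{pmatrix},
\]
whose determinant vanishes (it is a balanced signed triangle, i.e.\ an instance of the affine $\widetilde{A}_2$ form up to sign flips of the basis). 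A direct check shows that $v = \rho - \alpha_n - \alpha_{n+1}$ lies in its kernel and is nonzero, so $\mathcal{F}(b)(v,v) = 0$, whence $\mathcal{F}(b)$ is not definite and $b$ is indefinite.

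The main obstacle will be the bookkeeping across the sub-cases rather than the computation itself. Reversing the order of the two letters in the word flips the sign of $\mathcal{F}(\alpha_n, \alpha_{n+1})$, while the alternative linking pattern $t < r < u < s$ permutes the four attaching points $\alpha_{r-1}, \alpha_{s-1}, \alpha_{t-1}, \alpha_{u-1}$ on the $A$-chain. In each variant I will need to verify that a suitable chain interval $\rho$ can still be chosen so that the resulting signed triangle on $\{\rho, \alpha_n, \alpha_{n+1}\}$ is again balanced (its three $\pm 1$ edge weights multiplying to $+1$), yielding the same type of singular $\widetilde{A}_2$-like Gram matrix from which a nonzero null vector of $\mathcal{F}(b)$ can be read off.
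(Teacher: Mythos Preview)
Your approach is correct and is essentially the same as the paper's: both exhibit a nonzero isotropic vector in the sublattice spanned by the classes of $a_{rs}$, $a_{tu}$, and a suitable chain sum from the second~$\delta_n$; the paper's four-generator version $(\alpha,\beta,\gamma,\theta)$ produces the same null vector $\alpha+\beta-\gamma-\theta$ that you write as $\rho-\alpha_n-\alpha_{n+1}$. One small refinement to your case bookkeeping: after relabeling so that $r<t<s<u$ there are only \emph{two} configurations left (which of the two letters comes first in $P$), not four, and conjugation by powers of $\delta_n$ does not interchange them; your interval $\rho=\alpha_t+\cdots+\alpha_{u-1}$ handles the order $a_{rs}a_{tu}$, while for the order $a_{tu}a_{rs}$ (where $\mathcal{F}(\alpha_n,\alpha_{n+1})=-1$) taking $\rho=\alpha_r+\cdots+\alpha_{s-1}$ gives the balanced triangle
\[
\begin{pmatrix} -2 & 1 & -1 \\ 1 & -2 & -1 \\ -1 & -1 & -2 \end{pmatrix}
\]
and completes the argument.
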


\begin{proof}
Suppose that $P$ contains a product $a_{rs} a_{tu}$ of linked letters. We can assume that $r < t < s < u$. Otherwise we conjugate $b$ by $\delta_n^{n-u+1}$. 

Let $\alpha, \beta_1, \beta_2, \ldots, \beta_{u-s}, \gamma, \theta$ be the elements of $H_1(F(b))$ corresponding respectively to the letters $\sigma_{s-1}, \sigma_s, \sigma_{s+1}, \ldots, \sigma_{u-1}$ in the second $\delta_n$, and $a_{rs}$ and $a_{st}$. (See Figure \ref{fig: alphars}.) Set $\beta = \beta_1 + \cdots + \beta_{u-s}$

From Lemma \ref{lemma: linking info} the restriction of the Seifert form of $F(b)$ to the span of $\alpha, \beta, \gamma, \theta$ has matrix 
$$\left(\begin{matrix} 
-1 & \;\;\; 0 & \;\;\; 0 & \;\;\; 0 \\
\;\;\; 1 & -1 & \;\;\; 0 & \;\;\; 0 \\
-1 & \;\;\; 0 & -1 & \;\;\; 0 \\
\;\;\; 0 & -1 & \;\;\; 1 & -1
\end{matrix} \right) $$
Hence $\mathcal{F}(b)|_{\langle \alpha, \beta, \gamma, \theta \rangle}$ is represented by the matrix 
$$\left(\begin{matrix} 
-2 & \;\;\; 1 & -1 & \;\;\; 0 \\
\;\;\; 1 & -2 & \;\;\; 0 & -1 \\
-1 & \;\;\; 0 & -2 & \;\;\; 1 \\
\;\;\; 0 & -1 & \;\;\; 1 & -2
\end{matrix} \right)$$
If $c_i$ is the $i^{th}$ column of this matrix, then $c_1 + c_2 - c_3 - c_4 = 0$, so $\mathcal{F}(b)$ indefinite.
\end{proof} 

\begin{lemma}
\label{lemma: when commuting implies indefinite} 
Suppose that $b = \delta_n^k P$ is a BKL-positive braid where $P$ contains a pair of commuting letters $a_{rs}, a_{tu}$ each of whose spans are bounded between $2$ and $n-2$. Then $b$ is indefinite. 
\end{lemma}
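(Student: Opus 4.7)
The plan is to exhibit a nonzero vector $v \in H_1(F(b))$ with $\mathcal{F}(b)(v,v) = 0$: since a negative definite form admits no nonzero isotropic vector, this forces $\mathcal{F}(b)$ to be not negative definite and hence $b$ to be indefinite. Throughout I tacitly assume $k \ge 2$, since without a second copy of $\delta_n$ the conclusion can fail (for instance $b = \delta_n a_{13} a_{n-2,n}$ has symmetrised Seifert form $A_1 \oplus A_1$). Writing $b = \delta_n \cdot (\delta_n^{k-1} P)$ and applying Lemma~\ref{lemma: linking info} with $\delta_n^{k-1} P$ in the role of ``$P$'', I would use the classes $\gamma_1,\dots,\gamma_{n-1}$ associated to the letters of the last internal copy of $\delta_n$ (the one immediately preceding $P$), together with the classes $\alpha$ and $\beta$ corresponding to $a_{rs}$ and $a_{tu}$.

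The first step is the reduction to a nested configuration. Two commuting BKL generators are exactly those whose chord representatives in the cyclic polygon on vertices $\{1,\dots,n\}$ are non-crossing, so they are either disjoint or nested. By Remark~\ref{rem: delta rotn}, conjugating $b$ by a power of $\delta_n$ realises a cyclic rotation of all chord endpoints, and any disjoint configuration can be rotated into a nested one. The bound $s-r \le n-2$ on the outer span further leaves room to simultaneously arrange $r \ge 2$ and $s \le n$, so that all four classes $\gamma_{r-1}, \gamma_{t-1}, \gamma_{u-1}, \gamma_{s-1}$ are defined and we are in the situation $2 \le r < t < u < s \le n$. This is the main technical obstacle: once it is in place the remainder is a mechanical pairing calculation.

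With the reduction done I would set
\[
v \;=\; \alpha + \beta + \sum_{i=r-1}^{t-1} \gamma_i \;-\; \sum_{j=u-1}^{s-1} \gamma_j,
\]
and read off the pairings from Lemma~\ref{lemma: linking info}: the diagonals $\mathcal{F}(\gamma_i,\gamma_i) = \mathcal{F}(\alpha,\alpha) = \mathcal{F}(\beta,\beta) = -2$; the chain entries $\mathcal{F}(\gamma_i,\gamma_{i+1}) = 1$; the four nontrivial $\alpha/\beta$-to-$\gamma$ entries $\mathcal{F}(\alpha,\gamma_{r-1}) = +1$, $\mathcal{F}(\alpha,\gamma_{s-1}) = -1$, $\mathcal{F}(\beta,\gamma_{t-1}) = +1$, $\mathcal{F}(\beta,\gamma_{u-1}) = -1$; and $\mathcal{F}(\alpha,\beta) = 0$ from commutativity. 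Conceptually, the pairing graph on the support of $v$ is a single cycle whose edge signs multiply to $+1$ (the two closing edges contribute $-1$ each, which cancel), realising an affine $\tilde A$ lattice whose radical generator is precisely $v$. A direct expansion confirms this: the two diagonal contributions give $-4$; each of the two $\gamma$-chains contributes $-2$ via the standard $A_m$ computation; the four cross-pairings of $\alpha$ and $\beta$ against the two chains, each entering with a factor of $2$, together contribute $+8$; and the two chains are not adjacent, so their cross-contribution vanishes. The total is $\mathcal{F}(v,v) = 0$, and since the coefficient of $\alpha$ in $v$ is $1$, $v \ne 0$. Hence $\mathcal{F}(b)$ is not negative definite, completing the proof.
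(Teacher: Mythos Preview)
Your argument is correct and takes a genuinely different route from the paper's.

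\textbf{Comparison.} The paper normalises to the \emph{disjoint} configuration $1 \le r < s < t < u \le n$ and then reads off, from the classes in the second $\delta_n$ together with $\alpha$ and $\beta$, a tree with two trivalent vertices; since such a tree is not simply laced Dynkin, the sublattice is indefinite. You instead normalise to the \emph{nested} configuration $2 \le r < t < u < s \le n$ and exhibit an explicit isotropic vector, effectively recognising an affine $\widetilde A$ sublattice. Both arguments use the same Seifert-form data from Lemma~\ref{lemma: linking info}; the paper's is perhaps more pictorial (one just draws the incidence graph), while yours is more direct (a single null vector) and makes the role of the span hypotheses transparent: the outer bound $s-r \le n-2$ is exactly what allows $\gamma_{r-1}$ to exist after rotation, and the inner bound $u-t \ge 2$ is exactly what makes $\mathcal F(G_1,G_2)=0$.

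\textbf{One remark on the reduction.} Your claim that a disjoint pair can always be rotated to a nested pair is correct, but it is worth saying explicitly why: cutting the circle at a point strictly between $r$ and $s$ (which exists because the span of $a_{rs}$ is at least~$2$) sends $a_{rs}$ to $a_{s-r,\,n}$ and $a_{tu}$ to $a_{t-r,\,u-r}$, which is nested with new outer lower index $s-r \ge 2$ and new inner span still $u-t \ge 2$. The already-nested case with $r=1$ is handled by a single further shift, using $s-r \le n-2$. With that sentence added, the reduction is watertight. Your observation that the lemma tacitly needs $k \ge 2$ is also correct and matches the paper's own (implicit) hypothesis.
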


\begin{proof}
Up to conjugation by a power of $\delta_n$, we can suppose that $1 \leq r < s < t < u \leq n$. Consider Figure \ref{fig: commute} which represents a braid contained in $\delta_n^2P$.
\begin{figure}[!ht] 
\centering
  \includegraphics[scale=0.7]{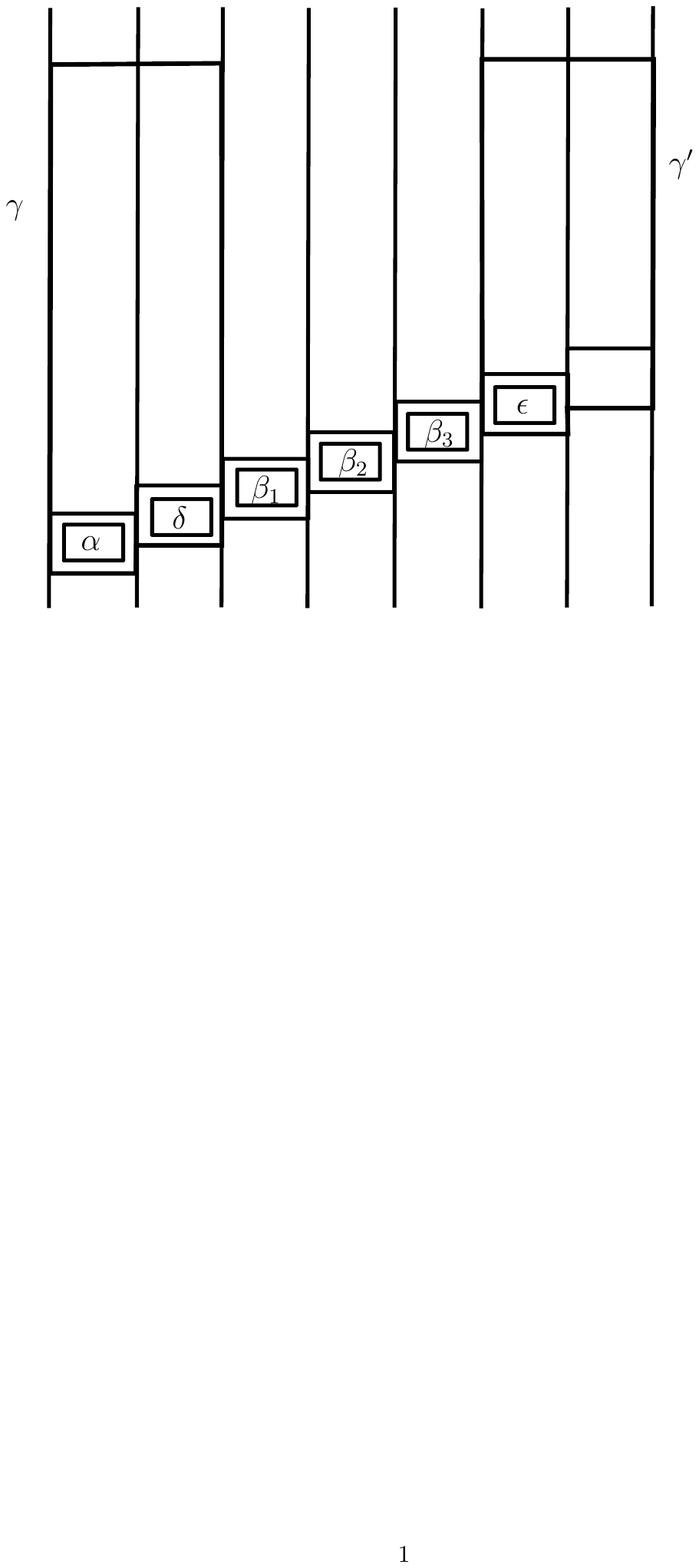}
\caption{} 
\label{fig: commute}
\end{figure} 
The associated graph is a tree with two vertices of valency $3$, as depicted in Figure \ref{fig: classes for 2 vertices valency 3 tree}, 
\begin{figure}[!ht] 
\centering
   \includegraphics[scale=0.7]{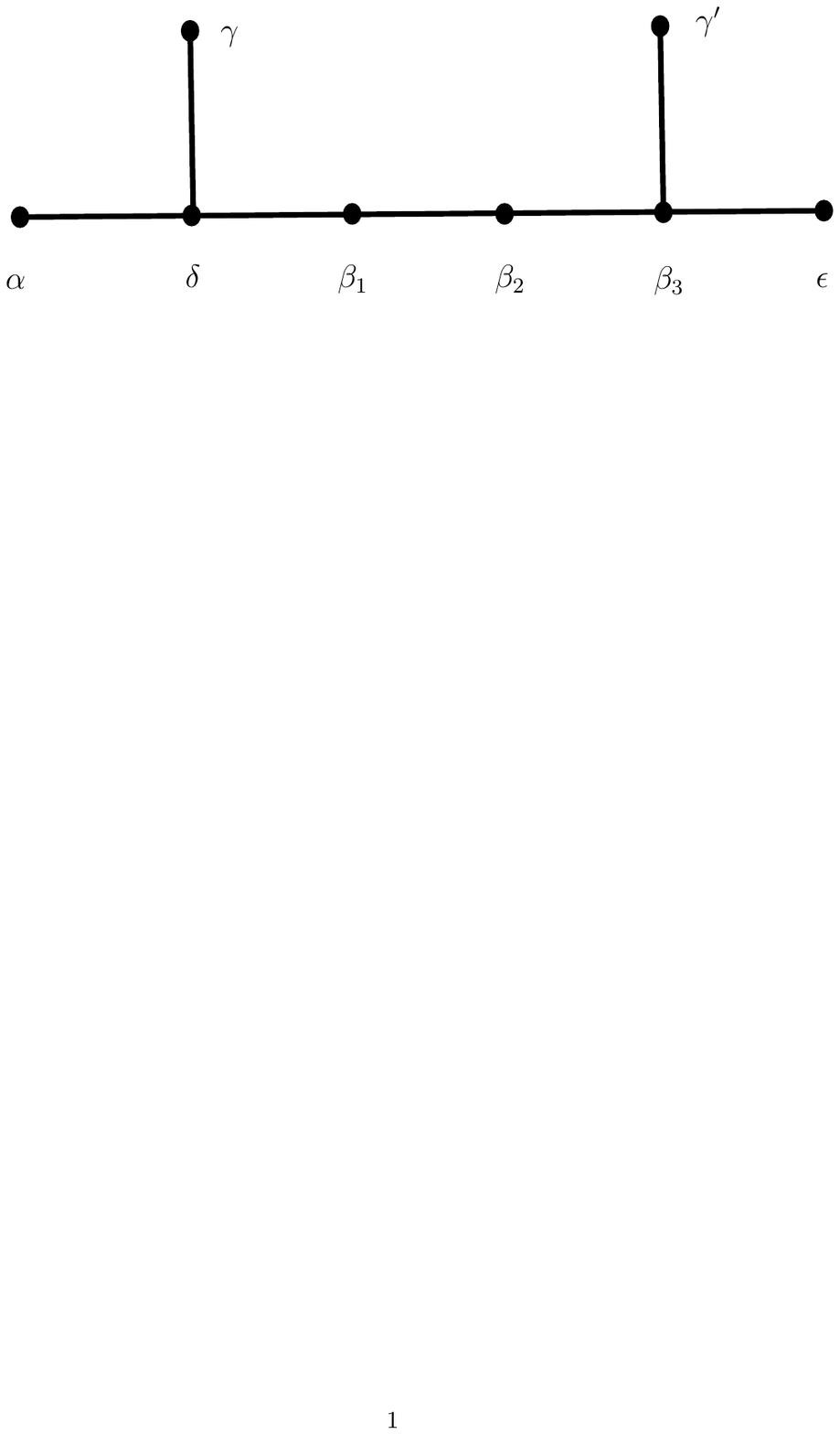}
\caption{} 
\label{fig: classes for 2 vertices valency 3 tree}
\end{figure}
which implies that $\mathcal{F}(b)$ is indefinite. 
\end{proof}

A {\it $2$-step descending staircase} is a product 
$$a_{st} a_{rs}$$
where $1 \leq r < s < t \leq n$. The letters $a_{rs}$ and $a_{st}$ are the {\it steps} of the staircase. 

\begin{lemma} 
 \label{lemma: 2-step descending stairs of length two or more indefinite} 
Suppose that $b = \delta_n^k P$ is a BKL-positive braid where $P$ contains a descending $2$-step staircase $a_{st} a_{rs}$ whose steps are of span $2$ or more. Then $b$ is indefinite.
\end{lemma}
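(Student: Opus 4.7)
The plan is to extract a five-class configuration in $H_1(F(b))$ whose restricted symmetrised Seifert form is (up to a sign change on one generator) the Cartan matrix of the affine Dynkin diagram $\widetilde D_4$, which is degenerate; this will immediately prevent $\mathcal F(b)$ from being definite.

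First I set up the classes. The hypotheses $s-r\ge 2$ and $t-s\ge 2$ give $3\le s\le n-2$, so the basis classes $\alpha_{s-2},\alpha_{s-1},\alpha_s\in H_1(F(b))$ coming from the letters $\sigma_{s-2},\sigma_{s-1},\sigma_s$ of the initial $\delta_n$ all exist. Let $\gamma$ and $\theta$ denote the basis classes associated to the letters $a_{st}$ and $a_{rs}$ of $P$, respectively. Since these come from five distinct letters of $b$, the set $\{\alpha_{s-2},\alpha_{s-1},\alpha_s,\gamma,\theta\}$ is linearly independent in $H_1(F(b))$.

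Next I apply Lemma \ref{lemma: linking info} to compute $\mathcal F(b)$ on the subspace $V=\langle\alpha_{s-2},\alpha_{s-1},\alpha_s,\gamma,\theta\rangle$. In addition to the standard chain contributions $\mathcal F(\alpha_{s-2},\alpha_{s-1})=\mathcal F(\alpha_{s-1},\alpha_s)=1$, the only non-zero off-diagonal entries turn out to be $\mathcal F(\alpha_{s-1},\gamma)=1$ and $\mathcal F(\alpha_{s-1},\theta)=-1$; every other cross-term vanishes. The crucial vanishing is $\mathcal F(\gamma,\theta)=0$: because $\gamma$ precedes $\theta$ in the word, this comes from the $i>j$ clause of Lemma \ref{lemma: linking info} applied with $(r,s,t,u)=(r,s,s,t)$, and both non-trivial cases collapse to the false inequality $s<s$.

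After negating $\theta$, the restricted Gram matrix becomes that of $\widetilde D_4$: a central vertex $\alpha_{s-1}$ of weight $-2$ joined by weight-$+1$ edges to the four leaves $\alpha_{s-2},\alpha_s,\gamma,-\theta$ (each of weight $-2$). The radical of this form is one-dimensional, spanned by the vector with centre coefficient $2$ and leaf coefficients $1$, which in our original basis reads
$$ v \;=\; \alpha_{s-2} + 2\alpha_{s-1} + \alpha_s + \gamma - \theta. $$
A direct five-line check confirms $\mathcal F(b)(v,w)=0$ for every basis vector $w$ of $V$, so $\mathcal F(b)(v,v)=0$; since $v\ne 0$, the form $\mathcal F(b)$ admits a non-zero isotropic vector and therefore cannot be definite, so $b$ is indefinite.

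The only real obstacle is the bookkeeping of Lemma \ref{lemma: linking info}'s sign conventions: verifying that each of $\alpha_{s-2}$ and $\alpha_s$ pairs trivially with both $\gamma$ and $\theta$, and that $\gamma$ pairs trivially with $\theta$, requires checking a handful of cases, all of which follow immediately from the span bounds $s-r\ge 2$ and $t-s\ge 2$.
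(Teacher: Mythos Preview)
Your proof is correct and follows essentially the same approach as the paper. Both arguments single out the five classes associated to $\sigma_{s-2},\sigma_{s-1},\sigma_s,a_{st},a_{rs}$ (the paper calls them $\theta_1,\theta_0,\theta_2,\theta_4,\theta_3$, negating the last), observe via Lemma~\ref{lemma: linking info} that the resulting incidence graph is a tree with a valency-$4$ vertex (i.e.\ $\widetilde D_4$), and conclude indefiniteness. One small terminological slip: the classes $\alpha_{s-2},\alpha_{s-1},\alpha_s$ come from the \emph{second} $\delta_n$ factor of $b=\delta_n^kP$ (the first $\delta_n$ of $P'=\delta_n^{k-1}P$ when you write $b=\delta_n P'$), not the initial one; the argument thus implicitly uses $k\ge 2$, which is the standing hypothesis throughout this section.
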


\begin{proof}
Define classes in $H_1(F(b))$ as follows
\vspace{-.2cm} 
\begin{itemize}

\item $\theta_1$ corresponds to the class associated to $\sigma_{s-2}$;

\vspace{.2cm} \item $\theta_0$ corresponds to the class associated to $\sigma_{s-1}$; 

\vspace{.2cm} \item $\theta_2$ corresponds to the class associated to $\sigma_{s}$; 

\vspace{.2cm} \item $\theta_3$ corresponds to the negative of the class associated to $a_{rs}$; 

\vspace{.2cm} \item $\theta_4$ corresponds to the class associated to $a_{st}$;

\end{itemize}
It follows from Lemma \ref{lemma: linking info} that $\mathcal{F}(b)(\theta_i, \theta_i) = -2$ for each $i$, $\mathcal{F}(b)(\theta_i, \theta_j) \in \{0, 1\}$ for $i \ne j$, and the associated graph is a tree with a vertex of valency $4$. Hence, the restriction of $\mathcal{F}(b)$ to it is indefinite.
\end{proof}

\begin{lemma}
\label{lemma: contains sigma1} 
If $b = \delta_n^2 P$ is a BKL-positive definite $n$-braid and $n(\widehat b) = n$, then $\sigma_1$ is contained in some conjugate of $P$ by a power of $\delta_n$.
\end{lemma}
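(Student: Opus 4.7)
I will argue by contradiction, assuming that no $\delta_n$-conjugate of $P$ contains $\sigma_1$. Identity~(\ref{conj by delta}) shows that the $\langle\delta_n\rangle$-orbit of $\sigma_1$ under conjugation is precisely $\{\sigma_1,\sigma_2,\ldots,\sigma_{n-1},a_{1n}\}$, so the assumption is equivalent to: every letter of $P$---and of each of its $\delta_n$-conjugates---has span in $\{2,3,\ldots,n-2\}$. Two easy cases must be dispatched immediately. If $P=1$ then $\widehat b=T(2,n)=\widehat{\delta_2^{\,n}}$, so $n(\widehat b)\le 2<n$ for $n\ge 3$, contradicting $n(\widehat b)=n$. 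If $P=a_{rs}$ is a single letter, then $a_{rs}$ vacuously commutes with every other letter of $P$ and no letter of $P$ has both indices strictly between $r$ and $s$, so Lemma~\ref{lemma: innermost commute} forces $n(\widehat b)<n$, again a contradiction. Hence $|P|\ge 2$.

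With $|P|\ge 2$, I extract the structure forced by definiteness. Lemma~\ref{lemma: bound on spans}(1) restricts the span of each letter of $P$ to $\{2,3,n-3,n-2\}$; Lemma~\ref{lemma: no linking} forbids any pair of linked letters in $P$; and Lemma~\ref{lemma: when commuting implies indefinite}, applicable because every span lies in $[2,n-2]$, forbids any pair of commuting letters in $P$. Since any two distinct BKL letters either share an index, commute, or are linked, it follows that any two distinct letters of $P$ share a common index. After conjugating $b$ by a suitable power of $\delta_n$ (allowable since that preserves both the hypothesis and the conclusion), I may assume that one letter of $P$ is either $a_{13}$ (when some letter of $P$ has span $2$) or $a_{1,n-1}$ (otherwise); the shared-index constraint then restricts the companion letters to an explicit short list, and the resulting two- or three-letter configurations can be enumerated exhaustively.

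For each such configuration I expect the contradiction to come from one of three devices: (i)~recognising a descending two-step staircase, whereupon Lemma~\ref{lemma: 2-step descending stairs of length two or more indefinite} applies directly; (ii)~performing a BKL-positive rewriting---legitimate because $b$ is fibred by Banfield's theorem \cite[Theorem 4.2]{Ban}, so $F(b)$ is the fibre surface of $\widehat b$ and hence independent of the BKL word chosen---to surface a letter of span in $[4,n-4]$, invoking Lemma~\ref{lemma: bound on spans}(1); or (iii)~computing the symmetrised Seifert form directly via Lemma~\ref{lemma: linking info} and identifying the resulting intersection diagram as an affine Dynkin diagram of type $\tilde D_n$, which is only semi-definite and thus incompatible with definiteness of $\mathcal{F}(b)$. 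I expect device~(iii) to be the main obstacle: configurations such as $P=a_{13}\,a_{1,n-1}$ admit no BKL-positive rewriting that raises a span, so the only recourse is the explicit Seifert-form calculation, in which one reads off the Y--chain--Y pattern from Lemma~\ref{lemma: linking info} and writes down an explicit null vector on that subdiagram. In the small-$n$ cases where Lemma~\ref{lemma: bound on spans}(2)(a) and (3)(a) do not yet rule out spans $3$, $n-3$ or squared span-$2$ letters, one supplements the same trichotomy using the primitive $E_n$ and $E_{n+1}$ sublattices furnished by parts~(2)(b) and (3)(b) of Lemma~\ref{lemma: bound on spans}.
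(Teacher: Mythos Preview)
Your proposal is not a proof; it is a plan. The first paragraph is fine: the reformulation in terms of spans is correct, and the cases $P=1$ and $|P|=1$ are handled cleanly via Lemmas~\ref{lemma: not minimal} and~\ref{lemma: innermost commute}. But for $|P|\ge 2$ you only outline a strategy. You write that the configurations ``can be enumerated exhaustively'' without enumerating them, and then that you ``expect the contradiction to come from'' one of three devices, flagging device~(iii) as ``the main obstacle.'' None of this is carried out. You also never bound $|P|$: the phrase ``two- or three-letter configurations'' is unjustified, since pairwise index-sharing does not by itself limit the number of letters (think of long words like $a_{13}a_{35}a_{13}a_{35}\cdots$, or powers $a_{13}^k$, which have to be excluded by further arguments you only gesture at). Even granting that such a case analysis could be completed, it would be substantially longer and would invoke Lemmas~\ref{lemma: bound on spans}, \ref{lemma: no linking}, \ref{lemma: when commuting implies indefinite}, \ref{lemma: 2-step descending stairs of length two or more indefinite}, and the material of \S\ref{sec: e6, e7, e8}, with separate treatment of small~$n$.

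The paper's argument avoids all of this with a two-line maximality trick using only Lemmas~\ref{lemma: not minimal} and~\ref{lemma: no linking}. Since $n(\widehat b)=n$, every $\delta_n$-conjugate of $P$ covers $\sigma_1$ and hence contains some $a_{1s'}$; take $s$ maximal among all such $s'$ over all conjugates, and arrange (by conjugating) that $P$ itself contains $a_{1s}$. If $s<n$, then $P$ must also cover $\sigma_{n-1}$, so $P$ contains some $a_{rn}$. Definiteness and Lemma~\ref{lemma: no linking} force $s\le r$ (the alternative $1<r<s<n$ would make $a_{1s}$ and $a_{rn}$ linked). But then $\delta_n P\delta_n^{-1}$ contains $\delta_n a_{rn}\delta_n^{-1}=a_{1,r+1}$ with $r+1>s$, contradicting the maximality of $s$. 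Hence $s=n$, so $P\supseteq a_{1n}$ and $\delta_n P\delta_n^{-1}\supseteq \sigma_1$. Notice that this uses definiteness exactly once, through the non-linking lemma, and never needs to examine the structure of $P$ beyond two of its letters.
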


\begin{proof}
Lemma \ref{lemma: not minimal} implies that each conjugate of $P$ by some $\delta_n^k$ contains a letter of the form $a_{1s'}$. Let $s$ be the largest such $s'$. 
Without loss of generality we can suppose that $P \supseteq a_{1s}$. If $s < n$, Lemma \ref{lemma: not minimal} implies that $P$ also contains a letter of the 
form $a_{rn}$. Then Lemma \ref{lemma: no linking} shows that $s \leq r$. But then $\delta_n P \delta_n^{-1} \supseteq \delta_n a_{rn} \delta_n^{-1} = a_{1, r+1}$, 
which is a contradiction since $s \leq r < r+1 \leq n$. Thus $s = n$ so that $P \supseteq a_{1n}$. Then  $\delta_n P \delta_n^{-1} \supseteq \sigma_1$, which is what we set out to prove.
\end{proof} 

\section{BKL-positive braids with forms $E_6, E_7$, and $E_8$} 
\label{sec: e6, e7, e8}

In this section we identify certain situations under which the symmetrised Seifert form of a basket is either $E_6, E_7$, or $E_8$. In this case,
\begin{equation} 
\label{eqn: l(P) bound} 
(n - 1) + l_{BKL}(P) = \mbox{{\rm rank}} \; H_1(F(b)) = \left\{ 
\begin{array}{ll}
6 & \mbox{if } \mathcal{F}(b) \cong E_6 \\ 
7 & \mbox{if } \mathcal{F}(b) \cong E_7 \\ 
8 & \mbox{if } \mathcal{F}(b) \cong E_8 
\end{array} \right.
\end{equation}

\subsection{Definite extensions of $E_6, E_7$, and $E_8$} 

For convenience we use the positive definite versions of $E_6, E_7$, and $E_8$ in the next lemma. 

\begin{lemma} \label{lemma: e* case}
Suppose that $A^* = \left(\begin{matrix}   A & R^T  \\ 
R & 2  \end{matrix}\right)$ is a symmetric bilinear positive definite form on $\mathbb Z^{d}$ where $R = (0, 0, \ldots, 0, 1)$. Then,

$(1)$ $A \cong E_6$ implies that $A^* \cong E_7$.

$(2)$ $A \cong E_7$ implies that $A^* \cong E_8$.

$(3)$ $A \not \cong E_8$.
\end{lemma}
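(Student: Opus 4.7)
The plan is to exploit the determinant of $A^*$. Expanding along its last column, whose only non-zero entries are $(A^*)_{d-1,d} = 1$ and $(A^*)_{d,d} = 2$, yields
\[
\det(A^*) = 2\det(A) - \det(A''),
\]
where $A''$ is the $(d-2)\times(d-2)$ principal minor of $A$ obtained by deleting its last row and last column. Since $A''$ is the Gram matrix of a positive definite even sublattice of $A$ of rank $d-2$, its determinant is bounded below by the minimum determinant of a positive definite even lattice of that rank.

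Two classical facts will be invoked. First, the minimum determinants of positive definite even lattices of ranks $5$, $6$, $7$, $8$ are $4$, $3$, $2$, $1$, attained by $D_5$, $E_6$, $E_7$, $E_8$ respectively; these bounds follow either from the known values of Hermite's constants $\gamma_5,\ldots,\gamma_8$ together with the evenness constraint (minimal non-zero norm $\geq 2$), or from the Milgram formula for discriminant forms. Moreover, $E_8$ is the unique positive definite even unimodular lattice of rank $8$, and no such lattice exists in rank $7$ since the rank of any positive definite even unimodular lattice must be divisible by $8$. Second, by Witt's classification, any positive definite even lattice generated by vectors of norm $2$ is an orthogonal direct sum of irreducible root lattices of ADE type.

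With these tools the case analysis is routine. In case (3), $\det A = 1$ and $\det(A'') \geq 2$ force $\det(A^*) \leq 0$, contradicting positive-definiteness. In case (2), $\det A = 2$ and $\det(A'') \geq 3$ give $1 \leq \det(A^*) \leq 1$, so $\det(A^*) = 1$; then $A^*$ is a rank-$8$ positive definite even unimodular lattice, hence $A^* \cong E_8$. In case (1), $\det A = 3$ gives $\det(A^*) \leq 2$, and the possibility $\det(A^*) = 1$ is ruled out by the rank-$7$ obstruction, so $\det(A^*) = 2$; the given basis of $A^*$ consists of norm-$2$ vectors, so $A^*$ is a rank-$7$ root lattice of ADE type, and inspection shows that only $E_7$ among such lattices has determinant $2$, whence $A^* \cong E_7$.

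The main obstacle is justifying the minimum-determinant bounds in ranks $5$ and $6$; apart from that, each step is either a direct determinant computation, the unimodularity obstruction in rank $7$, the uniqueness of $E_8$ in rank $8$, or an application of Witt's theorem on root lattices.
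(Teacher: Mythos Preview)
Your argument shares the paper's skeleton---the determinant identity $\det(A^*) = 2\det(A) - \det(A'')$ followed by bounding $\det(A'')$ from below---but the two proofs diverge in how that bound is obtained. You invoke Hermite's constants $\gamma_5,\gamma_6,\gamma_7$ to get $\det(A'') \geq 4,3,2$ in ranks $5,6,7$ directly. The paper instead argues more concretely: in rank $5$ it uses a parity trick (writing an even form as $C+C^T$ and observing $\det(C+C^T)\equiv\det(C-C^T)=0\pmod 2$) to reduce to $\det(A'')\in\{2,4\}$, then rules out $2$ via the classification fact (from Griess) that every positive definite lattice of determinant $2$ and rank at most $7$ is either diagonalisable or isomorphic to $E_7$; in rank $6$ it uses the analogous fact for determinant $3$ and $E_6$. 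Both routes are valid; yours is more uniform but imports the exact values of $\gamma_5,\gamma_6,\gamma_7$, while the paper's relies on one cited low-rank classification.

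There is one slip in your case (1): the claim that ``the given basis of $A^*$ consists of norm-$2$ vectors'' is not justified. The hypothesis is only that $A$ is \emph{congruent} to $E_6$, so the diagonal entries of the matrix $A$ need not all equal $2$. The repair is immediate: since $A\cong E_6$ is a root lattice, $A$ is generated as a lattice by its norm-$2$ vectors, and together with $e_d$ (which does have norm $2$) these generate $A^*$; hence $A^*$ is a root lattice and your appeal to Witt's theorem goes through. With this correction your proof is complete.
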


\begin{proof}
We remark that $E_8$ is the unique even symmetric, bilinear positive definite unimodular lattice of rank $8$ or less; $E_7$ is the unique non-diagonalisable symmetric, bilinear positive definite lattice of determinant $2$ and rank $7$ or less; and $E_6$ is the unique non-diagonalisable symmetric, bilinear positive definite lattice of determinant $3$ and rank $6$ or less. See the last paragraph of the introduction of \cite{Gri} for a discussion of these statements and further references. 

Write $A = \left(\begin{matrix}   B & *  \\ * & *  \end{matrix}\right)$ where $B$ is a symmetric bilinear form on $\mathbb Z^{d-2}$ and observe that as $A^*$ is positive definite, $0 < \det(B)$ and $0 < \det(A^*)$. On the other hand, 
$$\det(A^*)= 2 \det(A) - \det(B)$$ 
and therefore 
$$0 < \det(B) < 2\det(A)$$ 

If $A \cong E_6$, then $\det(A) = 3$, so that $0 < \det(B) < 6$. On the other hand, $B$ is an even form so it can be written $B = C + C^T$ where $C$ is a $5 \times 5$ matrix with integer coefficients. Then as $C - C^T$ is skew-symmetric, $0 = \det(C - C^T) \equiv \det(C + C^T) \hbox{ {\rm(mod $2$)}} = \det(B)$. Thus $\det(B) \in \{2, 4\}$. If $\det(B) = 2$, then $B$ is a definite diagonal form (see the first paragraph of the proof) and hence has determinant at least $32$, a contradiction. Thus $\det(B) = 4$ and therefore the determinant of the even form $A^*$ is $2 \det(A) - \det(B) = 2$. Another application of the remarks of the first paragraph implies that $A^* \cong E_7$.

If $A \cong E_7$ then $\det(A) = 2$, so that $0 < \det(B) < 4$. On the other hand, $B$ is an even form of rank $6$ and as there are no even unimodular lattices of rank less than $8$, $\det(B) \in \{2, 3\}$. If $B$ was a diagonalisable form its determinant would be at least $64$, thus by the remarks in the first paragraph it must be congruent to $E_6$ and therefore have determinant $3$. It follows that $\det(A^*) = 1$ and therefore $A^*$ is an even symmetric unimodular form on $\mathbb Z^8$. Consequently, $A^* \cong E_8$. 

Finally suppose that $A \cong E_8$. Then $\det(A) = 1$ so that $0 < \det(B) < 2$. Thus $\det(B) = 1$, implying that $B$ is an even symmetric unimodular form of rank $7$, a contradiction. This completes the proof. 
\end{proof}

\begin{lemma} 
\label{lemma: e_n}
Let $b_0 \subset b = \delta_n P$ be BKL-positive $n$-braids where $l_{BKL}(b) - l_{BKL}(b_0) = 1$ and $F(b_0)$ is connected. Let $a_{rs}$ be the letter of $b$ not contained in $b_0$ and suppose that $\alpha \in H_1(F(b))$ corresponds to $a_{rs}$. Suppose as well that there is a class 
$\beta \in H_1(F(b_0))$ such that $\mathcal{F}(b)(\alpha, \beta) = \pm1$. 

$(1)$ If $\mathcal{F}(b_0) \cong E_6$ and $b$ is definite, then $\mathcal{F}(b) \cong E_7$.

$(2)$ If $\mathcal{F}(b_0) \cong E_7$ and $b$ is definite, then $\mathcal{F}(b) \cong E_8$.

$(3)$ If $\mathcal{F}(b_0) \cong E_8$, then $b$ is indefinte.

\end{lemma}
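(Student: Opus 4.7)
The plan is to reduce to Lemma~\ref{lemma: e* case} by writing $\mathcal{F}(b)$ as a rank-one extension of $\mathcal{F}(b_0)$ in an adapted integral basis.

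First I would record the block form of $\mathcal{F}(b)$. Since $b_0 \subseteq b$, the inclusion $F(b_0) \hookrightarrow F(b)$ realises $H_1(F(b_0))$ as a primitive direct summand of $H_1(F(b))$, as recalled in \S\ref{subsec: symm seifert forms of baskets}; since $l_{BKL}(b) - l_{BKL}(b_0) = 1$, one in fact has $H_1(F(b)) = H_1(F(b_0)) \oplus \mathbb{Z}\alpha$. The restriction of $\mathcal{F}(b)$ to $H_1(F(b_0))$ is $\mathcal{F}(b_0)$, and $\mathcal{F}(b)(\alpha,\alpha) = -2$ by Remark~\ref{rem: root lattice}. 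Hence, with respect to any basis of $H_1(F(b_0))$ augmented by $\alpha$, the Gram matrix of $\mathcal{F}(b)$ takes the form
\[
G \;=\; \begin{pmatrix} \mathcal{F}(b_0) & R^T \\ R & -2 \end{pmatrix},
\]
where $R$ records the pairings $\mathcal{F}(b)(\alpha, \cdot)$ against that basis.

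Next I would put $R$ into the shape required by Lemma~\ref{lemma: e* case}. The linear functional $\phi : H_1(F(b_0)) \to \mathbb{Z}$, $\phi(\gamma) = \mathcal{F}(b)(\alpha,\gamma)$, takes the value $\pm 1$ on $\beta$ by hypothesis and is therefore surjective. Picking any $\mathbb{Z}$-basis of $\ker\phi$ and adjoining a vector in $\phi^{-1}(1)$ (which exists by primitivity of $\phi$) gives a basis of $H_1(F(b_0))$ in which $R = (0,\ldots,0,1)$; such a change of basis preserves the isomorphism class of the sublattice $\mathcal{F}(b_0)$, so it still realises $E_6$, $E_7$, or $E_8$ as appropriate.

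Finally I would apply Lemma~\ref{lemma: e* case} to $-G$, using that negating all basis vectors turns a negative definite form into a positive definite form of the same isomorphism class. In parts (1) and (2), the assumed definiteness of $\mathcal{F}(b)$ puts us squarely in the hypotheses of Lemma~\ref{lemma: e* case}, and parts (1) and (2) of that lemma identify $-\mathcal{F}(b)$, and hence $\mathcal{F}(b)$, with $E_7$ and $E_8$ respectively. For part (3), if $\mathcal{F}(b)$ were definite the same construction would produce a positive definite extension of $E_8$ of the shape forbidden by Lemma~\ref{lemma: e* case}(3); so $\mathcal{F}(b)$ must be indefinite. The only subtle point I expect is the existence of the integral basis putting $R$ into normal form $(0,\ldots,0,1)$, which is exactly where the hypothesis $\mathcal{F}(b)(\alpha,\beta) = \pm 1$ is used; everything else is just sign bookkeeping.
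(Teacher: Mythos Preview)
Your proposal is correct and follows essentially the same route as the paper: realise $H_1(F(b)) = H_1(F(b_0)) \oplus \mathbb{Z}\alpha$, use the hypothesis $\mathcal{F}(b)(\alpha,\beta) = \pm 1$ to find an integral basis of $H_1(F(b_0))$ in which the off-diagonal row is $(0,\ldots,0,1)$, and then invoke Lemma~\ref{lemma: e* case} on the negated form. The paper constructs that basis slightly differently---it extends the primitive vector $\beta$ to a basis and then clears the other entries by shearing $\gamma_i \mapsto \gamma_i + \mathcal{F}(b)(\gamma_i,\alpha)\beta$---but your kernel-plus-section construction achieves the same thing.

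One small slip: ``negating all basis vectors turns a negative definite form into a positive definite form'' is not what you want to say, since $(-v_i,-v_j) = (v_i,v_j)$ leaves the Gram matrix unchanged. What you mean (and what you in fact do) is to replace the form $\mathcal{F}(b)$ by $-\mathcal{F}(b)$, i.e.\ pass from $G$ to $-G$; this is exactly what the paper does as well.
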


\begin{proof}
Up to replacing $\alpha$ by its negative, we can suppose that $\mathcal{F}(b)(\alpha, \beta) = -1$. Since $\beta$ is primitive, there is a basis $\{\gamma_1, \gamma_2, \ldots, \gamma_r\}$ of $H_1(F(b_0))$ where $\gamma_r = \beta$. After replacing $\gamma_i$ by $\gamma_i + \mathcal{F}(b)(\gamma_i, \alpha) \gamma_r$ for $1 \leq i \leq r-1$ we can suppose that 
$$\mathcal{F}(b)(\gamma_i, \alpha) = \left\{ \begin{array}{rl} 0 & \mbox{ if } 1 \leq i \leq r -1 \\ -1 & \mbox{ if } i = r \end{array} \right.$$
The lemma now follows from Lemma \ref{lemma: e* case} after replacing $\mathcal{F}(b)$ by $-\mathcal{F}(b)$.  
\end{proof}

\begin{cor}
\label{cor: contains deltan2}
Suppose that $b = \delta_n^2 P \in B_n$ is a definite BKL-positive braid and $P \supset P_0$ such that $\mathcal{F}(\delta_n^2 P_0) \cong E_6, E_7$ or $E_8$. Then $\mathcal{F}(b) \cong E_6, E_7$ or $E_8$. 
\end{cor}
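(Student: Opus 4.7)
The plan is to interpolate between $b_0$ and $b$ by re-inserting the ``missing'' letters of $P\setminus P_0$ one at a time and to apply Lemma \ref{lemma: e_n} at each stage. If $l_{BKL}(P) = l_{BKL}(P_0)$ there is nothing to prove, so assume $k := l_{BKL}(P) - l_{BKL}(P_0) > 0$. Order the extra letters by their position in $P$ and re-insert them one at a time, producing a chain
$$b_0 \subset b_1 \subset \cdots \subset b_k = b$$
with $l_{BKL}(b_{i+1}) - l_{BKL}(b_i) = 1$ at each step. Each $b_i = \delta_n^2 P_i$ contains $\delta_n$ as a subword, so $F(b_i)$ is connected and Lemma \ref{lemma: e_n} applies to every adjacent pair. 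Moreover each $\mathcal{F}(b_i)$ sits as a primitive sublattice of the definite form $\mathcal{F}(b)$, hence is itself definite.

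The key step is to verify, at every stage, the linking hypothesis of Lemma \ref{lemma: e_n}: that the new basis class $\alpha_{i+1} \in H_1(F(b_{i+1}))$ pairs as $\pm 1$ with some element of $H_1(F(b_i))$. Writing $b_{i+1} = \delta_n \widetilde P_{i+1}$ with $\widetilde P_{i+1} = \sigma_1 \cdots \sigma_{n-1} P_{i+1}$ (so the basis and linking conventions of \S \ref{sec: seifert forms of baskets} apply), the basis of $H_1(F(b_{i+1}))$ contains classes $\beta_1, \ldots, \beta_{n-1}$ for the letters of the second $\delta_n$, all of which lie in the image of $H_1(F(b_i))$. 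If the inserted letter is $a_{rs}$, I will take $\beta := \beta_{s-1}$ (well defined since $2 \leq s \leq n$) and apply Lemma \ref{lemma: linking info} to the pair $(\alpha_{i+1}, \beta_{s-1})$: $\beta_{s-1}$ has position $s-1 \leq n-1$ in $\widetilde P_{i+1}$, while $\alpha_{i+1}$ has position $\geq n$, so $\alpha_{i+1}$ comes later. Taking $(t,u) = (s-1, s)$ in the lemma, the first condition $r \leq t < s \leq u$ becomes $r \leq s-1 < s \leq s$, which holds (since $r < s$ forces $r \leq s-1$), giving $lk(\alpha_{i+1}^+, \beta_{s-1}) = -1$; conversely $lk(\beta_{s-1}^+, \alpha_{i+1}) = 0$ by the ``otherwise'' clause, since $\beta_{s-1}$ precedes $\alpha_{i+1}$. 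Summing, $\mathcal{F}(b_{i+1})(\alpha_{i+1}, \beta_{s-1}) = -1$, as required. Note that the second copy of $\delta_n$ in $b = \delta_n^2 P$ is essential here: it is precisely what supplies the class $\beta_{s-1}$ with the needed linking.

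With this in hand, Lemma \ref{lemma: e_n} can be applied inductively. If ever $\mathcal{F}(b_i) \cong E_8$ for some $i < k$, then part (3) of the lemma would force $\mathcal{F}(b_{i+1})$ to be indefinite, contradicting the definiteness noted above; hence the induction stays within parts (1) and (2) and raises the form along $E_6 \to E_7 \to E_8$ at each step. Depending on whether $\mathcal{F}(b_0) \cong E_6$, $E_7$, or $E_8$, this forces $k \leq 2$, $k \leq 1$, or $k = 0$ respectively, and in every case $\mathcal{F}(b) \cong E_6$, $E_7$, or $E_8$.

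The main obstacle, handled in the middle paragraph, is ruling out the possibility that a newly inserted class is orthogonal to the already built $E_k$ sublattice, which would produce an $E_k \oplus \langle -2 \rangle$ form lying outside the scope of Lemma \ref{lemma: e_n}. Exhibiting the explicit partner $\beta_{s-1}$ coming from the second $\delta_n$ is what rules this out uniformly and makes the induction go through.
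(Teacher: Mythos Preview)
Your proof is correct and follows the same approach as the paper's: interpolate by re-inserting letters one at a time, use a class from the second $\delta_n$ factor as the partner $\beta$, and apply Lemma~\ref{lemma: e_n} inductively. Your explicit choice $\beta = \beta_{s-1}$ and the verification via Lemma~\ref{lemma: linking info} are a bit more detailed than the paper's (which simply asserts that ``some such class $\beta$ satisfies $\mathcal{F}(b)(\alpha,\beta)=\pm 1$''), but the argument is otherwise identical.
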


\begin{proof}
There is nothing to prove if $P_0 = P$ so suppose otherwise and let $P_1 \subseteq P$ be obtained from $P_0$ by reinserting a letter $a_{rs}$ of $P \setminus P_0$. Denote by $\alpha$ the associated class in $H_1(F(\delta_n^2 P_1))$. For each $1 \leq i \leq n-1$, $H_1(F(\delta_n^2 P_0))$ contains the class associated to the letter $\sigma_i$ of the second $\delta_n$ factor of $b = \delta_n^2 P$ and some such class $\beta$ satisfies $\mathcal{F}(b)(\alpha, \beta) = \pm1$ by Lemma \ref{lemma: linking info}. Lemma \ref{lemma: e_n} then implies that $\mathcal{F}(\delta_n^2 P_1) \cong E_7$, or $E_8$. The conclusion of the lemma now follows by induction on the number of elements in $P \setminus P_0$. 
\end{proof}

\subsection{Some BKL-positive braids with form $E_6, E_7$, or $E_8$ }
\label{subsec: other e configs}

\begin{lemma}
\label{lemma: small span}
Suppose that  $n \geq 6$ and that $b = \delta_n^2 P$ is a BKL-positive definite $n$-braid. If $P$ contains a letter of span $3$ or $n-3$, then $n \leq 8$ and $\mathcal{F}(b) \cong E_6, E_7$, or $E_8$.
\end{lemma}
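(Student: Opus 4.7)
The plan is to combine Lemma \ref{lemma: bound on spans} with Corollary \ref{cor: contains deltan2}. The first step is immediate: since $b$ is definite and $P$ contains a letter of span $l \in \{3, n-3\}$, Lemma \ref{lemma: bound on spans}(2)(a) rules out $n \geq 9$, so I may assume $n \in \{6, 7, 8\}$.

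The core of the proof is to extract a small sub-braid of $b$ whose symmetrised Seifert form is already $E_n$. Identity (\ref{conj by delta}) shows that conjugating $b$ by a suitable power of $\delta_n$ preserves both the shape $\delta_n^2 \cdot (\mbox{BKL-positive word})$ and the isomorphism class of $\mathcal{F}(b)$, so I may assume that $P$ contains the letter $a_{1, l+1}$ as a subword. Setting $P_0 = a_{1, l+1}$ and $b_0 = \delta_n^2 P_0$, we have $P_0 \subseteq P$ and $l_{BKL}(b_0) = 2(n-1) + 1$, so the surface $F(b_0)$ has first Betti number exactly $n$. The proof of Lemma \ref{lemma: bound on spans}(2)(b) already exhibits $n$ classes in $H_1(F(b_0))$ whose intersection graph is the $E_n$ Dynkin diagram of Figure \ref{fig: e6 tree}; these classes span a rank-$n$ sublattice isomorphic to $E_n$ inside the rank-$n$ integral lattice $\mathcal{F}(b_0)$. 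A determinant count (the sublattice index $k$ satisfies $k^2 \mid \det(E_n) \in \{3, 2, 1\}$ for $n = 6, 7, 8$, forcing $k = 1$) then yields $\mathcal{F}(b_0) \cong E_n$.

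Finally, Corollary \ref{cor: contains deltan2} applies with this choice of $P_0$ and delivers $\mathcal{F}(b) \cong E_6$, $E_7$, or $E_8$. The only step that requires a moment's thought is the identification $\mathcal{F}(b_0) \cong E_n$ in the previous paragraph, since the statement of Lemma \ref{lemma: bound on spans}(2)(b) only asserts a primitive $E_n$ sublattice; but once the rank and determinant bookkeeping is in place, this becomes routine and the rest of the argument is a direct invocation of already established results.
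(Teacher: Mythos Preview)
Your proof is correct and follows essentially the same route as the paper: conjugate to normalise the span-$3$ (or span-$(n-3)$) letter, invoke Lemma~\ref{lemma: bound on spans}(2) to bound $n$ and locate the $E_n$ sublattice inside $\mathcal{F}(\delta_n^2 P_0)$ with $P_0$ a single letter, and then apply Corollary~\ref{cor: contains deltan2}. The only difference is cosmetic: your index/determinant argument to upgrade ``primitive $E_n$ sublattice'' to ``$\mathcal{F}(b_0)\cong E_n$'' is sound but unnecessary, since the $n$ classes exhibited in the proof of Lemma~\ref{lemma: bound on spans} are precisely the standard basis of $H_1(F(b_0))$ described in \S\ref{sec: seifert forms of baskets}, so the form on that basis \emph{is} the $E_n$ matrix.
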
 

\begin{proof}
Conjugating by a suitable power of $\delta_n$ we may assume that $P$ contains $a_{14}$. According to Lemma \ref{lemma: bound on spans}(2), $n \leq 8$ and $\mathcal{F}(b)$ contains an $E_m$ sublattice for $m = 6, 7$, or $8$. Inspection of the proof of this lemma shows that $P$ contains a subword $P_0$ such that $\mathcal{F}(\delta_n^2 P_0) \cong E_{m}$. The conclusion of the lemma then follows from Corollary \ref{cor: contains deltan2}. 
\end{proof}

\begin{lemma} 
\label{lemma: no repeated span 2 letters}
Suppose that $n \geq 5$ and $b = \delta_n^2 P$ is a definite BKL-positive $n$-braid. If $P$ contains the square of a letter of span $2$ or $n-2$, then $n \leq 7$ and $\mathcal{F}(b) \cong E_6, E_7$, or $E_8$. 
\end{lemma}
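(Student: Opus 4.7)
My plan mirrors the argument for Lemma~\ref{lemma: small span}, but now starting with the larger building block $\delta_n^2 a_{13}^2$ rather than $\delta_n^2 a_{14}$.

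First I would reduce to the standard position. Conjugation by $\delta_n$ sends $a_{rs}$ to $a_{r+1,s+1}$ when $s < n$ and to $a_{1,r+1}$ when $s = n$, so by iterating this conjugation I can move any letter of span $2$ into the form $a_{13}$, and any letter of span $n-2$ into $a_{2,n}$, which one further application of the second rule converts to $a_{13}$. Thus after replacing $b$ by $\delta_n^i b \delta_n^{-i}$ for a suitable $i$, I may assume $P \supseteq a_{13}^2$, and in particular $b \supseteq \delta_n^2 a_{13}^2$.

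Next I apply Lemma~\ref{lemma: bound on spans}(3): since $b$ is definite and contains the square of a span-$2$ letter, the case $n \geq 8$ is excluded and we have $n \leq 7$; moreover $\mathcal{F}(b)$ contains a primitive $E_{n+1}$ sublattice for each $n \in \{5,6,7\}$. The key additional observation is that the $E_{n+1}$ already appears at the level of the subword $\delta_n^2 a_{13}^2$ itself. Indeed, $l_{BKL}(\delta_n^2 a_{13}^2) = 2(n-1)+2 = 2n$, so
\begin{equation*}
\operatorname{rank} H_1(F(\delta_n^2 a_{13}^2)) \;=\; 2n - (n-1) \;=\; n+1,
\end{equation*}
and the explicit classes depicted in Figures~\ref{fig: square} and~\ref{fig: square tree} (used in the proof of Lemma~\ref{lemma: bound on spans}(3)) form an $E_{n+1}$-shaped tree with $n+1$ vertices. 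A primitive sublattice of full rank coincides with the ambient lattice, so $\mathcal{F}(\delta_n^2 a_{13}^2) \cong E_{n+1}$; in particular this form is itself definite and isomorphic to one of $E_6, E_7, E_8$ according to whether $n = 5, 6, 7$.

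Finally, setting $P_0 = a_{13}^2 \subseteq P$, the hypothesis that $b = \delta_n^2 P$ is definite together with $\mathcal{F}(\delta_n^2 P_0) \cong E_{n+1} \in \{E_6, E_7, E_8\}$ places us exactly in the setting of Corollary~\ref{cor: contains deltan2}, which delivers the conclusion $\mathcal{F}(b) \cong E_6, E_7$, or $E_8$. The only real content is the rank count that upgrades the primitive $E_{n+1}$ sublattice produced by Lemma~\ref{lemma: bound on spans}(3) to an isomorphism on $\mathcal{F}(\delta_n^2 a_{13}^2)$; after that, Corollary~\ref{cor: contains deltan2} does all the remaining work.
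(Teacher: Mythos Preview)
Your proof is correct and follows essentially the same approach as the paper's. The only difference is that where the paper writes ``Inspection of the proof of this lemma shows that $P$ contains a subword $P_0$ such that $\mathcal{F}(\delta_n^2 P_0) \cong E_{m}$,'' you make this explicit by taking $P_0 = a_{13}^2$ and supplying the rank count $\operatorname{rank} H_1(F(\delta_n^2 a_{13}^2)) = n+1$ to upgrade the primitive $E_{n+1}$ sublattice to an isomorphism; this is a welcome clarification of what ``inspection'' means here.
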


\begin{proof}
Conjugating by a suitable power of $\delta_n$ we may assume that $P$ contains $a_{13}^2$. Then by Lemma \ref{lemma: bound on spans}(3), $n \leq 7$ and $\mathcal{F}(b)$ contains an $E_m$ sublattice for $m = 6, 7$, or $8$. Inspection of the proof of this lemma shows that $P$ contains a subword $P_0$ such that $\mathcal{F}(\delta_n^2 P_0) \cong E_{m}$. The conclusion of the lemma now follows from Corollary \ref{cor: contains deltan2}. 
\end{proof}

An {\it ascending} $(2, 2)$ pair is a BKL-positive word of the form $a_{r, r+2} a_{r+2, r+4}$.

\begin{lemma} 
\label{lemma: no ascending stairs of total length 3 or 4}
Suppose that $n \geq 5$ and $b = \delta_n^2 P$ is a definite BKL-positive $n$-braid. If $P$ contains an ascending $(2,2)$ pair, then $n \leq 7$ and $\mathcal{F}(b) \cong E_6, E_7$, or $E_8$.
\end{lemma}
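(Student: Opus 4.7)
The plan is to mirror the proofs of Lemmas \ref{lemma: small span} and \ref{lemma: no repeated span 2 letters}. First I would conjugate $b$ by a power of $\delta_n$ so that $P \supseteq a_{13} a_{35}$, and then use the Birman-Ko-Lee relation \eqref{eqn: non-commuting generators} with $(r,s,t) = (1,3,5)$, namely $a_{13} a_{35} = a_{15} a_{13}$, to exhibit a BKL-positive factorisation of $b$ containing the span-$4$ letter $a_{15}$. For $n \geq 8$ the bound $4 \leq 4 \leq n-4$ holds, so Lemma \ref{lemma: bound on spans}(1) would force $b$ to be indefinite, contradicting the hypothesis. This reduces us to $n \in \{5,6,7\}$.

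Next I would take $P_0 = a_{13} a_{35}$ and aim to identify $\mathcal{F}(\delta_n^2 P_0)$ with $E_{n+1}$. Using Lemma \ref{lemma: linking info}, the Gram matrix in the basis $\{\eta_1, \dots, \eta_{n-1}, \gamma, \theta\}$ arising from the letters of $\delta_n P_0$ has diagonal $-2$, chain pairings $\mathcal{F}(\eta_i, \eta_{i+1}) = 1$, and the four additional pairings $\mathcal{F}(\gamma, \eta_2) = -1$, $\mathcal{F}(\theta, \eta_2) = 1$, $\mathcal{F}(\theta, \eta_4) = -1$, $\mathcal{F}(\theta, \gamma) = 1$. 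Since $\mathcal{F}(b)$ is definite and $P_0 \subseteq P$, the restriction $\mathcal{F}(\delta_n^2 P_0)$ is also definite. A routine Gaussian elimination on this $(n+1)\times(n+1)$ matrix produces positive pivots $2, 3/2, \dots, n/(n-1)$ along the $\eta$-chain and two further positive pivots from the $\gamma, \theta$ rows, whose total product gives $|\det| = 3, 2, 1$ for $n = 5, 6, 7$ respectively.

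The hard part will be matching this computed Gram matrix with the Dynkin lattice $E_{n+1}$. For this I would invoke the uniqueness results cited in the proof of Lemma \ref{lemma: e* case}: the only even negative definite integral lattices of rank $6, 7, 8$ with determinants $3, 2, 1$ are $E_6, E_7, E_8$ respectively. As an explicit check in the smallest case $n = 5$, one may verify that $-\eta_1, \eta_2, -\eta_3, \eta_4, \gamma + \theta, \theta - \eta_3 - \eta_4$ form a standard $E_6$ Dynkin basis. Once $\mathcal{F}(\delta_n^2 P_0) \cong E_{n+1}$ is established, Corollary \ref{cor: contains deltan2} immediately upgrades this to $\mathcal{F}(b) \cong E_6, E_7,$ or $E_8$, completing the proof.
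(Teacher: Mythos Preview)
Your argument is correct, but it takes a longer road than the paper does. The paper dispatches the lemma in two lines: after arranging $P \supseteq a_{13}a_{35}$, it observes that
\[
\delta_n^2 a_{13} a_{35} \;\sim\; a_{35}\,\delta_n^2 a_{13} \;=\; \delta_n^2 a_{13}^2,
\]
using $\delta_n^{-2} a_{35} \delta_n^2 = a_{13}$. Carrying the rest of $P$ along, one finds a conjugate of $b$ of the form $\delta_n^2 P'$ with $P' \supseteq a_{13}^2$, and Lemma~\ref{lemma: no repeated span 2 letters} then gives both the bound $n\le 7$ and $\mathcal{F}(b)\cong E_6,E_7,$ or $E_8$ in one stroke. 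By contrast, you split the argument: for $n\ge 8$ you use the BKL rewriting $a_{13}a_{35}=a_{15}a_{13}$ to produce a span-$4$ letter and invoke Lemma~\ref{lemma: bound on spans}(1), while for $n\in\{5,6,7\}$ you compute the Gram matrix of $\mathcal{F}(\delta_n^2 a_{13}a_{35})$ from scratch via Lemma~\ref{lemma: linking info}, extract its determinant, and appeal to the classification of even definite lattices of small rank before finishing with Corollary~\ref{cor: contains deltan2}. Both routes are valid, and your Gram-matrix entries and determinant values are correct (indeed they must be, since your lattice is congruent to $\mathcal{F}(\delta_n^2 a_{13}^2)\cong E_{n+1}$ via the very conjugacy the paper uses). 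The paper's approach buys brevity by recycling Lemma~\ref{lemma: no repeated span 2 letters} wholesale; yours is more self-contained and makes the $E_{n+1}$ identification explicit, at the cost of redoing work already packaged in that lemma.
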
 

\begin{proof}
Without loss of generality, we can suppose that $P$ contains $a_{13} a_{35}$. Since
$$\delta_n^2 a_{13} a_{35} \sim  a_{35} \delta_n^2 a_{13} =  \delta_n^2 a_{13}^2,$$
we see that up to conjugation, $b = \delta_n^2 P$ where $P$ contains the square of a letter of span $2$. An appeal to Lemma \ref{lemma: no repeated span 2 letters} then 
completes the proof.
\end{proof} 

We call a product  
$$a_{r_0 r_1} a_{r_1 r_2} \ldots a_{r_{k-1} r_k}$$ 
where $1 \leq r_0 < r_1 < r_2 < \ldots < r_k \leq n$ a {\it $k$-step ascending staircase} and the letters $a_{r_{i-1} r_i}$ are the {\it steps} of the staircase. It follows from (\ref{eqn: non-commuting generators}) that for each $1 \leq i < j \leq k$, 
$$a_{r_{i-1} r_i} a_{r_i r_{i+1}} \ldots a_{r_{j-1} r_j} = a_{r_{i-1} r_j} a_{r_{i-1} r_{j-1}}  \ldots a_{r_{i-1} r_i}$$
so that up to rewriting, $P$ contains a letter of span $r_j - r_{i-1} \geq j - i$ for each $1 \leq i < j \leq k$. 

\begin{lemma} 
\label{lemma: $3$-step ascending staircase e6 definite}
Suppose that $b = \delta_n^2 P$ is a BKL-positive definite $n$-braid. If $P$ contains a $3$-step ascending staircase $a_{r_0 r_1} a_{r_1 r_2} a_{r_2 r_3}$, then $\mathcal{F}(b) \cong E_6, E_7$, or $E_8$. 
\end{lemma}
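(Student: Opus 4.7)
The plan is to invoke Corollary~\ref{cor: contains deltan2} by exhibiting a subword $P_0 \subseteq P$---possibly after a BKL-positive rewriting of the staircase and a conjugation of $b$ by a power of $\delta_n$---such that $\mathcal{F}(\delta_n^2 P_0)$ is congruent to one of $E_6$, $E_7$, or $E_8$. The conclusion of the lemma then follows immediately.

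First I apply the BKL identities (\ref{eqn: commuting generators}) and (\ref{eqn: non-commuting generators}) to compute
\begin{equation*}
a_{r_0 r_1}\, a_{r_1 r_2}\, a_{r_2 r_3} \;=\; a_{r_0 r_2}\, a_{r_0 r_1}\, a_{r_2 r_3} \;=\; a_{r_0 r_2}\, a_{r_2 r_3}\, a_{r_0 r_1} \;=\; a_{r_0 r_3}\, a_{r_0 r_2}\, a_{r_0 r_1},
\end{equation*}
so after rewriting $P$ contains the letter $a_{r_0, r_3}$ of span $r_3 - r_0 \geq 3$. Since $b$ is definite, Lemma~\ref{lemma: bound on spans}(1) forces $r_3 - r_0 \in \{3\} \cup \{n-3, n-2, n-1\}$. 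Conjugating $b$ by a suitable power of $\delta_n$ via (\ref{conj by delta}), I may assume $r_0 = 1$, so $r_3 \in \{4, n-2, n-1, n\}$.

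When $r_3 \in \{4, n-2\}$ and $n \geq 6$, the letter $a_{1, r_3}$ has span $3$ or $n-3$ and Lemma~\ref{lemma: small span} concludes the proof directly. When $r_3 = 4$ and $n = 4$, the staircase equals $\sigma_1 \sigma_2 \sigma_3 = \delta_4$, so $\delta_4^2 P_0 = \delta_4^3 = T(3,4)$ has form $E_6$ by the table in \S\ref{subsec: symm seifert forms of baskets}, and Corollary~\ref{cor: contains deltan2} finishes this case with $P_0 = \sigma_1 \sigma_2 \sigma_3$.

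It remains to treat $r_3 \in \{n-1, n\}$ (this includes the residual sub-case $n = 5$, $r_3 = 4 = n-1$), in which the exposed letter $a_{1, r_3}$ has span $n-2$ or $n-1$ and lies outside the scope of Lemma~\ref{lemma: small span}. For these I examine the two two-step substaircases $a_{1, r_1} a_{r_1, r_2}$ and $a_{r_1, r_2} a_{r_2, r_3}$, which by two-step BKL rewriting expose further letters of spans $r_2 - 1$ and $r_3 - r_1$ in $P$. Applying Lemma~\ref{lemma: bound on spans}(1) to each of these intermediate spans confines $r_2 - 1$ and $r_3 - r_1$ to $\{1,2,3\} \cup \{n-3, n-2, n-1\}$, and combined with $1 < r_1 < r_2 < r_3$ this reduces the possibilities to a short finite list. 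In each surviving configuration I either extract a span-$3$ or span-$(n-3)$ letter and invoke Lemma~\ref{lemma: small span}, or extract the square of a span-$2$ or span-$(n-2)$ letter (by combining two of the exposed letters using the move $\delta_n^2 a_{rs} a_{tu} \sim \delta_n^2 a_{rs}^2$ as in the proof of Lemma~\ref{lemma: no ascending stairs of total length 3 or 4}) and invoke Lemma~\ref{lemma: no repeated span 2 letters}, or else identify $\mathcal{F}(\delta_n^2 P_0)$ for a small explicit $P_0$ with one of $E_6, E_7, E_8$ via a direct Seifert-form computation using Lemma~\ref{lemma: linking info}. The main obstacle is this last step: carrying out the residual combinatorial analysis cleanly and performing the handful of explicit Seifert-form computations for those long-span configurations that survive all prior span constraints.
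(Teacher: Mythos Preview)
Your outline follows the paper's strategy—rewrite the staircase to expose letters of various spans and invoke the earlier span lemmas—and your reductions for $n=4$ and for a span-$3$ or span-$(n-3)$ letter are exactly right. But, as you yourself flag, the proof is incomplete: the residual analysis for $r_3\in\{n-1,n\}$ is where the content lies, and you have not carried it out.

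The paper avoids your ``main obstacle'' by organizing the case split differently. Rather than fixing $r_0=1$ and branching on $r_3$, it observes that rewriting exposes a letter of span $r_j-r_{i-1}$ for \emph{every} pair $1\le i<j\le 3$, so if Lemma~\ref{lemma: small span} does not already apply then all of these spans lie in $\{1,2,n-2,n-1\}$. This forces each individual step span $r_i-r_{i-1}$ to be at most $2$ (a step of span $\ge n-2$ would make $r_3-r_0\ge n$). From here the remaining cases are few: either all three steps have span $1$, in which case the staircase is (up to conjugation) $\sigma_1\sigma_2\sigma_3$, and an inductive application of Lemma~\ref{lemma: e_n}—starting from $\mathcal{F}(\delta_4^3)\cong E_6$ and adding the bands $\sigma_4,\ldots,\sigma_{n-1}$ of the second $\delta_n$ one at a time—shows $\mathcal{F}(\delta_n^2\,\sigma_1\sigma_2\sigma_3)\in\{E_6,E_7,E_8\}$; or some step has span $2$, and since two consecutive span-$2$ steps form an ascending $(2,2)$ pair (Lemma~\ref{lemma: no ascending stairs of total length 3 or 4}), rewriting produces a span-$3$ letter, settling $n\ge 6$ and leaving exactly three $n=5$ staircases, each reduced by a one-line conjugation to the $(2,2)$-pair case. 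The specific technique you are missing is that inductive use of Lemma~\ref{lemma: e_n} for the all-span-$1$ staircase; it replaces the ad hoc Seifert-form computations you anticipated.
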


\begin{proof}
The existence of a $3$-step ascending staircase implies that $n \geq 4$ and if $n = 4$, the staircase must be $\sigma_1 \sigma_2 \sigma_3 = \delta_4$. In this case $b = \delta_4^2 P \supseteq \delta_4^2 \delta_4 = \delta_4^3$ whose closure is $T(3,4)$. Thus $\mathcal{F}(b)$ contains $\mathcal{F}(T(3,4)) \cong E_6$. The proof now follows from Corollary \ref{cor: contains deltan2}.   

A similar argument shows that the lemma holds for $n \geq 5$ if each of the steps in the staircase has span $1$: Up to conjugation by a power of $\delta_n$, $P$ contains $P_0 = \sigma_1 \sigma_2 \sigma_3 = \delta_4$ so that $b = \delta_n^2 P \supseteq \delta_4^3$. 
Then $\mathcal{F}(b) \supset \mathcal{F}(\delta_4^3) \cong E_6$. Let $F_0$ be the quasipositive surface of the $4$-braid $\delta_4^3$. The reader will verify that $F_1 = F(\delta_n \delta_4^2) \cong F_0$ and the inclusion $F_0 \to F_1$ is a homotopy equivalence. Then $\mathcal{F}(\delta_n \delta_4^2) \cong E_6$. Adding the twisted band associated to the letter $\sigma_4$ in the second $\delta_n$ factor of $b$ to $F_1$ yields the surface $F(\delta_n \delta_5 \delta_4)$. Let $\alpha \in H_1(F(\delta_n \delta_5 \delta_4))$ be the class associated to this band and $\beta$ the class associated to the $\sigma_3$ in the second $\delta_n$ factor of $b$. Then $\mathcal{F}(b)(\alpha, \beta) = \pm 1$ by Lemma \ref{lemma: linking info}. Lemma \ref{lemma: e_n} then implies that $\mathcal{F}(\delta_n \delta_5 \delta_4) \cong E_7$ or $E_8$. We can obtain the quasipositive surface $F(\delta_n^2 \delta_3)$ by successively adding the twisted bands of the letters letters $\sigma_5, \ldots , \sigma_{n-1}$ to $F(\delta_n \delta_5 \delta_4)$ and inductively applying Lemma \ref{lemma: e_n} shows that $\mathcal{F}(\delta_n^2 \delta_4)$ is $E_7$ or $E_8$.
Corollary \ref{cor: contains deltan2} then shows that $\mathcal{F}(b) \cong E_7$ or $E_8$.

Assume next that $r_i - r_{i-1} \geq 2$ for some $i$. Then $n \geq 5$. We saw above that up to rewriting, $P \supseteq P_0$ where $P_0$ has a letter of span $r_j - r_{i-1}$ for each $1 \leq i < j \leq 3$. Lemma \ref{lemma: bound on spans}(1), Lemma \ref{lemma: small span}, and Corollary \ref{cor: contains deltan2} imply that we are done if there are $i$ and $j$ such that $3 \leq r_j - r_{i-1} \leq n-3$. 

Suppose then that $r_j - r_{i-1} \in \{1, 2, n-2, n-1\}$ for each $i < j$. In this case $r_i - r_{i-1} \leq 2$ for each $i$ as otherwise $r_3 - r_0 = (r_3 - r_2) + (r_2 - r_1) + (r_1 - r_0) \geq n$. 

We can assume that $r_i - r_{i-1} = 2$ does not occur for successive values of $i$ as then $b$ contains an ascending $(2, 2)$ pair, so we are done by Lemma \ref{lemma: no ascending stairs of total length 3 or 4}. Thus $P$ contains a product $\sigma_k a_{k+1, k+3} = a_{k, k+3} \sigma_k$ or $a_{k, k+2} \sigma_{k+2} = a_{k, k+3} a_{k, k+2}$. In either case, after rewriting $P$ contains a letter of span $3$ so if $n \geq 6$, Lemma \ref{lemma: small span} and Corollary \ref{cor: contains deltan2} imply that $\mathcal{F}(b) \cong E_6, E_7$ or $E_8$. On the other hand, if $n = 5$ then $a_{r_0 r_1} a_{r_1 r_2} a_{r_2 r_3}$ is one of $a_{13} \sigma_3 \sigma_4, \sigma_1 a_{24} \sigma_4$, or $\sigma_1 \sigma_2 a_{35}$. In each case we are done by Lemma \ref{lemma: no ascending stairs of total length 3 or 4} and Corollary \ref{cor: contains deltan2}: 
\vspace{-.2cm} 
\begin{itemize}

\item $\delta_5^2 a_{13} \sigma_3 \sigma_4 = \delta_5^2 a_{13} a_{35} \sigma_3  \supset \delta_5^2 a_{13}  a_{35}$

\vspace{.2cm} \item $\delta_5^2 \sigma_1 a_{24} \sigma_4 = \delta_5^2 \sigma_1 a_{25} a_{24}  \sim \delta_5^2 \sigma_2 a_{13} a_{35} \supset \delta_5^2 a_{13}  a_{35}$

\vspace{.2cm} \item $\delta_5^2 \sigma_1 \sigma_2 a_{35} = \delta_5^2 a_{13} \sigma_1 a_{35} \supset \delta_5^2 a_{13}  a_{35}$

\end{itemize}
This completes the proof. 
\end{proof}

\section{The proof of Theorem \ref{thm: def baskets} when $n = 4$} 
\label{sec: n = 4}

In this section we prove Theorem \ref{thm: def baskets} for strongly quasipositive $4$-braids. Given Baader's theorem (Theorem \ref{thm: baader}), it suffices to prove the following proposition. 

\begin{prop} 
\label{prop: definite strongly quasipositive 4-braids} 
If $L$ is a definite basket link which is the closure of a BKL-positive word of the form $\delta_4^2 P \in B_4$, then $\delta_4^2 P$ is conjugate to a positive braid. 
\end{prop}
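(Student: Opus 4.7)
The plan is to bound the exponent $r(P)$ sharply via Baader's lemma and then carry out a finite case analysis. First, by Lemma \ref{restrictions on krn} combined with the observation that $\delta_4^s$ is definite only for $s \in \{2,3\}$, definiteness of $b \supseteq \delta_4^{2+r(P)}$ forces $2 + r(P) \in \{2,3\}$, and hence $r(P) \in \{0,1\}$.

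In the case $r(P) = 1$, cyclic conjugation together with BKL-positive rewriting yields $b \sim \delta_4^3 Q$ for some BKL-positive $Q \in B_4$ with $l_{BKL}(Q) = l_{BKL}(P) - 3$. Since $\mathcal{F}(\delta_4^3) \cong E_6$, Corollary \ref{cor: contains deltan2} forces $\mathcal{F}(b) \cong E_6$, $E_7$, or $E_8$, and the rank bound $\operatorname{rank}\mathcal{F}(b) \leq 8$ yields $l_{BKL}(Q) \leq 2$. This reduces the proof to a finite list of $Q$'s (of length $0$, $1$, or $2$). For each such $Q$ I would verify directly that $\delta_4^3 Q$ is conjugate to a positive braid via the identity $\delta_n a_{rs} \delta_n^{-1} = a_{r+1, s+1}$ with the convention $a_{r+1,\,n+1} = a_{1,\,r+1}$. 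For example, $\delta_4^3 a_{14} = \delta_4^2 \sigma_1 \delta_4 \sim \delta_4^3 \sigma_1$, and $\delta_4^3 a_{13} \sim \sigma_2 \sigma_3 \delta_4^2 \sigma_1 \sigma_2$, with the remaining two-letter cases handled analogously.

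The harder case is $r(P) = 0$: here $P$ contains no $3$-step ascending staircase, and in $B_4$ the only such staircase is $\sigma_1 \sigma_2 \sigma_3 = \delta_4$. Lemma \ref{lemma: no linking} precludes $P$ from containing the linked pair $\{a_{13}, a_{24}\}$, and since conjugation by $\delta_4$ interchanges these letters, we may assume $a_{24} \notin P$. If $n(\widehat b) < 4$ we reduce directly by applying Lemmas \ref{lemma: not minimal} and \ref{lemma: innermost commute}; otherwise Lemma \ref{lemma: contains sigma1} ensures $\sigma_1$ occurs in $P$ (up to further $\delta_4$-conjugation). The remaining analysis is a sub-case enumeration on the occurrences of $a_{13}$ and $a_{14}$ in $P$: relations such as $a_{13} \sigma_3 = \sigma_3 a_{14}$ and $\sigma_1 a_{24} = a_{14} \sigma_1$ let us shuttle these letters into positions where cyclic conjugation through $\delta_4^2$ (via $\delta_4 a_{14} = \sigma_1 \delta_4$ and $\delta_4 a_{13} = a_{24} \delta_4$) converts them to positive generators.

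I expect the main obstacle to lie precisely in this final sub-case analysis: the available lemmas of Sections \ref{sec: indefinite bkl pos words} and \ref{sec: e6, e7, e8} give only weak restrictions when $n = 4$ (most are effective only for $n \geq 5$ or $n \geq 6$), and a BKL-rewriting that removes one non-positive letter may transiently introduce another elsewhere in $P$. Resolving this cleanly will require a well-chosen monovariant—for instance, the lexicographic pair (number of $a_{14}$'s in $P$, number of $a_{13}$'s in $P$), or the total span of non-positive letters—that strictly decreases under each reduction move, thereby guaranteeing termination in a positive braid conjugate to the original $\delta_4^2 P$.
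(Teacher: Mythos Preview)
Your overall structure—bounding $r(P)\le 1$ and splitting into the two cases—matches the paper, but there is a real gap in the $r(P)=1$ case.

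You assert that ``cyclic conjugation together with BKL-positive rewriting yields $b\sim\delta_4^3 Q$'' for some BKL-positive $Q$. This is not justified, and it is precisely the content that requires work. The hypothesis $r(P)=1$ says only that $P\supseteq\delta_4$ in the containment sense: for some BKL expression $\delta_4=uvw$, the letters $u,v,w$ appear in $P$ in order, but possibly with other letters interspersed, e.g.\ $P=uxvyw$. Formally one has $uxvyw=(uxu^{-1})\,\delta_4\,(w^{-1}yw)$, so $b\sim\delta_4^3(w^{-1}yw)(uxu^{-1})$, but the conjugates $uxu^{-1}$ and $w^{-1}yw$ need not be BKL-positive (for instance $\sigma_1 a_{13}\sigma_1^{-1}$ is not a BKL generator). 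Your claim is equivalent to saying that the conjugacy infimum of $b$ in the dual Garside structure is at least $3$, and that is exactly what would need proof. The paper does not attempt this reduction. Instead, it uses the $E_6/E_7/E_8$ rank bound to get $l_{BKL}(P)\le 5$, then enumerates the finitely many positions of the one or two extra letters relative to $u,v,w$ (seven patterns in the $E_8$ case) and dispatches each by explicit braid identities. Your ``finite list of $Q$'s'' would indeed be short if the reduction held, but getting there is the substance of the argument.

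For $r(P)=0$, your proposed monovariant is speculative, and the paper takes a different, more structural route. It splits on whether $P$ contains one of $a_{13},a_{24}$ (never both, by the linking lemma); in each branch the constraint $P\not\supseteq\delta_4$ forces sharp restrictions on where each $\sigma_i$ may occur (for example, if $\sigma_3$ appears before some $a_{24}$ and $\sigma_1$ after, then $P\supseteq\sigma_3\sigma_1 a_{24}=\delta_4$). These restrictions pin $P$ down to explicit shapes that are then shown, by direct computation, to be conjugate to positive braids. No induction or monovariant is needed, because $r(P)=0$ already carries enough combinatorial force. The relations you cite are useful locally, but the danger you flag—that eliminating one non-positive letter introduces another—is real, and the paper avoids it by working globally with the structure of $P$ rather than locally reducing letter by letter.
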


Recall the BKL-positive $4$-braid letters 
\begin{equation} 
\label{eqn: script L} 
\mathcal{L} = \{\sigma_1, \sigma_2, \sigma_3, a_{14}, a_{13}, a_{24}\}
\end{equation}
Conjugation by $\delta_4$ determines a permutation of $\mathcal{L}$ with two orbits: 
$$\sigma_1 \mapsto \sigma_2 \mapsto \sigma_3 \mapsto a_{14} \mapsto \sigma_1$$
and 
$$a_{13} \mapsto a_{24} \mapsto a_{13}$$

Throughout we consider braids of the form 
\begin{equation} 
\label{basic form} 
\delta_4^k P
\end{equation}
where $k \geq 0$ and $P$ is BKL-positive. Given such a braid $b$, choose from among all of its conjugates and their BKL-positive rewritings which have the form (\ref{basic form}), a braid $\delta_4^k P$ for which $k$ is maximal. Then $P$ does not contain a subword equalling $\delta_4$. 

Recall the integer $r(P) \geq 0$ defined by $P \supseteq \delta_4^{r(P)}$ but not $\delta_4^{r(P)+1}$ (cf. \S \ref{subsec: symm seifert forms of baskets}).

\begin{lemma}
If $b = \delta_4^2 P \in B_4$ is strongly quasipositive and definite, then $r(P) \leq 1$.
\end{lemma}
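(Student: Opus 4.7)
The plan is to argue by contradiction using the containment ordering $\subseteq$ on BKL-positive words together with Lemma \ref{restrictions on krn}.

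Suppose for contradiction that $r(P) \geq 2$. By definition of $r(P)$, we have $\delta_4^{r(P)} \subseteq P$, so that deleting the appropriate letters from $P$ leaves $\delta_4^{r(P)}$. Performing the same deletions on $b = \delta_4^2 P$ yields
\[
\delta_4^2 \cdot \delta_4^{r(P)} \; = \; \delta_4^{\, 2 + r(P)} \; \subseteq \; b,
\]
and since $r(P) \geq 2$ this gives $\delta_4^{4} \subseteq b$.

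Now invoke the general principle recalled in \S \ref{subsec: symm seifert forms of baskets}: whenever $b' \subseteq b$, the form $\mathcal{F}(b')$ embeds as a primitive sublattice of $\mathcal{F}(b)$. In particular $\mathcal{F}(\delta_4^4)$ is a primitive sublattice of $\mathcal{F}(b)$. If $\mathcal{F}(b)$ were negative definite, then every sublattice of $\mathcal{F}(b)$ would also be negative definite, so $\mathcal{F}(\delta_4^4)$ would be definite. But Lemma \ref{restrictions on krn} states that $\delta_n^k$ is definite if and only if $\{k,n\} \in \{\{2,n\},\{3,3\},\{3,4\},\{3,5\}\}$, and the pair $\{4,4\}$ is not on this list. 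Hence $\mathcal{F}(\delta_4^4)$ is indefinite, contradicting the hypothesis that $b$ is definite. Thus $r(P) \leq 1$.

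No step here is a genuine obstacle: the claim is essentially a direct application of the sublattice-monotonicity of $\mathcal{F}$ combined with Baader's list. The only point to verify carefully is the elementary observation that $\delta_4^{r(P)} \subseteq P$ implies $\delta_4^{2+r(P)} \subseteq \delta_4^2 P$, which is immediate from the definition of $\subseteq$ via letter-deletion.
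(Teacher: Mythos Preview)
Your proof is correct and follows essentially the same route as the paper: from $r(P)\geq 2$ one gets $\delta_4^4 \subseteq b$, hence $\mathcal{F}(\delta_4^4)$ sits as a sublattice of $\mathcal{F}(b)$, and Lemma~\ref{restrictions on krn} forces $\mathcal{F}(\delta_4^4)$ (equivalently $\mathcal{F}(T(4,4))$) to be indefinite, a contradiction. The paper's argument is just a one-line compression of exactly what you wrote.
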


\begin{proof}
If $r(P) \geq 2$ then $b = \delta_4^2 P \supseteq \delta_4^4$ so that $\mathcal{F}(b) \supseteq \mathcal{F}(T(4,4))$, which is indefinite (Lemma \ref{restrictions on krn}), contrary to our hypotheses. 
\end{proof}

\begin{lemma}
If $r(P) = 0$, then $\delta_4^2 P$ definite implies that it is conjugate to a positive braid. 
\end{lemma}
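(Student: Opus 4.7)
Assume $r(P)=0$ and $b=\delta_4^2 P$ is definite, with $P$ a BKL-positive word. We wish to show that $b$ is conjugate in $B_4$ to a positive braid.

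The BKL generators of $B_4$ are $\sigma_1,\sigma_2,\sigma_3,a_{13},a_{24},a_{14}$. The only linked pair among these is $\{a_{13},a_{24}\}$, so Lemma~\ref{lemma: no linking} forces $P$ to omit one of them. Conjugation of $b$ by $\delta_4$ swaps $a_{13}\leftrightarrow a_{24}$ while cyclically permuting $\sigma_1\to\sigma_2\to\sigma_3\to a_{14}\to\sigma_1$ (as recorded in Remark~\ref{rem: delta rotn}), so we may assume without loss of generality that $a_{24}$ does not appear in $P$.

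The central algebraic input consists of the identities obtained by iterating the $\delta_4$-conjugation rule:
\[
\delta_4^2 a_{13}=a_{13}\delta_4^2,\qquad \delta_4^2 a_{14}=\sigma_2\delta_4^2,
\]
together with $\delta_4^2\sigma_i=\sigma_{i'}\delta_4^2$ for the appropriate $i'$ (with $1\leftrightarrow 3$ and $2\leftrightarrow a_{14}$). Combined with the BKL-rewrite rules
\[
a_{13}\sigma_1=\sigma_1\sigma_2,\qquad a_{13}\sigma_3=\sigma_3 a_{14},
\]
these allow us to eliminate non-standard letters from $P$. The argument proceeds by induction on the number of non-$\sigma_i$ letters of $P$. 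Each $a_{14}$ is removable: cyclically conjugate $b$ so that it sits just to the right of $\delta_4^2$, then replace the block $\delta_4^2 a_{14}$ by $\sigma_2\delta_4^2$. Each $a_{13}$ is handled by sliding it rightward through $P$: meeting $\sigma_1$ converts it to the positive word $\sigma_1\sigma_2$, while meeting $\sigma_3$ converts it to $a_{14}$ (already handled). If an $a_{13}$ reaches the right end of $P$, cyclic conjugation followed by $a_{13}\delta_4^2=\delta_4^2 a_{13}$ returns it to the left of $\delta_4^2$; expanding $a_{13}=\sigma_1\sigma_2\sigma_1^{-1}$ and absorbing the resulting $\sigma_1^{\pm1}$ into a cyclic conjugation of $b$ reduces $\delta_4^2 a_{13}\cdots$ to $\delta_4^2\sigma_2\sigma_1\cdots$, which is purely positive after the $\delta_4^2\sigma_i$ identities are used to re-normalize.

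\textbf{Main obstacle.} The principal technical hurdle is verifying that the inductive rewriting process terminates: an $a_{13}$ pushed past $\sigma_2$ does not admit a simple BKL-positive swap, so it must be routed either through a $\sigma_1$ or $\sigma_3$ encounter, or out the right end of $P$ and back via the commutation with $\delta_4^2$. The hypothesis $r(P)=0$ is indispensable here, since any unbounded recycling of non-standard letters would force the assembly of a $\delta_4$ subword inside $P$, contradicting $r(P)=0$. Making this termination argument rigorous, while checking that no $a_{24}$ is accidentally generated (e.g.\ via the relation $a_{14}\sigma_1=\sigma_1 a_{24}$ that we must therefore avoid triggering), is the main content of the proof.
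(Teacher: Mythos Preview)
Your plan makes the correct first reduction --- exclude one of $a_{13}, a_{24}$ via Lemma~\ref{lemma: no linking} --- but the inductive rewriting you sketch does not terminate, and the gap you flag as the ``main obstacle'' is the entire content of the proof.

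The measure you propose to induct on (the number of non-$\sigma_i$ letters of $P$) is not monotone under your moves. Conjugation by $\delta_4^{\pm 2}$ interchanges $\sigma_2 \leftrightarrow a_{14}$ and $\sigma_1 \leftrightarrow \sigma_3$ while fixing $a_{13}$, so each time you rotate material past $\delta_4^2$ to position an $a_{14}$, every $\sigma_2$ that crossed $\delta_4^2$ turns into a fresh $a_{14}$. Your fallback for a stuck $a_{13}$ --- cycling it around via $a_{13}\delta_4^2=\delta_4^2 a_{13}$, or expanding $a_{13}=\sigma_1\sigma_2\sigma_1^{-1}$ and absorbing the $\sigma_1^{\pm 1}$ --- simply reproduces $a_{13}$ once the pieces are pushed back through $\delta_4^2$ (for instance because $\sigma_3 a_{14}\sigma_3^{-1}=a_{13}$). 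The heuristic that non-terminating recycling would force $P\supseteq\delta_4$ is unsupported: nothing in your loop assembles three letters forming $\delta_4$.

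The paper's argument is structural rather than algorithmic. It separates the cases where $P$ avoids both $a_{13},a_{24}$ and where it contains exactly one, and in each case uses $r(P)=0$ once, globally, to constrain the \emph{shape} of $P$. For instance, in the first case the identity $\sigma_3 a_{14}\sigma_1=\delta_4$ forces the $\sigma_1$'s occurring among the interstitial positive words all to one side of a pivot index and the $\sigma_3$'s all to the other; a single explicit pair of $\delta_4$-pushes then makes $b$ positive. In the second case definiteness is used beyond Lemma~\ref{lemma: no linking}: one subcase is eliminated by exhibiting an indefinite $3$-braid contained in $b$ and invoking Proposition~\ref{prop: when definite}(6). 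Your plan uses definiteness only once, to forbid the pair $\{a_{13},a_{24}\}$, and that alone is not enough.
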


\begin{proof}
We divide the proof into several cases.
\setcounter{case}{0}

\begin{case}
{\it $P$ contains neither $a_{13}$ nor $a_{24}$. }
\end{case}

We can suppose that $P \ne 1$ is not a positive braid so that up to conjugation,
$$b = \delta_4^2 a_{14}^{d_1} p_1 a_{14}^{d_2} p_2 \ldots a_{14}^{d_t} p_t$$
where $t \geq 1$, $p_1, \ldots, p_t$ are positive braids, and $d_1, \ldots, d_t > 0$. If $p_1 \cdots p_r \not \supseteq \sigma_3$, conjugating by $\delta_4$ changes $b$ into a positive braid, so we are done. Assume then that $p_1 \cdots p_t \supseteq \sigma_3$ and let $i_0$ be the smallest index $i$ for which $p_i \supseteq \sigma_3$. 
Since $\sigma_3 a_{14} \sigma_1 = \delta_4$, $p_i \not \supseteq \sigma_1$ for $i > i_0$. Consequently,
$$p_i = \left\{ 
\begin{array}{ll}
p_i(\sigma_1, \sigma_2) & \mbox{ if } i < i_0 \\ 
p_i(\sigma_2, \sigma_3) & \mbox{ if } i > i_0
\end{array} \right.$$
Then pushing one $\delta_4$ forward through $P$ till just before $p_{i_0}$ and conjugating the other to the end of $P$ and then pushing it backward till just after $p_{i_0}$ we obtain
\begin{eqnarray} 
b & = & \delta_4^2 a_{14}^{d_1} p_1(\sigma_1, \sigma_2) \cdots a_{14}^{d_{i_0 - 1}} p_{i_0 - 1}(\sigma_1, \sigma_2) a_{14}^{d_{i_0}} p_{i_0} 
                       a_{14}^{d_{i_0 + 1}} p_{i_0 + 1}(\sigma_2, \sigma_3) \cdots a_{14}^{d_t} p_t (\sigma_2, \sigma_3) \nonumber \\ 
& \sim &   \sigma_1^{d_1} p_1(\sigma_2, \sigma_3) \cdots \sigma_1^{d_{i_0 - 1}} p_{i_0 - 1}(\sigma_2, \sigma_3) \sigma_1^{d_{i_0}} \delta_4 p_{i_0} \delta_4
                       \sigma_3^{d_{i_0 + 1}} p_{i_0 + 1}(\sigma_1, \sigma_2) \cdots \sigma_3^{d_t} p_t (\sigma_1, \sigma_2), \nonumber 
\end{eqnarray}
which is a positive braid. 

\begin{case}
{\it $P$ contains at least one of $a_{13}$ or $a_{24}$.}
\end{case}

Since $\delta_4^2 a_{13} a_{24}$ and $\delta_4^2 a_{24} a_{13}$ are indefinite (Lemma \ref{lemma: no linking}), $P$ contains either no $a_{13}$ or no $a_{24}$. Then up to conjugation we can write
$$b = \delta_4^2 a_{13}^{d_1} w_1 a_{13}^{d_2} w_2  \cdots a_{13}^{d_t} w_t$$
where $w_i = w_i(\sigma_1, \sigma_2, \sigma_3, a_{14})$ and $t$ is minimal among all such BKL-positive expressions for conjugates of $b$. It follows that $d_i > 0$ and $w_i \ne 1$ for each $i$. Since $a_{13} \sigma_1 \sigma_3 = \delta_4$, the assumption that $r(P) = 0$ implies that $w_1 w_2 \cdots w_t \not \supseteq \sigma_1 \sigma_3$. Up to conjugating by $\delta_4^2$ we can suppose that $w_1 w_2 \cdots w_t \not \supseteq \sigma_3$. After a further conjugation by $\delta_4$ we have 
$$b \sim \delta_4^2 a_{24}^{d_1} p_1 a_{24}^{d_2} p_2  \cdots a_{24}^{d_t} p_t$$
where $p_i = p_i(\sigma_1, \sigma_2, \sigma_3) \ne 1$ for each $i$. If $t = 0$, $b \sim \delta_4^2$, a positive braid, and if $t = 1$, $b \sim \delta_4^2 a_{24}^{d_1} p_1(\sigma_1, \sigma_2, \sigma_3) =  \delta_4 (\sigma_1 \sigma_2 \sigma_3) (\sigma_3^{-1} \sigma_2^{d_1}  \sigma_3) p_1(\sigma_1, \sigma_2, \sigma_3) = \delta_4 \sigma_1 \sigma_2^{d_1 + 1} \sigma_3 p_1(\sigma_1, \sigma_2, \sigma_3)$, again a positive braid. We assume below that $t \geq 2$.

Since $\sigma_1 \sigma_3 a_{24} = \delta_4$, $p_1 \cdots p_{t-1} \not \supseteq \sigma_1 \sigma_3$. There are two cases to consider. 

\begin{subcase}
$p_1 \cdots p_{t-1} \not \supseteq \sigma_3$. 
\end{subcase}

If $p_i \not \supseteq \sigma_1$ for some $i < t$, then $p_i$ is a non-zero power of $\sigma_2$, say $p_i = \sigma_2^m$ where $m > 0$. Then $a_{24}^{d_i} p_i= \sigma_2 \sigma_3^{d_i} \sigma_2^{m -1}$, contrary to the minimality of $t$. (When $i = 1$, the condition implies that $b \sim \delta_4^2 a_{13}^{d_2} v_1 a_{13}^{d_3} v_2  \cdots a_{13}^{d_t} v_{t-1}$ where each $v_i = v_i(\sigma_1, \sigma_2, \sigma_3, a_{14})$ is a BKL-positive word.) Thus $p_i \supseteq \sigma_1$ for each $i < t$. 

If $p_2 p_3 \cdots p_t \supseteq \sigma_2$, then $P \supseteq \sigma_1 a_{24} \sigma_2 = \delta_4$, a contradiction. Thus $p_2 p_3 \cdots p_t \not \supseteq \sigma_2$. It follows that 
$$p_i = \left\{ 
\begin{array}{ll}
p_1(\sigma_1, \sigma_2) & \mbox{ if } i = 1 \\ 
\sigma_1^{m_i} & \mbox{ if } 1 < i < t \\ 
p_3(\sigma_1, \sigma_3) & \mbox{ if } i = t
\end{array} \right.$$
As above, the minimality of $t$ can be used to see that $p_1(\sigma_1, \sigma_2) = \sigma_1 q_1(\sigma_1, \sigma_2)$ where $q_1(\sigma_1, \sigma_2)$ is a positive braid. Also, $p_t(\sigma_1, \sigma_3) = \sigma_1^{m_t} \sigma_3^{m_0}$. 
Then 
\begin{eqnarray}
b & \sim & \delta_4^2 a_{24}^{d_1} p_1(\sigma_1, \sigma_2)  a_{24}^{d_2} \sigma_1^{m_2}  \cdots a_{24}^{d_{t-1}} \sigma_1^{m_{t-1}} a_{24}^{d_t} \sigma_1^{m_t} \sigma_3^{m_0} \nonumber \\ 
& \sim & \delta_4^2 \sigma_1^{m_0} \sigma_3^{m_t} a_{24}^{d_1} p_1(\sigma_1, \sigma_2)  a_{24}^{d_2} \sigma_1^{m_2}  \cdots a_{24}^{d_{t-1}} \sigma_1^{m_{t-1}} a_{24}^{d_t}   \nonumber
\end{eqnarray}
If $m_0, m_t > 0$, then $P \supseteq \sigma_1^{m_0} \sigma_3^{m_t} a_{24}^{d_1} \supseteq \sigma_1 \sigma_3 a_{24} = \delta_4$, contradicting our assumption that $r(P) = 0$. Thus one of $m_0, m_t$ is zero. If $m_t > 0$, the fact that $t \geq 2$ implies that $P \supseteq \sigma_3^{m_t} a_{24}^{d_1} p_1(\sigma_1, \sigma_2)  a_{24}^{d_2} = \sigma_3^{m_t} a_{24}^{d_1} \sigma_1 q_1(\sigma_1, \sigma_2) a_{24}^{d_2} \supseteq \sigma_3 \sigma_1 a_{24} = \delta_4$, again a contradiction. Hence $m_t = 0$. Then $m_0 > 0$ and 
\begin{eqnarray}
b & \sim & \delta_4^2 a_{24}^{d_1} p_1(\sigma_1, \sigma_2)  a_{24}^{d_2} \sigma_1^{m_2}  \cdots a_{24}^{d_{t-1}} \sigma_3^{m_0} \nonumber \\ 
& \sim & \delta_4^2 \sigma_3^{m_{t-1}} a_{24}^{d_t}  \sigma_1^{m_0} a_{24}^{d_1} p_1(\sigma_1, \sigma_2)  a_{24}^{d_2} \sigma_1^{m_2}  \cdots a_{24}^{d_{t-1}} \  \nonumber \\ 
& \supseteq & \delta_4^2 \sigma_3  \sigma_1 a_{24} \nonumber \\ 
& = & \delta_4^3 \nonumber
\end{eqnarray}
a contradiction, which completes the proof when $p_1 \cdots p_{t-1} \not \supseteq \sigma_3$.

\begin{subcase}
$p_1 \cdots p_{t-1} \not \supseteq \sigma_1$. 
\end{subcase}

Then $b \sim \delta_4^2 a_{24}^{d_1} p_1(\sigma_2, \sigma_3)  a_{24}^{d_2} p_1(\sigma_2, \sigma_3) \cdots a_{24}^{d_{t-1}} p_{t-1}(\sigma_2, \sigma_3) a_{24}^{d_t} p_t$. By the minimality of $t$, $p_i = \sigma_3 q_i \sigma_2$ where each $q_i$ is a positive braid for $i < t$. Thus 
$$b \sim \delta_4^2 a_{24}^{d_1} \sigma_3 q_1 \sigma_2  a_{24}^{d_2} \sigma_3 q_2 \sigma_2 \cdots a_{24}^{d_{t-1}} \sigma_3 q_{t-1} \sigma_2 a_{24}^{d_t} p_t$$
As $t \geq 2$, $b \supseteq (\sigma_2 \sigma_3)^2 \sigma_2 a_{24} \sigma_3 \sigma_2$, which is an indefinite form by Proposition \ref{prop: when definite}(6), so this case does not occur. 

This completes the proof of the lemma. 
\end{proof}

\begin{lemma}
If $r(P) = 1$, then $\delta_4^2 P$ definite implies that it is conjugate to a positive braid. 
\end{lemma}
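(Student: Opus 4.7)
The plan is to reduce the problem to a short case analysis. Since $r(P) = 1$, we have $b = \delta_4^2 P \supseteq \delta_4^3$, so $\mathcal{F}(b)$ contains $\mathcal{F}(\delta_4^3) \cong E_6$ as a primitive sublattice. The definiteness hypothesis and Corollary \ref{cor: contains deltan2}, applied with a three-letter subword $P_0 \subseteq P$ representing $\delta_4$, force $\mathcal{F}(b) \in \{E_6, E_7, E_8\}$. From (\ref{eqn: l(P) bound}) we get $l_{BKL}(P) \in \{3, 4, 5\}$. The value $3$ is impossible: then $P$ equals $\delta_4$ in $B_4$ and $b = \delta_4^3$, contradicting the maximality of $k = 2$. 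So $l_{BKL}(P) \in \{4, 5\}$.

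Next I would use maximality of $k$ to sharply constrain the shape of $P$. If any BKL-positive rewriting of $P$ contained $\delta_4$ as a \emph{contiguous} subword, I could apply the rotation identity (\ref{conj by delta}) to slide that $\delta_4$ to the left of $P$ and absorb it into the leading $\delta_4^2$, raising $k$ to $3$ and contradicting maximality. So the three letters of $P$ spelling $\delta_4$ must remain non-contiguous under every BKL-positive rewriting. After normalising by a power of $\delta_4$ and using Lemma \ref{lemma: contains sigma1} to place $\sigma_1$ among the letters of $P$, and discarding any configurations with a linked pair by Lemma \ref{lemma: no linking}, I would enumerate the short list of candidate $P$'s, which are four- or five-letter words in the BKL generators $\mathcal{L}$ of (\ref{eqn: script L}).

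For each surviving candidate I would either apply an indefiniteness result from \S\ref{sec: indefinite bkl pos words} (in particular Lemmas \ref{lemma: bound on spans} and \ref{lemma: 2-step descending stairs of length two or more indefinite}) to exclude it, or exhibit an explicit chain of conjugations and BKL-positive rewritings converting $\delta_4^2 P$ into a positive braid. The main tools are the expressions (\ref{exprn for ars}) and (\ref{alt exprn for ars}) for each $a_{rs}$ as a conjugate of $\sigma_{s-1}$ or $\sigma_r$ by a positive half-twist word, together with the rotation identity (\ref{conj by delta}), which allows the $\delta_4^2$ prefix to slide through a prefix of $P$ so that the half-twist conjugators cancel cleanly. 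I expect the main obstacle to be precisely this case enumeration: the combination of the deletion-subword constraint, maximality of $k$, and definiteness pins down a short list, but verifying in each surviving case that $b$ admits a positive-braid rewriting requires careful use of the BKL relations, analogous to but more delicate than the $r(P) = 0$ proof.
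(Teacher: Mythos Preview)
Your proposal is correct in outline and follows essentially the same route as the paper: reduce to $\mathcal{F}(b)\in\{E_6,E_7,E_8\}$ via Corollary~\ref{cor: contains deltan2}, bound $l_{BKL}(P)\le 5$ by (\ref{eqn: l(P) bound}), then enumerate the short words $P$ and rewrite each $\delta_4^2P$ to a positive braid. Your observation that $l_{BKL}(P)=3$ is excluded by maximality of $k$ is correct and slightly sharper than the paper, which simply notes that $b=\delta_4^3$ is already positive in that (vacuous) case.

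Two caveats on the tools you plan to invoke. First, Lemmas~\ref{lemma: bound on spans} and~\ref{lemma: 2-step descending stairs of length two or more indefinite} are inert when $n=4$: there are no spans in the range $4\le l\le n-4$; the span-$3$ and span-$(n-3)$ clauses require $n\ge 6$ to yield anything; the squared span-$2$ clause requires $n\ge 5$; and no $2$-step descending staircase in $B_4$ can have both steps of span $\ge 2$. The only indefiniteness lemma from \S\ref{sec: indefinite bkl pos words} that actually eliminates cases here is Lemma~\ref{lemma: no linking}, which you do cite, and that is exactly what the paper uses. Second, Lemma~\ref{lemma: contains sigma1} carries the hypothesis $n(\widehat b)=n$, which is not part of the present statement; you would need to remark that if $n(\widehat b)<4$ the result follows from the $n\le 3$ case already established.

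The bulk of the work, as you anticipate, is the explicit rewriting. The paper organises it not by first normalising to $\sigma_1\in P$ but by fixing a three-letter BKL expression $\delta_4=uvw$ and inserting the one or two extra letters of $P$ among $u,v,w$; this gives two subcases for $E_7$ and (after symmetry reductions) five for $E_8$, each dispatched by short manipulations using (\ref{conj by delta}) and the BKL relations. Your proposed enumeration would reach the same finite list by a different bookkeeping, so the two approaches are equivalent in substance.
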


\begin{proof}
In this case $P \supseteq \delta_4$ and so $b \supseteq b_0 = \delta_4^3$. Then $\mathcal{F}(b) \supseteq \mathcal{F}(b_0)  = \mathcal{F}(\delta_4^3) \cong E_6$. Hence $\mathcal{F}(b)$ is either $E_6, E_7$, or $E_8$ by Corollary \ref{cor: contains deltan2}.  Further, by (\ref{eqn: l(P) bound}) we see that  
$$3 + l_{BKL}(P) = \mbox{{\rm rank}} \; H_1(F(b)) = \left\{ 
\begin{array}{ll}
6 & \mbox{if } \mathcal{F}(b) \cong E_6 \\ 
7 & \mbox{if } \mathcal{F}(b) \cong E_7 \\ 
8 & \mbox{if } \mathcal{F}(b) \cong E_8 
\end{array} \right.$$
Since $r(P) = 1$, $l_{BKL}(P) \geq 3$. Thus $l_{BKL}(P)$ is $3$ if $\mathcal{F}(b) \cong E_6$, $4$ if $\mathcal{F}(b) \cong E_7$, and $5$ if $\mathcal{F}(b) \cong E_8$. It follows that $P = \delta_4$ if $\mathcal{F}(b) \cong E_6$ and is obtained from $\delta_4$ by adding one, respectively two, BKL-positive letters if $\mathcal{F}(b) \cong E_7$, respectively $\mathcal{F}(b) \cong E_8$.

\setcounter{case}{0}
\begin{case} 
{\it $\mathcal{F}(b) \cong E_6$.} 
\end{case} 

Then $b = \delta_4^3$ and therefore $\widehat b = T(3,4)$, so we are done.

\begin{case} 
{\it $\mathcal{F}(b) \cong E_7$.} 
\end{case} 

Then $P$ is obtained from $\delta_4$ by adding one BKL positive letter in $\mathcal{L}$ (cf. (\ref{eqn: script L})). The letter must be added between 
two letters of $\delta_4$ as otherwise $b \sim \delta_4^3 x$, contrary to our assumption that $k = 2$. Every expression for $\delta_4$ as a product of 
BKL-positive words has the form $uvw$ where $u, v, w \in \mathcal{L}$. Hence for such $u, v, w$, $P$ is either $uxvw$ or $uvxw$ for some $x \in \mathcal{L}$. 

The reader will verify that
$$\sigma_3 x \sigma_1 \sigma_2 \;\; \mbox{ and } \;\; \sigma_2 \sigma_3 x \sigma_1$$
are positive braids for all $x \in \mathcal{L}$, as are
$$\sigma_3 \delta_4 x \;\; \mbox{ and } \;\; x \delta_4 \sigma_1$$

\begin{subcase}
{\it $P = uxvw$ for some $x \in \mathcal{L}$.}
\end{subcase}

Then $b = \delta_4^2 u x u^{-1} \delta_4$. Up to conjugation by a power of $\delta_4$ and changing $x$, $b$ is either $\delta_4^2 \sigma_1 x \sigma_1^{-1} \delta_4$ or $\delta_4^2 a_{13} x a_{13}^{-1} \delta_4 = \delta_4^2 a_{13} x \sigma_1 \sigma_3$. 

In the first case, $b = \sigma_3 (\delta_4^2 x) (\sigma_2 \sigma_3)$ and as $\delta_4^2 x$ is a positive braid for each $x \in \mathcal{L}$, we are done. 

In the second, $b = \delta_4^2 a_{13} x \sigma_1 \sigma_3 =  \delta_4 a_{24} \delta_4 x\sigma_1 \sigma_3 = \sigma_1 \sigma_2^2 \sigma_3 (\delta_4 x \sigma_1) \sigma_3$. Since $\delta_4 x \sigma_1 = x' \delta_4 \sigma_1$ is always a positive braid, we are done.

\begin{subcase}
{\it $P = uvxw$ for some $x \in \mathcal{L}$.}
\end{subcase}

Then $b = \delta_4^3 (w^{-1} x w)$. Up to conjugation by a power of $\delta_4$, $b$ is either $\delta_4^3 \sigma_3^{-1} x \sigma_3$ or $\delta_4^3 a_{24}^{-1} x a_{24}$. 

If the former occurs, then $b \sim \delta_4^2 \sigma_1 \sigma_2 x \sigma_3 = \delta_4 \sigma_2 (\sigma_3 x' \delta_4) \sigma_3$ where $x' = \delta_4 x \delta_4^{-1} \in \mathcal{L}$. Since $\sigma_3 x' \delta_4 = \sigma_3 \delta_4 x''$ is a positive braid for each $x' \in \mathcal{L}$, we are done. 

Otherwise,  $b = \delta_4^3 a_{24}^{-1} x a_{24} = \delta_4^2 \sigma_1 \sigma_3 x a_{24} \sim  \sigma_1 (\sigma_3 x \delta_4) (a_{13} \delta_4) = \sigma_1 (\sigma_3 x \delta_4) \sigma_1 \sigma_2^2 \sigma_3$, so we are done as $\sigma_3 x \delta_4 = \sigma_3 \delta_4 x'$ is a positive braid. 

This completes the proof when $\mathcal{F}(b) \cong E_7$. 

\begin{case} 
{\it $\mathcal{F}(b) \cong E_8$.} 
\end{case} 

Then $P$ is obtained from $\delta_4$ by adding two BKL positive letters. Since $P$ does not contain $\delta_4$ as a subword, for any expression $\delta_4 = uvw$ where $u, v, w \in \mathcal{L}$, there are seven possibilities for $P$ : 
$$P = \left\{
\begin{array}{l} 
(1) \; x u y vw \\
(2) \; x uvy w \\
(3) \; u x y vw \\ 
(4) \; u x v y w \\ 
(5) \; u x vw y \\
(6) \; u v xy w \\
(7) \; u v x w y
\end{array} \right. $$
Since $\delta_4^2 u x vw y \sim \delta_4^2 y' u x vw$ and  $\delta_4^2 uv x w y \sim \delta_4^2 y' u x vw$ where $y' = \delta_4^{-2} y \delta_4^2 \in \mathcal{L}$, we need only deal with cases (1), (2), (3), (4) and (6).

\begin{subcase}
{\it $P = x u y vw$ for some $x, y \in \mathcal{L}$.}
\end{subcase}

Then $b \sim \delta_4^2  x u y vw = \delta_4^2  x (u y u^{-1}) \delta_4$. Up to conjugation, we can take $u$ to be either $\sigma_1$ or $a_{13}$. Further, by Lemma \ref{lemma: no linking}, definiteness implies that if $x = a_{13}$, respectively $a_{24}$, then $y \ne a_{24}$, respectively $a_{13}$. 

If $u = \sigma_1$, then $b \sim \delta_4^2  x \sigma_1 y \sigma_2 \sigma_3 \sim (\delta_4  x \sigma_1) (\delta_4 y' \sigma_1) \sigma_2$ where $y' = \delta_4^{-1} y \delta_4$. Since $\delta_4  x \sigma_1$ and $\delta_4 y' \sigma_1$ are positive braids for all $x, y \in \mathcal{L}$, we are done.

If $u = a_{13}$, then $b \sim \delta_4^2  x a_{13} y a_{13}^{-1} \delta_4 \sim \delta_4^2  x a_{13} y \sigma_1 \sigma_3  \sim \sigma_3 x' \delta_4  a_{24}  \delta_4 y \sigma_1 =  (\sigma_3  x' \sigma_1 \sigma_2) \sigma_2 
\sigma_3 (\delta_4 y \sigma_1)$, which is a positive braid. 

\begin{subcase}
{\it $P = x uv y w $ for some $x, y \in \mathcal{L}$.}
\end{subcase}

Then $b \sim \delta_4^2  x uv y w = \delta_4^2  x \delta_4 (w^{-1} y w)$. Up to conjugation, we can take $w$ to be either $\sigma_3$ or $a_{24}$.

If $w = \sigma_3$, then $b \sim \delta_4^2 x \delta_4 \sigma_3^{-1} y \sigma_3 \sim (\delta_4 x \sigma_1) \sigma_2 (y \delta_4) \sigma_2$. This is a positive braid unless $y = a_{24}$. But in this case $x \ne a_{13}$ and so $\sigma_3 x \sigma_1$ is a positive braid. Hence $b \sim \sigma_1 \sigma_2 (\sigma_3 x \sigma_1 \sigma_2) a_{24} \delta_4 \sigma_2 \sim (\sigma_3 x \sigma_1 \sigma_2) a_{24} \delta_4 (\sigma_2 \sigma_1 \sigma_2) = (\sigma_3 x \sigma_1 \sigma_2) \delta_4 a_{13}  (\sigma_1 \sigma_2 \sigma_1) = (\sigma_3 x \sigma_1 \sigma_2) \delta_4 \sigma_1 \sigma_2^2 \sigma_1)$, which is a positive braid. 

If $w = a_{24}$, then $b \sim \delta_4^2  x \delta_4 a_{24}^{-1} y a_{24} \sim \delta_4^2  x \sigma_1 \sigma_3 y a_{24} = (\sigma_2 \sigma_3 x \sigma_1) \sigma_3 y a_{24} \delta_4 \sigma_1 = (\sigma_2 \sigma_3 x \sigma_1) (\sigma_3 y  \delta_4) a_{24}\sigma_1  = (\sigma_2 \sigma_3 x \sigma_1) (\sigma_3 y  \delta_4) \sigma_1 \sigma_2$, which is a positive braid. 

\begin{subcase}
{\it $P = u x y vw $ for some $x, y \in \mathcal{L}$.}
\end{subcase}

Then $b \sim \delta_4^2  (a  x y a^{-1}) \delta_4  = \delta_4^2  (axa^{-1}) \delta_4 y$. Up to conjugation, we can take $u$ to be either $\sigma_1$ or $a_{24}$.

If $u = \sigma_1$, then $b \sim \delta_4^2 \sigma_1 xy \sigma_2 \sigma_3 \delta_4 = \sigma_3 \delta_4 (\delta_4 x) \delta_4 y' \sigma_1 \sigma_2 = \sigma_3 \delta_4 (\delta_4 x \sigma_1) (\sigma_2 \sigma_3 y' \sigma_1) \sigma_2$, which is a positive braid. 

 If $u = a_{13}$, then $b \sim \delta_4^2  a_{13}  x y a_{13}^{-1}  \delta_4 =  \delta_4  a_{24}  \delta_4 x y \sigma_1 \sigma_3 = \sigma_1 \sigma_2^2  \sigma_3  \delta_4 x y \sigma_1 \sigma_3 = \sigma_1 \sigma_2   (\sigma_2 \sigma_3  x' \delta_4) y \sigma_1 \sigma_3$ where $x' = \delta_4 x \delta_4^{-1}$.  Hence $b \sim \sigma_1 \sigma_2   (\sigma_2 \sigma_3  x' \sigma_1) (\sigma_2 \sigma_3 y \sigma_1) \sigma_3$, which is always a positive braid. 

\begin{subcase}
{\it $P =u x v y w $ for some $x, y \in \mathcal{L}$.}
\end{subcase}

There are four BKL-positive expressions for $\delta_4$ up to conjugacy by a power of $\delta_4$: $\sigma_1 \sigma_2 \sigma_3, \sigma_1 \sigma_3 a_{24}$, $\sigma_1 a_{24} \sigma_2$, and $a_{24} \sigma_1 \sigma_3$. We consider each of these separately. 

First suppose that $b \sim \delta_4^2 \sigma_1 x \sigma_2 y \sigma_3$. Then $b \sim \sigma_3  x' \delta_4   \sigma_3 y' \delta_4 \sigma_3$ where $x' = \delta_4^2 x \delta_4^{-2}$ and $y' = \delta_4 y \delta_4^{-1}$. Hence $b \sim (\sigma_3  x'  \delta_4)  (\sigma_3 y' \delta_4) \sigma_3$, which is a positive braid.  

Next suppose that $b \sim \delta_4^2 \sigma_1 x \sigma_3 y a_{24}$. Then $b \sim \sigma_3  x' \delta_4   \sigma_3 y' \delta_4 a_{24} = (\sigma_3  x' \sigma_1 \sigma_2)   \sigma_3 a_{14} y' \sigma_1 \sigma_2^2 \sigma_3$ where $x' = \delta_4^2 x \delta_4^{-2}$ and $y' = \delta_4 y \delta_4^{-1}$. Hence $b \sim (\sigma_3  x' \sigma_1) \sigma_1 (\sigma_2 \sigma_3 y' \sigma_1) \sigma_2^2 \sigma_3$. Now $\sigma_2 \sigma_3 y' \sigma_1$ is a positive braid as is $\sigma_3  x' \sigma_1$ as long as $x' \ne a_{14}$. But if $x' = a_{14}$, then $x = \sigma_2$ so that $b \sim \delta_4^2 \sigma_1 \sigma_2 \sigma_3 y a_{24} = \delta_4^3 y a_{24} = \delta_4^2 y' \delta_4 a_{24} = \delta_4 \sigma_1 \sigma_2 (\sigma_3 y' \sigma_1 \sigma_2) (\sigma_3 a_{24}) = \delta_4 \sigma_1 \sigma_2 (\sigma_3 y' \sigma_1 \sigma_2) (\sigma_2 \sigma_3)$, which is a positive braid.  

If $b \sim \delta_4^2 \sigma_1 x a_{24} y \sigma_2$, then $b \sim (\sigma_3 x' \sigma_1 \sigma_2) (\sigma_3 \delta_4 a_{24}) y \sigma_2 = (\sigma_3 x' \sigma_1 \sigma_2) (\sigma_3 \sigma_1 \sigma_2)(\sigma_2 \sigma_3 y) \sigma_2$. Now $\sigma_3 x' \sigma_1 \sigma_2$ is a positive braid while $\sigma_2 \sigma_3 y$ is as long as $y \ne a_{13}$. But $y \ne a_{13}$ as $b \sim \delta_4^2 \sigma_1 x a_{24} y \sigma_2 = \delta_4^2 \sigma_1 x a_{24} a_{13} \sigma_2$ is indefinite (Lemma \ref{lemma: no linking}). 

Finally suppose that $b \sim \delta_4^2 a_{24} x \sigma_1 y \sigma_3$. Then $b \sim \delta_4 \sigma_1 \sigma_2^2 \sigma_3 x \sigma_1 y \sigma_3 \sim (\sigma_2 \sigma_3 x \sigma_1) y \sigma_3 \delta_4 \sigma_1 \sigma_2 = (\sigma_2 \sigma_3 x \sigma_1) (\delta_4 y' \sigma_2 \sigma_1 \sigma_2) = (\sigma_2 \sigma_3 x \sigma_1) (\delta_4 y' \sigma_1) \sigma_2 \sigma_1$, which is a positive braid. 

\begin{subcase}
{\it $P = uv xy w$ for some $x, y \in \mathcal{L}$.}
\end{subcase}

Then $b \sim \delta_4^2   uv xy w   = \delta_4^3 w^{-1} xy w$. Up to conjugation, we can take $w$ to be either $\sigma_3$ or $a_{24}$.

If $w = \sigma_3$, then $b \sim \delta_4^3 \sigma_3^{-1} xy \sigma_3  = \delta_4^2  \sigma_1 \sigma_2 xy \sigma_3 = (\sigma_2 \sigma_3 x' \delta_4) (y' \delta_4) \sigma_3 = (\sigma_2 \sigma_3 x' \sigma_1) (\sigma_2 \sigma_3 y' \sigma_1) \sigma_2 \sigma_3^2$ where $x' = \delta_4^2 x \delta_4^{-2}$ and $y' = \delta_4 y \delta_4^{-1}$. As this is a positive braid for each $x, y \in \mathcal{L}$, we are done. 

Finally suppose that  $w = a_{24}$. Then $b \sim \delta_4^3 a_{24}^{-1} xy a_{24} = 
\delta_4^3 (\sigma_3^{-1} \sigma_2^{-1} \sigma_3) xy a_{24} = \delta_4^2 \sigma_1 \sigma_3 xy a_{24} \sim \sigma_1 \sigma_3 xy a_{24} \delta_4^2 
= \sigma_1 \sigma_3 x \delta_4 y' a_{13} \delta_4 = \sigma_1 (\sigma_3 x \sigma_1 \sigma_2) (\sigma_3 y'  \sigma_1 \sigma_2) \sigma_2 \sigma_3$ where 
$y' = \delta_4 y \delta_4^{-1}$. Thus $b$ is conjugate to a positive braid. 
\end{proof}

\section{The proof of Theorem \ref{thm: def baskets} when $n \geq 5$} 
\label{sec: n = 5}

In this section we prove 

\begin{prop} 
\label{prop: definite strongly quasipositive n-braids} 
If $L$ is a definite basket link which is the closure of a BKL-positive word of the form $\delta_n^2 P \in B_n$ where $n \geq 5$, then $b$ is conjugate to a positive braid. 
\end{prop}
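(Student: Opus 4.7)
The plan is to combine the structural restrictions on $P$ established in Sections~\ref{sec: indefinite bkl pos words} and~\ref{sec: e6, e7, e8} with explicit braid rewriting, in the spirit of the proof of Proposition~\ref{prop: definite strongly quasipositive 4-braids} for $n=4$. First I would bound $r(P)$: since $b$ contains $\delta_n^{2+r(P)}$ as a subword and is definite, Lemma~\ref{restrictions on krn} forces $r(P)=0$ when $n\geq 6$ and $r(P)\in\{0,1,2,3\}$ when $n=5$. In the exceptional case $n=5$, $r(P)\geq 1$, $b\supseteq \delta_5^3$, whose form is $E_8$; Corollary~\ref{cor: contains deltan2} then gives $\mathcal{F}(b)\cong E_m$ for some $m\in\{6,7,8\}$, and the rank lower bound from containing $E_8$ forces $m=8$. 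The rank identity $4+l_{BKL}(P)=8$ then yields $l_{BKL}(P)=4$, so $P=\delta_5$ and $b=\delta_5^3$ is itself a positive braid.

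Assuming next that $r(P)=0$, I would apply the structural lemmas. The absence of linked pairs (Lemma~\ref{lemma: no linking}), spans in $[4,n-4]$ (Lemma~\ref{lemma: bound on spans}(1)), commuting pairs with spans in $[2,n-2]$ (Lemma~\ref{lemma: when commuting implies indefinite}), and descending $2$-step staircases with steps of span at least $2$ (Lemma~\ref{lemma: 2-step descending stairs of length two or more indefinite}), combined with the exceptional conclusions of Lemmas~\ref{lemma: small span}, \ref{lemma: no repeated span 2 letters}, \ref{lemma: no ascending stairs of total length 3 or 4}, and~\ref{lemma: $3$-step ascending staircase e6 definite}, split the analysis into two cases: either $\mathcal{F}(b)\cong E_6, E_7$, or $E_8$---which further forces $n\in\{5,6,7,8\}$---or $P$ involves only letters of span $1$ or $n-1$, that is, $\sigma_i$'s and $\delta_n$-conjugates of $a_{1n}$, arranged in a highly restricted pattern.

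In the $E$-form branch, the rank identity $(n-1)+l_{BKL}(P)\in\{6,7,8\}$ with $n\geq 5$ yields a short finite list of candidates $(n,P)$; I would check each is conjugate in $B_n$ to one of the positive braid representatives of $T(3,4)$, $P(-2,3,4)$, or $T(3,5)$ listed in the table of~\S\ref{subsec: symm seifert forms of baskets}, by the same rewriting techniques used in the $\mathcal{F}(b)\cong E_7, E_8$ cases of Proposition~\ref{prop: definite strongly quasipositive 4-braids}. In the generic branch, an argument parallel to Cases~1 and~2 of that proposition applies: I would use the identity $\delta_n a_{1n}=\sigma_1\delta_n$ (which follows from $\delta_n a_{1n}\delta_n^{-1}=a_{1,2}=\sigma_1$), together with commutation and conjugation by powers of $\delta_n$, to absorb each $a_{1n}$-letter into a positive braid representative of a conjugate of $b$. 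The hard part will be the bookkeeping in both branches, particularly for $n\in\{5,6,7,8\}$, where the variety of admissible BKL generators is largest and the interplay between the two $\delta_n$ factors and the non-$\sigma_i$ letters of $P$ demands the most delicate combinatorial argument, paralleling the intricate case work of Section~\ref{sec: n = 4}.
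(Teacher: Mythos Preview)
Your high-level architecture matches the paper's: reduce to $r(P)=0$, split off the $E_m$ case, and handle the residual span-$1$/$(n-1)$ case by rewriting. However, the dichotomy you assert---``either $\mathcal{F}(b)\cong E_m$ or every letter of $P$ has span $1$ or $n-1$''---is not what the cited lemmas deliver. Those lemmas only rule out (or push into the $E_m$ case) \emph{pairs} of letters of span in $[2,n-2]$: linked pairs, commuting pairs, repeated letters, ascending $(2,2)$ pairs, and descending staircases. They say nothing about a $P$ containing a \emph{single} letter of span $2$ (or $n{-}2$) together with letters of span $1$ or $n{-}1$; such a $P$ need not have $E_m$ form, and it is certainly not ``span $1$ or $n{-}1$ only''. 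The paper handles this residual case explicitly in the last paragraph of the proof: after conjugation one writes $b\sim\delta_n^2 P' a_{13}$ with $P'$ built from span-$1$ and span-$(n{-}1)$ letters, and then either $P'$ contains $a_{1n}$ (so $a_{1n}a_{13}=a_{13}a_{3n}$ produces a span-$(n{-}3)$ letter or an ascending $(2,2)$ pair, forcing the $E_m$ case) or $P'$ is a positive word and $b\sim\delta_n P'\sigma_1\sigma_2^2\sigma_3\cdots\sigma_{n-1}$. This is a genuine extra step, not a consequence of the structural lemmas you list.

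There is a second point worth flagging. For the $E_m$ branch you propose to enumerate the short list of $(n,P)$ and rewrite each as in the $n=4$ argument. The paper does this quite differently (Lemma~\ref{lemma: the case e6}): it first invokes the standing assumption $n=n(\widehat b)$, so that Lemmas~\ref{lemma: not minimal}, \ref{lemma: innermost commute}, and~\ref{lemma: contains sigma1} apply and sharply constrain which letters can occur in $P$ when $l_{BKL}(P)\leq 3$; only the $l_{BKL}(P)=4$, $n=5$ case then needs a genuine case analysis. You never invoke $n=n(\widehat b)$, and without it the ``short finite list'' is longer and the rewriting considerably messier than you suggest. Likewise, in the span-$1$/$(n{-}1)$ branch the paper's argument (Lemma~\ref{lemma: done if all spans 1 or n-1}) is not pure rewriting: the configurations $\sigma_{n-1}a_{1n}\sigma_1$ and $(a_{1n}\sigma_1)^2$ are first shown to force the $E_m$ case, and only after excluding these does the $\delta_n a_{1n}=\sigma_1\delta_n$ trick suffice.
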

We assume throughout this section that $n \geq 5$ and that 
\begin{equation}
b = \delta_n^k P
\end{equation}
is a definite braid where $P \in B_n$ is a BKL-positive word and $k \geq 2$. Given Theorem \ref{thm: definite 3-braids} and Proposition \ref{prop: definite strongly quasipositive 4-braids}, we can suppose that $n = n(\widehat b)$ (cf. \S \ref{subsec: reducing indices}) without loss of generality.

\begin{lemma} 
\label{lemma: wlog r = 0}
Proposition \ref{prop: definite strongly quasipositive n-braids} holds if $r(P) > 0$.
\end{lemma}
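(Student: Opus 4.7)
The plan is to first force $(n, k, r(P)) = (5, 2, 1)$ by a lattice-theoretic comparison, then to show via Lemma \ref{lemma: e_n}(3) that no BKL-letter beyond an embedded $\delta_5$ can survive in $P$. Since $P \supseteq \delta_n^{r(P)}$, we have $b \supseteq \delta_n^{k + r(P)}$, so $\mathcal{F}(\delta_n^{k+r(P)})$ sits inside the (assumed) definite form $\mathcal{F}(b)$ as a primitive sublattice. In particular $\delta_n^{k+r(P)}$ is definite, so Lemma \ref{restrictions on krn}, combined with $n \geq 5$, $k \geq 2$, $r(P) \geq 1$, forces $\{k+r(P), n\} = \{3, 5\}$, i.e.\ $n = 5$, $k = 2$ and $r(P) = 1$. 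Consequently $\mathcal{F}(b) \supseteq \mathcal{F}(\delta_5^3) \cong E_8$ (cf.\ the table in \S \ref{subsec: symm seifert forms of baskets}).

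Next, I would pick a BKL-subword $b_0 \subseteq b$ equal as a BKL-positive word to $\delta_5^3$ --- namely the two $\delta_5$-factors in front of $P$ together with any four letters of $P$ realising the embedded $\delta_5$. Then $F(b_0)$ is connected and $\mathcal{F}(b_0) \cong E_8$. If $P$ contains any further letter $a_{rs}$, let $b_1 \subseteq b$ be obtained from $b_0$ by reinserting just that letter. Taking $\beta \in H_1(F(b_0))$ to be the cycle associated to $\sigma_{s-1}$ in the \emph{second} $\delta_5$-factor of $b$ (which lies in $b_0$ and precedes all of $P$), and $\alpha \in H_1(F(b_1))$ the cycle for $a_{rs}$, Lemma \ref{lemma: linking info} (with $t = s-1$, $u = s$ and $r \leq s-1 < s \leq s$) gives $lk(\alpha^+, \beta) = -1$ and $lk(\beta^+, \alpha) = 0$, whence $\mathcal{F}(b_1)(\alpha, \beta) = -1$. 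Lemma \ref{lemma: e_n}(3) then renders $b_1$ indefinite, which by primitivity of $\mathcal{F}(b_1) \subseteq \mathcal{F}(b)$ contradicts the hypothesis. Hence $P$ consists of just the four letters $\sigma_1, \sigma_2, \sigma_3, \sigma_4$ in that order, so $P = \delta_5$ and $b = \delta_5^3 = (\sigma_1\sigma_2\sigma_3\sigma_4)^3$ is itself a positive braid.

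The principal obstacle is ensuring that the pairing class $\beta$ lies in $H_1(F(b_0))$ and precedes $\alpha$ in $b_1$, irrespective of where $a_{rs}$ sits inside $P$. Drawing $\beta$ from the second $\delta_5$-factor of $b$ --- which comes before the whole of $P$ --- handles this uniformly and reduces the linking computation to a single instance of Lemma \ref{lemma: linking info}, avoiding a case split on the relative position of $a_{rs}$ amongst the four chosen $\delta_5$-letters inside $P$.
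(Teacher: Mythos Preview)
Your argument is correct and is essentially the paper's own proof with Corollary~\ref{cor: contains deltan2} unpacked inline. The paper reaches $n=5$, $r(P)=1$, $\mathcal{F}(b)\supseteq E_8$ by the same lattice comparison, then quotes Corollary~\ref{cor: contains deltan2} to force $\mathcal{F}(b)\cong E_8$ and hence $b=\delta_5^3$; your direct appeal to Lemma~\ref{lemma: linking info} (pairing the extra letter against $\sigma_{s-1}$ in the second $\delta_5$) and Lemma~\ref{lemma: e_n}(3) is exactly how that corollary is proved.
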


\begin{proof}
If $r(P) > 0$, then $\mathcal{F}(b) \supseteq \mathcal{F}(\delta_n^{2 + r(P)}) = \mathcal{F}(T(n, 2 + r(P))$. Since $b$ is definite, we have $n = 5$ and $r(P) = 1$ (Lemma \ref{restrictions on krn}). Then $\mathcal{F}(b) \supseteq \mathcal{F}(\delta_5^3) = \mathcal{F}(T(3,5) \cong E_8$. Corollary \ref{cor: contains deltan2} now implies that $b$ is the positive braid $\delta_5^3$.  
\end{proof}

We assume below that $r(P) = 0$.   

\begin{lemma} 
\label{lemma: the case e6}
Suppose that $b = \delta_n^2 P$ is a definite BKL-positive braid where $n \geq 5$. If $\mathcal{F}(b)$ is congruent to $E_6, E_7$, or $E_8$, then $b$ is conjugate to a positive braid.  
\end{lemma}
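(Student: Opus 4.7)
The plan is to leverage the rank constraint \eqref{eqn: l(P) bound}: since $\mathcal{F}(b)\cong E_j$ with $j\in\{6,7,8\}$, we have $(n-1)+l_{BKL}(P)=j$, so $l_{BKL}(P)=j-n+1\le 4$ and $n\le 8$. Combined with the standing assumptions $r(P)=0$ and $n(\widehat b)=n$, this forces $P$ to be a very short word and reduces the lemma to a finite case check. I would begin by stratifying by the pair $(n,j)$ with $n\in\{5,6,7,8\}$ and $j\in\{6,7,8\}$, recording in each case the forced length $l_{BKL}(P)=j-n+1$, and discarding the sub-case $l_{BKL}(P)=0$ since $\mathcal{F}(\delta_n^2)\cong A_{n-1}$ is never an $E_j$.

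Next I would apply the obstructions from Sections~\ref{sec: indefinite bkl pos words} and~\ref{sec: e6, e7, e8} to severely restrict the admissible letters and pairs of letters in $P$. Lemma~\ref{lemma: bound on spans} forbids letters of span strictly between $3$ and $n-3$; Lemma~\ref{lemma: no linking} rules out linked pairs; Lemma~\ref{lemma: when commuting implies indefinite} rules out commuting pairs of letters both of span strictly between $1$ and $n-1$; Lemma~\ref{lemma: 2-step descending stairs of length two or more indefinite} rules out descending $2$-step staircases with steps of span $\ge 2$; and Lemmas~\ref{lemma: no repeated span 2 letters} and~\ref{lemma: no ascending stairs of total length 3 or 4} eliminate squared span-$2$ letters and ascending $(2,2)$-pairs (or push $n$ into the regime where $\mathcal{F}(b)$ is already identified). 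Finally, Lemmas~\ref{lemma: contains sigma1} and~\ref{lemma: innermost commute} ensure, after a suitable $\delta_n$-conjugation, that $\sigma_1$ occurs in $P$ and that no letter of $P$ commutes with all other letters of $P$. Together these filters leave only a short, explicit list of candidate words $P$ for each pair $(n,j)$.

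For each surviving candidate I would verify, by explicit braid rewriting, that $\delta_n^2 P$ is conjugate to a positive braid. The basic device is to expand each BKL-generator via $a_{rs}=(\sigma_r\sigma_{r+1}\cdots\sigma_{s-2})\sigma_{s-1}(\sigma_r\sigma_{r+1}\cdots\sigma_{s-2})^{-1}$ and use the identity~\eqref{conj by delta} together with Lemmas~\ref{lemma: simple rewrite} and~\ref{lemma: not minimal} to push the negative tail past the $\delta_n^2$ prefix and cancel it against matching positive letters, a technique already deployed in Sections~\ref{sec: 3-braids} and~\ref{sec: n = 4}. The principal obstacle will be the combinatorial bookkeeping: the enumeration must be complete and for each surviving $P$ the positive rewriting has to be produced, especially in the richest sub-case $n=5$ with $l_{BKL}(P)\in\{3,4\}$. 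A secondary subtlety is compatibility with the minimality $n(\widehat b)=n$: any candidate $P$ whose closure $\widehat b$ admits a representation $\delta_m^k P'$ with $m<n$ and $k\ge 2$ is excluded by the WLOG reduction, and I expect that a number of \emph{a priori} candidates---for instance those whose closure would be one of the standard realizations $T(3,4),P(-2,3,4),T(3,5)$ of the $E_j$-link at smaller braid index---will be eliminated in precisely this way, leaving only those cases that genuinely admit a positive rewriting inside $B_n$.
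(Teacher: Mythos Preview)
Your proposal is correct and takes essentially the same approach as the paper: bound $l_{BKL}(P)\le 4$ via the rank constraint, use the structural lemmas of Sections~\ref{sec: indefinite bkl pos words}--\ref{sec: e6, e7, e8} together with the standing assumption $n(\widehat b)=n$ to eliminate most candidates, and explicitly rewrite the survivors as positive braids. The paper stratifies by $l_{BKL}(P)$ rather than by pairs $(n,j)$ (equivalent, since $l_{BKL}(P)=j-n+1$) and finds that for $l_{BKL}(P)\le 3$ almost every case is eliminated by contradiction with $n(\widehat b)=n$, so the substantive rewriting is concentrated in the case $l_{BKL}(P)=4$, $n=5$, which it organizes by how many letters of $P$ lie in the $\delta_5$-orbit $\{\sigma_1,\sigma_2,\sigma_3,\sigma_4,a_{15}\}$ versus $\{a_{13},a_{24},a_{35},a_{14},a_{25}\}$; note also that Lemma~\ref{lemma: simple rewrite} is specific to $B_3$ and is not used here.
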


\begin{proof}
By (\ref{eqn: l(P) bound}), 
$$(n - 1) + l_{BKL}(P) = \mbox{{\rm rank}} \; H_1(F(b)) = \left\{ 
\begin{array}{ll}
6 & \mbox{if } \mathcal{F}(b) \cong E_6 \\ 
7 & \mbox{if } \mathcal{F}(b) \cong E_7 \\ 
8 & \mbox{if } \mathcal{F}(b) \cong E_8 
\end{array} \right.$$
In particular, $l_{BKL}(P) \leq 4$ and equals $4$ if and only if $n = 5$ and $\mathcal{F}(b) \cong E_8$.  

If $l_{BKL}(P) = 0$, then $b$ is the positive braid $\delta_n^k$. 

If $l_{BKL}(P) = 1$, we can conjugate $b$ by a power of $\delta_n$ to see that $b \sim \delta_n^2 a_{1s} \sim \delta_n a_{1s} \delta_n = \delta_n \delta_{s-1} \delta_{s-2}^{-1} \delta_n = \delta_n \delta_{s-1} \sigma_{s-1} \sigma_{s} \ldots \sigma_{n-1}$, which is a positive braid. 

If $l_{BKL}(P) = 2$, then Lemmas \ref{lemma: not minimal} and \ref{lemma: contains sigma1} imply that 
up to conjugation by a power of $\delta_n$, the two letters of $P$ are $\sigma_1$ and $a_{rn}$. Now $r$ is either $1$ or $2$ as otherwise Lemma \ref{lemma: innermost commute} implies that $n(\widehat b) < n$. But $r = 1$ implies the two letters of $\delta_n b \delta_n^{-1}$ are $\sigma_1$ and $\sigma_2$, so $\sigma_{n-1}$ is not covered, contrary to Lemma \ref{lemma: not minimal}. And if $r = 2$, $P$ is either $\sigma_1 a_{2n} = a_{1n} \sigma_1$, which is impossible, or $a_{2n} \sigma_1$. In the latter case, $b = \delta_n^2 a_{2n} \sigma_1 = \delta_n \sigma_1 \sigma_2^2 \sigma_3 \sigma_4 \cdots \sigma_{n-1} \sigma_1$, a positive braid.

Next suppose that $l_{BKL}(P) = 3$. By Lemma \ref{lemma: not minimal} we can assume that the letters of $P$ are $\sigma_1, a_{rs}$, and $a_{tn}$. Lemma \ref{lemma: innermost commute} implies that $\min \{r, t\} \in \{1, 2\}$.  

 If $t = 1$, then conjugation by $\delta_n$ yields the letters $\sigma_1, \sigma_2, a_{r+1, s+1}$. Hence either $r = n-1$, so $a_{rs} = \sigma_{n-1}$, or $s = n-1$. The former is impossible as it implies that the letters of $\delta_n^2 P \delta_n^{-2}$ are $\sigma_1, \sigma_2$, and $\sigma_3$, which doesn't cover $\sigma_{n-1}$. Thus $s = n-1$. But then $r = 1$ or $2$ by Lemma \ref{lemma: innermost commute} applied to $\delta_n P \delta_n^{-1}$. In either case, the letters of $\delta_n^2 P \delta_n^{-2}$ are $\sigma_1, \sigma_3$, and $a_{1,r+2}$ and as $r+2 \leq 4 < n$, $\sigma_{n-1}$ is not covered, a contradiction. 

Similarly if $t = 2$, conjugation by $\delta_n$ yields the letters $\sigma_1, a_{13}, a_{r+1, s+1}$. Hence either $r = n-1$, so $a_{rs} = \sigma_{n-1}$, or $s = n-1$. The former is impossible as it implies that the letters of $\delta_n^2 P \delta_n^{-2}$ are $\sigma_1, a_{24}$, and $\sigma_3$. Thus $s = n-1$. But then $r = 2$ (Lemma \ref{lemma: innermost commute} and Lemma \ref{lemma: no linking}), and in this case the letters of $\delta_n^2 bP \delta_n^{-2}$ are $\sigma_3, a_{14}$, and $a_{24}$, a contradiction. 

Assume that $t > 2$. Then $r \in \{1, 2\}$ and $s \in \{t, n\}$ (Lemma \ref{lemma: innermost commute} and Lemma \ref{lemma: no linking}). If $s = n$, the letters of $\delta_n P \delta_n^{-1}$ are $\sigma_2, a_{1, r+1}$, and $a_{1, t+1}$. Then $t = n-1$ so that the letters of $\delta_n^2 bP \delta_n^{-2}$ are $\sigma_3, a_{2, r+2},$ and $\sigma_2$, a contradiction. If $s = t$, the letters of $\delta_n P \delta_n^{-1}$ are $\sigma_2, a_{1, t+1}$, and $a_{r+1, t+1}$. Then $t = n-1$ so that the letters of $\delta_n^2 bP \delta_n^{-2}$ are $\sigma_3, a_{1, r+2},$ and $\sigma_1$, a contradiction.

This completes the proof in the case that $l_{BKL}(P) = 3$. In fact it proves more. The same arguments show that the lemma holds if the number of distinct letters in $P$ is at most three. We will use this below. 

Finally suppose that $l_{BKL}(P) = 4$ so as noted above, $n = 5$ and $\mathcal{F}(b) \cong E_8$. From the last paragraph we can suppose that $P$ is made up of four distinct BKL-positive letters. There are two conjugacy classes of BKL-positive letters:
$$A = \{\sigma_1, \sigma_2, \sigma_3, \sigma_4, a_{15}\}$$
and 
$$B = \{a_{13}, a_{24}, a_{35}, a_{14}, a_{25}\}$$
By Lemma \ref{lemma: no linking}, there are at most two letters of $P$ which come from $B$. Hence at least two come from $A$.

\setcounter{case}{0}
\begin{case}
{\rm {\it Each letter of $P$ comes from $A$.}}
\end{case}

In this case $P$ is a product of four of the letters $\sigma_1, \sigma_2, \sigma_3, \sigma_4, a_{15}$. Hence after conjugation by an appropriate power of $\delta_5$ we have that $P$, and therefore $b$, becomes a positive braid. 

\begin{case}
{\rm {\it Three letters of $P$ come from $A$ and one from $B$.}}
\end{case}

Then up to conjugation, $b = \delta_5^2 a_1 a_2 a_3 a_{1r}$ where $a_1, a_2, a_3 \in A$ and $a_{1r} \in B$ so that $r \in \{3, 4\}$. Then
$$b \sim \delta_5 a_1 a_2 a_3 a_{1r} \delta_5 = \left\{ 
\begin{array}{ll} 
\delta_5 a_1 a_2 a_3 \sigma_1 \sigma_2^2 \sigma_3 \sigma_4 & \mbox{ if } r = 3 \\
\delta_5 a_1 a_2 a_3 \sigma_1 \sigma_2 \sigma_{3}^2 \sigma_4  & \mbox{ if } r = 4 
 \end{array} 
 \right.$$ 
 By hypothesis, the $a_i$ are distinct. If no $a_i$ is $a_{15}$, $b$ is a positive braid. 

If $a_1 = a_{15}$, then as $\delta_5 a_{15} = \sigma_1 \delta_5$ and $a_2, a_3 \in A \setminus \{a_{15}\}$, we are done. 

If $a_3 = a_{15}$ then note that either 
$$b \sim \delta_5 a_1 a_2 a_{15} \sigma_1 \sigma_2 \sigma_3^2 \sigma_4 = \delta_5 a_1 a_2 \delta_5 \sigma_3 \sigma_4,$$
so we are done, or 
\begin{eqnarray} 
b \sim \delta_5 a_1 a_2 a_{15} \sigma_1 \sigma_2^2 \sigma_3 \sigma_4 =  \delta_5 a_1 a_2 \delta_5 \sigma_3^{-1} \sigma_2 \sigma_3 \sigma_4 =  \delta_5 a_1 a_2 \sigma_4^{-1} \delta_5 \sigma_2 \sigma_3 \sigma_4  \nonumber   
\end{eqnarray} 
We're done if $a_2 = \sigma_4$. If $a_2 = \sigma_3$ and $a_1 \ne \sigma_4$, then  $\delta_5 a_1 a_2 \sigma_4^{-1} = 
\delta_5 a_1 \sigma_3 \sigma_4^{-1} = a_1' \sigma_4 \delta_5 \sigma_4^{-1} = a_1' \sigma_4 \sigma_1 \sigma_2 \sigma_3$ where $a_1' \in \{\sigma_2, \sigma_3, \sigma_4\}$, so we're done. If  $a_2 = \sigma_3$ and $a_1 = \sigma_4$, then 
\begin{eqnarray} 
b \sim \delta_5 \sigma_4 \sigma_3 \sigma_4^{-1} \delta_5 \sigma_2 \sigma_3 \sigma_4  & = & \delta_5 \sigma_3^{-1} \sigma_4 \sigma_3 \delta_5 \sigma_2 \sigma_3 \sigma_4  \nonumber   \\ 
& = & \sigma_1 \sigma_2 \sigma_3 \sigma_4 \sigma_3^{-1}  \sigma_4 \sigma_3 \delta_5 \sigma_2 \sigma_3 \sigma_4  \nonumber   \\ 
& = & \sigma_1 \sigma_2 \sigma_4^{-1} \sigma_3 \sigma_4^2 \sigma_3 \delta_5 \sigma_2 \sigma_3 \sigma_4  \nonumber   \\ 
& = & \sigma_4^{-1} \sigma_1 \sigma_2  \sigma_3 \sigma_4^2 \sigma_3 \delta_5 \sigma_2 \sigma_3 \sigma_4  \nonumber  \\ 
& \sim & \sigma_1 \sigma_2  \sigma_3 \sigma_4^2 \sigma_3 \delta_5 \sigma_2 \sigma_3,   \nonumber    
\end{eqnarray} 
a positive braid. 

Suppose $a_2 = \sigma_i \in \{\sigma_1, \sigma_2\}$. Then $b \sim \delta_5 a_1 \sigma_i \sigma_4^{-1}  \delta_5 \sigma_2 \sigma_3 \sigma_4 = \delta_5 a_1 \sigma_4^{-1} \sigma_i \delta_5 \sigma_2 \sigma_3 \sigma_4$. We're done if $a_1 \in \{\sigma_1, \sigma_2, \sigma_4\}$ as then $b \sim \delta_5 \sigma_4^{-1} a_1  \sigma_i \delta_5 \sigma_2 \sigma_3 \sigma_4 = \sigma_1 \sigma_2 \sigma_3 a_1  \sigma_i \delta_5 \sigma_2 \sigma_3 \sigma_4$. If $a_1 = \sigma_3$, then 
\begin{eqnarray}  
b \sim \delta_5 \sigma_3 \sigma_4^{-1} \sigma_i \delta_5 \sigma_2 \sigma_3 \sigma_4  & = & \sigma_1 \sigma_2  \sigma_3 (\sigma_4 \sigma_3 \sigma_4^{-1}) \sigma_i \delta_5 \sigma_2 \sigma_3  \sigma_4   \nonumber   \\ 
& = & \sigma_1 \sigma_2  \sigma_3 (\sigma_3^{-1} \sigma_4 \sigma_3) \sigma_i \delta_5 \sigma_2 \sigma_3  \sigma_4  \nonumber   \\ 
& = & \sigma_1 \sigma_2  \sigma_4 \sigma_3 \sigma_i \delta_5 \sigma_2 \sigma_3  \sigma_4,  \nonumber   
\end{eqnarray} 
a positive braid. This completes the argument when $a_3 = a_{15}$.

Finally, if $a_2 = a_{15}$ we are done if $a_1 \ne \sigma_4$ as $\delta_5 a_1 a_2 = (\delta_5 a_1 \delta_5^{-1}) (\delta_5 a_{15})$. If $a_2 = a_{15}$ and $a_1 =\sigma_4$, then $\delta_5 a_1 a_2 = \delta_5 \sigma_4 (\sigma_4^{-1} \sigma_3^{-1} \sigma_2^{-1}) \delta_5 = \delta_5 \sigma_3^{-1} \sigma_2^{-1} \delta_5 = \sigma_4^{-1} \sigma_3^{-1} \delta_5^{2}$. It is obvious then that $b = \delta_5 a_1 a_2 \sigma_4^{-1} \delta_5 \sigma_2 \sigma_3 \sigma_4 = \sigma_4^{-1} \sigma_3^{-1} \delta_5^{2} \sigma_4^{-1} \delta_5 \sigma_2 \sigma_3 \sigma_4 = (\sigma_3 \sigma_4)^{-1} \delta_5 \sigma_1 \sigma_2 \sigma_3 \delta_5 \sigma_2) (\sigma_3 \sigma_4)$ is conjugate to a positive braid. 

\begin{case}
{\rm {\it  Two letters of $P$ come from $A$ and two from $B$.}}
\end{case}

Lemma \ref{lemma: no linking} implies that up to conjugation by a power of $\delta_5$, the two letters from $B$ are $a_{13}$ and $a_{35}$. Lemma \ref{lemma: 2-step descending stairs of length two or more indefinite} implies that $a_{13}$ occurs before $a_{35}$ in $P$. Note as well that we are reduced to the previous subcase if $P$ contains $a_{13} a_{35} = a_{15} a_{13}$. Thus there are distinct letters $a_1, a_2 \in A$ such that 
\begin{equation} 
\label{eqn: possibilities}
P = \left\{ \begin{array}{l} a_{13} a_1 a_{35} a_2, \mbox{ or}   \\ 
a_{13} a_1 a_2 a_{35}, \mbox{ or}  \\ 
a_1 a_{13} a_2 a_{35} \end{array} \right.
\end{equation} 
In particular, $a_{13}$ is one of the first two letters of $P$ and $a_{35}$ one of the last two letters.

Since $\delta_5 \{a_{13}, a_{35}\} \delta_5^{-1} = \{a_{24}, a_{14}\}$, Lemma \ref{lemma: not minimal} implies that either $\sigma_4$ or $a_{15}$ must be contained in the letters of $\delta_5 P \delta_5^{-1}$. Thus $\sigma_3$ or $\sigma_4$ is a letter of $P$. Next note that as $\delta_5^3 \{a_{13}, a_{35}, \sigma_3\} \delta_5^{-3} = \{a_{13}, a_{14}, \sigma_1\}$ and $\delta_5^3 \{a_{13}, a_{35}, \sigma_4\} \delta_5^{-3} = \{a_{13}, a_{14}, \sigma_2\}$, either $\sigma_4$ or $a_{15}$ must be contained in the letters of $\delta_5^3 P \delta_5^{-3}$. Thus the letters of $P$ are either $a_{13}, a_{35}, \sigma_1, \sigma_3$, or $a_{13}, a_{35}, \sigma_1, \sigma_4$, or $a_{13}, a_{35}, \sigma_2, \sigma_3$, or $a_{13}, a_{35}, \sigma_2,  \sigma_4$. We consider these cases separately. 

\begin{subcase}
{\rm {\it The letters of $P$ are $a_{13}, a_{35}, \sigma_1, \sigma_3$.}}
\end{subcase}

Since $\sigma_1$ commutes with $a_{35}$ and $\sigma_3$, if it occurs after $a_{13}$, $P$ can be be rewritten to contain $a_{13} \sigma_1 = \sigma_1 \sigma_2$ and we can then appeal to the previous subcase. So without loss of generality we can suppose that $\sigma_1$ occurs before $a_{13}$. Then (\ref{eqn: possibilities}) implies that $P = \sigma_1 a_{13} \sigma_3 a_{35} = \sigma_1 a_{14} a_{13} a_{35}$. But this is impossible since the presence of $a_{14}$ and $a_{35}$ in (a rewritten) $P$ forces $\mathcal{F}(b)$ to be indefinite.

\begin{subcase}
{\rm {\it The letters of $P$ are $a_{13}, a_{35}, \sigma_1, \sigma_4$.}}
\end{subcase}

A similar argument  to that used in the previous subcase shows that $P = \sigma_1 a_{13} \sigma_4 a_{35} = \sigma_1 a_{13} \sigma_3 \sigma_4$, so again we are done by the previous case.

\begin{subcase}
{\rm {\it The letters of $P$ are $a_{13}, a_{35}, \sigma_2, \sigma_3$.}}
\end{subcase}

Since $\sigma_2 a_{13} = \sigma_1 \sigma_2$ and $a_{35} \sigma_3 = \sigma_3 \sigma_4$, we are reduced to the previous case if $\sigma_2$ occurs before $a_{13}$ or $\sigma_3$ occurs after $a_{35}$ (cf. (\ref{eqn: possibilities})). Assume that this isn't the case. Lemma \ref{lemma: no linking} rules out the possibility that $P$ contains the subwords $\sigma_2 a_{35} = a_{25} \sigma_2$ or $a_{13} \sigma_3 = a_{14} a_{13}$. From (\ref{eqn: possibilities}), the only possibility is for $P = a_{13} \sigma_2 \sigma_3 a_{35} = a_{13} a_{24} \sigma_2 a_{35}$, which is ruled out by Lemma \ref{lemma: no linking}.

\begin{subcase}
{\rm {\it The letters of $P$ are $a_{13}, a_{35}, \sigma_2, \sigma_4$.}}
\end{subcase}

Since $\sigma_2$ commutes with $\sigma_4$, if it occurs before $a_{13}$, $P$ can be be rewritten to contain $\sigma_2 a_{13}  = \sigma_1 \sigma_2$ and we can then appeal to the previous case. So without loss of generality we can suppose that $\sigma_2$ occurs after $a_{13}$. Similarly $\sigma_4$ must occur after $a_{35}$ so from (\ref{eqn: possibilities}), $P = a_{13} \sigma_2 a_{35} \sigma_4 = a_{13} a_{25} \sigma_2 \sigma_4$. This is ruled out by Lemma \ref{lemma: no linking}. 
\end{proof}

\begin{lemma}
\label{lemma: done if all spans 1 or n-1}
If $b = \delta_n^2 P$ where $P$ is a product of the letters of span $1$ and $n-1$, then $b$ is conjugate to a positive braid. 
\end{lemma}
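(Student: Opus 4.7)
My plan is to exploit the absorption identities
\[
\delta_n^{\,2}\, a_{1n} \;=\; \sigma_2\, \delta_n^{\,2}, \qquad a_{1n}\, \delta_n^{\,2} \;=\; \delta_n^{\,2}\, \sigma_{n-2},
\]
both obtained by iterating $\delta_n\,a_{1n}=\sigma_1\,\delta_n$, which is Identity~(\ref{conj by delta}) applied to $(r,s)=(1,n)$ under the cyclic convention $a_{r+1,n+1}=a_{1,r+1}$. These say that an $a_{1n}$-letter sitting immediately adjacent to $\delta_n^{\,2}$ can be traded for a $\sigma_2$ or $\sigma_{n-2}$ on the other side, strictly reducing the number of $a_{1n}$-letters by one.

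I argue by induction on $k$, the number of $a_{1n}$-letters in $P$. The base $k=0$ is immediate since $\delta_n^{\,2}P$ is already positive. For $k\geq 1$, write $P=L_1\cdots L_m$. If $L_1=a_{1n}$, then $b=\sigma_2\,\delta_n^{\,2}L_2\cdots L_m\sim\delta_n^{\,2}L_2\cdots L_m\,\sigma_2$ has one fewer $a_{1n}$; the case $L_m=a_{1n}$ is symmetric using the second identity. When every $a_{1n}$ is interior, I bring one to position~$1$ by iterating the cyclic-conjugation move $\delta_n^{\,2}L_1X\sim\delta_n^{\,2}X\,L_1^{(+2)}$ inside $b$, where $L_1^{(+2)}=\delta_n^{\,2}L_1\delta_n^{-2}$ is the $+2$-shift in the cyclic order $\sigma_1\to\sigma_2\to\cdots\to\sigma_{n-1}\to a_{1n}\to\sigma_1$; after $j-1$ rotations, the new first letter is the original $L_j$, so choosing $j$ to be the position of an interior $a_{1n}$ reduces me to the previous case.

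The subtle point is that cycling a $\sigma_{n-2}$-letter creates a new $a_{1n}$, since $\sigma_{n-2}^{(+2)}=a_{1n}$, so naively the total count could fail to decrease. I repair this using the commutation $\sigma_{n-2}\,a_{1n}=a_{1n}\,\sigma_{n-2}$, valid by Relation~(\ref{eqn: commuting generators}) since $\{n-2,n-1\}$ and $\{1,n\}$ are disjoint and unlinked (in fact nested). By first commuting each $\sigma_{n-2}$ past any immediately following $a_{1n}$, I rearrange $P$ so that the rotate-and-absorb step strictly reduces $k$, completing the induction. The main obstacle I anticipate is the combinatorial bookkeeping in this last step: one must verify, in every configuration of $P$, that a suitable sequence of cyclic rotations, $\sigma_{n-2}\leftrightarrow a_{1n}$ swaps, and absorption identities produces a net decrease, especially when $P$ uses every letter of $\{\sigma_1,\ldots,\sigma_{n-1},a_{1n}\}$ and the non-commuting pairs $\sigma_{n-3}\leftrightarrow\sigma_{n-2}$, $\sigma_{n-2}\leftrightarrow\sigma_{n-1}$, $\sigma_1\leftrightarrow a_{1n}$, $\sigma_{n-1}\leftrightarrow a_{1n}$ obstruct direct movement of $\sigma_{n-2}$'s past intervening letters.
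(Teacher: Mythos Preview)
Your absorption identities and the cyclic-conjugation mechanics are correct, but the induction on $k$ does not go through as stated, and your proposed fix via the $\sigma_{n-2}\leftrightarrow a_{1n}$ commutation is insufficient. Take $n\ge 5$ and $P=\sigma_{n-2}\,\sigma_{n-1}\,a_{1n}\,\sigma_1\,\sigma_2$: the lone $a_{1n}$ is interior, but $\sigma_{n-2}$ cannot be commuted past the intervening $\sigma_{n-1}$ to reach $a_{1n}$, and cycling $L_1=\sigma_{n-2}$ forward creates a new $a_{1n}$ at the back while the subsequent front absorption removes only one --- so after one rotate-and-absorb round you are back at a word with $k=1$. (Repeated rounds \emph{do} eventually succeed here, but your induction offers no quantity that strictly decreases at each step.) You flag exactly this obstacle in your final paragraph; it is not bookkeeping but the heart of the matter, and as written the argument is incomplete.

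The paper's proof proceeds along quite different lines and makes essential use of the standing hypothesis of \S\ref{sec: n = 5} that $b$ is \emph{definite}, which you have not used. Writing $P=p_1 a_{1n}^{d_1}\cdots p_r a_{1n}^{d_r}p_{r+1}$ with each $p_i$ positive, it first disposes of the configurations $P\supseteq\sigma_{n-1}a_{1n}\sigma_1$ and $P\supseteq(a_{1n}\sigma_1)^2$: after conjugation by a power of $\delta_n$ these force a $3$-step ascending staircase, respectively a square $a_{13}^2$, inside the new $P$, so Lemmas~\ref{lemma: $3$-step ascending staircase e6 definite} and~\ref{lemma: no repeated span 2 letters} give $\mathcal{F}(b)\cong E_6,E_7$ or $E_8$, and Lemma~\ref{lemma: the case e6} finishes. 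Once those patterns are excluded, the positive blocks are constrained enough (if some $p_k$ with $k\le r$ contains $\sigma_1$ then no earlier $p_i$ contains $\sigma_{n-1}$ and no later $p_i$ contains $\sigma_1$) that one copy of $\delta_n$ can be pushed forward to $p_k$ and the other backward from the end, converting every $a_{1n}$ into $\sigma_1$ or $\sigma_{n-1}$ without ever producing a new $a_{1n}$. In short, the configurations that break your induction are precisely those the paper handles by invoking definiteness.
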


\begin{proof}
Write $b = \delta_n^2 p_1 a_{1n}^{d_1} p_2 a_{1n}^{d_2}\ldots p_r a_{1n}^{d_r}p_{r+1}$ where each $p_i$ is a positive braid. 

If $P$ contains $\sigma_{n-1} a_{1n} \sigma_1$, then $\delta_n^2 b \delta_n^{-2}$ contains the $3$-step ascending staircase $\sigma_1 \sigma_2 \sigma_3$ and therefore $\mathcal{F}(b) \cong E_6, E_7$, or $E_8$ by Lemma \ref{lemma: $3$-step ascending staircase e6 definite}. In this case Lemma \ref{lemma: the case e6} implies that $b$ is conjugate to a positive braid. 

Similarly if $P$ contains $(a_{1n} \sigma_1)^2$, then $\delta_n P \delta_n^{-1}$ contains $(\sigma_1 \sigma_2)^2 = (a_{13} \sigma_1)^2 \supset a_{13}^2$. Lemma \ref {lemma: no repeated span 2 letters} then implies that $\mathcal{F}(b) \cong E_6, E_7$, or $E_8$ and so $b$ is conjugate to a positive braid by Lemma \ref{lemma: the case e6}. 

Assume below that $P$ contains neither $\sigma_{n-1} a_{1n} \sigma_1$ nor $(a_{1n} \sigma_1)^2$. Consequently, if there is a $1 \leq k \leq r$ such that $p_{k}$ contains $\sigma_1$, then
\vspace{-.2cm} 
\begin{itemize}

\item  if $i < k, p_i$ does not contain $\sigma_{n-1}$; otherwise $P$ contains $\sigma_{n-1} a_{1n} \sigma_1$; 

\vspace{.2cm} \item if $k < i, p_i$ does not contain $\sigma_1$; otherwise $P$ contains $a_{1n} \sigma_1 a_{1n} \sigma_1 = (a_{1n} \sigma_1)^2$. 

\end{itemize} 
Now rewrite $P$ by conjugating one copy of $\delta_n$ forward through $P$ till just after $a_{1n}^{d_{k-1}}$. Since there is no $\sigma_{n-1}$
in $p_1 p_2 \ldots p_{k-1}$ and since $\delta_n a_{1n} \delta_n^{-1} = \sigma_1$, we see that
\begin{equation} 
\label{eqn: rewrite 1}
b = \delta_n q_1 \sigma_1^{d_1} q_2 \sigma_1^{d_2}\ldots \sigma_1^{d_{k-1}} \delta_n p_k a_{1n}^{d_k}p_{k+1} \ldots p_r a_{1n}^{d_r}p_{r+1}
\end{equation} 
where each $q_i$ is a positive braid.

Next conjugate the lead $\delta_n$ in (\ref{eqn: rewrite 1}) to the``back of $b$" and then conjugate it backward through the rewritten $P$ till just before $a_{1n}^{d_k}$. Since
$p_{k+1} p_{k+2} \ldots p_{r+1}$ does not contain $\sigma_1$ and since pushing $\delta_n^{-1} a_{1n} \delta_n = \sigma_{n-1}$, we see that
$$b \sim q_1 \sigma_1^{d_1} q_2 \sigma_1^{d_2}\ldots \sigma_1^{d_{k-1}} \delta_n p_k \delta_n \sigma_{n-1}^{d_k} q_{k+1} \ldots q_r \sigma_{n-1}^{d_r} q_{r+1}$$
where $q_{k+1}, q_{k+2}, \ldots$ are positive braids. This completes the proof when  there is a $1 \leq k \leq r$ such that $p_{k}$ contains $\sigma_1$.

Suppose then that $p_2 p_3 \ldots p_{r}$ does not contain $\sigma_1$. If it is also not contained in $p_{r+1}$, conjugate one copy of $\delta_n$ to the back of $b$ and then backward through $P$ till just before $a_{1n}^{d_1}$. As above, this operation yields a positive braid. On the other hand, if $p_{r+1}$ does contain $\sigma_1$, then $p_1 p_2 \ldots p_{r}$ cannot contain $\sigma_{n-1}$ as otherwise $P$ contains $\sigma_{n-1} a_{1n} \sigma_1$. Then conjugating one copy of $\delta_n$ forward through $P$ till just after $a_{1n}^{d_r}$ rewrites $b$ in the form $b = \delta_n q_1 \sigma_1^{d_1} q_2 \sigma_1^{d_2}\ldots q_r \sigma_1^{d_{r}} \delta_n p_{r+1}$ where each $q_i$ is a positive braid, thus completing the proof. 
\end{proof}

\begin{proof}[Proof of Proposition \ref{prop: definite strongly quasipositive n-braids}] 
We can assume that $P$ contains a letter of span $l \not \in \{1, n-1\}$ by Lemma \ref{lemma: done if all spans 1 or n-1}. We can further suppose that the spans of the letters of $P$ lie in $\{1, 2, n-2, n-1\}$. This is obvious if $n = 5$. If $n \geq 6$ and $P$ contains a letter of span $3 \leq l \leq n-3$, Lemmas \ref{lemma: small span} and \ref{lemma: the case e6} imply that Proposition \ref{prop: definite strongly quasipositive n-braids} holds. 

We can also assume that $P$ contains at most one letter of span $2$ and if one, it occurs exactly once by Lemmas \ref{lemma: no linking}, \ref{lemma: when commuting implies indefinite}, \ref{lemma: 2-step descending stairs of length two or more indefinite}, \ref{lemma: no repeated span 2 letters}, \ref{lemma: no ascending stairs of total length 3 or 4}, and \ref{lemma: the case e6}. 

If $P$ contains two letters (possibly equal) of span $n - 2$ they cannot be distinct as otherwise the assumption that $n \geq 5$ implies that they are linked, contrary to Lemma \ref{lemma: no linking}. Thus they coincide. But then, we could conjugate by an appropriate power of $\delta_n$ to obtain $b' = \delta_n^2 P'$ where $P'$ contains the square of a span $2$ letter. Then by Lemmas \ref{lemma: no repeated span 2 letters} and \ref{lemma: the case e6} we are done. Assume then that $P$ does not contain two letters (possibly equal) of span $n - 2$. 

If $P$ contains a letter of span $2$ and one of span $n-2$, conjugate $b$ by an appropriate power of $\delta_n$ so that the span $n-2$ letter is $a_{1, n-1}$. The span $2$ letter now has been converted to a letter $a_{rs}$ of span either $2$ or $n-2$. We have just seen that the latter is impossible so $a_{rs}$ has span $2$. Lemma \ref{lemma: no linking} now implies that $s \leq n-1$. The reader will verify that conjugating by either $\delta_n$ or $\delta_n^{-1}$ yields $P'$ with distinct letters of span $2$, so we are done as in the second paragraph of this proof. 

Assume then that $P$ does not contain both a letter of span $2$ and one of span $n-2$. By assumption, it contains at least one such letter, so after conjugating by a suitable power of $\delta_n$ we can suppose that $P$ contains $a_{13}$. All other letters of $P$ have span $1$ or $n-1$. Thus $b = \delta_n^2 P_1 a_{13} P_2$ where the letters of $P_1$ and $P_2$ have span $1$ or $n-1$. Then 
$$b = \delta_n^2 P_1 a_{13} P_2 \sim  P_1 a_{13} P_2 \delta_n^2 = P_1 a_{13} \delta_n^2 P_2' \sim  \delta_n^2 P_2' P_1 a_{13} =  \delta_n^2 P' a_{13}$$
where the letters of $P'$ are of span $1$ or $n-1$. If $P'$ contains $a_{1n}$, then it contains $a_{1n} a_{13} = a_{13} a_{3n}$. Since the span of $a_{3n}$ is $n-3$, Lemmas \ref{lemma: small span} and \ref{lemma: the case e6} imply that we are done when $n \geq 6$. On the other hand, if $n = 5$, $a_{13} a_{3n}$ is an ascending $(2,2)$ pair. Hence we are done by Lemma \ref{lemma: no ascending stairs of total length 3 or 4} and \ref{lemma: the case e6}. Thus without loss of generality, $P'$ is a positive braid. But then 
$$b \sim \delta_n^2 P' a_{13} \sim \delta_n P' a_{13} \delta_n = \delta_n P' \sigma_1 \sigma_2^2 \sigma_3 \cdots \sigma_{n-1},$$ 
a positive braid. This completes the proof.
\end{proof}

\section{Cyclic basket links} \label{sec: cycle baskets}

A basket surface $F$ is determined by a finite set $\mathcal{A}$ of arcs properly embedded in the disk $D^2$, together with an ordering $\omega$ of $\mathcal{A}$. Let the arcs be ordered $\alpha(1), \alpha(2),...,\alpha(m)$. Then $F = F(\mathcal{A}, \omega)$ is obtained by successively plumbing positive Hopf bands $b(1), b(2),...,b(m)$, where $b(i)$ is plumbed along a neighborhood of the arc $\alpha(i), 1 \le i \le m$. We adopt the convention that each plumbing is a {\it bottom plumbing} \cite{Ru1}, so $b(i)$ lies underneath $D^2 \cup b(1) \cup ... \cup b(i-1)$.

Baskets are also considered in \cite{Hir}; we will adopt the terminology of \cite{Hir} and call the collection of arcs $\mathcal{A}$ in the disk a {\it chord diagram}. These are considered up to the equivalence relation generated by isotopy of an arc $\alpha$, keeping $\partial \alpha$ in $S^1$ and disjoint from the endpoints of the other arcs. In particular we may assume that any two arcs are either disjoint or intersect transversely in a single point. As in \cite{Hir}, $\mathcal{A}$ determines a graph $\Gamma = \Gamma(\mathcal{A})$, the {\it incidence graph} of $\mathcal{A}$: the vertices of $\Gamma$ correspond bijectively to the arcs in $\mathcal{A}$, and two vertices are joined by an edge if and only if the corresponding arcs intersect.

If $\Gamma$ is a tree then a planar embedding of $\Gamma$ determines a unique chord diagram $\mathcal{A}$, and $F(\mathcal{A})$ is the corresponding {\it arborescent plumbing}; this is clearly independent of the ordering $\omega$.

Recall the simply laced arborescent links $L(A_m), L(D_m), L(E_6), L(E_7)$, and $L(E_8)$ from the introduction. We note that 
$$L(D_4) = P(-2,2,2) = T(3,3)$$ 
Also, the expression for $L(D_m)$ continues to hold for $m = 3$: $D_3 = A_3$ and 
$$L(D_3) = P(-2,2,1) = T(2,4) = L(A_3)$$

\begin{problem}
{\rm Determine the definite baskets.}
\end{problem} 

We will see that there are prime definite basket links that are not simply laced arborescent.

We discuss the case where $\Gamma = C_m$ is an $m$-cycle, $m \ge 3$. As observed in \cite{Hir}, $\Gamma$ has a unique realization as a chord diagram $\mathcal{A}$. Let $\omega$ be an ordering of $\mathcal{A}$. We encode $\omega$ as follows. Number the arcs $\alpha_1,\alpha_2,...,\alpha_m$, and the corresponding bands $b_1,b_2,...,b_m$, in clockwise order on $D^2$, and define $\boldsymbol{\epsilon} = (\epsilon_1,...,\epsilon_m) \in \{\pm 1\}^n$ as illustrated in Figure \ref{fig: cam 1}. Thus $\boldsymbol{\epsilon}$ determines the order $\omega$ in which the bands are plumbed. Denote the corresponding basket by $F(C_m, \boldsymbol{\epsilon})$, the corresponding link by $L(C_m, \boldsymbol{\epsilon})$, and the symmetrised Seifert form of $F(C_m, \boldsymbol{\epsilon})$ by $\mathcal{F}(C_m, \boldsymbol{\epsilon})$.

\begin{figure}[!ht] 
\centering
 \includegraphics[scale=0.75]{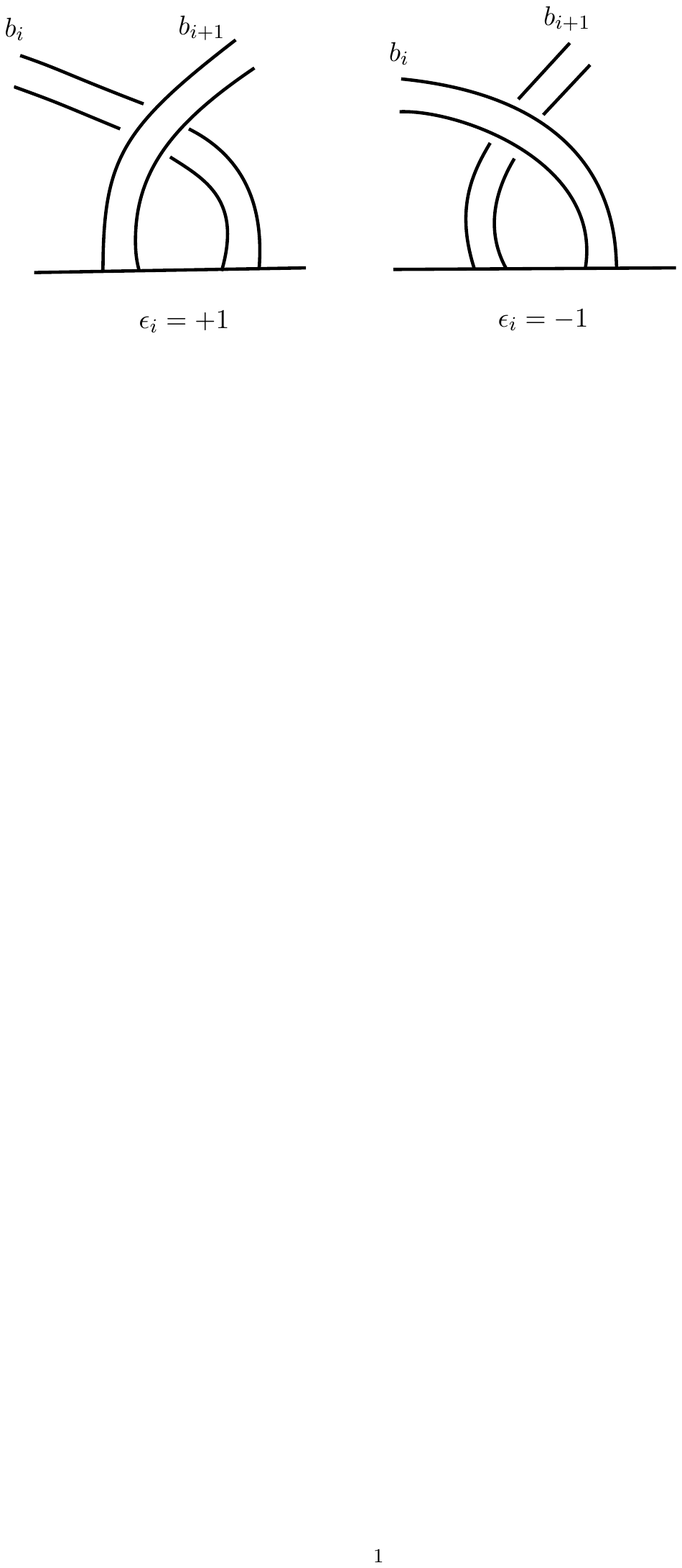}
\caption{} 
\label{fig: cam 1}
\end{figure} 

The link $L(C_m, \boldsymbol{\epsilon})$ has two components if $m$ is odd and three components if $m$ is even.

If $L$ is an oriented link with components $K_1, K_2,..., K_n, n \ge 2$, define 
$$lk(L) = \sum_{1 \le i < j \le n} lk(K_i,K_j)$$
In particular, 
$$lk(L(D_m)) = lk(P(-2,2,m-2)) = \left\{ 
\begin{array}{ll}
2 & \hbox{if $m$ is odd,} \\ 
\frac{m}{2} + 1  & \hbox{if $m$ is even.} 
\end{array} \right.$$ 

Let $p(\boldsymbol{\epsilon})$ be the number of $i$ such that $\epsilon_i = +1$. Note that if $F(C_m, \boldsymbol{\epsilon})$ is a basket then $1 \le p(\boldsymbol{\epsilon}) \le m-1$.

\begin{lemma} \label{lemma: linking}
$lk(L(C_m,\boldsymbol{\epsilon}))  = \left\{ \begin{array}{ll}
2p(\boldsymbol{\epsilon}) & \hbox{if $m$ is odd,} \\ 
\frac{m}{2} + p(\boldsymbol{\epsilon}) & \hbox{if $m$ is even.} 
\end{array} \right.$ 
\end{lemma}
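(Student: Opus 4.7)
The plan is to compute $lk(L(C_m, \boldsymbol{\epsilon}))$ by constructing an explicit planar diagram $D$ for $L = \partial F(C_m, \boldsymbol{\epsilon})$ and applying the standard identity
\[
lk(L) \;=\; \tfrac{1}{2}\Bigl(w(D) \;-\; \sum_{K} w(D_K)\Bigr),
\]
where the sum runs over the components $K$ of $L$ and $w$ denotes writhe. The surface $F$ is built by bottom-plumbing positive Hopf bands $b_1, \ldots, b_m$ along the cyclically arranged chords $\alpha_1, \ldots, \alpha_m$, and $\boldsymbol{\epsilon}$ records, at each intersection $\alpha_i \cap \alpha_{i+1}$, which of $b_i$, $b_{i+1}$ is plumbed first (and hence which one lies below the other locally).

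First I would verify the component count of $L$ by tracing $\partial F$: starting on $\partial D^2$ and jumping across each Hopf band when one reaches it, a parity count on the cyclic chord diagram shows that $\partial F$ has exactly two components when $m$ is odd and three components when $m$ is even. This count is independent of $\boldsymbol{\epsilon}$, since each $\epsilon_i$ only controls the vertical order of two neighboring bands and not the combinatorics of the boundary trace.

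Next I would account for the crossings of $D$. They fall into two families: the two positive crossings produced by the full twist inside each Hopf band, contributing $+2m$ in total to $w(D)$; and, at each pair of adjacent chords $\alpha_i \cap \alpha_{i+1}$, the crossings arising where strands of $b_i$ pass through $b_{i+1}$ (or vice versa). The sign of the latter is governed by $\epsilon_i$: when $\epsilon_i = +1$, $b_i$ lies under $b_{i+1}$ and the corresponding local crossings contribute one signed value, while for $\epsilon_i = -1$ they contribute the opposite value. A careful case analysis then shows that, after subtracting the self-writhes $\sum_K w(D_K)$ and dividing by $2$, the contribution is $2p(\boldsymbol{\epsilon})$ when $m$ is odd and $\tfrac{m}{2} + p(\boldsymbol{\epsilon})$ when $m$ is even.

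The main obstacle will be the sign and strand bookkeeping in the band-band crossing analysis, in particular distinguishing crossings between strands belonging to the same component of $L$ (which cancel against $w(D_K)$) from crossings between strands on different components (which contribute twice to $lk(L)$). Cyclic symmetry should reduce matters to a single model case, and the dependence on $p(\boldsymbol{\epsilon})$ alone can then be established by an inductive swap argument: flipping a single $\epsilon_i$ changes only the local crossings at $\alpha_i \cap \alpha_{i+1}$, and one verifies directly that this alters $lk(L)$ by $+2$ when $m$ is odd and by $+1$ when $m$ is even, which matches the claimed formula.
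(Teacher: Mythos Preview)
Your outline is correct in principle, but it takes a longer route than the paper does, and the parts you flag as ``careful case analysis'' are exactly where the paper's argument is cleanest.

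The paper bypasses the writhe formula entirely and computes $lk(L)$ directly as a sum of local contributions. First, each Hopf band $b_i$ contributes exactly $+1$ to $lk(L)$: the two boundary arcs running through the twist of $b_i$ always lie on distinct components of $L$, so the band's twist contributes $+1$ to the pairwise linking. This gives a total of $m$ from the bands. Second, at each adjacency $\alpha_i\cap\alpha_{i+1}$ there is a single band-over-band crossing. When $m$ is odd, $L$ has two components and both appear at that crossing, so it contributes $\epsilon_i$ to $lk(L)$; when $m$ is even, all three components are present and the contribution is $\epsilon_i/2$. Summing gives $m+\sum_i\epsilon_i=m+(2p-m)=2p$ in the odd case and $m+\tfrac12\sum_i\epsilon_i=\tfrac{m}{2}+p$ in the even case.

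Your swap argument (flipping one $\epsilon_i$ changes $lk(L)$ by $+2$ or $+1$ according to parity) is exactly the per-crossing contribution the paper identifies, so that part of your plan is on target. What you would gain by abandoning the global writhe formula is precisely the bookkeeping you call the ``main obstacle'': rather than computing all crossings and then cancelling the self-crossings against $\sum_K w(D_K)$, the paper simply reads off, crossing by crossing, which components are involved. That makes the whole proof a few lines.
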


\begin{proof} 
Let $L = L(C_m, \boldsymbol{\epsilon})$. Each band contributes $+1$ to $lk(L)$.

Consider a crossing of a pair of adjacent bands $b_i$ and $b_{i+1}$. First assume that $m$ is odd. Then $L$ has two components, $K_1, K_2$ say, and the crossing contributes $\epsilon_i$ to $lk(L)$; see Figure \ref{fig: cam 2}. Hence
\begin{eqnarray} lk(L) & = & m + (\hbox{number of $i$ such that $\epsilon_i = +1$) - (number of $i$ such that $\epsilon_i = -1$)} \nonumber \\ 
& = & m + p(\boldsymbol{\epsilon}) - (m - p(\boldsymbol{\epsilon})) \nonumber \\ 
& = & 2p(\boldsymbol{\epsilon}) \nonumber 
\end{eqnarray} 

\begin{figure}[!ht]
\centering
 \includegraphics[scale=0.75]{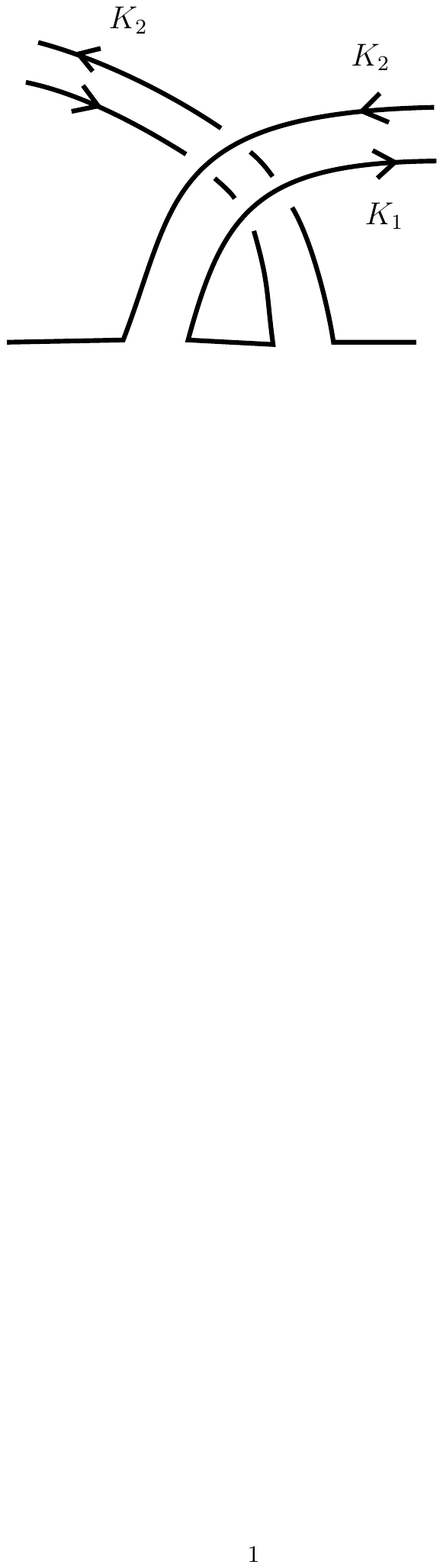}
\caption{} 
\label{fig: cam 2}
\end{figure} 

If $m$ is even then all three components, $K_1, K_2, K_3$ say,  appear at the band crossing and the contribution to $lk(L)$ is $\epsilon_{i}/2$; see Figure \ref{fig: cam 3}. Hence
$$lk(L) = m + (p(\boldsymbol{\epsilon}) - (m - p(\boldsymbol{\epsilon}))/2 = m/2 + p(\boldsymbol{\epsilon}).$$

\begin{figure}[!ht]
\centering
 \includegraphics[scale=0.75]{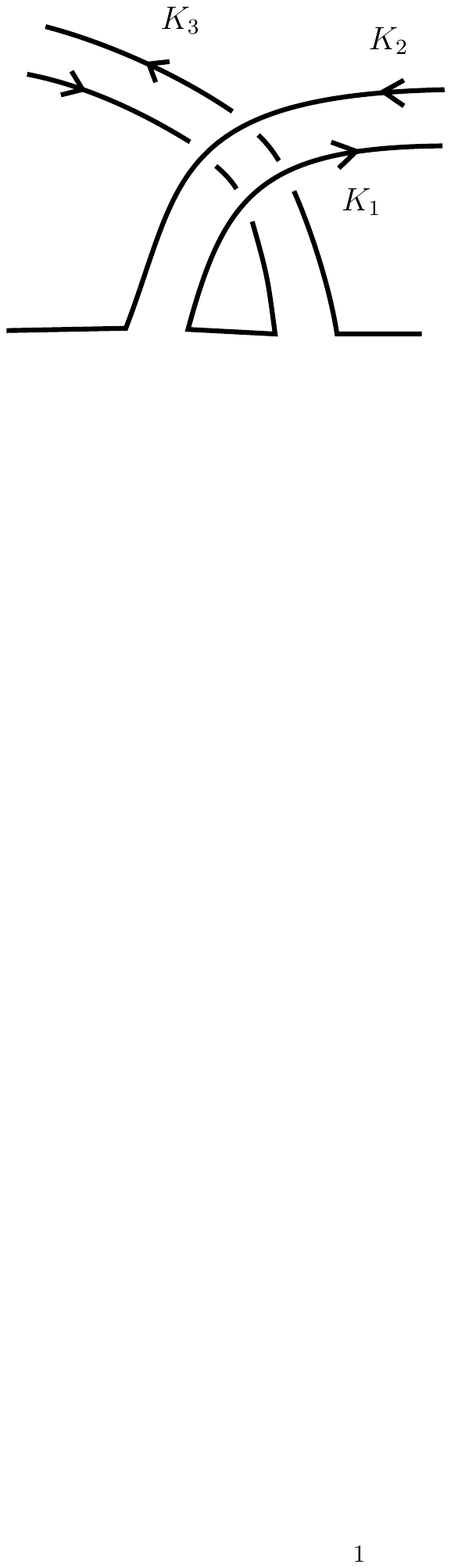}
\caption{} 
\label{fig: cam 3}
\end{figure} 
\end{proof}

\begin{prop} \label{prop: only depends on p}
The baskets $F(C_m, \boldsymbol{\epsilon})$ and $F(C_m, \boldsymbol{\epsilon}')$ are isotopic if and only if $p(\boldsymbol{\epsilon}) = p(\boldsymbol{\epsilon}')$.
\end{prop}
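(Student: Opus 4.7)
For the only-if direction, observe that an ambient isotopy of the basket induces an ambient isotopy of its boundary link, and hence preserves the linking number. By Lemma \ref{lemma: linking}, the quantity $lk(L(C_m,\boldsymbol{\epsilon}))$ is an explicit affine function of $p(\boldsymbol{\epsilon})$ (taking distinct values for each admissible $p$, in both parity cases for $m$), so equality of linking numbers forces $p(\boldsymbol{\epsilon})=p(\boldsymbol{\epsilon}')$.

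For the if direction, I would fix $p$ with $1 \le p \le m-1$ and let $\boldsymbol{\epsilon}_p = (+,\ldots,+,-,\ldots,-)$ denote the canonical sign vector whose first $p$ entries are $+$. The plan is to show that any $\boldsymbol{\epsilon}$ with $p(\boldsymbol{\epsilon})=p$ yields a basket ambient isotopic to $F(C_m,\boldsymbol{\epsilon}_p)$, via two elementary moves each of which preserves the isotopy type of the basket. The first move is the cyclic shift $\boldsymbol{\epsilon} \mapsto (\epsilon_m,\epsilon_1,\ldots,\epsilon_{m-1})$, realised by the ambient rotation of $D^2$ by $2\pi/m$ carrying each chord $\alpha_i$ to $\alpha_{i+1}$. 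The second is an adjacent transposition: when $\epsilon_i \ne \epsilon_{i+1}$, exchanging these two entries leaves the basket unchanged up to isotopy. Combining cyclic shifts with opposite-sign adjacent transpositions (a bubble-sort argument) transforms any $\boldsymbol{\epsilon}$ with the prescribed $p$ into $\boldsymbol{\epsilon}_p$.

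The main technical content is the justification of the transposition move. Because $\alpha_{i+1}$ meets only its two cyclic neighbours $\alpha_i$ and $\alpha_{i+2}$ in the chord diagram $C_m$, the three bands $b_i, b_{i+1}, b_{i+2}$ form a local plumbing configuration which, apart from its attachment to the underlying disk $D^2$, is disjoint from the remaining bands of the basket. The pair $(\epsilon_i,\epsilon_{i+1})$ records the position of $b_{i+1}$ in the stacking order of these three bands: when $(\epsilon_i,\epsilon_{i+1})=(+,-)$, the band $b_{i+1}$ is plumbed last and sits on top of its two neighbours, while $(-,+)$ places it first, beneath both. I would construct an ambient isotopy of $S^3$ supported in a regular neighbourhood of $b_i \cup b_{i+1} \cup b_{i+2}$ (fixing its frontier) that flips $b_{i+1}$ from above to below its neighbours. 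Such a flip is possible because no other band of the basket crosses this local region, so $b_{i+1}$ may be isotoped through the local model without obstruction.

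The hardest part of the programme will be exhibiting the explicit local isotopy realising the transposition and verifying that it glues correctly to the rest of the surface. This reduces to a picture-level check with the Hopf-band plumbings, but the combinatorial bookkeeping around the cycle needs care.
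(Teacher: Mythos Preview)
Your approach is correct and follows essentially the same route as the paper: the ``only if'' direction is identical, and for ``if'' the paper also uses adjacent transpositions of unequal signs to reach a canonical form (namely $(-1,\dots,-1,+1,\dots,+1)$). The paper dispatches the transposition in one line by citing Rudolph \cite[Lemma 3.2.1]{Ru1}, which says that replacing a top plumbing of a Hopf band by a bottom plumbing gives an isotopic surface --- this is precisely the local flip you propose to build by hand, so your ``hardest part'' is already in the literature. Two minor points: under the paper's bottom-plumbing convention the band plumbed last sits \emph{below} its neighbours rather than on top, and your cyclic-shift move, while valid, is not needed once the transposition is available.
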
 

\begin{proof} The ``only if" direction follows from Lemma \ref{lemma: linking}.

For the``if" direction, note that replacing a top plumbing by a bottom plumbing does not change the isotopy class of the surface \cite[Lemma 3.2.1]{Ru1}. If $b_i$ is top plumbed and is replaced by a bottom plumbing then the effect on $\boldsymbol{\epsilon}$ is that $(\epsilon_{i-1}, \epsilon_i)$ changes from $(+1,-1)$ to $(-1,+1)$. After a sequence of such moves we can bring $\boldsymbol{\epsilon}$ to the form $(-1,...,-1,+1,...,+1)$.
\end{proof}

In view of Proposition \ref{prop: only depends on p} we write $F(C_m,p)$ for $F(C_m, \boldsymbol{\epsilon})$ where $p = p(\boldsymbol{\epsilon})$, and similarly for $L(C_m,p)$ and $\mathcal{F}(C_m, p)$.

We can think of the basket $F(\mathcal{A}, \omega)$ as being obtained from $D^2$ by attaching 1-handles $h_1,...,h_m$, where $h_i = \overline{b_i \setminus D^2}$. We then have the following {\it handle-sliding} move, which does not change the isotopy class of the surface. Let $h$ and $h'$ be 1-handles such that no 1-handle meets the interior of the interval $I$ shown in Figure \ref{fig: cam 4}. 

\begin{figure}[!ht]
\centering
 \includegraphics[scale=0.75]{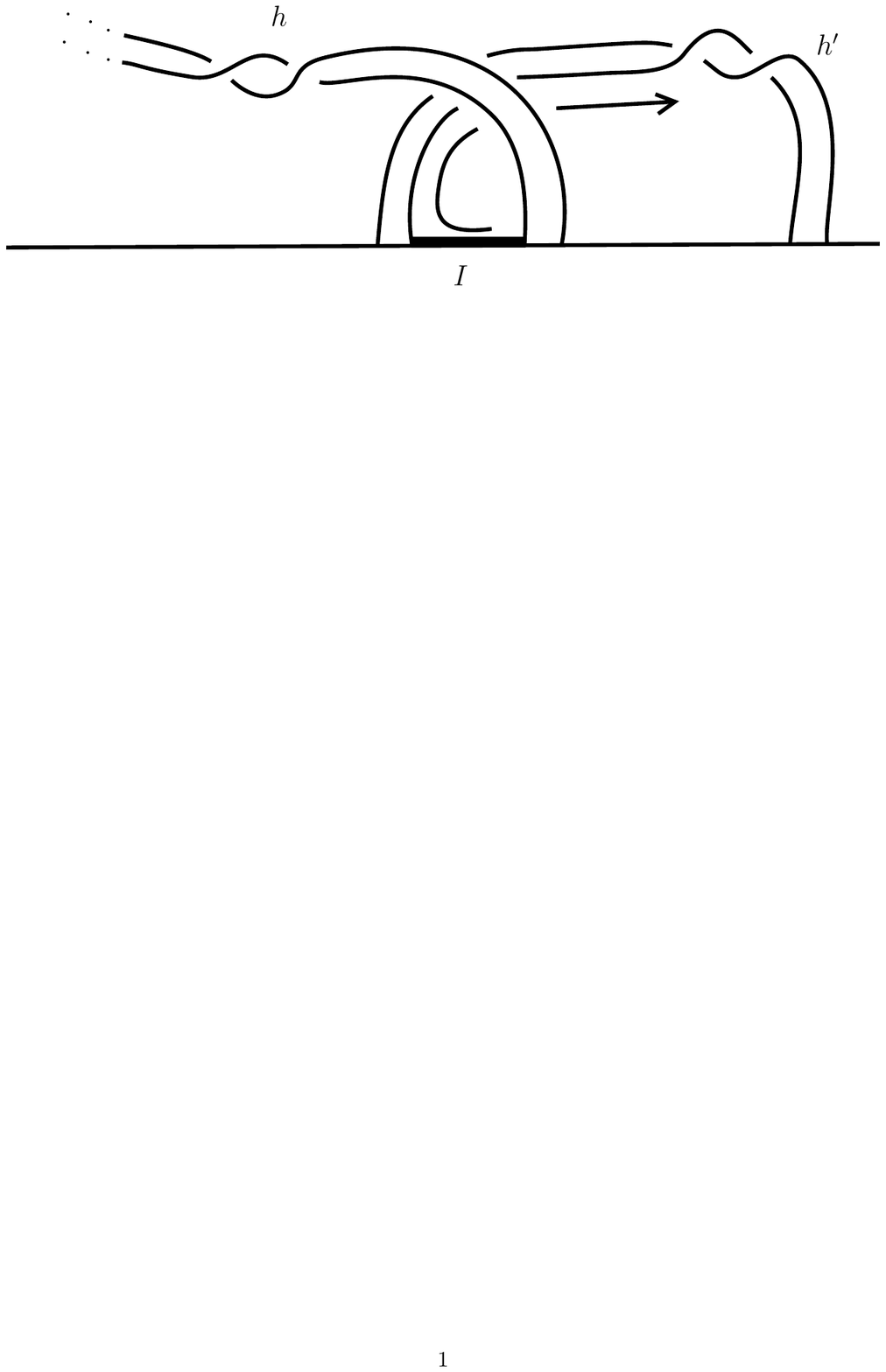}
\caption{} 
\label{fig: cam 4}
\end{figure} 

Then $h$ can be slid over $h'$ as indicated in Figure \ref{fig: cam 4}, resulting in Figure \ref{fig: cam 5}.

\begin{figure}[!ht]
\centering
 \includegraphics[scale=0.75]{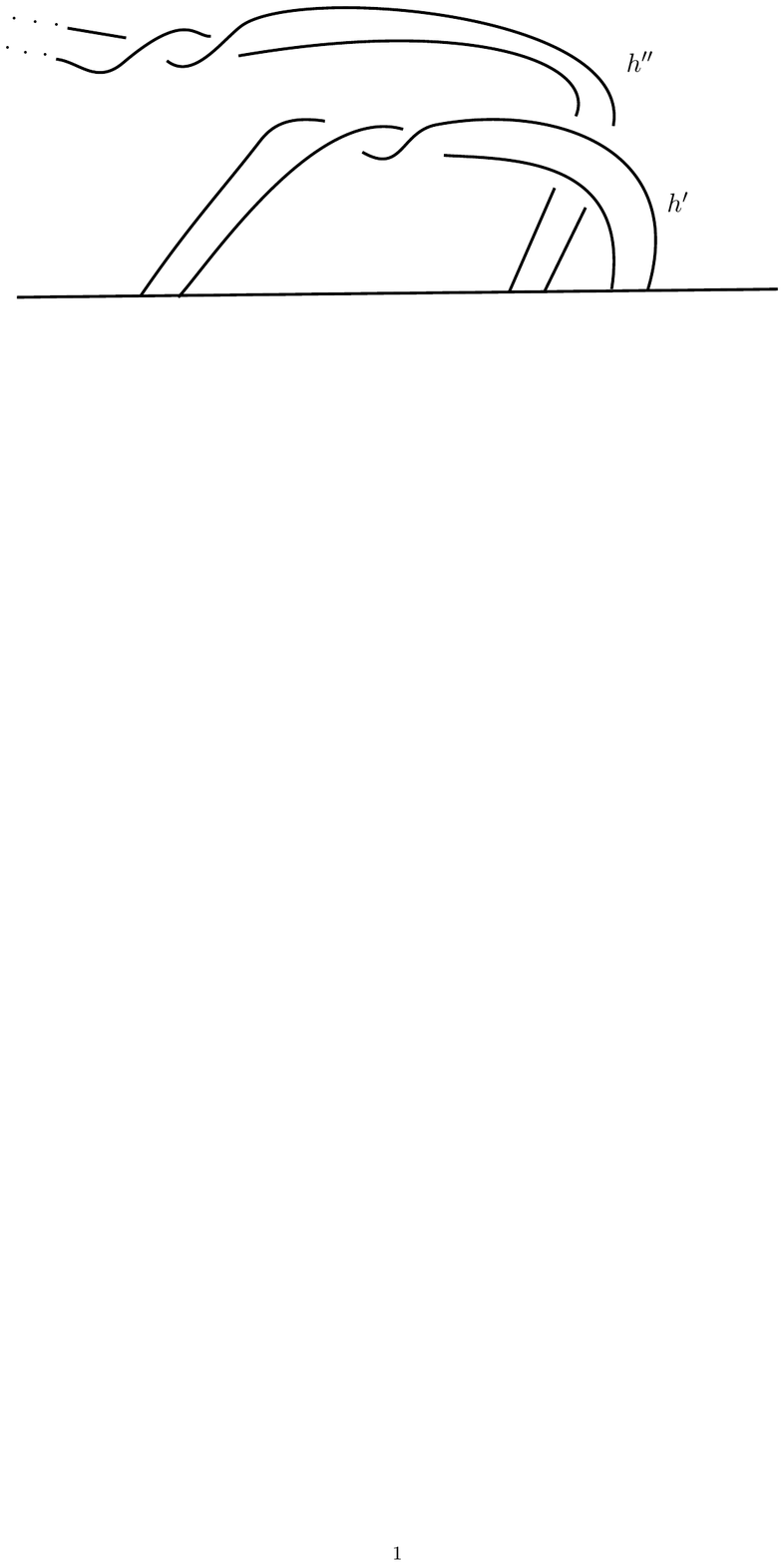}
\caption{} 
\label{fig: cam 5}
\end{figure} 
 
Note that the core of the band corresponding to $h''$ still has framing $-1$. 

\begin{thm} \label{thm: p = 1} 
If $p = 1$ then $F(C_m,p)$ is isotopic to $F(D_m)$.
\end{thm}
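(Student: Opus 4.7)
The plan is to show that $F(C_m,1)$ can be converted to $F(D_m)$ by a single handle-slide, using Proposition~\ref{prop: only depends on p} to prepare a convenient ordering first, and then identifying the resulting incidence graph as the $D_m$ Dynkin diagram.

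\textbf{Step 1 (normalize the ordering).} By Proposition~\ref{prop: only depends on p} the isotopy class of $F(C_m,p)$ depends only on $p$, so I am free to pick any representative $\boldsymbol{\epsilon}$ with $p(\boldsymbol{\epsilon})=1$. I would take $\boldsymbol{\epsilon} = (+1,-1,-1,\ldots,-1)$; this puts the unique ``top'' plumbing between the bands $b_m$ and $b_1$, while every other adjacent pair of bands along the cycle $C_m$ is plumbed in the same (bottom) fashion.

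\textbf{Step 2 (handle-slide at the special edge).} I would then apply the handle-sliding move from Figures~\ref{fig: cam 4}--\ref{fig: cam 5} with $(h,h')=(h_1,h_2)$: because the interval $I$ in the figure sits in the part of $\partial D^2$ between the feet of $h_1$ and $h_2$ where no other handle is attached (the ``$+1$ gap''), the hypothesis of the move is satisfied. The move preserves both the isotopy class of the surface and the fact that each band is a positive Hopf band with framing $-1$. Its effect on the chord diagram is local: one foot of $\alpha_1$ is dragged across $\alpha_2$, so the new chord $\alpha_1''$ no longer meets $\alpha_m$ but now meets $\alpha_3$.

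\textbf{Step 3 (identify the new incidence graph).} After the slide, the incidence graph is obtained from $C_m$ by deleting the edge $\alpha_m \alpha_1$ and adding the edge $\alpha_1 \alpha_3$. The vertex $\alpha_3$ now has valence~$3$, adjacent to the two leaves $\alpha_1$ and $\alpha_2$, and $\alpha_3$--$\alpha_4$--$\cdots$--$\alpha_m$ is a chain of length $m-3$. This is precisely the $D_m$ Dynkin diagram. Since the resulting chord diagram is a tree, the basket depends only on the underlying graph (the ordering is irrelevant in the arborescent case), so the surface is $F(D_m)$.

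\textbf{Main obstacle.} The delicate point is Step~2: I need to confirm that with the normalized $\boldsymbol{\epsilon}$ the chosen slide really is legal (i.e.\ the interval $I$ contains no other handle feet), and that it produces exactly the claimed modification of the incidence graph -- detaching $\alpha_1$ from $\alpha_m$ and creating a new intersection with $\alpha_3$, without inadvertently producing further crossings. This is a careful but elementary picture-chase in the disk, and is exactly where having chosen $p=1$ (a single ``sign change'' around the cycle) is essential.
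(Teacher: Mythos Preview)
Your Step~2/3 contains a genuine error: a single handle-slide of $h_1$ over $h_2$ does \emph{not} detach $\alpha_1$ from $\alpha_m$.  In the chord diagram for $C_m$, the arc $\alpha_1$ meets $\alpha_2$ near one of its feet and $\alpha_m$ near the \emph{other} foot.  Sliding $h_1$ over $h_2$ moves only the foot of $\alpha_1$ that lies adjacent to a foot of $h_2$; the foot of $\alpha_1$ sitting next to $\alpha_m$ does not move at all.  So after the slide the chord $\alpha_1''$ still crosses $\alpha_m$, and it also picks up a new crossing with $\alpha_3$.  For $m\ge 4$ the resulting incidence graph therefore still contains a cycle (through $\alpha_1'', \alpha_3,\ldots,\alpha_m$) and is not $D_m$.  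Your sentence ``one foot of $\alpha_1$ is dragged across $\alpha_2$, so the new chord $\alpha_1''$ no longer meets $\alpha_m$'' conflates the two ends of $\alpha_1$: dragging a foot across $\alpha_2$ can only affect the intersections on \emph{that} side.

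The paper's argument fixes this by performing $m-2$ slides rather than one: with $\epsilon_m=+1$ and $\epsilon_i=-1$ for $i<m$, it slides $h_1$ successively over $h_2$, then $h_3$, \dots, then $h_{m-1}$.  Each slide pushes the moving foot of $\alpha_1$ one step further around the boundary of the disk; after all $m-2$ slides that foot has travelled past $\alpha_2,\ldots,\alpha_{m-1}$ and lands between the feet of $\alpha_{m-1}$ and $\alpha_m$, at which point $\alpha_1$ meets only $\alpha_{m-1}$ and the incidence graph is $D_m$.  (Your argument is correct precisely when $m=3$, where $m-2=1$ slide suffices.)
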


\begin{proof} 
Without loss of generality $\epsilon_m = +1, \epsilon_i = -1$ for  $1 \le i \le m-1$. The surface $F(C_m, \boldsymbol{\epsilon})$ is illustrated in Figure \ref{fig: cam 6}, which shows the case $m = 7$.

\begin{figure}[!ht] 
\centering
 \includegraphics[scale=0.75]{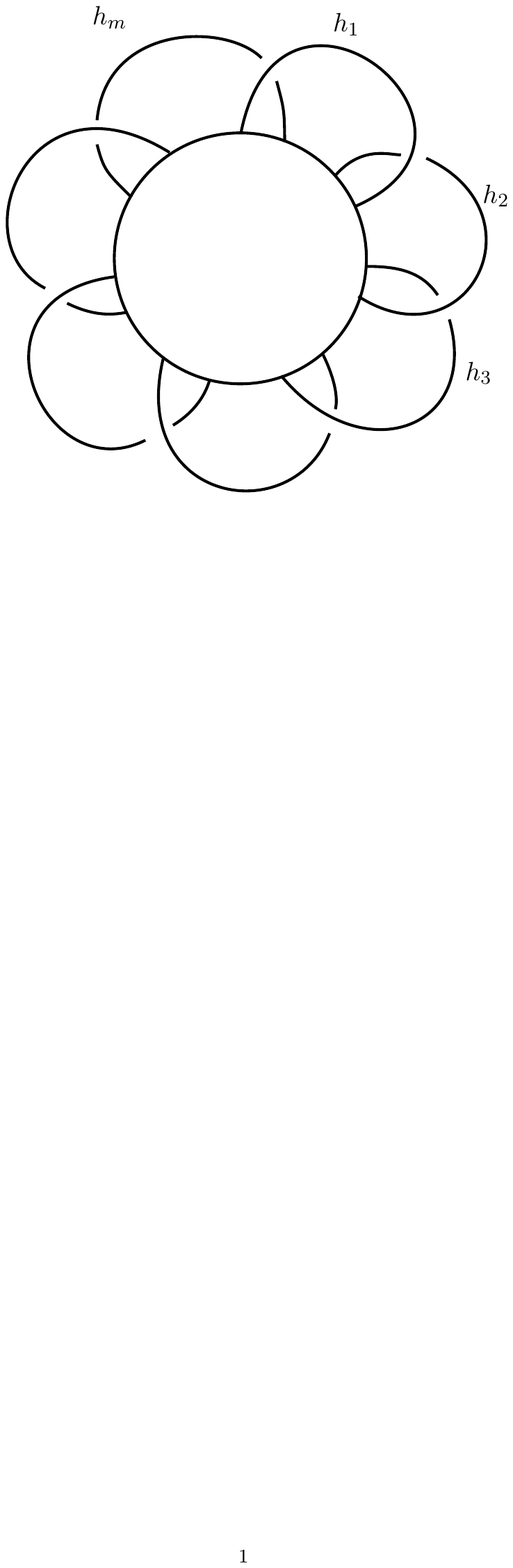}
\caption{} 
\label{fig: cam 6}
\end{figure} 

Slide $h_1$ over $h_2$, then over $h_3$,..., then over $h_{m-1}$. The resulting surface is shown in Figure \ref{fig: cam 7}. 

\begin{figure}[!ht] 
\centering
 \includegraphics[scale=0.75]{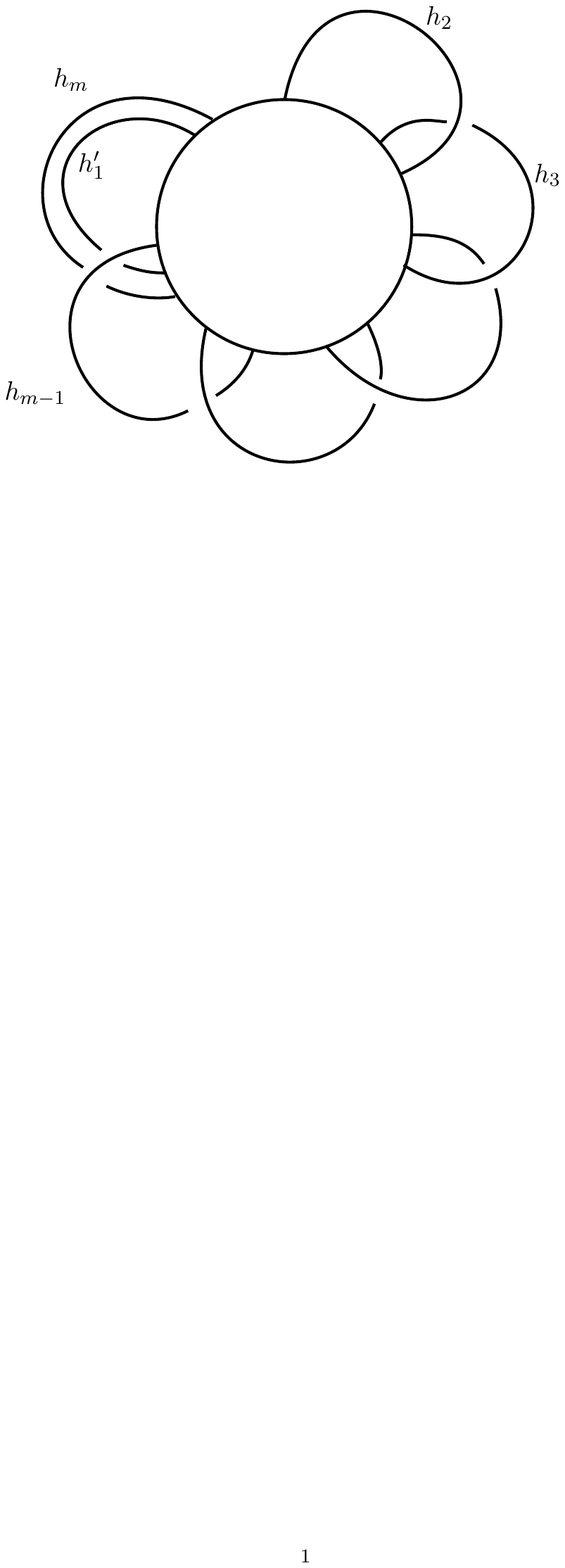}
\caption{} 
\label{fig: cam 7}
\end{figure} 

The corresponding chord diagram has incidence graph $D_m$.
\end{proof}

We now determine which of the links $L(C_m, p)$ are definite (cf. Lemma \ref{lemma: definite iff p odd}). 

Let $x_i$ be the core of $b_i, 1 \le i \le m$, with the anticlockwise orientation, and let $x_i^{+}$ be a copy of $x_i$ pushed slightly off $F = F(C_m, \boldsymbol{\epsilon})$ in the positive normal direction, i.e. the direction from the disk towards the reader. Let $S = (s_{ij})$ be the Seifert matrix of $F$ with respect to the basis $[x_1],...,[x_m]$ for $H_1(F)$. Note that 
$$s_{i, i+1} = lk(x_{i}^+, x_{i+1}) = \left\{ \begin{array}{l} -1 \;\; \hbox{ if } \epsilon_i = +1 \;\;\; \\ 0 \;\; \hbox{ if } \epsilon_i = -1 \end{array} \right.$$
and 
$$s_{i+1, i} = lk(x_{i+1}^+, x_i) = \left\{ \begin{array}{l} 0 \;\; \hbox{ if } \epsilon_i = +1 \\ \;\;\; 1 \;\; \hbox{ if } \epsilon_i = -1 \end{array} \right.$$
Also, $s_{ii} = -1$, and $s_{ij} = 0$ if $|i-j| > 1$ (where subscripts are interpreted modulo $m$).

Let $p = p(\boldsymbol{\epsilon})$, so $0 < p < m$. By Proposition \ref{prop: only depends on p} we may assume that 
$$\epsilon_i = \left\{ \begin{array}{l} -1 \;\; \hbox{ if } 1 \leq i \leq m-p \\ +1 \;\; \hbox{ if } m-p < i \leq m \end{array} \right.$$
Thus the matrix $S$ is 

$$S = \left(
\begin{array}{rrrrrrrr|rrrrrrr} 
-1 & 0 & 0 & 0 & \cdots  & 0 & 0 &  & 0 & 0 & \cdot & \cdot &\cdots & \cdot &   0 \\ 
1 & -1 & 0 & 0 & \cdots & 0 & 0 &  & 0 & 0  & \cdot & \cdot & \cdots &\cdot &    0 \\ 
0 & 1 & -1& 0 & \cdots & 0 & 0 &  & 0 & 0  & \cdot & \cdot & \cdots &\cdot &    0 \\ 
\cdot & \cdot & \cdot &\cdot & \cdots & \cdot & \cdot &  & 0 & 0  & \cdot & \cdot & \cdots &\cdot &    0 \\ 
\cdot & \cdot & \cdot& \cdot& \cdots&\cdot & \cdot  &  & 0 & 0  & \cdot & \cdot & \cdots &\cdot &    0 \\ 
0 & 0 & 0 & 0 &\cdots  & 1 & -1  &  & 0 & 0 & \cdot & \cdot & \cdots &\cdot &    0 \\ 
\hline
0 & 0 & 0 & 0 &\cdots  & 0 & 1  &  & -1 & -1  & 0 & \cdot & \cdots &\cdot &    0 \\ 
0 & 0 & 0 & 0 &\cdots  & 0 & 0  &  & 0 & -1  & -1 & 0 & \cdots &\cdot &    0 \\ 
\cdot & \cdot & \cdot &\cdot & \cdots & \cdot & \cdot &  & \cdot& \cdot  & \cdot & \cdot & \cdots &\cdot &    0 \\ 
\cdot & \cdot & \cdot &\cdot & \cdots & \cdot & \cdot &  & \cdot & \cdot  & \cdot & \cdot & \cdots &\cdot &    0 \\ 
\cdot & \cdot & \cdot& \cdot& \cdots&\cdot & \cdot  &  & \cdot & \cdot & \cdot & \cdot & \cdots &-1 &  -1 \\ 
-1 & 0 & 0 & 0 &\cdots  & 0 & 0  &  & 0 & 0 & \cdot & \cdot & \cdots & 0 &    -1 \\ 
\end{array}\right)$$

Then

$$-(S + S^T) = \left(
\begin{array}{rrrrrrrr|rrrrrrrr}  
2 & -1 & 0 & 0 & \cdots  & 0 & 0 &  & 0 & 0 & \cdot & \cdot &\cdots & \cdot &  \cdot & 1 \\ 
-1 & 2 & -1 & 0 & \cdots & 0 & 0 &  & 0 & 0  & \cdot & \cdot & \cdots &\cdot & \cdot &   0 \\ 
0 & -1 & 2& -1 & \cdots & 0 & 0 &  & 0 & 0  & \cdot & \cdot & \cdots &\cdot &  \cdot &  0 \\ 
\cdot & \cdot & \cdot &\cdot & \cdots & \cdot & \cdot &  & 0 & 0  & \cdot & \cdot & \cdots &\cdot &  \cdot &  0 \\ 
\cdot & \cdot & \cdot& \cdot& \cdots&\cdot & -1 &  & 0 & 0  & \cdot & \cdot & \cdots &\cdot &  \cdot &  0 \\ 
0 & 0 & 0 & 0 &\cdots  & -1 & 2  &  & -1 & 0 & \cdot & \cdot & \cdots &\cdot &  \cdot &  0 \\ 
\hline
0 & 0 & 0 & 0 &\cdots  & 0 & -1  &  & 2 & 1  & 0 & \cdot & \cdots &\cdot &    \cdot &0 \\ 
0 & 0 & 0 & 0 &\cdots  & 0 & 0  &  & 1  & 2  & 1 & 0 & \cdots &\cdot & \cdot &   0 \\ 
\cdot & \cdot & \cdot &\cdot & \cdots & \cdot & \cdot &  & \cdot& \cdot  & \cdot & \cdot & \cdots &\cdot &   \cdot & 0 \\ 
\cdot & \cdot & \cdot &\cdot & \cdots & \cdot & \cdot &  & \cdot & \cdot  & \cdot & \cdot & \cdots &\cdot &  \cdot &  0 \\ 
\cdot & \cdot & \cdot& \cdot& \cdots&\cdot & \cdot  &  & \cdot & \cdot & \cdot &\cdot & \cdots & 1 &2 &  1 \\ 
1 & 0 & 0 & 0 &\cdots  & 0 & 0  &  & 0 & 0 & \cdot & \cdot & \cdots & 0 &1 &    2 \\ 
\end{array}\right)$$

For $\varepsilon = \pm 1, m \ge 3$, let $Q_m(\varepsilon)$ be the $m \times m$ symmetric matrix

$$Q_m(\varepsilon) = \left(\begin{matrix} 2 & 1 & 0 & 0 & \cdots  & 0 & \varepsilon \\ 
1 &2 & 1 & 0 & \cdots & 0 & 0 \\ 
0 & 1 & 2& 1 & \cdots & 0 & 0 \\
\cdot & \cdot & \cdot &\cdot & \cdots& \cdot & \cdot \\ 
\cdot & \cdot & \cdot& \cdot& \cdots&\cdot & \cdot  \\
0 & 0 & 0 & 0 &\cdots  & 2 & 1 \\
\epsilon & 0 & 0 & 0 &\cdots  & 1 & 2  \end{matrix}\right)$$

Successively multiplying the $i^{th}$ row and column of $-(S+S^T)$ by $-1$ for 
$$i = \left\{ \begin{array}{l} m-p, m-p-2, \ldots, 1 \;\; \hbox{ if $m-p$ is odd} \\ m-p, m-p-2, \ldots, 2 \;\; \hbox{ if $m-p$ is even}  \end{array} \right.$$ 
we see that $-(S+S^T)$ is congruent to $Q_m((-1)^{m-p})$.

One easily shows by induction on $r$ that the $r \times r$ leading minor of $Q_m(\varepsilon)$ is $r+1$ for $1 \leq r \leq m-1$. A straightforward calculation also gives the following.

\begin{lemma} \label{lemma: det qme} 
$$\det Q_m(\varepsilon) = \left\{ \begin{array}{l} 2 + 2\varepsilon\;\; \hbox{ if $m$ is odd} \\ 2 - 2\varepsilon \;\; \hbox{ if $m$ is even}  \end{array} \right. \qed$$ 
\end{lemma}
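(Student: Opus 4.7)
The plan is to prove the formula by cofactor expansion along the first row of $Q_m(\varepsilon)$. Let $T_k$ denote the standard $k \times k$ tridiagonal matrix with $2$s on the diagonal and $1$s on the sub- and super-diagonals; as observed just before the statement of the lemma, $\det T_k = k+1$. Since the first row of $Q_m(\varepsilon)$ contains only three nonzero entries, namely $2$, $1$, and $\varepsilon$ in positions $1$, $2$, and $m$, the expansion reads
$$\det Q_m(\varepsilon) \;=\; 2\,\det M_{11} \;-\; \det M_{12} \;+\; (-1)^{m+1}\,\varepsilon\,\det M_{1m},$$
where $M_{1j}$ denotes the minor obtained by deleting row $1$ and column $j$ from $Q_m(\varepsilon)$.

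The first minor is immediate: $M_{11}$ equals $T_{m-1}$, so $\det M_{11} = m$. For each of $M_{12}$ and $M_{1m}$ I would expand along the first column, which in both cases has only two nonzero entries, a $1$ at the top and an $\varepsilon$ at the bottom. The two $(m-2) \times (m-2)$ submatrices produced in each expansion can be identified directly: one is a copy of the standard tridiagonal matrix $T_{m-2}$ (determinant $m-1$), while the other is triangular with $1$s on the diagonal (determinant $1$). A short calculation then yields
$$\det M_{12} = (m-1) + (-1)^m \,\varepsilon, \qquad \det M_{1m} = 1 + (-1)^m\, \varepsilon\,(m-1).$$

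Substituting these into the first-row expansion and using $\varepsilon^2 = 1$ together with $(-1)^{m+1}(-1)^m = -1$, the $m$-dependent contributions cancel and one obtains
$$\det Q_m(\varepsilon) \;=\; 2m \,-\, (m-1) \,-\, 2(-1)^m \varepsilon \,-\, (m-1) \;=\; 2 + 2(-1)^{m+1}\varepsilon,$$
which specializes to $2 + 2\varepsilon$ when $m$ is odd and $2 - 2\varepsilon$ when $m$ is even, exactly as stated.

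The only real obstacle is bookkeeping: one must correctly identify the triangular submatrices arising in the cofactor expansions of $M_{12}$ and $M_{1m}$ (confirming that they really are triangular with unit diagonal in each of the relevant orientations) and keep careful track of the sign $(-1)^{m+1}$ contributed by the corner entry $\varepsilon$. Beyond that, the argument is the ``straightforward calculation'' promised in the statement.
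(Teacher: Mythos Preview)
Your proof is correct. The paper omits any argument for this lemma beyond calling it a ``straightforward calculation,'' and your cofactor expansion along the first row is precisely the natural way to carry out that calculation; the identifications of the minors as $T_{m-2}$ and as unit-triangular matrices are accurate, and the algebra collapses exactly as you describe.
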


It follows that $\mathcal{F}(C_m, p)$ depends only on the parity of $p$ and is definite if and only if $p$ is odd. Recalling Theorem \ref{thm: p = 1} we therefore have the following.

\begin{lemma} \label  {lemma: definite iff p odd} $\;$ 

$(1)$ If $p$ is odd then $\mathcal{F}(C_m, p) \cong D_m$. 

$(2)$  If $p$ is even then $\mathcal{F}(C_m, p)$ is indefinite.
\qed
\end{lemma}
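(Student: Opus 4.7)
The plan is to leverage the preceding computation, which already establishes that $-(S+S^T)$ is congruent to the matrix $Q_m(\varepsilon)$ with $\varepsilon = (-1)^{m-p}$. Everything then reduces to invoking Lemma \ref{lemma: det qme} together with Theorem \ref{thm: p = 1}, after making one very simple observation about parity.

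The key observation is that the congruence class of $Q_m((-1)^{m-p})$ depends on $p$ only through its parity. Indeed, for odd $p$ one has $(-1)^{m-p} = (-1)^{m-1}$, and for even $p$ one has $(-1)^{m-p} = (-1)^{m}$, so the sign $\varepsilon$ is constant on odd $p$ (and separately on even $p$) once $m$ is fixed. Hence the symmetrised Seifert form $\mathcal{F}(C_m, p)$ depends (up to congruence) only on the parity of $p$.

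For part (1), suppose $p$ is odd. By Lemma \ref{lemma: det qme}, $\det Q_m((-1)^{m-p}) = 4$ in both the $m$ odd and $m$ even subcases. Combined with the fact (noted in the paragraph before Lemma \ref{lemma: det qme}) that the $r \times r$ leading principal minor of $Q_m(\varepsilon)$ equals $r+1 > 0$ for $1 \leq r \leq m-1$, Sylvester's criterion shows $Q_m((-1)^{m-p})$ is positive definite, so $\mathcal{F}(C_m, p)$ is negative definite. Since the congruence class only depends on the parity of $p$, we may compare with $p = 1$: Theorem \ref{thm: p = 1} identifies $F(C_m, 1)$ with the standard $D_m$-plumbing $F(D_m)$, giving $\mathcal{F}(C_m, 1) \cong D_m$. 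Therefore $\mathcal{F}(C_m, p) \cong D_m$ for every odd $p$.

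For part (2), suppose $p$ is even. Then Lemma \ref{lemma: det qme} gives $\det Q_m((-1)^{m-p}) = 0$ in both $m$ odd and $m$ even subcases, so $\mathcal{F}(C_m, p)$ is degenerate and in particular cannot be definite. In the paper's terminology (``indefinite'' means ``not definite'') this suffices. The main subtleties are purely bookkeeping: one must keep track of the sign convention (the computation produces $-(S+S^T)$, so ``positive definite $Q_m$'' corresponds to ``negative definite $\mathcal{F}$''), and one must note that for part (2) we do not need to produce vectors of positive and negative square but only need the degeneracy supplied by Lemma \ref{lemma: det qme}. No significant obstacle remains beyond these points.
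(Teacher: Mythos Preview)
Your proposal is correct and follows exactly the route the paper takes: the lemma is stated with an immediate \qed because the paragraph preceding it already observes that $\mathcal{F}(C_m,p)$ depends only on the parity of $p$ (via $\varepsilon=(-1)^{m-p}$), that it is definite precisely when $p$ is odd by Sylvester's criterion together with Lemma~\ref{lemma: det qme}, and that the $p=1$ case is identified with $D_m$ by Theorem~\ref{thm: p = 1}. Your write-up simply makes these implicit steps explicit.
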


We now prove Theorem \ref{thm: p odd} which we restate here for the reader's convenience. 

{\bf Theorem \ref{thm: p odd}}. {\it Let $m \geq 3$ and $p$ be integers with $p$ odd and $0 < p < m$. 

$(1)$ $L(C_m, p)$ is prime and definite. 

$(2)$ $L(C_m, p)$ is simply laced arborescent if and only if $p = 1$. }

\begin{proof} 
The definiteness of $L(C_m,p)$ in part (1)  follows from Lemma \ref{lemma: definite iff p odd} 
(1), and the primeness of $L(C_m,p)$ follows from Lemma \ref{lemma: definite iff p odd}(1) and
the fact that $D_m$ is indecomposable. To prove part (2), we have that 
$L(C_m,1) = L(D_m)$ by Theorem \ref{thm: p = 1}. Conversely, if $L(C_m,p)$ is simply 
laced arborescent then it is definite, so $p$ is odd by Lemma \ref{lemma: definite iff p odd}, and 
therefore $F(C_m,p)$ is isotopic to $F(D_m)$. By Lemma \ref{lemma: linking} this implies 
that $p = 1$.
\end{proof}

Next we prove Theorem \ref{thm: intro not a counterexample}, which shows that the links in Theorem \ref{thm: p odd} do not provide counterexamples to 
Conjecture \ref{conj: branched lspace implies simply laced arborescent}.

\begin{lemma} \label{lemma: alex poly} 
The Alexander polynomial of $L(C_m, p)$ is 
$$\Delta_{L(C_m, p)} = \left\{ \begin{array}{l} (t^p - 1)(t^{(m-p)} + 1) \;\; \hbox{ if $m$ is odd} \\ (t^p - 1)(t^{(m-p)} - 1)  \;\; \hbox{ if $m$ is even}  \end{array} \right.$$ 
\end{lemma}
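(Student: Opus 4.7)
The plan is to compute the Alexander polynomial directly from the formula $\Delta_L(t) = \det(tS - S^T)$ (valid up to units in $\mathbb Z[t,t^{-1}]$), using the explicit Seifert matrix $S$ displayed just above the lemma. Writing $A := tS - S^T$, one sees that $A$ is a \emph{cyclic tridiagonal} matrix: the diagonal entries all equal $1-t$, the super/sub-diagonal pair at position $i$ is $(-1,\,t)$ when $\epsilon_i = -1$ and $(-t,\,1)$ when $\epsilon_i = +1$, and the corner entries are $A_{1,m} = 1$ and $A_{m,1} = -t$.

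The first key observation is that the product of the two off-diagonal entries at each position equals $-t$ regardless of the value of $\epsilon_i$, so every purely tridiagonal submatrix of $A$ has determinant depending only on its size. Solving the two-term recurrence with characteristic roots $1$ and $-t$ gives the closed form
\[ T_k \;=\; \frac{1 + (-1)^k t^{k+1}}{1+t} \]
for the size-$k$ tridiagonal determinant.

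Next I would derive a general identity for the determinant of a cyclic tridiagonal matrix. Expanding $\det A$ along the first row (whose only nonzero entries lie in columns $1$, $2$ and $m$) and evaluating each of the three resulting minors (two of them via a secondary cofactor expansion) yields
\[ \det A \;=\; T_m \,-\, cd\, T_{m-2} \,+\, (-1)^{m+1}\bigl[\,d\cdot b_1 b_2\cdots b_{m-1} \,+\, c\cdot c_1 c_2\cdots c_{m-1}\,\bigr], \]
where $c = A_{1,m}$, $d = A_{m,1}$ and $b_i, c_i$ denote the super/sub-diagonal entries at position $i$. For the specific $A$ at hand, the prescribed $\epsilon$-pattern gives $b_1\cdots b_{m-1} = (-1)^{m-1}t^{p-1}$ (from $m-p$ factors equal to $-1$ together with $p-1$ factors equal to $-t$) and $c_1\cdots c_{m-1} = t^{m-p}$.

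Finally, a short computation using that $T_m + t\,T_{m-2} = 1 + (-1)^m t^m$ (clearing the common $(1+t)$ denominator) reduces everything to
\[ \det A \;=\; 1 + (-1)^m t^m - t^p + (-1)^{m+1} t^{m-p}, \]
which factors as $(1-t^p)(1+t^{m-p})$ when $m$ is odd and as $(1-t^p)(1-t^{m-p})$ when $m$ is even, giving the claimed formulas up to units. The main obstacle I foresee is keeping all the signs and parities straight through the cofactor expansion and the monomial bookkeeping; no new conceptual input is needed once the cyclic tridiagonal determinant identity is in hand.
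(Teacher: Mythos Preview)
Your proposal is correct. The cyclic tridiagonal determinant identity you quote,
\[
\det A \;=\; T_m - cd\,T_{m-2} + (-1)^{m+1}\bigl[d\,b_1\cdots b_{m-1} + c\,c_1\cdots c_{m-1}\bigr],
\]
is valid (it follows from the two nested cofactor expansions you describe, and one can spot-check it for $m=3,4$), and your bookkeeping of the products $b_1\cdots b_{m-1}=(-1)^{m-1}t^{p-1}$, $c_1\cdots c_{m-1}=t^{m-p}$ and of the identity $T_m + tT_{m-2}=1+(-1)^m t^m$ is accurate. The final factorisation is then immediate.

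Your route is, however, organised differently from the paper's. The paper expands $\det(S-tS^T)$ along the first column and interprets the three resulting minors \emph{topologically}: deleting row/column $1$ (resp.\ $1$ and $2$, resp.\ $1$ and $m$) corresponds to deleting the band $b_1$ (resp.\ $b_1,b_2$, resp.\ $b_1,b_m$) from the basket, giving the arborescent baskets $F(A_{m-1})$ and $F(A_{m-2})$. The minors are therefore $\Delta_{T(2,m)}$ and $\Delta_{T(2,m-1)}$, and the known closed form $\Delta_{T(2,r)}=(t^r\pm 1)/(t+1)$ finishes the computation. By contrast, you stay entirely inside linear algebra: you exploit the uniform off-diagonal product $-t$ to solve the tridiagonal recurrence in closed form and then feed everything into a general cyclic-tridiagonal identity. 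Your argument is more self-contained (no external input about torus-link polynomials) and generalises cleanly to other cyclic tridiagonal matrices; the paper's argument has the advantage of making visible why the answer factors through the torus-link polynomials, which is natural in the plumbing context.
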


\begin{proof}
The matrix $S - tS^T$ (immediately below) is a Seifert matrix for $L(C_m,p)$ whose determinant is $\Delta_{L(C_m, p)}$. 

$$\left(
\begin{array}{cccccccc|cccccccc} 
t-1 & -t & 0 & 0 & \cdots  & 0 & 0 &  & 0 & 0 & \cdot & \cdot &\cdots & \cdot& \cdot &   t  \\ 
1 & t-1& -t & 0 & \cdots & 0 & 0 &  & 0 & 0  & \cdot & \cdot & \cdots &\cdot & \cdot&    0 \\ 
0 & 1 & t-1& -t & \cdots & 0 & 0 &  & 0 & 0  & \cdot & \cdot & \cdots &\cdot & \cdot&    0 \\ 
\cdot & \cdot & \cdot &\cdot & \cdots & \cdot & \cdot &  & 0 & 0  & \cdot & \cdot & \cdots &\cdot & \cdot&    0 \\ 
\cdot & \cdot & \cdot& \cdot& \cdots&\cdot & -t &  & 0 & 0  & \cdot & \cdot & \cdots &\cdot & \cdot&    0 \\ 
0 & 0 & 0 & 0 &\cdots  & 1 & t-1  &  & -t & 0 & \cdot & \cdot & \cdots &\cdot & \cdot&    0 \\ 
\hline
0 & 0 & 0 & 0 &\cdot  & 0 & 1  &  & t-1 & -1  & 0 & \cdot & \cdots &\cdot & \cdot&    0 \\ 
0 & 0 & 0 & 0 &\cdot  & 0 & 0  &  & t & t-1  & -1 & 0 & \cdots &\cdot & \cdot&    0 \\ 
\cdot & \cdot & \cdot &\cdot & \cdot & \cdot & \cdot &  & \cdot& \cdot  & \cdot & \cdot & \cdots &\cdot & \cdot&    0 \\ 
\cdot & \cdot & \cdot &\cdot & \cdot & \cdot & \cdot &  & \cdot & \cdot  & \cdot & \cdot & \cdots &\cdot & \cdot&    0 \\ 
\cdot & \cdot & \cdot& \cdot& \cdot&\cdot & \cdot  &  & \cdot & \cdot & \cdot & \cdot & \cdots & t &t-1 &  -1 \\ 
-1 & 0 & 0 & 0 &\cdot  & 0 & 0  &  & 0 & 0 & \cdot & \cdot & \cdots & 0 & t &    t-1 \\ 
\end{array}\right)$$

Let $M_1, M_2$, and $M_3$ be the matrices obtained by removing from $S - tS^T$, respectively, the first row and column, the first two rows and columns, and the first and last rows and columns. Correspondingly, let $F_1, F_2$, and $F_3$ be the surfaces obtained by removing, respectively, the first band $b_1$, the first and second bands $b_1$ and $b_2$, and the first and last bands $b_1$ and $b_m$. Then for $i = 1, 2,3$, $M_i = R_i - tR_i^T$ where $R_i$ is a Seifert matrix for $F_i$. Also, $F_1$ is the arborescent basket $F(A_{m-1})$, and $F_2$ and $F_3$ are the arborescent basket $F(A_{m-2})$, the corresponding links being $T(2,m)$ and $T(2,m-1)$ respectively.

Expanding by the first column we obtain $\Delta_{L(C_m,p)} = \det(S - tS^T) = (t-1) \det M_1 - \det U + (-1)^m \det V$. Here $U = \left(\begin{matrix}   -t & C_1  \\ C_2^T & M_2  \end{matrix}\right)$ where $C_1 = (0, 0, \ldots, 0, t)$ and $C_2 = (1, 0, \ldots , 0)$, and  $V = \left(\begin{matrix}  D_1 & t  \\ M_3 & D_2^T  \end{matrix}\right)$ where $D_1 = (-t, 0, \ldots, 0, 0)$ and $D_2 = (0, 0, \ldots , 0, -1)$

Expanding $\det U$ by the first row we get 
$$\det U = (-t) \det M_2 + (-1)^{m} t^p$$ 
Similarly, expanding $\det V$ by the last column we get 
$$\det V = (-1)^m t \det M_3 + (-1)^{m-1} t^{m-p}$$ 
Since $\det M_1= \Delta_{T(2,m)}$ and $\det M_2 = \det M_3 = \Delta_{T(2,m-1)}$, we obtain 
$$\Delta_{L(C_m, p)} = (t-1) \Delta_{T(2,m)} + 2t \Delta_{T(2,m-1)} + (-1)^m t^p - t^{m-p}$$
Now 
$$\Delta_{T(2,r)} = \left\{ \begin{array}{l} (t^r + 1)/(t + 1) \;\; \hbox{ if $r$ is odd} \\ (t^r - 1)/(t + 1)  \;\; \hbox{ if $r$ is even}  \end{array} \right.$$ 
See for example \cite[Exercise 7.3.1]{Mu2}. 

First suppose that $m$ is even. Then $(t-1)\Delta_{T(2,m)} + 2t\Delta_{T(2,m-1)} = \big((t-1)(t^m - 1) + 2t(t^{m-1}+ 1)\big)/(t+1) = (t^{m+1} + t^m + t + 1)/(t+1) = t^m + 1$. Hence $\Delta_{L(C_m,p)} = t^m + 1 - t^p - t^{m-p} = (t^p - 1)(t^{m-p} - 1)$.

The case that $m$ is odd is similar; we leave the details to the reader.
\end{proof}

\begin{proof}[Proof of Theorem \ref{thm: intro not a counterexample}]
For each $\zeta \in S^1$ we define $I_+(\zeta)$, to be the open subarc of the circle with endpoints $\zeta, \bar \zeta$ which contains $+1$.  

If $\Sigma_n(L(C_m, p))$ is an L-space, then
\begin{equation} \label{eqn: roots} 
\hbox{{\it All the roots of $\Delta_{L(C_m, p)}$ lie in $I_+(\zeta_n)$}}
\end{equation}
by Theorem \ref{thm: bbg definite}.

If $p > 1$ and $n \geq 3$, the fact that $t^p - 1$ divides $\Delta_{L(C_m, p)}$ (cf. Lemma \ref{lemma: alex poly}) shows that (\ref{eqn: roots}) is violated.

Next suppose that $\Sigma_n(L(C_m, 1))$ is an L-space and recall that by Theorem \ref{thm: p = 1}, $L(C_m, 1) = P(-2, 2, m-2)$. Hence $\Sigma_2(L(C_m, 1)$ is an L-space. Also, if $m = 3$ then $L(C_3, 1) = P(-2, 2, 1)) = T(2,4)$, and so $\Sigma_3(L(C_3, 1))$ is also an L-space.

Conversely, first suppose that $m$ is even; then the roots of $\Delta_{L(C_m,1)}$ are the $(m-1)^{st}$ roots of unity. Since $m \geq 4$, (\ref{eqn: roots}) fails for $n \geq 3$. If $m$ is odd then the roots of $\Delta_{L(C_m, 1)}$ are the complex numbers $\exp(\frac{\pi i r}{m-1})$, $r$ odd, together with $1$. Therefore, if $m \geq 5$ then (\ref{eqn: roots}) fails  for $n \geq 3$, and if $m = 3$, it fails for $n \geq 4$. 

Finally suppose that $p > 1$ and $n = 2$.  

\begin{lemma} \label{lem: lcm braid}
As an unoriented link, $-L(C_m, p)$ is the closure of the $3$-braid $\gamma^m \delta^{3p}$, where $\delta = \sigma_2 \sigma_1, \gamma = \sigma_2 \delta^{-1}$. 
\end{lemma}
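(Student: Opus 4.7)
The plan is to first simplify $\gamma^m \delta^{3p}$ algebraically and then match its closure with $-L(C_m,p)$ by an explicit geometric construction.

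First, I would carry out the braid-algebraic simplification. From $\gamma = \sigma_2 \delta^{-1} = \sigma_2 \sigma_1^{-1} \sigma_2^{-1}$ and the braid relation $\sigma_1 \sigma_2 \sigma_1 = \sigma_2 \sigma_1 \sigma_2$, one verifies $\gamma = \sigma_1^{-1} \sigma_2^{-1} \sigma_1$, so that $\gamma^m = \sigma_1^{-1} \sigma_2^{-m} \sigma_1$ by telescoping the internal $\sigma_1\sigma_1^{-1}$ pairs. Since $\delta^3 = (\sigma_2\sigma_1)^3$ generates the centre of $B_3$, a Markov (cyclic) conjugation gives
\begin{equation*}
\widehat{\gamma^m \delta^{3p}} \;=\; \widehat{\sigma_1^{-1}\sigma_2^{-m}\delta^{3p}\sigma_1} \;=\; \widehat{\sigma_2^{-m}\delta^{3p}} \;=\; \widehat{\sigma_2^{-m}(\sigma_2\sigma_1)^{3p}}.
\end{equation*}
Thus it suffices to exhibit an unoriented isotopy between the closed braid $\widehat{\sigma_2^{-m}(\sigma_2\sigma_1)^{3p}}$ and $-L(C_m,p)$.

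Second, I would construct both sides from the same explicit picture. Normalize $\boldsymbol{\epsilon}$ via Proposition \ref{prop: only depends on p} to $(-1,\dots,-1,+1,\dots,+1)$, with exactly $p$ entries equal to $+1$, and place the cyclic chord diagram of $C_m$ in $D^2$ in a standard position in which consecutive chords cross transversely once. The $p$ positively-signed Hopf bands, plumbed cyclically, produce (by iterated handle-slide moves of the type in Figures \ref{fig: cam 4}--\ref{fig: cam 5}, carried out in the direction opposite to the proof of Theorem \ref{thm: p = 1}) the Bennequin fiber surface of the positive $3$-braid $(\sigma_2\sigma_1)^{3p}$---intuitively, $p$ nested copies of the $D_4$-basket bounded by $T(3,3)$, whose union is the fibre surface of $T(3,3p)$. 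The $m-p$ negatively-signed Hopf bands, interleaved with the positive ones around the cycle, contribute the $m-p$ negative letters to the braid word, while the orientation reversal implicit in the minus sign of $-L(C_m,p)$ turns the remaining $p$ positive bands of the original basket into $p$ further negative letters, giving the total factor $\sigma_2^{-m}$.

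Third, a careful bookkeeping of the cyclic structure of the chord diagram---each one-step cyclic rotation of $C_m$ corresponds braid-theoretically to conjugation by $\delta$---produces the full braid word $\sigma_2^{-m}(\sigma_2\sigma_1)^{3p}$. The main obstacle is step two: precisely matching the cyclic arrangement of $m$ bands with the braid letters. A natural inductive framework is induction on $p$, with the base case $p=1$ supplied by Theorem \ref{thm: p = 1} (which identifies $L(C_m,1)$ with $L(D_m) = P(-2,2,m-2)$, for which the required 3-braid presentation is classical), and an induction on $m$ reducing each additional negative band to a single $\sigma_2^{-1}$ via the handle-slide relation. As a consistency check, one can verify independently that the image of $\sigma_2^{-m}(\sigma_2\sigma_1)^{3p}$ in $S_3$ gives the correct number of components (matching Lemma \ref{lemma: linking}) and that a reduced Burau matrix computation reproduces the Alexander polynomial of Lemma \ref{lemma: alex poly}, both of which together with the common genus of the two surfaces force the identification in the present $3$-braid setting.
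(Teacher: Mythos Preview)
Your Step 1 is a correct and pleasant simplification: it reduces the claim to showing that the mirror $-L(C_m,p)$ is the closure of $\sigma_2^{-m}(\sigma_2\sigma_1)^{3p}$.

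Steps 2 and 3, however, rest on a misreading of the basket construction. In $F(C_m,\boldsymbol{\epsilon})$ \emph{every} Hopf band is a positive Hopf band; the signs $\epsilon_i=\pm 1$ do not record ``positively-signed'' versus ``negatively-signed'' bands but only the relative plumbing order of $b_i$ and $b_{i+1}$, i.e.\ which one lies on top at the crossing. So the sentence ``the $p$ positively-signed Hopf bands \ldots\ produce the Bennequin fibre surface of $(\sigma_2\sigma_1)^{3p}$, while the $m-p$ negatively-signed Hopf bands contribute the $m-p$ negative letters'' does not describe the actual surface, and the bookkeeping that follows (in particular the claim that the mirror ``turns the remaining $p$ positive bands \ldots\ into $p$ further negative letters, giving $\sigma_2^{-m}$'') is not a valid accounting of crossings. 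The inductive scheme built on this picture therefore has no solid foundation, and the consistency checks (component count, Alexander polynomial) cannot substitute for the missing isotopy.

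The paper's argument avoids all of this by working locally. One views the basket $F(C_m,\boldsymbol{\epsilon})$ as a cyclic concatenation of $m$ identical $3$-strand tangles, one at each crossing of adjacent bands $b_i,b_{i+1}$; these tangles are literally $3$-braids $\beta_{\pm}$ depending on $\epsilon_i$. Reading them off the picture gives $\beta_- = \sigma_2\sigma_1\sigma_2^{-1}$ and $\beta_+ = \sigma_2^{-1}\sigma_1^{-1}\sigma_2^{-3}$; passing to the mirror inverts each, and one checks $\beta_-^{-1}=\gamma$ and $\beta_+^{-1}=\gamma\delta^{3}$. Since $\delta^3$ is central, the product over all $m$ crossings is $\gamma^m\delta^{3p}$. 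This is a two-line proof once the local tangles are identified; I recommend redoing Steps 2--3 along these lines rather than via handle slides.
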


\begin{proof}
We can express $L(C_m, p)$ as a circular concatenation of tangles corresponding to pairs of adjacent bands, as in Figure \ref{fig: cam 14}. These tangles are the braids $\beta_{\pm}$ shown in Figure \ref{fig: cam 15}. 
\begin{figure}[!ht]
\centering
 \includegraphics[scale=0.75]{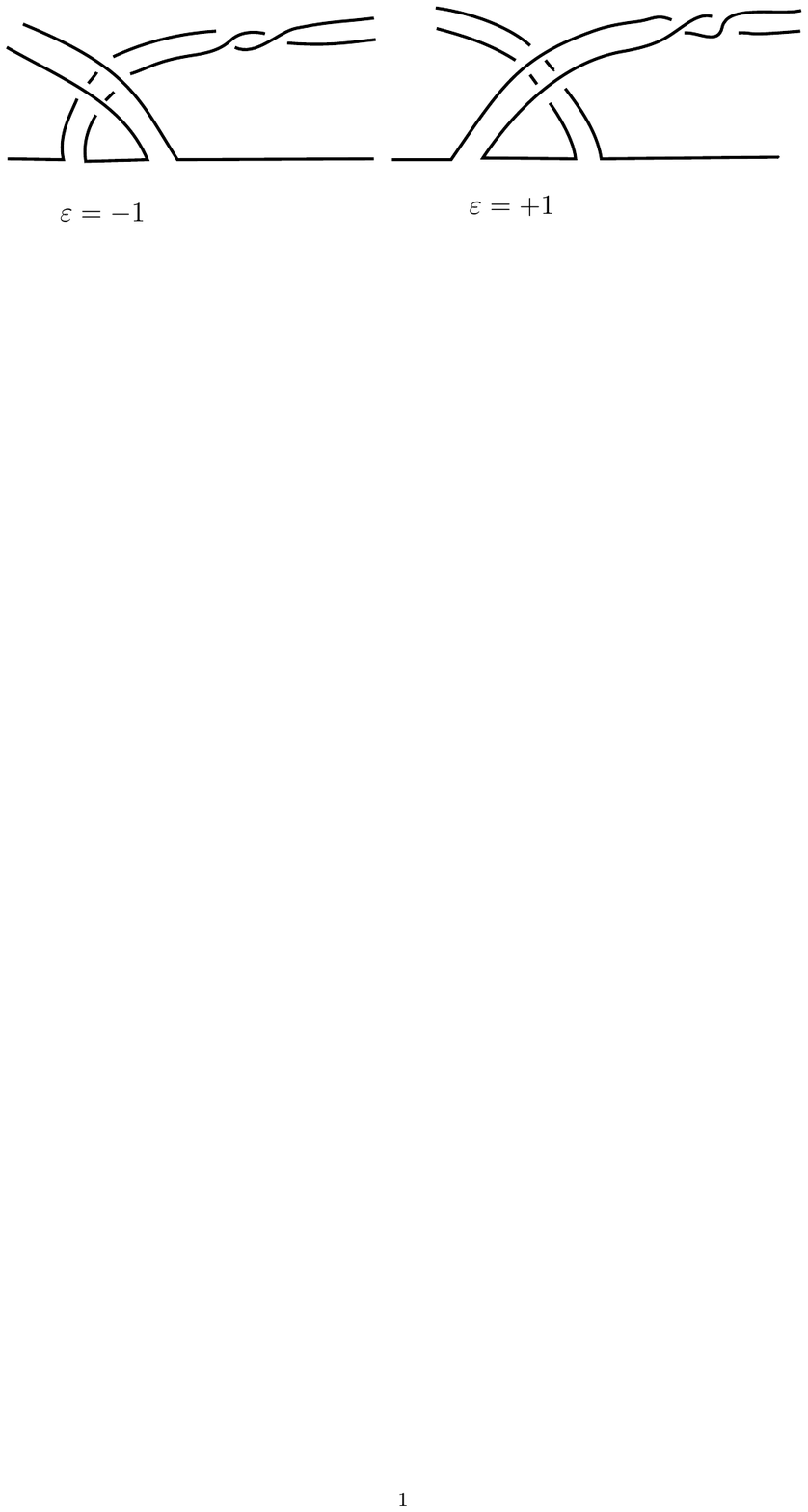}
\caption{} 
\label{fig: cam 14}
\end{figure} 

\begin{figure}[!ht] 
\centering
 \includegraphics[scale=0.75]{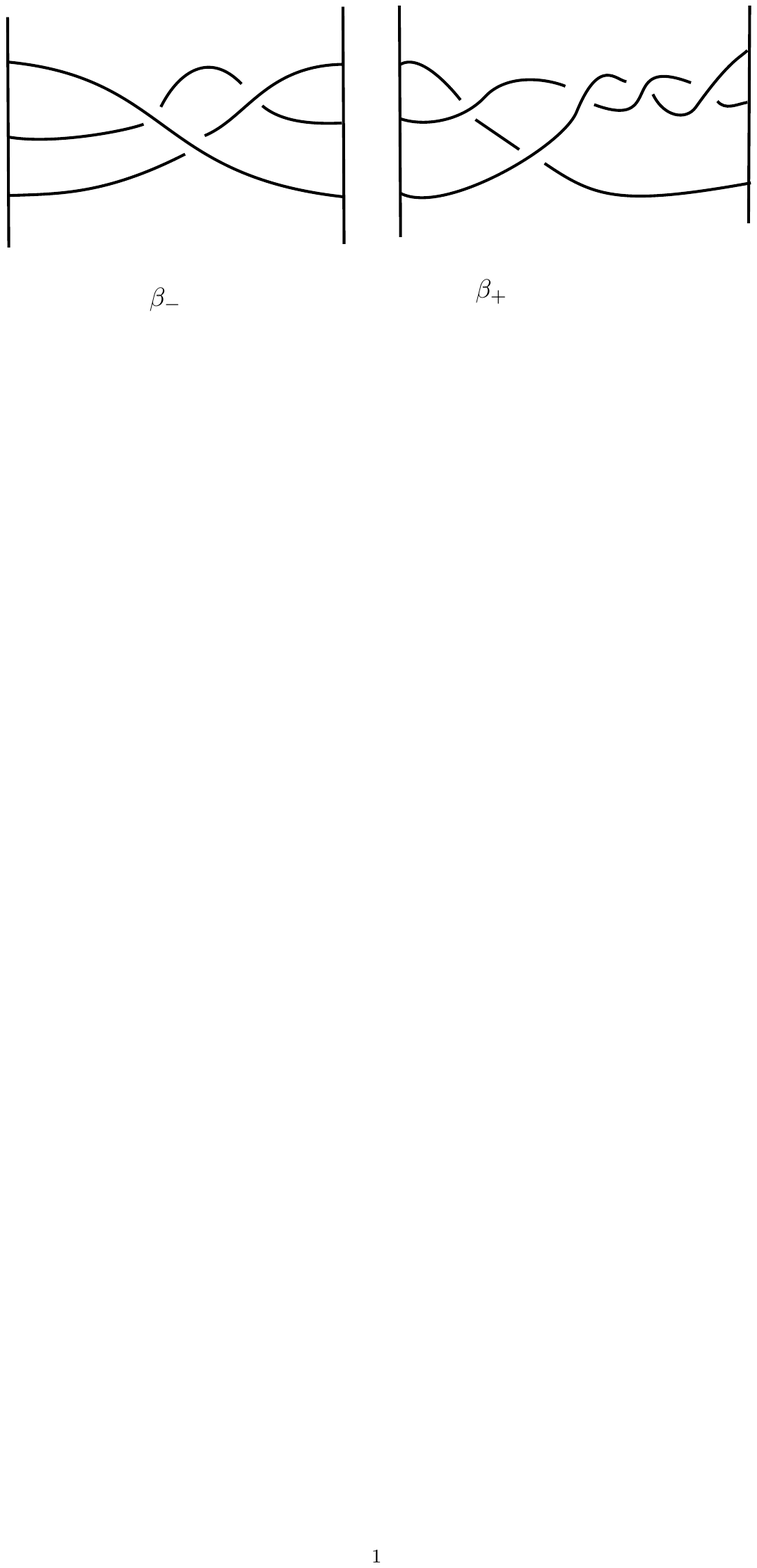}
\caption{} 
\label{fig: cam 15}
\end{figure} 

Reading from right to left in Figure \ref{fig: cam 15}, we see that
$$\beta_- = \sigma_2 \sigma_1 \sigma_2^{-1}, \;\;\;\;\; \beta_+ = \sigma_2^{-1} \sigma_1^{-1} \sigma_2^{-3}$$
Taking inverses for convenience we have 
$$\beta_-^{-1} = \sigma_2 \sigma_1^{-1} \sigma_2^{-1} = \sigma_2 \delta^{-1} = \gamma$$
$$\beta_+^{-1} = \sigma_2^3 \sigma_1 \sigma_2 = \sigma_2^2 \sigma_1 \sigma_2 \sigma_1 = \sigma_2 \delta^2 = \gamma \delta^{3}$$
Since $\delta^3$ is central in $B_3$, the result follows.
\end{proof}

First note that for any link $L$, $\Sigma_2(L)$ is independent of the orientations of the components of $L$. 

By Lemma \ref{lem: lcm braid}, as an unoriented link, $-L(C_m, p)$ is the closure of the $3$-braid $\beta_p = \gamma^m \delta^{3p}$. Let $p = 2r+1, r>0$; then $\beta_p = \beta_1 \delta^{6r}$. 

The inverse image in $\Sigma_2(-L(C_m, p))$ of the braid axis of $\widehat \beta_p$ in $S^3$ is a simple closed curve whose exterior is fibred over $S^1$ with fibre a once-punctured torus $T$, the double branched cover of $(D^2, 3 \hbox{ points})$. 

Recall the natural identification of $B_3$ with a subgroup of the group of homeomorphisms of $(D^2, 3 \hbox{ points})$ which restrict to the identity on $\partial D^2$. Then there are simple closed curves $\alpha_1$ and $\alpha_2$ in Int($T$), meeting transversely in a single point, such that $\sigma_i$ lifts to the positive Dehn twist $\tau_i$ along $\alpha_i$, $i = 1, 2$. Thus $\beta_p$ lifts to $(\tau_2 \tau_1^{-1} \tau_2^{-1})^m (\tau_2 \tau_1)^p = \varphi_p$, say, the monodromy of the $T$-bundle in $\Sigma_2(-L(C_m, p))$. Note that $(\tau_2 \tau_1)^6$ is isotopic (rel $\partial T$) to $\tau_\partial$, the positive Dehn twist along $\partial T$. 

For a homeomorphism $\psi: T \to T$ such that $\psi|\partial T$ is the identity, let $c(\psi)$ denote the fractional Dehn twist coefficient of $\psi$; see \cite[\S 3]{HKM1}. 

\begin{claim}
$c(\varphi_1) = \frac12$.
\end{claim}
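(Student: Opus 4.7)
My plan is to simplify $\varphi_1$ via a conjugation and then exploit the fact that the hyperelliptic lift is central in $MCG(T,\partial T)$. First, using $(\sigma_2\sigma_1^{-1}\sigma_2^{-1})^m = \sigma_2\sigma_1^{-m}\sigma_2^{-1}$ and the centrality of $\delta^3 = \Delta^2$ in $B_3$, I would compute
\[
\beta_1 \;=\; \sigma_2\sigma_1^{-m}\sigma_2^{-1}\cdot\Delta^2 \;=\; \sigma_2\cdot(\sigma_1^{-m}\Delta^2)\cdot\sigma_2^{-1},
\]
exhibiting $\beta_1$ as conjugate in $B_3$ to $\Delta^2\sigma_1^{-m}$. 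Consequently the induced mapping class $\varphi_1 \in MCG(T,\partial T)$ is conjugate to $\iota\,\tau_1^{-m}$, where $\iota = (\tau_2\tau_1)^3$ is the image of $\Delta^2$ under the braid-to-monodromy correspondence. Because $c$ is a conjugacy invariant, it suffices to prove $c(\iota\,\tau_1^{-m}) = \tfrac12$.

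The key input is that $\iota$ generates the centre of $MCG(T,\partial T)$. This is a classical fact about the mapping class group of the once-punctured torus (which is generated by $\tau_1,\tau_2$ subject to the braid relation): the chain relation gives $(\tau_1\tau_2)^6 = \tau_{\partial}$, so $\iota^2 = \tau_\partial$, and $\iota$ is precisely the element of $MCG(T,\partial T)$ lifting the central element $\Delta^2$ of $B_3$. In particular, $\iota$ commutes with $\tau_1$.

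The fractional Dehn twist coefficient $c$ is a homogeneous quasi-morphism on $MCG(T,\partial T)$; see \cite[\S 3]{HKM1}. Any homogeneous quasi-morphism restricts to a genuine homomorphism on each abelian subgroup, so
\[
c(\varphi_1) \;=\; c(\iota\,\tau_1^{-m}) \;=\; c(\iota)\;-\;m\,c(\tau_1).
\]
Now $\tau_1$ is supported in a regular neighbourhood of the essential simple closed curve $\alpha_1\subset\operatorname{Int}(T)$, disjoint from $\partial T$, so $\tau_1$ restricts to the identity on a collar of $\partial T$ and consequently $c(\tau_1)=0$. The normalisation $c(\tau_\partial)=1$ together with $\iota^2=\tau_\partial$ and homogeneity yields $c(\iota)=\tfrac12$. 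Combining these gives $c(\varphi_1)=\tfrac12$, as claimed.

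The only point that requires real care is establishing the centrality of $\iota$ in $MCG(T,\partial T)$ (equivalently, the chain relation $\iota^2=\tau_\partial$ and the identification of $\iota$ with the hyperelliptic lift); once this is in hand, the additivity of $c$ on the abelian subgroup $\langle\iota,\tau_1\rangle$ and the elementary computations $c(\iota)=\tfrac12$ and $c(\tau_1)=0$ finish the proof.
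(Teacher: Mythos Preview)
Your argument is correct and close in spirit to the paper's, though the packaging differs. The paper does not conjugate at the braid level; instead it squares, observing that $\varphi_1^2$ lifts $\gamma^{2m}\delta^6$ and hence equals $(\tau_2\tau_1^{-1}\tau_2^{-1})^{2m}\tau_\partial$. It then uses conjugacy invariance of $c$ together with the existence of an essential properly embedded arc fixed by $\tau_1$ to get $c\big((\tau_2\tau_1^{-1}\tau_2^{-1})^{2m}\big)=c(\tau_1^{-2m})=0$, and the elementary shift $c(\psi\tau_\partial)=c(\psi)+1$ to conclude $c(\varphi_1^2)=1$, whence $c(\varphi_1)=\tfrac12$ by homogeneity. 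Your route---conjugating $\beta_1$ to $\Delta^2\sigma_1^{-m}$ first, then invoking that $c$ is a homogeneous quasimorphism and therefore additive on the abelian subgroup $\langle\iota,\tau_1\rangle$---avoids the squaring step at the cost of appealing to the quasimorphism property in slightly greater generality. Both reduce the computation to $c(\tau_1)=0$ and $c(\iota)=\tfrac12$.

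One point in your write-up needs repair: the sentence ``$\tau_1$ restricts to the identity on a collar of $\partial T$ and consequently $c(\tau_1)=0$'' is not a valid deduction. Every mapping class in $MCG(T,\partial T)$ has a representative that is the identity on a collar of $\partial T$; in particular $\tau_\partial$ does, yet $c(\tau_\partial)=1$. The correct reason $c(\tau_1)=0$ is the one the paper gives: since $\alpha_1$ is a non-separating simple closed curve in the interior of $T$, there is an essential properly embedded arc $a$ disjoint from $\alpha_1$, hence fixed by $\tau_1$, and a map fixing an essential arc has fractional Dehn twist coefficient zero (see \cite[\S 3]{HKM1}). With that correction your proof stands.
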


\begin{proof}
Since $\delta^3$ is central in $B_3$, $\varphi_1^2$ is the lift of $\gamma^{2m} \delta^6$, which is $(\tau_2 \tau_1^{-1} \tau_2^{-1})^{2m} \tau_\partial$. 

There is an essential properly embedded arc $a$ in $T$ such that $\tau_1(a) = a$. It follows that $c(\tau_1^{-2m}) = 0$ (see \cite{HKM1}). Hence $c((\tau_2 \tau_1^{-1} \tau_2^{-1})^{2m}) = c(\tau_1^{-2m}) = 0$. Therefore $c(\varphi_1^2) = 1$, implying that $c(\varphi_1) = \frac12$ (see \cite{HKM1}). 
\end{proof}

Since $\varphi_p = \varphi_1 \tau_\partial^r$, we have that $c(\varphi_p) = \frac12 + r > 1$. Therefore by \cite{Ro} (see \cite[Theorem 4.1]{HKM2}), $\Sigma_2(L(C_m, p))$ admits a co-oriented taut foliation.
\end{proof}

Having given a detailed analysis of the cyclic case we will not attempt a complete classification of definite baskets, but will content ourselves with a few remarks. 

Recall that a subgraph $\Gamma_0$ of a graph $\Gamma$ is {\it full} if $\Gamma_0$ contains every edge of $\Gamma$ whose endpoints lie in $\Gamma_0$.

Let $\mathcal{A}$ be a chord diagram, with ordering $\omega$, and $\mathcal{A}_0 \subset \mathcal{A}$ a subchord diagram, with ordering $\omega_0$ induced by $\omega$. If $\Gamma(\mathcal{A}_0)$ is a full subgraph of $\Gamma(\mathcal{A})$ then $F(\mathcal{A}_0, \omega_0)$ is a homologically injective subsurface of $F(\mathcal{A}, \omega)$. Hence, if $F(\mathcal{A}, \omega)$ is definite then so is $F(\mathcal{A}_0, \omega_0)$. This can be used to show that many baskets are indefinite. For example, if $F(\mathcal{A}, \omega)$ is definite then any full subtree of $\Gamma(\mathcal{A})$ must be simply laced arborescent.

As a simple application of this, let $C_{m,l}$ be an $m$-cycle with a leg of length $l$, $l \ge 1, m \ge 3$. See Figure \ref{fig: cam 8}, which shows $C_{5,2}$. 

\begin{figure}[!ht]
\centering
 \includegraphics[scale=0.75]{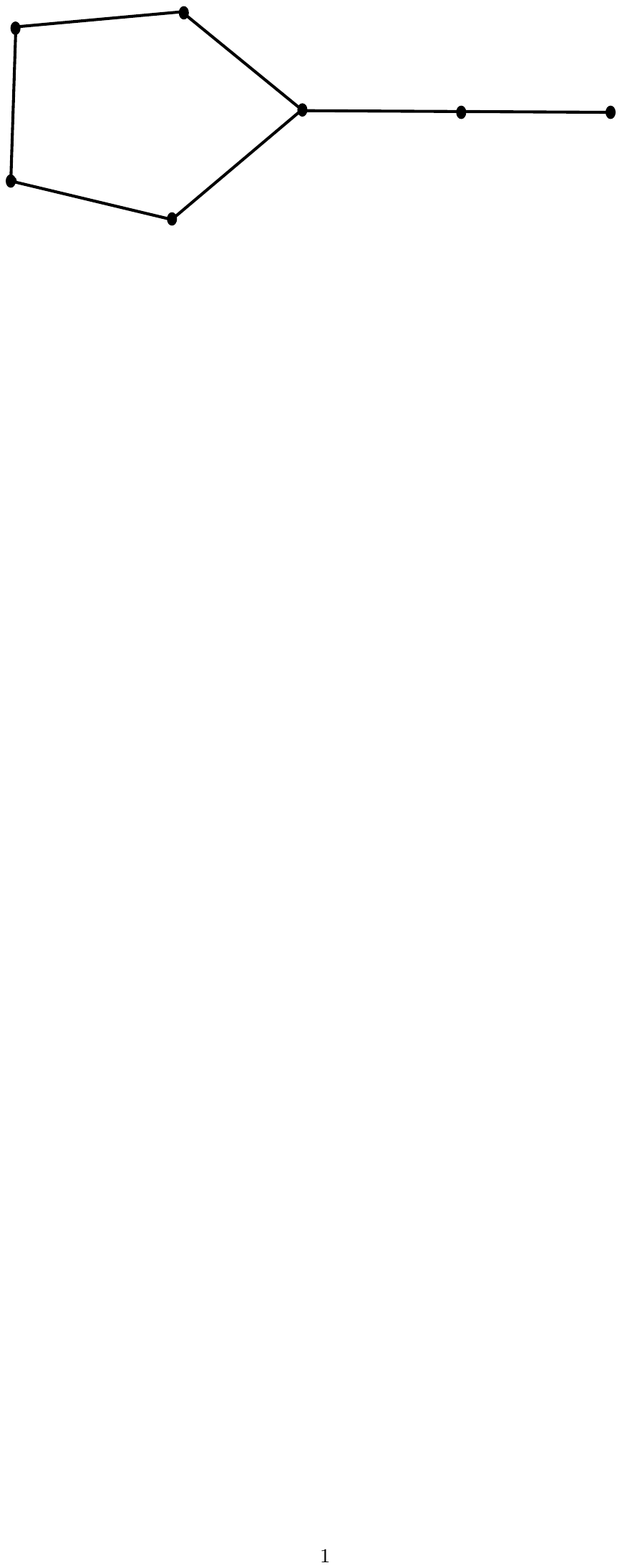}
\caption{} 
\label{fig: cam 8}
\end{figure} 

It is easy to see that any chord diagram realising $C_{m,l}$ is of the form shown in Figure \ref{fig: cam 20}.

\begin{figure}[!ht]
\centering
 \includegraphics[scale=0.75]{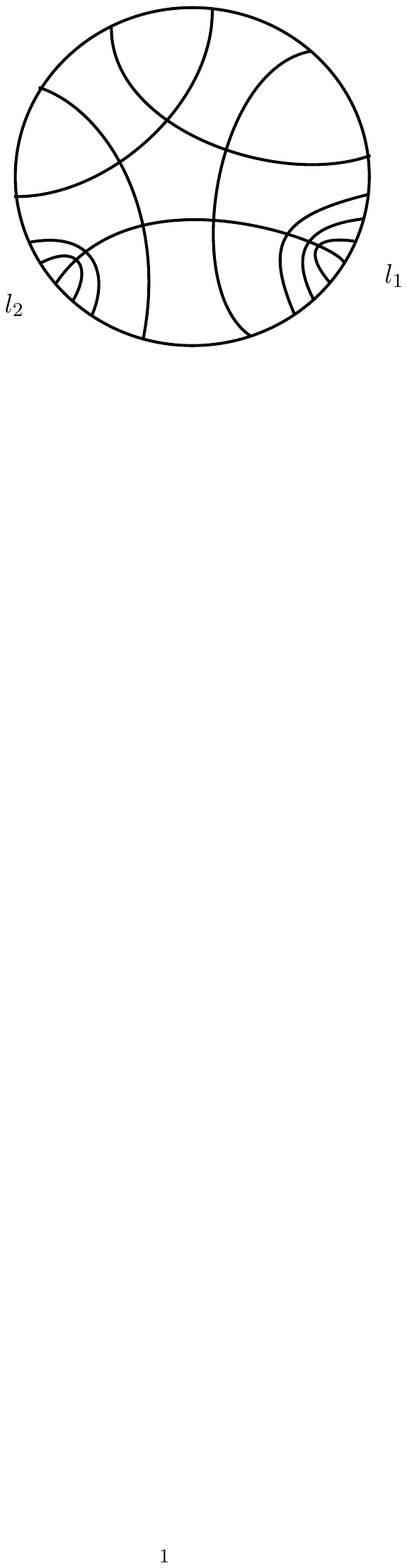}
\caption{} 
\label{fig: cam 20}
\end{figure} 

Thus if $l > 1$, $C_{m,l}$ does not have a unique realisation as a chord diagram. However, given an ordering of the arcs of the $m$-cycle $C_m$, the corresponding basket $F(C_{m,l})$ is clearly unique up to isotopy. 

\begin{thm}
Assume that $m \geq 3$ and $l \geq 1$. There is a definite basket with incidence graph $C_{m,l}$ if and only if $(m,l)$ is one of the following: $(3,l)$ or $(4,l)$ for $l \geq 1$; $(5,l)$ for $l = 1, 2$, or $3$; $(6, 1)$ or $(7, 1)$.
\end{thm}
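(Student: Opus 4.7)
The plan is to analyze the symmetrised Seifert form directly, reducing to an elementary inequality via Sylvester's criterion applied to a nested family of sub-baskets obtained by truncating the leg. Following the proof of Lemma \ref{lemma: det qme}, I work throughout with the positive definite form $-\mathcal{F}$, whose Gram matrix is $M = -(S + S^T)$ on the chord-core basis. Let $F = F(\mathcal{A}, \omega)$ have incidence graph $C_{m, l}$, and write the chord-core basis as $x_1, \ldots, x_m, y_1, \ldots, y_l$, with the $x_i$ corresponding to cycle chords and the $y_j$ to leg chords. Since $C_m$ is a full subgraph of $C_{m, l}$, the cycle sub-basket is homologically injective, so its Gram matrix is a primitive sublattice of $M$. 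If $F$ is definite then so is this sub-basket, and Lemma \ref{lemma: definite iff p odd} forces the cycle parameter $p$ to be odd. After suitable basis sign changes the leg-edge weights and the unique cycle--leg coupling $\langle x_k, y_1 \rangle$ are all normalized to $+1$, bringing $M$ into block form
\[
M \;=\; \begin{pmatrix} Q_m((-1)^{m-1}) & e_k e_1^{T} \\ e_1 e_k^{T} & A_l \end{pmatrix},
\]
where $A_l$ is the standard tridiagonal $A_l$ Gram matrix and $k$ indexes the cycle chord to which the leg is attached.

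The central computation is the identity $\bigl(Q_m((-1)^{m-1})^{-1}\bigr)_{kk} = m/4$ for every $1 \le k \le m$. Since $\det Q_m((-1)^{m-1}) = 4$ by Lemma \ref{lemma: det qme}, this is equivalent to showing that the $(k,k)$-minor has determinant $m$. Removing row $k$ and column $k$ corresponds to deleting vertex $k$ from the weighted cycle, which breaks it into a path on $m - 1$ vertices; on a path, further sign changes normalize all edge weights to $+1$, exhibiting this minor as the Gram matrix of $A_{m-1}$, whose determinant is $m$. Given the identity, the Schur complement of the cycle block yields, for each $0 \le j \le l$, the determinant of the sub-basket form $M_j$ consisting of the cycle and the first $j$ leg bands:
\[
\det M_j \;=\; 4 \cdot \det\!\bigl(A_j - (m/4)\, e_1 e_1^{T}\bigr) \;=\; 4\bigl((j + 1) - (m/4)\, j\bigr) \;=\; 4 + (4 - m) j,
\]
using the elementary identity $\det(A_j - c\, e_1 e_1^{T}) = (j + 1) - c j$.

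Ordering the basis as $x_1, \ldots, x_m, y_1, \ldots, y_l$, the leading principal minors of $M$ of size $\le m$ are all positive (they are leading minors of the positive definite cycle block), while those of size $m + j$ equal $\det M_j$. By Sylvester's criterion, $F$ is definite if and only if $4 + (4 - m) j > 0$ for every $j = 1, \ldots, l$. Case analysis on $m$ produces the asserted list: all $l \ge 1$ when $m \in \{3, 4\}$; $l \le 3$ when $m = 5$; $l \le 1$ when $m \in \{6, 7\}$; and no $l \ge 1$ when $m \ge 8$. Matching determinants with the classification of indecomposable even positive definite lattices identifies the form as $A_{l+3}$ when $m = 3$, $D_{l+4}$ when $m = 4$, and one of $E_6, E_7, E_8$ in the remaining cases. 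Conversely, for each pair in the list any ordering $\omega$ with $p(\omega|_{C_m})$ odd realizes a definite basket, by the same computation. The main obstacle is establishing the identity $(Q_m((-1)^{m-1})^{-1})_{kk} = m/4$ uniformly in the attachment vertex $k$; once this is in hand, the rest of the proof reduces to Sylvester's criterion applied to the one-line formula $4 + (4 - m) j$.
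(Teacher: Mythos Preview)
Your argument is correct and takes a genuinely different route from the paper. The paper proceeds geometrically: for the ``if'' direction it uses the handle-slide isotopy $F(C_m,1)\cong F(D_m)$ from Theorem \ref{thm: p = 1}, so that plumbing a row of $l$ Hopf bands to one of the short arms produces $F(\Gamma)$ for an explicit tree $\Gamma$ which is then checked against the ADE list; for the ``only if'' direction it observes that every pair outside the list, with the single exception $(5,4)$, admits a full subtree that is not simply laced, and handles $(5,4)$ separately via Lemma \ref{lemma: e* case}(3). Your approach instead computes the leading principal minors directly by Schur complement, hinging on the pleasant identity $(Q_m((-1)^{m-1})^{-1})_{kk}=m/4$ for \emph{every} $k$, which you verify by recognising the $(k,k)$-minor of the cyclic matrix as the Gram matrix of the path $A_{m-1}$. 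This yields the single formula $\det M_j = 4+(4-m)j$, and Sylvester's criterion reads off the list in one stroke; in particular the borderline case $(m,l)=(5,4)$ is absorbed automatically as $\det M_4=0$, with no separate $E_8$ argument needed. Your approach is more elementary and uniform, and makes transparent that definiteness is independent of the attachment vertex $k$; the paper's approach has the advantage of actually identifying the surface up to isotopy with an arborescent basket, which is a stronger geometric statement than congruence of forms.
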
 

\begin{proof}
For the ``only if" direction, note that if $(m,l)$ is not one of the pairs listed then either $(m,l) = (5,4)$ or $C_{m,l}$ contains a full subtree that is not simply laced arborescent. In the case $(m,l) = (5,4)$, the corresponding basket $F$ is obtained by plumbing a positive Hopf band to $F(E_8)$ along a non-separating arc. Hence $F$ is indefinite by Lemma \ref{lemma: e* case}(3). 

For the ``if" direction, order the arcs of the $m$-cycle $C_m$ so that $p = 1$. By Theorem \ref{thm: p = 1}, $F(C_m, 1)$ is isotopic to $F(D_m)$. This isotopy is effected by sliding $h_1$ successively over $h_2, \ldots, h_{m-1}$. After these handle-slides, the handles $h_1$ and $h_m$ correspond to the vertices $v_1$ and $v_m$ of $D_m$ as shown in Figure \ref{fig: cam 21}.
\begin{figure}[!ht] 
\centering
 \includegraphics[scale=0.75]{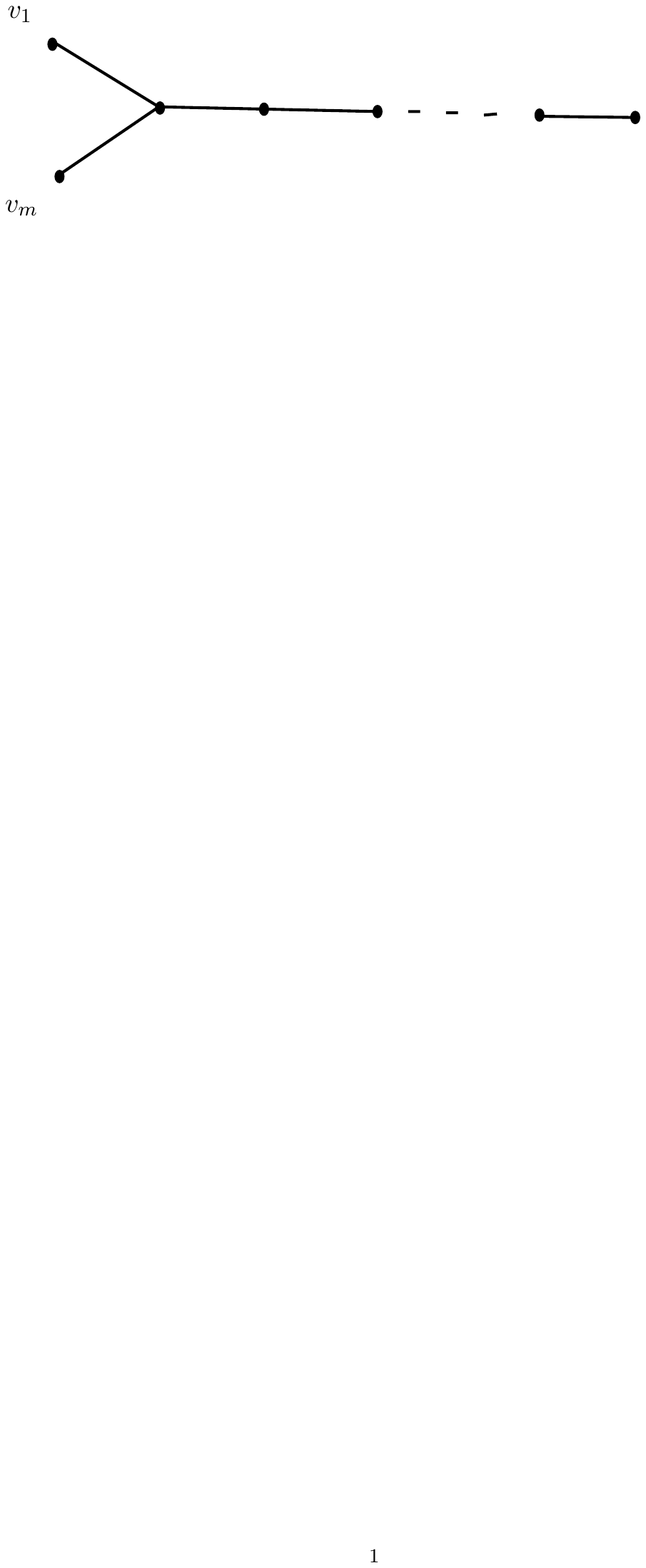}
\caption{} 
\label{fig: cam 21}
\end{figure}

Also, the transverse arcs (co-cores) of $h_1$ and $h_m$ are unaffected by the handle-slides. It follows that if we plumb a row of positive Hopf bands to $h_1$ (say), then the resulting surface is $F(\Gamma)$, where $\Gamma$ is the tree obtained by adjoining a line of $l$ edges to $D_m$ at $v_1$.

If $m = 3$ or $4$ then $\Gamma \cong D_{m+l}$, so $F(\Gamma)$ is definite. 

If $m = 5$ and $l = 1, 2,$ or $3$ then $\Gamma \cong E_6, E_7$, or $E_8$ respectively. Similarly if $m = 6$ or $7$ and $l = 1$ then $\Gamma \cong E_7$ or $E_8$. In all cases, $F(\Gamma)$ is definite. 
\end{proof}

We conclude with one more example. Let $K_m$ be the complete graph on $m$ vertices. It is easy to see that $K_m$ has a unique realization as a chord diagram $\mathcal{A}$, namely $m$ diameters of $D^2$.

\begin{thm} 
If $F(K_m, \omega)$ is definite then it is isotopic to $F(A_m)$.
\end{thm}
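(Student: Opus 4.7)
The plan is to proceed by induction on $m$, reducing the theorem to a Seifert-form calculation followed by a fiber-surface uniqueness argument. For $m = 1, 2$ the result is immediate since $K_1 = A_1$ and $K_2 = A_2$ as graphs, and for $m = 3$ the identification $K_3 = C_3$, combined with Lemma \ref{lemma: definite iff p odd} (which forces $p = 1$) and Theorem \ref{thm: p = 1}, gives $F(K_3, \omega) \cong F(D_3) = F(A_3)$.

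For the inductive step, I assume $m \geq 4$ and the result for $K_{m-1}$, and let $F = F(K_m, \omega)$ be definite. The first main step is to show $\mathcal{F}(K_m, \omega) \cong -A_m$. With the cores $x_1, \ldots, x_m$ of the bands forming a basis of $H_1(F)$, a direct linking-number computation in the spirit of Lemma \ref{lemma: linking info} yields $\mathcal{F}(x_i, x_i) = -2$ and, since each pair of chords in $K_m$ crosses exactly once, $\mathcal{F}(x_i, x_j) = \epsilon_{ij} \in \{\pm 1\}$ for $i \ne j$. The $3 \times 3$ principal submatrix on any triple $\{x_i, x_j, x_k\}$ has determinant
\[
-2 + 2\,\epsilon_{ij}\epsilon_{jk}\epsilon_{ik},
\]
so negative-definiteness forces $\epsilon_{ij}\epsilon_{jk}\epsilon_{ik} = -1$ for every triple. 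Setting $\tau_1 = 1$ and $\tau_j = -\epsilon_{1j}$ for $j \geq 2$, the triangle condition gives $\epsilon_{ij} = -\tau_i \tau_j$; after replacing each $x_i$ by $\tau_i x_i$, every off-diagonal entry becomes $-1$. The change of basis $y_m = x_m$, $y_i = x_i - x_{i+1}$ for $i < m$, then produces a Gram matrix with $\mathcal{F}(y_i, y_i) = -2$, $\mathcal{F}(y_i, y_{i+1}) = 1$, and $\mathcal{F}(y_i, y_j) = 0$ for $|i - j| \geq 2$, so $\mathcal{F}(K_m, \omega) \cong -A_m$.

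To upgrade the form congruence to a surface isotopy, I would first remove the last-plumbed band $b_m$ to obtain a sub-basket $F' \subset F$ whose incidence graph $K_{m-1}$ is the full subgraph of $K_m$ on the remaining vertices; being homologically injective, $F'$ is definite, and by the inductive hypothesis $F' \cong F(A_{m-1})$ with $\partial F' = T(2, m)$. After reindexing the bands by plumbing order, the Seifert matrix $S$ of $F$ becomes lower triangular with $-1$'s on and below the diagonal, and a straightforward row-reduction argument yields
\[
\Delta_{\partial F}(t) = \det(tS - S^T) = \sum_{i=0}^{m} (-t)^i = \Delta_{T(2, m+1)}(t).
\]
Combined with the embedded sub-fiber $F' = F(A_{m-1})$ whose boundary is $T(2, m)$, this will identify $\partial F = T(2, m+1) = L(A_m)$, and then the uniqueness of the fiber surface of a fibered link will yield $F(K_m, \omega) \cong F(A_m)$.

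The hard part will be this final link identification, since equality of Alexander polynomials alone does not determine a link. To pin down $\partial F = T(2, m+1)$, one can either invoke a classification of fibered links with Seifert form $-A_m$---for instance by arguing that the form $-A_m$ forces $k(\partial F) \geq 2$ so Theorem \ref{thm: def baskets} applies, in contrast with the cyclic basket analysis of Theorem \ref{thm: p odd} where the $k = 1$ examples carried form $-D_m$ rather than $-A_m$---or, more concretely, exhibit an explicit sequence of handle slides, generalizing the proof of Theorem \ref{thm: p = 1}, that transforms the $K_m$ chord diagram, with a balanced plumbing order, into the standard $A_m$ chord diagram.
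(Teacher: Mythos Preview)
Your form calculation is correct and elegant: the triangle condition $\epsilon_{ij}\epsilon_{jk}\epsilon_{ik} = -1$ is exactly the algebraic shadow of the paper's constraint that every full $3$-cycle has $p$ odd (Lemma \ref{lemma: definite iff p odd}), and your diagonalisation to $-A_m$ is clean. But the step from ``$\mathcal{F}\cong -A_m$ and $\Delta_{\partial F} = \Delta_{T(2,m+1)}$'' to ``$\partial F = T(2,m+1)$'' is a genuine gap, and neither of your proposed fixes closes it.

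For fix (a): there is no established implication from ``symmetrised Seifert form $\cong -A_m$'' to ``$k(L)\ge 2$''. The whole point of Theorem \ref{thm: p odd} is that there exist prime definite basket links with $k(L)=1$; those particular examples carry form $D_m$ rather than $A_m$, but nothing in the paper rules out $A_m$ examples with $k=1$, and you would need an independent argument to invoke Theorem \ref{thm: def baskets}. More fundamentally, even the full asymmetric Seifert form does not determine a fibred link---this is exactly what \cite{Mis} exhibits---so a strategy based on computing invariants and then hoping to identify the link cannot succeed in general.

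For fix (b): explicit handle slides \emph{is} the paper's proof, and it is where the actual content lies. The point you are missing is that your triangle condition has a combinatorial consequence much stronger than the form congruence: since every triple of diameters is a full $C_3$ with $p=1$, the plumbing order $\omega$ is forced to be cyclically monotone with respect to the angular order of the diameters (the paper's ``up to cyclic renumbering, $\alpha_1 > \alpha_2 > \cdots > \alpha_m$''). This pins down the basket itself, not merely its homological invariants. With $\omega$ thus rigidified, one performs a concrete sequence of handle slides on that specific basket to reduce the incidence graph from $K_m$ to $A_m$. Your induction via removing the last-plumbed band does not shortcut this: the isotopy $F'\cong F(A_{m-1})$ supplied by the inductive hypothesis gives no control over where the plumbing arc of the removed band sits inside $F(A_{m-1})$, so you cannot conclude that re-attaching it yields $F(A_m)$.
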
 

\begin{proof}
If $m \leq 2$ then $K_m = A_m$ and there is nothing to prove. 

Suppose $m \ge 3$. Number the arcs in $\mathcal{A}, \alpha_1, \alpha_2,...,\alpha_m,$ in anticlockwise order around the disk. Let $\omega$ be an ordering of $\mathcal{A}$ such that $F(\mathcal{A}, \omega)$ is definite. Any 3-cycle in $K_m$ is full, and so by Lemma \ref{lemma: definite iff p odd}, up to cyclic renumbering of the $\alpha_i$'s, we must have $\alpha_1 > \alpha_2 > ... > \alpha_m$. The corresponding basket $F(K_m, \omega)$ is illustrated in Figure \ref{fig: cam 9}, which shows the case $m = 6$.

\begin{figure}[!ht]
\centering
 \includegraphics[scale=0.75]{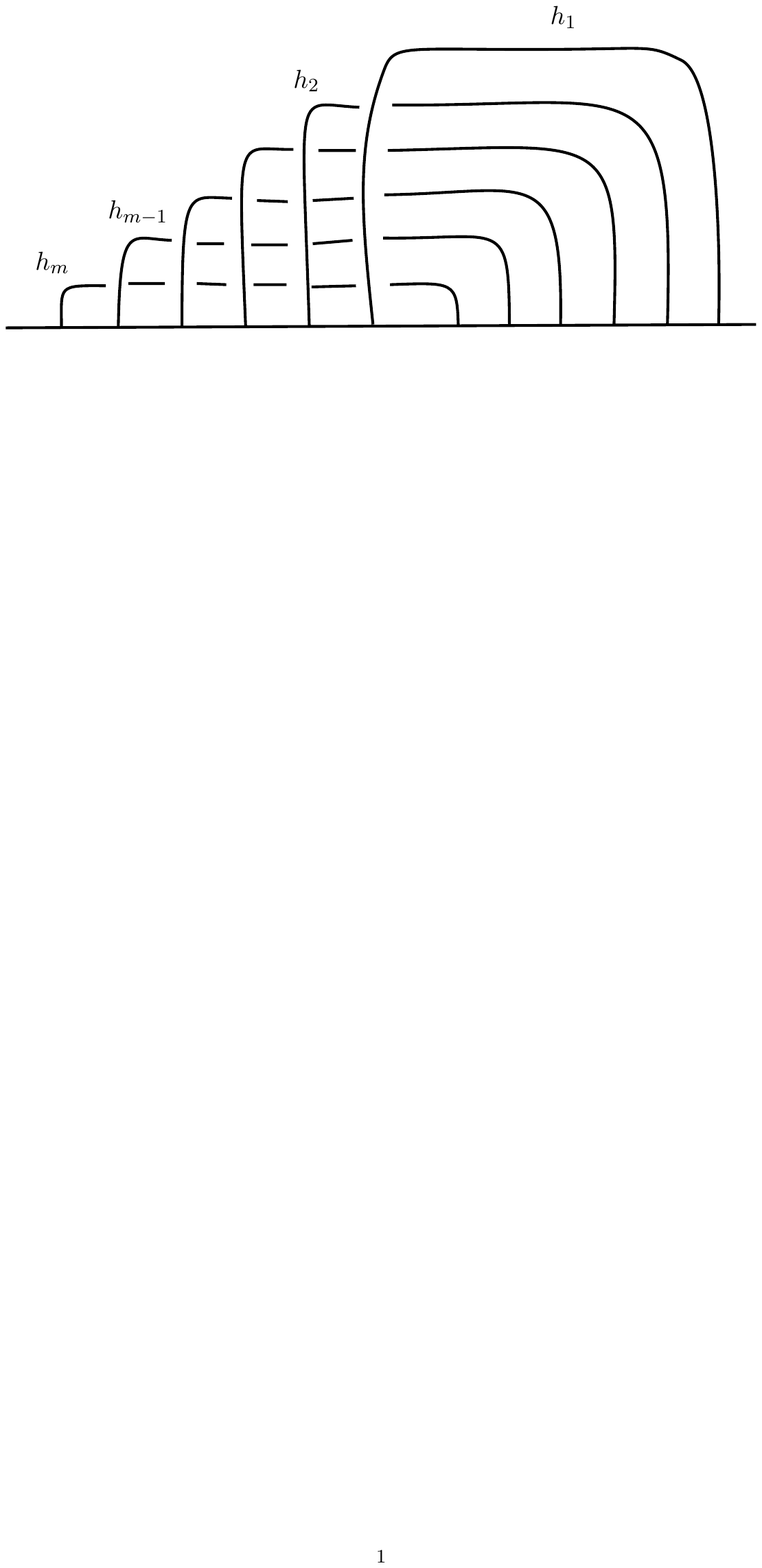}
\caption{} 
\label{fig: cam 9}
\end{figure} 

Slide $h_{m-1}$ over $h_m$. The resulting chord diagram has incidence graph $K_{m-1}$ with a leg of length 1; see Figure \ref{fig: cam 10}. 

\begin{figure}[!ht]
\centering
 \includegraphics[scale=0.75]{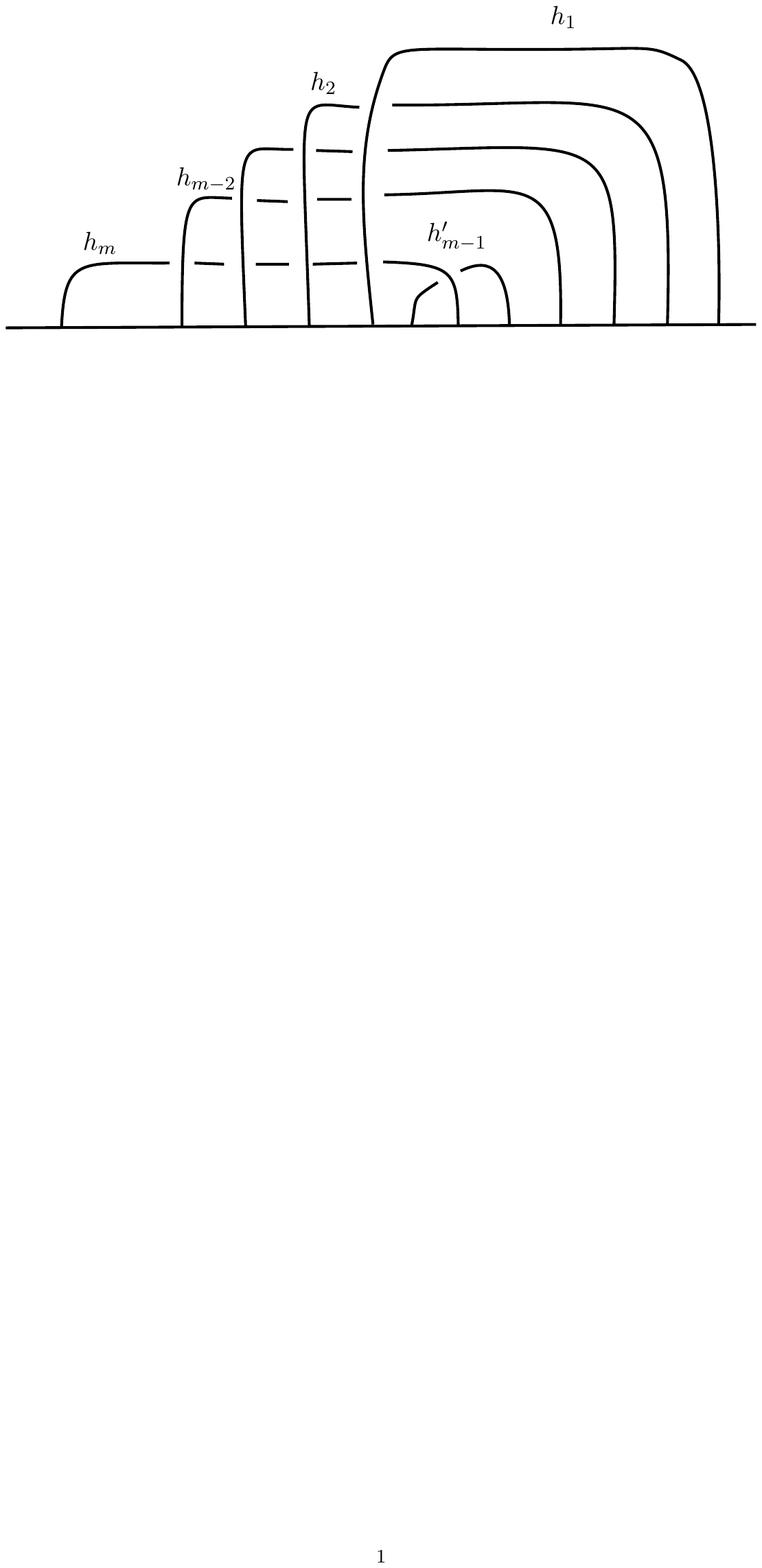}
\caption{} 
\label{fig: cam 10}
\end{figure} 

Now slide $h_{m-2}$ over $h_m$, and then slide $h_m$ over $h'_{m-2}$; see Figures  \ref{fig: cam 11} and \ref{fig: cam 12}. 

\begin{figure}[!ht]
\centering
 \includegraphics[scale=0.75]{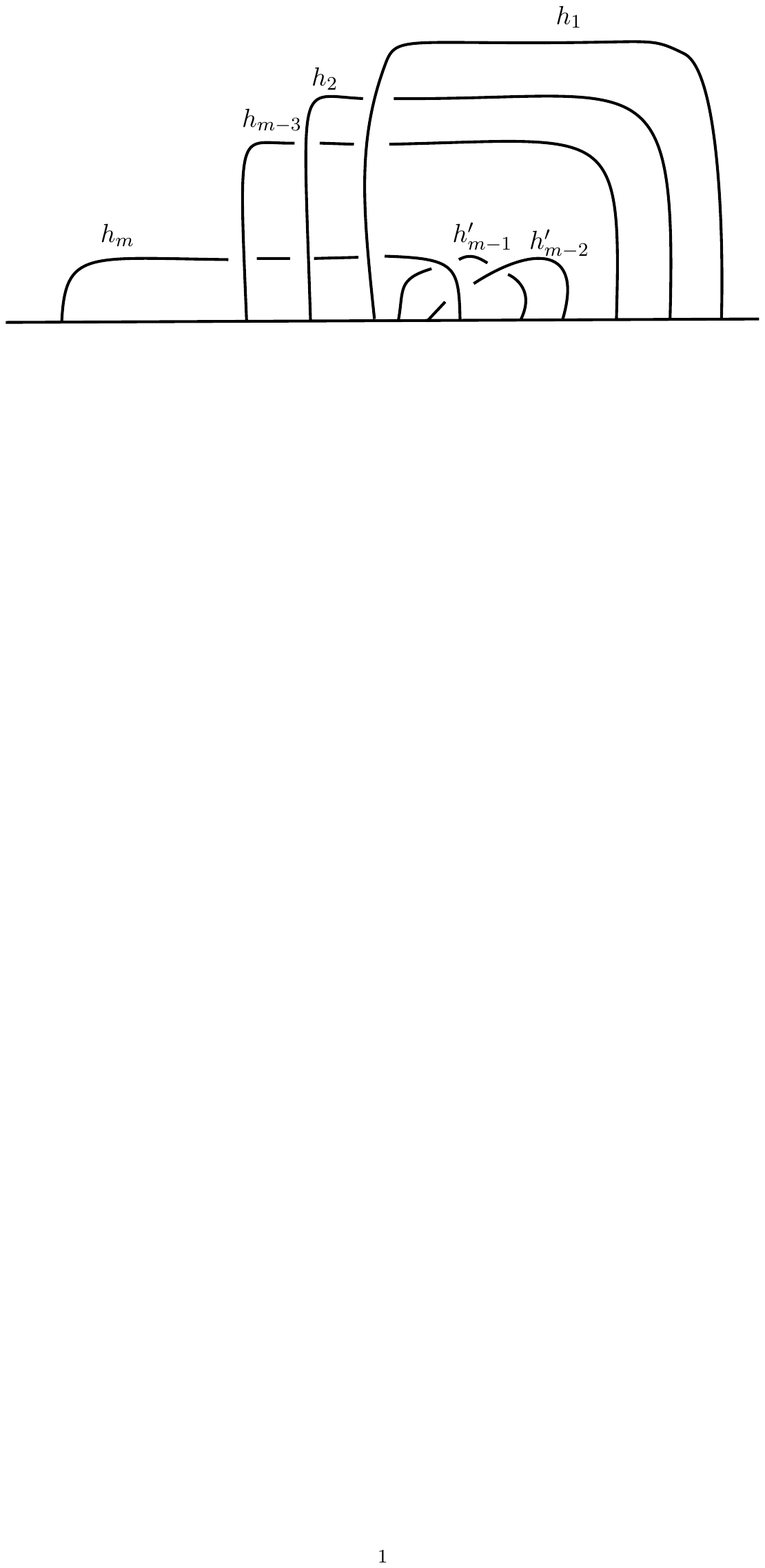}
\caption{} 
\label{fig: cam 11}
\end{figure} 

\begin{figure}[!ht]
\centering
 \includegraphics[scale=0.75]{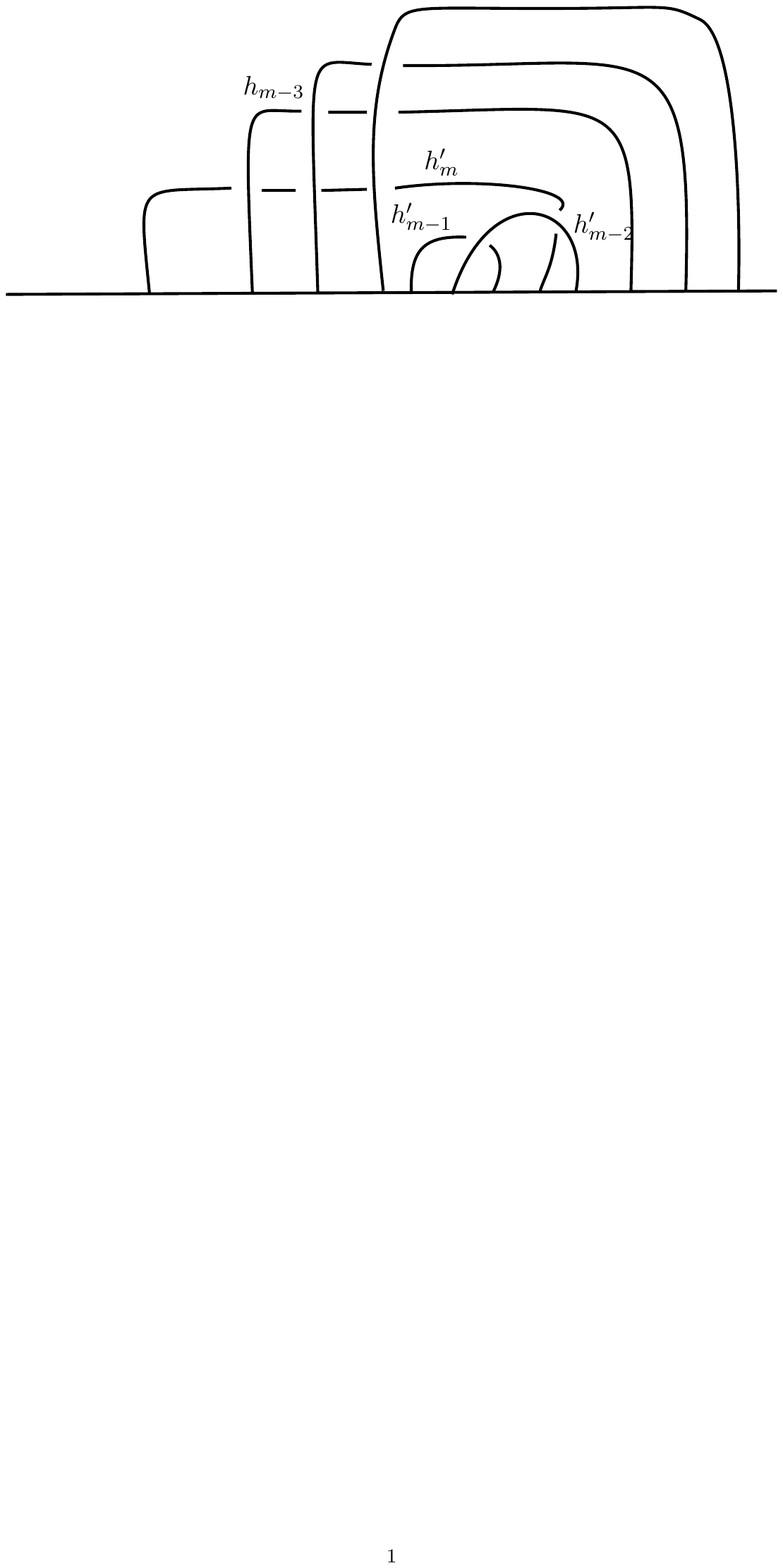}
\caption{} 
\label{fig: cam 12}
\end{figure}

The incidence graph is now $K_{m-2}$ with a leg of length 2. Continue this procedure: slide $h_{m-3}$ over $h'_m$ and then slide $h'_m$ over $h'_{m-3}$; the incidence graph is now $K_{m-3}$ with a leg of length 3; see Figure \ref{fig: cam 13}. 

\begin{figure}[!ht] 
\centering
 \includegraphics[scale=0.75]{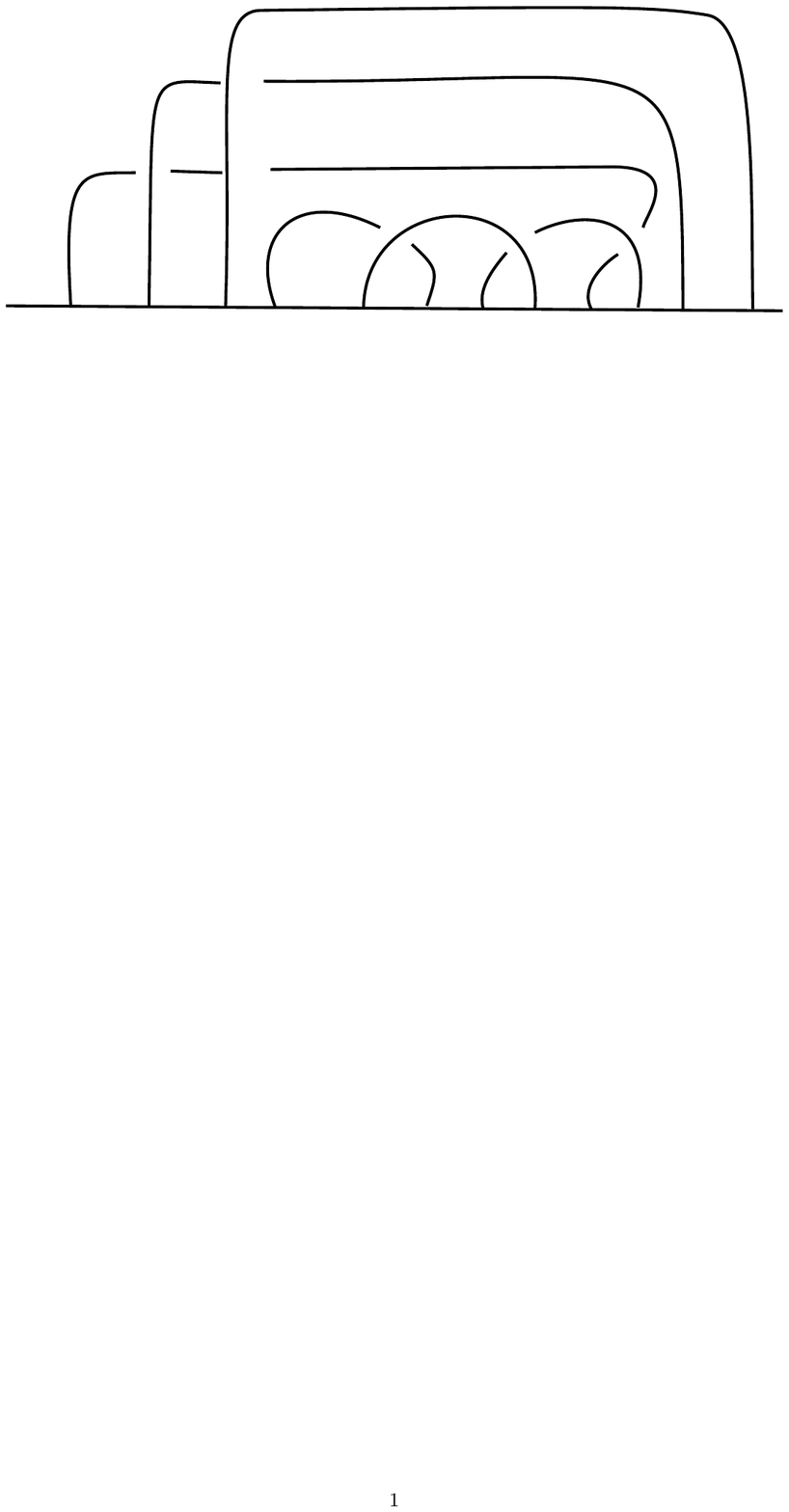}
\caption{} 
\label{fig: cam 13}
\end{figure} 

Eventually we obtain the incidence graph $A_m$.
\end{proof}

\section{L-space branched covers of strongly quasipositive links of braid index $3$}  
\label{sec: lspace branched covers 3-braids}

In this section we investigate the pairs $(L, n)$ where $L$ is a non-split strongly quasipositive link of braid index $3$ 
and $\Sigma_n(L)$ is an L-space. This will lead to a proof of Proposition \ref{prop: 1 < r < n}. We assume that $(L, n)$ is such a pair throughout. By Theorem \ref{thm: bbg definite}, $L$ is definite and therefore Theorem \ref{thm: definite 3-braids} implies that 
$\Sigma_2(L)$ is an L-space. 

Stoimenow has shown that strongly quasipositive links of 
braid index $3$ are the closures of strongly quasipositive $3$-braids (\cite[Theorem 1.1]{Stoi}), so we can write 
$$L = \widehat b$$
where 
$$b = \delta_3^k P$$ 
is a minimal representative of a conjugacy class of 
strongly quasipositive $3$-braids (cf.~\S \ref{subsec: minimal rep}). We analyse the cases $k > 0$, $k = 0$ separately. 

\subsection{The case $k > 0$}
\label{subsec: k > 0}

Proposition \ref{prop: when definite} implies that the minimal form of $b$ is  
one of the following braids: 
\vspace{-.2cm} 
\begin{enumerate}

\item $\delta_3^k \sigma_1^p \mbox{ where either } \left\{ 
\begin{array}{l}
p = 0   \mbox{ and } 1 \leq k \leq 5 \\ 
p > 0  \mbox{ and } 1 \leq k \leq 3 \\ 
p = \{1, 2\}  \mbox{ and }  k = 4
\end{array} \right.$ 

\vspace{.2cm} \item $\delta_3 \sigma_1^p a_{13}^q$ where $p, q \geq 1$.  

\end{enumerate}
The closures of these braids are non-split and the only one whose closure is trivial is $\delta_3$. In the latter case, $\Sigma_n(\widehat b)$ is an L-space for each $n$, and so Proposition \ref{prop: 1 < r < n} holds. Without loss of generality we assume $\widehat b$ is non-trivial below. 

First consider the braids listed in (2) and note that the closure of $b = \delta_3 \sigma_1^p a_{13}^q$ is the composite link $\widehat b = T(2, p+1) \# T(2, q+1)$. 

Since $\Sigma_n(T(2, p+1) \# T(2, q+1)) \cong \Sigma_n(T(2, p+1)) \# \Sigma_n(T(2, q+1))$ is an L-space if and only if both $\Sigma_n(T(2, p+1))$ and $\Sigma_n(T(2, q+1))$ are L-spaces, it follows from \cite{GLid} and \cite[Example 1.11]{Nem} that Proposition \ref{prop: 1 < r < n} holds for the closures of the braids in family (2). 

More precisely, if $p = q = 1$, $\widehat b = T(2, 2) \# T(2, 2)$ where $T(2, 2)$ is the Hopf link. Since each $\Sigma_n(T(2,2))$ is a lens space, $\Sigma_n(T(2, 2) \# T(2, 2))$ is an L-space for each $n \geq 2$. Otherwise, $\Sigma_n(\widehat b)$ is an L-space if and only if  
$$n \leq \left\{ 
\begin{array}{l} 
2 \mbox{ when } \max\{p, q\} \geq  5 \\
3 \mbox{ when } \max\{p, q\}  = 3, 4 \\   
5 \mbox{ when } \max\{p, q\} = 2
\end{array} \right.$$

Next consider the braids listed in (1). These are clearly positive and their closures are prime. They are also definite, so by Theorem \ref{thm: baader}, $\hat b$ is simply laced arborescent and therefore conjugate to one of the following positive braids:   
{\small 
\begin{center}
\begin{tabular}{|c||c|c|c|c|c|} \hline 
$b$ & $\sigma_1^m \sigma_2$ ($m \geq 2$)& $\sigma_1^m \sigma_2 \sigma_1^2 \sigma_2$ ($m \geq 2$)& $\sigma_1^3 \sigma_2 \sigma_1^3 \sigma_2 $ & $\sigma_1^4 \sigma_2 \sigma_1^3 \sigma_2$ & $\sigma_1^5 \sigma_2 \sigma_1^3 \sigma_2$  \\  \hline 
\footnotesize{$\widehat b$} & $T(2,m)$ & $P(-2,2,m) $ & $P(-2,3,3) = T(3,4)$ & $P(-2,3,4)$ & $P(-2,3,5) = T(3,5)$   \\  \hline 
\footnotesize{$\mathcal{F}(\widehat b)$} & $A_{m-1}$ & $D_{m+2}$ & $E_6$ & $E_7$ & $E_8$   \\  \hline 
\end{tabular}
\end{center}}
To see that Proposition \ref{prop: 1 < r < n}  holds for these braids first note that $\Sigma_n(T(2,2))$ is a lens space for each $n \geq 2$, and therefore an L-space. Also, it follows from \cite{GLid} and \cite[Example 1.11]{Nem} that 
 $$\Sigma_n(\widehat b) \mbox{ is an L-space if and only if } n \leq \left\{ 
\begin{array}{ll} 
2 & \mbox{ if } \widehat b = T(2,m) \mbox{ where } m \geq 6  \\
2 & \mbox{ if } \widehat b = T(3,3), T(3,4) \mbox{ or }  T(3,5)  \\
3 & \mbox{ if } \widehat b = T(2,4) \mbox{ or } T(2,5) \\ 
5 & \mbox{ if } \widehat b = T(2,3)   
\end{array} \right.$$ 
Thus we are left with considering the cases that $b$ is either $\sigma_1^m \sigma_2 \sigma_1^2 \sigma_2$ ($m \geq 2$) or $\sigma_1^4 \sigma_2 \sigma_1^3 \sigma_2$. We begin with some remarks. 

For each $\zeta \in S^1 \setminus \{-1\}$ we define $\overline{I}_-(\zeta)$, to be the closed subarc of the circle with endpoints $\zeta, \bar \zeta$  which contains $-1$. 

Let $\mathcal{S}_{F(b)}: H_1(F(b)) \times H_1(F(b)) \to \mathbb Z$ be the Seifert form of the surface $F(b)$ and recall the Hermitian form $\mathcal{S}_{F(b)}(\zeta) = (1 - \zeta)\mathcal{S}_{F(b)} + (1 - \bar \zeta) \mathcal{S}_{F(b)}^T$ defined for $\zeta \in S^1$. It follows from \cite[Theorem 1.1]{BBG} that if $\Sigma_n(\widehat b)$ is an L-space, then $\mathcal{S}_{F(b)}(\zeta)$ is definite for $\zeta \in \overline{I}_-(\zeta_n)$. If $b_0$ is contained in $b$, $F(b_0) \subseteq F(b)$ and therefore $\mathcal{S}_{F(b_0)}(\zeta)$ is definite for $\zeta \in \overline{I}_-(\zeta_n)$ as well. In particular, $\Delta_{\widehat b_0}(t)$ has no zero in this interval. 

Suppose that $b = \sigma_1^4 \sigma_2 \sigma_1^3 \sigma_2$ and that $\Sigma_n(\widehat b)$ is an L-space for some $n \geq 2$. Then $\mathcal{S}_{F(b)}(\zeta)$ is definite for $\zeta \in \overline{I}_-(\zeta_n)$. As $b$ contains $b_0 = \sigma_1^7 \sigma_2$, the same is true for $\mathcal{S}_{F(b_0)}(\zeta)$. In particular, the Alexander polynomial of $\widehat{b_0} = T(2,7)$ has no root in $\overline{I}_-(\zeta_n)$. Since 
$$\Delta_{\widehat b_0}(t) = \frac{t^7 + 1}{t+1},$$
$\Delta_{\widehat b_0}^{-1}(0)$ is the set of primitive $14^{th}$ roots of $1$. It follows that $\zeta_{14}^{5}$ is not contained in $\overline{I}_-(\zeta_n)$ and therefore $\frac{5}{14}  < \frac{1}{n}$. Equivalently, $n < \frac{14}{5} < 3$. Thus $n = 2$ so that Proposition \ref{prop: 1 < r < n} holds for these braids.   

Finally suppose that $b = \sigma_1^m \sigma_2 \sigma_1^2 \sigma_2$ where $m \geq 2$. Then $b$ contains the sub-braid $b_0 = \sigma_1^2 \sigma_2 \sigma_1^2 \sigma_2 = \sigma_1 (\sigma_2 \sigma_1 \sigma_2) \sigma_1 \sigma_2 = \delta_3^3$. It is simple to verify that $\widehat{b_0} = T(3,3)$ has Conway polynomial $z^2(z^2 + 3)$, and as $\Delta_{\widehat{b_0}}(t^2) = \nabla_{\widehat{b_0}}(t - t^{-1})$, the Alexander polynomial of $\widehat{b_0}$ is $(t-1)^2(t^2 + t + 1)$. Hence $\Delta_{\widehat{b_0}}(\zeta_3) = 0$. It follows that $\mathcal{S}_{F(b_0)}(\zeta_3)$, and therefore $\mathcal{S}_{F(b)}(\zeta_3)$, is indefinite. In particular, $\Sigma_n(\widehat b)$ cannot be an L-space for $n \geq 3$. 

Summarising, when $k > 0$ and $b = \delta_3^k P$ is a BKL-positive $3$-braid for which $\widehat b$ is a non-trivial prime link and some $\Sigma_n(\widehat b)$ is an L-space, then $b$ is conjugate to one of the positive braids listed in the table above. If $b$ is $\sigma_1^2 \sigma_2$, then $\Sigma_n(\widehat b)$ is an L-space for each $n \geq 2$. Otherwise, $\Sigma_n(\widehat b)$ is an L-space if and only if  
$$n \leq \left\{ 
\begin{array}{l} 
2 \mbox{ and } b = \sigma_1^m \sigma_2 \mbox{ ($m \geq 6$)}, \sigma_1^m \sigma_2 \sigma_1^2 \sigma_2 \mbox{ ($m \geq 2$)}, \mbox{or } \sigma_1^m \sigma_2 \sigma_1^3 \sigma_2 \mbox{ ($m = 3,4,5$)} \\
3 \mbox{ and } b = \sigma_1^4 \sigma_2 \mbox{ or } \sigma_1^5 \sigma_2   \\   
5 \mbox{ and } b = \sigma_1^3 \sigma_2 
\end{array} \right.$$ 
Thus Proposition \ref{prop: 1 < r < n} holds in the case that $k > 0$

\subsection{The case $k = 0$}
\label{subsec: k = 0}
In this case $b$ is conjugate to $b(p,q,r) = \sigma_1^p a_{13}^q \sigma_2^r$ where $p, q, r \geq 1$ (Corollary \ref{cor: definiteness, fibredness, and forms}). 
As we observed above, $\Sigma_2(\widehat{b})$ is an L-space. 

\subsubsection{Restricting the values of $p, q, r$} 
\label{subsubsec: bounding pqr} 
The identity
$$\delta_3 b(p,q,r) \delta_3^{-1} = \sigma_2^p \sigma_1^q a_{13}^r \sim b(q,r,p)$$
implies that $\widehat{b(p,q,r)} = \widehat{b(q,r,p)} = \widehat{b(r, p,q)}$. Further, $\widehat{b(p,q,r)}$ and $\widehat{b(q,p,r)}$ differ by a flype (\cite[The Classification Theorem, page 27]{BiM}), so coincide. Thus

\begin{lemma} 
\label{lemma: invariant under permutation} 
$\widehat{b(p,q,r)}$ is invariant under arbitrary permutations of $(p,q,r)$. 
\qed
\end{lemma}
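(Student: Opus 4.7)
The plan is to reduce the full symmetric group action on the triple $(p,q,r)$ to the composition of two simpler symmetries: a cyclic rotation coming from conjugation by $\delta_3$, and a transposition coming from a flype. Since the $3$-cycle $(p,q,r) \mapsto (q,r,p)$ together with any transposition generates $S_3$, these two statements suffice to prove the lemma.

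First I would establish cyclic invariance. The identity $\delta_3 \sigma_1 \delta_3^{-1} = \sigma_2$, together with $\delta_3 a_{13} \delta_3^{-1} = a_{24}$ and the fact that $a_{24}$ in $B_3$ equals $a_{13}$ up to cyclic rotation is not quite right, so I would instead compute directly: $\delta_3 \sigma_1^p a_{13}^q \sigma_2^r \delta_3^{-1} = \sigma_2^p \sigma_1^q a_{13}^r$, where $a_{13} = \sigma_2 \sigma_1 \sigma_2^{-1}$ and $\delta_3 \sigma_2 \delta_3^{-1} = a_{13}$ via (\ref{conj by delta}). The resulting braid $\sigma_2^p \sigma_1^q a_{13}^r$ is itself a cyclic permutation of $b(q,r,p) = \sigma_1^q a_{13}^r \sigma_2^p$, which is conjugate to it in $B_3$. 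Since the braid closure is invariant under conjugation, $\widehat{b(p,q,r)} = \widehat{b(q,r,p)}$.

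Next I would establish a single transposition, say $\widehat{b(p,q,r)} = \widehat{b(q,p,r)}$. The cleanest route is via the Montesinos interpretation: $\widehat{b(p,q,r)}$ is the Montesinos link $M(1; 1/p, 1/q, 1/r)$ (cf.\ Corollary \ref{cor: definiteness, fibredness, and forms}(1)), and exchanging two of the rational tangles in the Montesinos diagram is a well-known symmetry of such links, implementable by a flype as in the Birman--Menasco classification \cite{BiM}. Alternatively, one can verify this by a direct braid calculation showing that $\sigma_1^p a_{13}^q \sigma_2^r$ and $\sigma_1^q a_{13}^p \sigma_2^r$ have equivalent closures by a sequence of Markov and flype moves.

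The main (mild) obstacle is bookkeeping: checking carefully that the conjugation computation genuinely produces a cyclic shift of the exponent triple rather than some other rearrangement, and that the flype actually swaps the intended pair of exponents. Once cyclic invariance and a single transposition are in hand, the conclusion is immediate, since $\langle (1\,2\,3), (1\,2) \rangle = S_3$.
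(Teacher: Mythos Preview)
Your proposal is correct and follows essentially the same route as the paper: conjugation by $\delta_3$ gives the cyclic shift $\widehat{b(p,q,r)} = \widehat{b(q,r,p)}$, and a flype (citing \cite{BiM}) gives the transposition $\widehat{b(p,q,r)} = \widehat{b(q,p,r)}$, which together generate $S_3$. Your aside about the Montesinos description is a legitimate alternative for the transposition step, but the paper's argument is exactly the flype route you describe.
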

As such, we assume below that 
$$p \leq q \leq r$$ 
We noted in \S \ref{subsec: k > 0}  that  if $b_0$ is contained in $b$, 
then $F(b_0) \subseteq F(b)$ and therefore $\mathcal{S}_{F(b_0)}(\zeta)$ is definite for $\zeta \in \overline{I}_-(\zeta_n)$. In particular, $\Delta_{\widehat b_0}(t)$ 
has no zero in this interval. For instance, taking $b_0 = \sigma_1 \sigma_2^r$ we have $\widehat b_0 = T(2, r)$ and therefore 
$$\Delta_{\widehat b_0}(t) = \frac{t^r + (-1)^{r+1}}{t+1}$$
It follows that
$$\Delta_{\widehat b_0}^{-1}(0) = \left\{ 
\begin{array}{l} 
\mbox{ the set of primitive $2r^{th}$ roots of $1$ other than $-1$ if $r$ is odd} \\
\mbox{ the set of $r^{th}$ roots of $1$ other than $-1$ if $r$ is even} 
\end{array} \right.$$
If $r \geq 3$ is odd, then $\zeta_{2r}^{r - 2} \ne -1$ and is a primitive $2r^{th}$ root of unity, so it is not contained in $\overline{I}_-(\zeta_n)$. Therefore $\frac{1}{n} > \frac{r-2}{2r}$. Equivalently, $r < 2 + \frac{4}{n-2}$. Similarly if $r \geq 2$ is even, then $\zeta_r^{\frac{r}{2} - 1} \not \in \overline{I}_-(\zeta_n)$ so that $\frac{\frac{r}{2}-1}{r} = \frac{r-2}{2r} < \frac{1}{n}$. Again we deduce that $r < 2 + \frac{4}{n-2}$. Thus
$$\left\{ \begin{array}{lll} 
 n = 3 & \Rightarrow & 1 \leq p \leq q \leq r \leq 5 \\
n = 4, 5 & \Rightarrow & 1 \leq p \leq q \leq r  \leq 3 \\ 
n \geq 6 & \Rightarrow & 1 \leq p \leq q \leq r \leq 2
\end{array} \right.$$

In what follows, we study whether or not $\Sigma_n(\widehat{b(p,q,r)})$ is an L-space  
by analysing the zeros of the Alexander polynomials of the links $\widehat{b(p,q,r)}$ for each of the thirty-five values of $(p, q, r)$ with $1 \leq p \leq q \leq r \leq 5$ .  

\subsubsection{The Alexander polynomials of  $\widehat{b(p,q,r)}$ where $1 \leq p \leq q \leq r \leq 5$}
\label{subsec: calculating alex polys} 
Let $\nabla(p,q, r)(z)$ denote the Conway polynomial of $\widehat b(p,q,r)$. 
The skein relation (\cite[\S 2]{Kf}) implies that
\begin{equation}
\label{eqn: skein} 
\nabla(p,q, r)(z) = \left\{
\begin{array}{l} z \nabla(p-1,q, r)(z) + \nabla(p-2,q, r)(z) \\  
z \nabla(p,q-1, r)(z) + \nabla(p,q-2, r)(z)\\ 
z \nabla(p,q, r-1)(z) + \nabla(p,q, r-2)(z) 
 \end{array} \right.
 \end{equation} 
Since $T(2, p) = \widehat{\sigma_1^p\sigma_2}$, the reader will verify using (\ref{eqn: skein}) that $\nabla_{T(2, p)}(z) = f_p(z)$ where $f_n(z)$ is defined 
recursively by 
$$f_0(z) = 0, f_1(z) = 1, f_n(z) = zf_{n-1}(z) + f_{n-2}(z) \mbox{ when } n \geq 2$$
For instance, 
{\small 
\begin{center}
\begin{tabular}{|c||c|c|c|c|c|c|c|c|} \hline 
$n$ & $0$ & $1$ & $2$ & $3$ & $4$ & $5$ & $6$ & $7$  \\  \hline 
$f_n(z)$ & $0$ & $1$ & $z$ & $z^2 + 1$ & $z(z^2 + 2)$ & $z^4 + 3z^2 + 1$ & $z(z^2 + 1)(z^2 + 3)$  & $z^6 + 5z^4 + 6z^2 + 1$ \\  \hline 
\end{tabular}
\end{center}}
Since $b(p, q, 0) \cong T(2, p) \# T(2, q)$, we have 
$$\nabla(p, q, 0) = f_p(z) f_q(z)$$
for $p, q \geq 0$. An inductive argument then shows that 
\begin{eqnarray} 
\label{eqn: r} 
\nabla(1,1,r) = f_r(z) \nabla(1,1,1) + f_{r-1}(z) \nabla(1,0,1) = f_r(z) \nabla(1,1,1) + f_{r-1}(z) \nonumber 
\end{eqnarray} 
Similarly we have 
\begin{eqnarray} 
\label{eqn: qr} 
\nabla(1,q,r) & = & f_q(z) \nabla(1,1,r) + f_{q-1}(z) \nabla(1,0,r) \nonumber \\
& = & f_q(z) f_r(z) \nabla(1,1,1) + f_q(z) f_{r-1}(z) + f_{q-1}(z) f_{r}(z) \nonumber 
\end{eqnarray} 
Finally,
\begin{eqnarray} 
\label{eqn: pqr} 
\nabla(p,q,r) & = & f_p(z) \nabla(1,q,r) + f_{p-1}(z) \nabla(0,q,r) \nonumber  \\
& = & f_p(z) f_q(z) f_r(z) \nabla(1,1,1) + f_p(z) f_q(z) f_{r-1}(z) + f_p(z) f_{q-1}(z) f_{r}(z) \nonumber \\
&   & \hspace{.5cm} + f_{p-1}(z) f_{q}(z) f_{r}(z) \nonumber
\end{eqnarray} 
Since $\nabla(1,1,1) = 2z$, we have  
\begin{eqnarray} 
\nabla(p,q,r) = 2z f_p(z) f_q(z) f_r(z) + f_p(z)f_q(z)f_{r-1}(z) + f_p(z)f_{q-1}(z)f_r(z) + f_{p-1}(z)f_q(z)f_r(z) \nonumber 
\end{eqnarray} 
This identity combined with the skein relation (\ref{eqn: skein}) yields the following table. 
{\footnotesize
\begin{center}
\begin{tabular}{|c|c||c|c|} \hline
$(p,q,r)$ & $\nabla(p,q,r)$ & $(p,q,r)$ & $\nabla(p,q,r)$      \\  \hline   \hline  
$(1,1,1)$   &  $2z$  &  $(2,2,5)$ &  $z(2z^6 + 9 z^4 +10z^2 + 2)$       \\  \hline 
$(1,1,2)$   &  $2z^2  + 1$  &  $(2,3,3)$  &  $(z^2 + 1)(2z^4 + 5z^2 + 1)$        \\  \hline 
$(1,1,3)$   & $z(2z^2 + 3)$  &  $(2,3,4)$   &  $z(2z^6 + 9z^4 + 11z^2 + 3)$    \\   
&&& $= z(2 z^2 + 3) (z^4 + 3 z^2 + 1)$\\  \hline 
$(1,1,4)$   &  $2z^4 + 5z^2 + 1$  &  $(2,3,5)$ & $2z^8 + 11z^6 + 18z^4 + 9z^2 + 1$         \\  \hline 
$(1,1,5)$   &  $z(2z^4 + 7z^2 + 4)$  & $(2,4,4)$     &   $z^2(z^2+2)(2z^4 + 7z^2+4)$        \\  \hline 
$(1,2,2)$   &  $2z(z^2 + 1)$  &  $(2,4,5)$  & $z(2z^8 + 13z^6 + 27z^4 + 19z^2 + 3)$    \\   
&&& $ = z(z^2 + 1) (z^2 + 3) (2 z^4 + 5 z^2 + 1)$ \\  \hline 
$(1,2,3)$   &  $2z^4 + 4z^2 + 1$  &  $(2,5,5)$  &  $(z^4 + 3z^2 + 1)(2z^6 + 9z^4 + 9z^2 + 1)$       \\  \hline 
$(1,2,4)$   &  $z(2z^4 + 6z^2 + 3)$  &  $(3,3,3)$  &  $z(z^2 + 1)^2 (2z^2 + 5)$          \\  \hline 
$(1,2,5)$   &  $(z^2 + 1) (2 z^4 + 6 z^2 + 1)$  &  $(3,3,4)$  & $(z^2+1)(2z^6 + 9z^4+10z^2+1)$         \\ \hline 
$(1,3,3)$   &  $2z(z^2 + 1)(z^2 + 2)$  &   $(3,3,5)$ & $z(z^2 + 1) (z^2 + 2)(2z^4 + 7z^2 + 3)$ \\   
&&&$   = z(z^2 + 1) (z^2 + 2)(2 z^2 + 1) (z^2 + 3)$          \\  \hline 
$(1,3,4)$   &  $2z^6 + 8z^4 + 8z^2 + 1$  &  $(3,4,4)$  &  $z(z^2 + 2)(2z^6 + 9z^4 + 10z^2 + 2)$         \\  \hline 
$(1,3,5)$   & $z(2z^6 + 10z^4 + 14z^2 + 5)$  & $(3,4,5)$  &  $2z^{10} + 15z^8 + 39z^6 + 41z^4 + 15z^2 + 1$          \\  \hline 
$(1,4,4)$ & $2z(z^2 + 2)(z^4 + 3z^2 + 1)$ &  $(3,5,5)$   &  $z(z^4 + 3z^2 + 1)(2z^6 + 11z^4 + 17z^2 + 7)$        \\  \hline 
$(1,4,5)$   &  $2z^8 + 12 z^6 + 22 z^4 + 12 z^2 + 1$  &  $(4,4,4)$ &  $z^2(z^2+2)^2(2z^4 + 7z^2 + 3)$ \\ 
&&& $ = z^2(z^2+2)^2(2z^2 + 1)(z^2 + 3)$  \\  \hline 
$(1,5,5)$   &  $2z(z^2+1)(z^2+3)(z^4 + 3z^2 + 1)$  &  $(4,4,5)$ & $z(z^2+2)(2z^8 + 13z^6 + 26z^4 + 16 z^2 + 2)$            \\ \hline 
$(2,2,2)$   &  $z^2(2z^2 + 3)$  &  $(4,5,5)$   & $(z^2+1)(z^4+3z^2+1)(2z^6 + 11z^4 + 15z^2 + 1)$          \\  \hline 
$(2,2,3)$   &  $z(2z^4 + 5z^2 + 2) = z(2 z^2 + 1) (z^2 + 2)$  &  $(5,5,5)$  &  $z(z^4 + 3z^2 + 1)^2(2z^4 + 9z^2 + 8)$      \\  \hline 
$(2,2,4)$   &  $z^2(2z^4 + 7z^2 + 5)$ &    &          \\  \hline 
\end{tabular}
\end{center}}
The Alexander polynomials of these links are obtained using the identity 
$\nabla(p,q,r)(t - t^{-1}) = \Delta_{\widehat{b(p,q,r)}}(t^2)$. 
The result is listed in the second column of the following table. The third column lists the values of $n$ for which the position of the 
roots of the Alexander polynomial obstructs $\Sigma_n(\widehat{b(p,q,r)})$ from being an L-space (cf. \S \ref{subsubsec: bounding pqr}). The last 
column lists what is known about whether or not $\Sigma_n(\widehat{b(p,q,r)})$ is an L-space for the remaining values of $n \geq 3$ 
as well as those values for which the answer is unknown to us.

{\footnotesize
\begin{center}
\begin{tabular}{|c|c|c|c|} \hline
& & $\Delta_{\widehat b}(t)$ implies   & $\Sigma_n(\widehat b)$ is an L-space ($\checkmark$)  \\ 
$(p,q,r)$  & $\Delta_{\widehat b}(t)$  & $\Sigma_n(\widehat b)$ not  & $\Sigma_n(\widehat b)$ is not an L-space (x) \\  
&& an L-space&  Unknown (?) \\      \hline   \hline 
$(1,1,1)$   & 2(t-1)   &     & all $n$ $\checkmark$ \\  \hline 
$(1,1,2)$   & $2t^2 - 3t + 2$ &$n \geq 9$    & $n = 3,4,5 \checkmark; 6,7,8$  (x)\\  \hline 
$(1,1,3)$   &$(t - 1)(2t^2 - t + 2)$ & $n \geq 5$   & $n = 3, 4$ (?)\\  \hline 
$(1,1,4)$   & $2t^4 - 3t^3 + 3t^2 - 3t + 2$& $n \geq 4$   & $n = 3$ $\checkmark$ \\  \hline 
$(1,1,5)$   & $(t-1)(2t^4 - t^3 + 2t^2 - t + 2)$ &  $n \geq 4$ &  $n = 3$ (?) \\  \hline 
$(1,2,2)$   &$2(t - 1)(t^2 - t + 1)$ & $n \geq  6$  & $n = 3, 4, 5$ (?) \\  \hline 
$(1,2,3)$   & $2t^4 - 4t^3 + 5t^2 - 4t + 2$ & $n \geq 5$  & $n = 3$ $\checkmark$; $4$ (?)  \\  \hline 
$(1,2,4)$   & $(t -1)(2t^4 - 2t^3 + 3t^2 - 3t + 2)$ & $n \geq 4$  &   $n = 3$  (?)  \\  \hline 
$(1,2,5)$   & $(t^2 - t + 1)(2t^4 - 2t^3 + t^2 - 3t + 2)$ & $n \geq 4$&  $n = 3$ (?)\\ \hline 
$(1,3,3)$   & $2(t-1)(t^2 - t + 1)(t^2 + 1)$ & $n \geq 4$ & $n = 3$ (?)\\  \hline 
$(1,3,4)$   & $2t^6 - 4t^5 + 6t^4 - 7t^3 + 6t^2 - 4t + 2$  & $n \geq 4$&  $n = 3$ (?)\\  \hline 
$(1,3,5)$   & $(t - 1)(2t^6 - 2t^5 + 4t^4 - 3t^3 + 4t^2 - 2t + 2)$ & $n \geq 4$&  $n = 3$ (?)  \\  \hline 
$(1,4,4)$ & $2(t-1)(t^2 + 1)(t^4 - t^3 + 5t^2 - t + 1)$ &$n \geq 4$ &  $n = 3$ (?)\\  \hline 
$(1,4,5)$   &$2t^8 - 4t^7 + 6t^6 - 8t^5 + 9t^4 - 8t^3 + 6t^2 - 4t + 2$ &  $n \geq 4$ & $n = 3$ (?)\\  \hline 
$(1,5,5)$   &$2(t-1)(t^2 - t + 1)(t^2+t+1)(t^4 - t^3 + t^2 - t + 1) $ & $n \geq 3$   &  $*$ \\ \hline 
$(2,2,2)$   & $(t-1)^2(2t^2 - t + 2)$ & $n \geq 5$ & $n = 3, 4$ (?)  \\  \hline 
$(2,2,3)$   & $(t-1)(t^2 + 1)(2t^2 - 3t + 2)$& $n \geq 4$  & $n = 3$ (?)\\  \hline 
$(2,2,4)$   & $(t-1)^2(2t^4 - t^3 + 3t^2 - t + 2)$ & $n \geq 4$   & $n = 3$ (?)  \\  \hline 
$(2,2,5)$ & $(t-1)(2t^6 - 3t^5 + 4t^4 - 4t^3 + 4t^2 - 3t +2)$ & $n \geq 4$ & $n = 3$ (?)  \\  \hline 
$(2,3,3)$  & $(t^2 - t +1)(2t^4 - 3t^3 + 3t^2 - 3t + 2)$   &  $n \geq 4$ & $n = 3$ (?)  \\  \hline 
$(2,3,4)$   & $(t-1)(2t^2 - t + 2)(t^4 - t^3 + 5t^2 - t + 1)$    &  $n \geq 4$ & $n = 3$ (?)\\  \hline 
$(2,3,5)$ &  $2t^8 - 5t^7 + 8t^6 - 19t^5 + 29t^4 - 19t^3 + 8t^2 - 5t + 2$ &  $n \geq 3$  & $*$\\  \hline 
$(2,4,4)$    & $(t-1)^2(t^2 + 1)(2t^4 - t^3 + 2t^2 - t + 2)$ & $n \geq 4$ & $n = 3$ (?)\\  \hline 
$(2,4,5)$  & $(t-1)(t^2 - t +1)(t^2 + t +1)(2t^4 - 3t^3 + 3t^2 - 3t + 2)$ & $n \geq 3$ & $*$\\  \hline 
$(2,5,5)$  & $(t^4 - t^3 + 5t^2 - t + 1)(2t^6 - 3t^5 + 3t^4 - 3t^3 + 3t^2 - 3t +2)$  &    $n \geq 3$ & $*$\\  \hline 
$(3,3,3)$  & $(t-1)(t^2 - t + 1)^2(2t^2 + t + 2)$ &   $n \geq 4$   & $n = 3$ (?) \\  \hline 
$(3,3,4)$  &  $(t^2 - t + 1)(2t^6 - 3t^5 + 4t^4 - 4t^3 + 4t^2 - 3t + 2)$ & $n \geq 4$ & $n = 3$ (?)\\ \hline 
$(3,3,5)$ & $(t-1)(t^2 - t + 1)(t^2 + 1)(t^2 + t + 1)(2t^2 - 3t + 2)$ & $n \geq 3$ &$*$\\  \hline 
$(3,4,4)$  & $(t-1)(t^2 + 1)(2t^6 - 3t^5 + 4t^4 - 4t^3 + 4t^2 - 3t +2)$  & $n \geq 4$ & $n = 3$ (?)\\  \hline 
$(3,4,5)$  &  $2t^{10} - 5t^9 + 9t^8 - 13t^7 + 16t^6 -17 t^5 + 16t^4 -13t^3 + 9t^2 - 5t + 2$ &   $n \geq 3$ &$*$ \\  \hline 
$(3,5,5)$   &  $(t-1)(t^4 - t^3 + 5t^2 - t + 1)(2t^6 - t^5 + 3t^4 - t^3 + 3t^2 - t + 2)$   &    $n \geq 3$ &$*$\\  \hline 
$(4,4,4)$  & $(t-1)^2(t^2 + 1)^2(t^2 + t + 1)(2t^2 - 3t + 2)$ & $n \geq 3$ & $*$\\  \hline 
$(4,4,5)$ & $(t-1)(t^2 + 1)(2t^8 - 3t^7 + 4t^6 - 5t^5 + 6t^4 - 5t^3 + 4t^2 - 3t + 2)$ &   $n \geq 3$ & $*$\\ \hline 
$(4,5,5)$   & $(t^2 - t + 1)(t^4 - t^3 + 5t^2 - t + 1)(2t^6 - t^5 + t^4 - 3t^3 + t^2 - t + 2)$  &$n \geq 3$ &$*$\\  \hline 
$(5,5,5)$  & $(t-1)(t^4 - t^3 + 5t^2 - t + 1)^2(2t^4 + t^3 + 2t^2 + t + 2)$ & $n \geq 3$ &$*$\\  \hline 
\end{tabular}
\end{center}}

\subsubsection{The proof of Proposition \ref{prop: 1 < r < n} when $k = 0$} 
\label{subsubsec: proof of proposition}

Assume that $\Sigma_n(\widehat{b(p,q,r)})$ is an L-space for some $ 1 \leq p \leq q \leq r \leq 5$ and $n \geq 2$. Since 
$\Sigma_2(\widehat{b(p,q,r)})$ is an L-space, Proposition \ref{prop: 1 < r < n} holds if $n = 2$ or $3$. We suppose that $n \geq 4$ below. 

\begin{case}
$n \geq 6$ 
\end{case} 

We saw in \S \ref{subsubsec: bounding pqr} that $\max \{p, q, r\} \leq 2$ when $n \geq 6$, so $(p,q,r)$ is either $(1,1,1), (1,1,2), (1,2,2)$, or $(2,2,2)$. We 
also saw that there are no roots of $\Delta_{\widehat b}(t)$ in $\overline{I}_-(\zeta_n)$. Equivalently, if $\zeta \in S^1$ is a root of $\Delta_{\widehat b}(t)$, 
then $\mathfrak{Re}(\zeta) > \cos(2\pi/n) \geq \cos(2\pi/6) = \frac12$. The Alexander polynomials of these links are listed in the third table in 
\S \ref{subsec: calculating alex polys} and their roots are  
$$\left\{ 
\begin{array}{cl} 
\{1\} & \mbox{ if } (p,q,r) = (1,1,1)\\ 
\{\frac{3 + \sqrt{7} i}{4}, \frac{3 - \sqrt{7} i}{4}\} & \mbox{ if } (p,q,r) = (1,1,2) \\
\{1, \frac{1 + \sqrt{3} i}{2}, \frac{1 - \sqrt{3} i}{2}\} & \mbox{ if } (p,q,r) = (1,2,2) \\
\{1, \frac{1 + \sqrt{15} i}{4}, \frac{1 - \sqrt{15} i}{4}\}  & \mbox{ if } (p,q,r) = (2,2,2) 
\end{array} \right.$$
Since these roots $\zeta$ must satisfy $\mathfrak{Re}(\zeta) >  \frac12$, the only possibilities are $b(1,1,1)$ (any $n$) or $b(1,1,2)$ ($n = 6, 7, 8$). 

\begin{subcase}
$(p,q,r) = (1,1,1)$
\end{subcase} 

\begin{lemma}
The link $\widehat{b(1,1,1)}$ is $T(2, -4)$ oriented so that its components have linking number $2$. Further, $\Sigma_n(\widehat{b(1,1,1)})$ is an L-space for each $n$. In particular, Proposition \ref{prop: 1 < r < n} holds when $(p,q,r) = (1,1,1)$. 
\end{lemma}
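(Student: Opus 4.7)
My approach splits into two independent tasks: identifying the oriented link $\widehat{b(1,1,1)}$ concretely, and then verifying that every cyclic branched cover is an L-space. For the first task, I will invoke Corollary~\ref{cor: definiteness, fibredness, and forms}(1), which already asserts $\widehat{b(1,1,1)} = M(1; 1/1, 1/1, 1/1)$. Since each tangle slope $1/1$ is an integer, this Montesinos link collapses to a $2$-bridge link of continued-fraction invariant $1+1+1+1=4$; as an unoriented link this is a $(2,4)$-torus link, and the appearance of $-4$ rather than $+4$ in the statement is an artefact of our braid convention (Figure~\ref{fig: braid conventions}), which reverses crossing signs relative to Murasugi's. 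To pin down the orientation, I will read off the permutation of $b(1,1,1)$, which is $(1\,3)$: strands~$1$ and~$3$ join to form one component and strand~$2$ is the other. A direct signed count of the inter-component crossings in $b(1,1,1) = \sigma_1\cdot\sigma_1\sigma_2\sigma_1^{-1}\cdot\sigma_2$ then yields linking number~$+2$. As a consistency check, the Alexander polynomial $\Delta(t) = 2(t-1)$ (from the table in \S\ref{subsec: calculating alex polys}) and the signature $\sigma = -1$ (from the rank-one negative-definite symmetrised Seifert form $(-2)$ of $F(b(1,1,1))$) are exactly the invariants of $T(2,-4)$ so oriented.

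For the L-space statement I will exploit the fact that the complement of any torus link in $S^3$ is Seifert fibered, so every $\Sigma_n(\widehat{b(1,1,1)})$ inherits a Seifert fibration. In our situation the induced base is $S^2$ with three cone points of orders $(2,2,n)$, whose orbifold Euler characteristic is
\[
\chi^{\mathrm{orb}}(S^2(2,2,n)) \;=\; \tfrac{1}{2}+\tfrac{1}{2}+\tfrac{1}{n}-1 \;=\; \tfrac{1}{n}\;>\;0.
\]
Moreover, since $\Delta_{\widehat{b(1,1,1)}}(t)=2(t-1)$ has no zero at any nontrivial $n$-th root of unity, every $\Sigma_n(\widehat{b(1,1,1)})$ is a rational homology sphere and therefore has nonzero Seifert rational Euler number. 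Combining the positivity of $\chi^{\mathrm{orb}}$ with this nonvanishing forces $\Sigma_n$ to be a spherical space form with finite fundamental group, and such manifolds are L-spaces by~\cite{OS1}. The case $n=2$ also recovers directly as the lens space $L(4,\pm 1)$, providing an independent check. Once every $\Sigma_n(\widehat{b(1,1,1)})$ is an L-space, the nesting condition of Proposition~\ref{prop: 1 < r < n} is satisfied automatically.

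I expect the main obstacle to be not the Seifert-fibered analysis, which is essentially routine given the classical structure theory of $T(p,q)$-complements and the L-space criterion for small Seifert fibered spaces, but rather the orientation bookkeeping in the first step: verifying that the paper's braid-closure conventions produce precisely $T(2,-4)$ with linking~$+2$, as opposed to, say, $T(2,4)$ with linking~$-2$ or $T(2,-4)$ with linking~$-2$, and that this identification is internally consistent with the $(-2)$-definite symmetrised Seifert form and the computed Alexander polynomial. Once this matching is pinned down, the L-space argument for all $n$ is uniform.
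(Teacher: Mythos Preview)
Your identification of the oriented link $\widehat{b(1,1,1)}$ is fine, if more elaborate than the paper's one-line ``easily verified''.

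The L-space argument, however, rests on an incorrect assertion. You claim that $\Sigma_n(\widehat{b(1,1,1)})$ is Seifert fibered over $S^2(2,2,n)$ and therefore has finite fundamental group for every $n$. This base orbifold is wrong; it looks like an unjustified extrapolation of the Brieskorn description $\Sigma_n(T(p,q))\cong\Sigma(p,q,n)$ for torus \emph{knots}, which does not apply to the two-component link $T(2,-4)$, and which in any case ignores the role of the orientation in selecting the covering homomorphism. Your own consistency check already exposes the problem: you note that $\Sigma_2 = L(4,\pm 1)$, but $\pi_1(L(4,1)) = \mathbb Z/4$ cannot surject onto $\pi_1^{\mathrm{orb}}(S^2(2,2,2)) \cong \mathbb Z/2 \times \mathbb Z/2$, so $L(4,1)$ admits no Seifert fibration with base $S^2(2,2,2)$.

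The correct picture, carried out in the paper, is this. The exterior $X$ is Seifert fibered over the annular orbifold $A(2)$, and the crucial point is that with the orientation on $\widehat{b(1,1,1)}$ (one component reversed relative to the standard closure $\widehat{\sigma_1^{-4}}$) the regular fiber $h$ lies in the kernel of $H_1(X)\to\mathbb Z/n$. Hence the $n$-fold cover unwraps the \emph{base} rather than the fiber direction: the single order-$2$ cone point acquires $n$ preimages, and after Dehn filling one obtains $\Sigma_n$ fibered over $S^2$ with $n$ cone points of order $2$. Since $\chi^{\mathrm{orb}} = 2 - n/2 \le 0$ for $n\ge 4$, the manifolds $\Sigma_n$ have \emph{infinite} fundamental group for all $n\ge 4$, so a finite-$\pi_1$ argument cannot succeed. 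The paper instead computes the normalised Seifert invariants $(0;b_0,\tfrac12,\ldots,\tfrac12)$, uses $|H_1(\Sigma_n)| = 2^{n-1}n$ to pin down $|e| = n/2$ and hence $b_0\in\{0,-n\}$, and then applies the Eisenbud--Hirsch--Neumann inequalities \cite{EHN} to rule out any co-oriented taut foliation; the L-space conclusion follows from \cite{BGW}.
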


\begin{proof}
The first claim is easily verified. For the second, note that the exterior $X$ of $\widehat{b(1,1,1)}$ is Seifert fibred with base orbifold $A(2)$ where $A$ is an annulus. The homomorphism $\pi_1(X) \to \mathbb Z/n$ associated to the $n$-fold cyclic cover $X_n$ of $X$ determined by $\widehat{b(1,1,1)}$ factors through $H_1(A(2)) \cong \mathbb Z \oplus \mathbb Z/2$ (and therefore $\pi_1(A(2))$) in such a way that the $\mathbb Z/2$ factor is killed. Thus the base orbifold 
of $X_n$ is an annulus with cone points of order $2$. Further, the Seifert fibre of $X_n$ has distance $1$ from the lifts of the meridians, so $\Sigma_n(\widehat{b(1,1,1)}$ has base orbifold a $2$-sphere with $n$ cone points of order $2$. It's also a rational homology $3$-sphere since its Alexander polynomial is $\Delta_{\widehat b}(t) = 2(t - 1)$ and therefore $|H_1(\Sigma_n(\widehat{b(1,1,1)})| = 2^{n-1} \prod_{j=1}^{n-1} |\zeta_n^j - 1| = 2^{n-1} |\prod_{j=1}^{n-1} (\zeta_n^j - 1)| = 2^{n-1}|t^{n-1} + t^{n-2} + \cdots + t + 1|_{t=1} = 2^{n-1} n$ (\cite[Theorem 1]{HK}).  

The normalised Seifert invariants of $\Sigma_n(\widehat{b(1,1,1)}$ (\cite[\S 3]{EHN}) are of the form 
$$(0; b_0, \frac12, \frac12, \ldots, \frac12)$$ 
where there are $n$ ``$\frac12$"'s. Therefore $e(\Sigma_n(\widehat{b(1,1,1)})) = -b_0 - \frac{n}{2}$. On the other hand, it follows from \cite[Corollary 6,2]{JN} that 
$|e(\Sigma_n(\widehat{b(1,1,1)})| = (\frac{1}{\;\;2^n})|H_1(\Sigma_n(\widehat{b(1,1,1)})| = n/2$. Thus 
$$-b_0 - \frac{n}{2} = e(\Sigma_n(\widehat{b(1,1,1)})) = \epsilon n/2$$ 
for some $\epsilon = \pm 1$. It follows that $b_0 + \frac{n}{2} \ne 0$ and further, 
$$b_0 = -(\epsilon + 1)n/2$$
Hence, $|b_0|$ is either $0$ or $n$. If $\Sigma_n(\widehat{b(1,1,1)}))$ supports a co-oriented taut foliation, then by \cite[Theorem 3.3]{EHN}, either $b_0 + \frac{n}{2} = 0$ (which has already been ruled out), or $b_0 \leq -1$ and $b_0 + \frac{n}{2} \geq 1$. The condition that $b_0 \leq -1$ implies that $b_0 = -n$. But then $b_0 + \frac{n}{2} = -\frac{n}{2} < 0$, which contradicts the requirement that $b_0 + \frac{n}{2} \geq 1$. Hence no $\Sigma_n(\widehat{b(1,1,1)}))$ supports a co-oriented taut foliation. Consequently, each is an L-space (\cite{BGW}).  
\end{proof}

\begin{subcase}
$(p,q,r) = (1,1,2)$
\end{subcase} 

\begin{lemma}
$\widehat{b(1,1,2)}$ is the knot $5_2$. Consequently, $\Sigma_n(\widehat{b(1,1,2)})$ is an L-space if and only if $n = 2,3,4, 5$. In particular, Proposition \ref{prop: 1 < r < n} holds when $(p,q,r) = (1,1,2)$. 
\end{lemma}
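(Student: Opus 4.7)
My plan is to first identify the closure $\widehat{b(1,1,2)}$ concretely. Using $a_{13} = \sigma_1\sigma_2\sigma_1^{-1}$, we rewrite
\[
b(1,1,2) = \sigma_1 \cdot \sigma_1\sigma_2\sigma_1^{-1} \cdot \sigma_2^2 = \sigma_1^2 \sigma_2 \sigma_1^{-1} \sigma_2^2,
\]
and then apply a short sequence of braid relations together with a Markov destabilisation to reduce the closure to a standard diagram. The Alexander polynomial $2t^2-3t+2$ computed in the table in \S\ref{subsec: calculating alex polys}, together with the signature (which is $-2$, readily obtained from Proposition \ref{prop: signatures} applied to the minimal rewriting from Proposition \ref{prop: rewrite}), pins down $\widehat{b(1,1,2)}$ unambiguously as the two-bridge knot $5_2$ of slope $7/3$.

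Next I would establish the L-space status of $\Sigma_n(5_2)$ for each relevant $n$. The case $n \geq 9$ is already handled: the roots of $\Delta_{5_2}$ are $\tfrac{3 \pm i\sqrt{7}}{4} = \exp(\pm i\theta_0)$ with $\theta_0 = \arccos(3/4)$, and $\theta_0 > 2\pi/9$, so Theorem \ref{thm: bbg definite} together with condition (\ref{eqn: roots}) rules out the L-space property. The case $n=2$ is immediate since $\Sigma_2(5_2) \cong L(7,3)$ is a lens space. For $n=3,4,5$ I would exhibit $\Sigma_n(5_2)$ as an L-space either by explicit Heegaard Floer computation on the genus-$(n-1)$ Heegaard splitting coming from the two-bridge presentation, or by recognising $\Sigma_n(5_2)$ as a graph manifold (these are in fact small Seifert fibred for $n \leq 5$) and applying the Hanselman--Rasmussen--Watson L-space criterion. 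These cases should be mechanical.

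The genuine obstacle is ruling out L-space for $n \in \{6,7,8\}$, where the Alexander polynomial criterion is too coarse. I propose to proceed as in \S\ref{sec: cycle baskets} for $L(C_m,p)$: lift the 3-braid $b(1,1,2)$ through the double-branched cover to get an open book on $\Sigma_2(5_2)$, then take further cyclic covers and compute the fractional Dehn twist coefficient of the lifted monodromy on each fibre piece in the JSJ decomposition of $\Sigma_n(5_2)$; once $c(\varphi) > 1$ on some incompressible fibred piece, Roberts' theorem produces a co-oriented taut foliation, and the L-space conjecture direction due to Ozsv\'ath--Szab\'o/Bowden shows $\Sigma_n(5_2)$ is not an L-space. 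The required bound $c(\varphi)>1$ for $n=6,7,8$ should be a finite computation parallel to the proof of Theorem \ref{thm: intro not a counterexample}(2).

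Finally, Proposition \ref{prop: 1 < r < n} holds automatically in this case because $\{n \geq 2 : \Sigma_n(5_2)\text{ is an L-space}\} = \{2,3,4,5\}$ is a contiguous interval starting at $2$, so the implication ``$\Sigma_n$ L-space $\Rightarrow$ $\Sigma_r$ L-space for all $2 \leq r \leq n$'' is immediate.
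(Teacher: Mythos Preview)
Your proposal has two genuine gaps that prevent it from constituting a proof.

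First, for $n=3,4,5$: your parenthetical assertion that $\Sigma_n(5_2)$ is small Seifert fibred for $n\le 5$ is not correct. The knot $5_2$ is hyperbolic, and its cyclic branched covers $\Sigma_n(5_2)$ are hyperbolic for $n\ge 3$; they are certainly not Seifert fibred, so neither the Lisca--Stipsicz nor the Hanselman--Rasmussen--Watson criteria apply in the form you suggest. The paper does not attempt a structural argument here: it simply invokes existing results, namely \cite{Pe} for $n=3$, \cite{Te1} (which treats fourfold covers of genus~$1$ two-bridge knots) for $n=4$, and a direct Heegaard Floer computer computation by Lipshitz (independently by Hori) for $n=5$.

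Second, for $n=6,7,8$: the FDTC strategy you sketch does not transfer from the $L(C_m,p)$ situation. In the proof of Theorem~\ref{thm: intro not a counterexample}(2) the parameter being increased is $p$, not the branching index $n$, and the argument takes place entirely on $\Sigma_2$; the key point is that $\varphi_p=\varphi_1\tau_\partial^r$ with $r$ growing in $p$, which forces $c(\varphi_p)>1$. There is no analogous mechanism here: passing from $\Sigma_2(5_2)$ to $\Sigma_n(5_2)$ is not an iterated cyclic cover when $n$ is odd, there is no natural open book on $\Sigma_n(5_2)$ whose monodromy you can write as (fixed map)$\cdot\tau_\partial^{r(n)}$, and nothing guarantees that any FDTC you might compute crosses $1$ precisely between $n=5$ and $n=6$. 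The paper does not claim a theoretical argument for these cases either; it reports that Lipshitz verified by computer that $\Sigma_6,\Sigma_7,\Sigma_8$ are not L-spaces.

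In short, the identification of $\widehat{b(1,1,2)}$ with $5_2$ and the cases $n=2$ and $n\ge 9$ are fine, but the heart of the lemma (the exact range $2\le n\le 5$) rests on cited computations rather than on the structural arguments you outline.
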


\begin{proof}
The first claim is easily verified. For the second, note that it follows from the third table in \S \ref{subsec: calculating alex polys} that $\Sigma_n(\widehat{b(1,1,2)})$ is not an L-space for $n \geq 9$. Robert Lipshitz has shown by computer calculation that $\Sigma_n(K)$ is an L-space for $n = 5$, and is not an L-space for $n = 6, 7$, and $8$ (private communication). 
The fact that $\Sigma_5(K)$ is an L-space was also proved by Mitsunori Hori. See \cite{Te2}. We already know that $\Sigma_2(K)$ is an L-space. This is also true for 
$\Sigma_3(\widehat{b(1,1,2)})$ by \cite{Pe} and $\Sigma_4(\widehat{b(1,1,2)})$ by \cite{Te1}. 
\end{proof}

\begin{case}
$n \in \{4, 5\}$ 
\end{case} 

In this case, $\max \{p, q, r\} \leq 3$ so $(p,q,r)$ is one of $(1,1,1), (1,1,2), (1,1,3), (1,2,2), (1,2,3)$, $(1,3,3), (2,2,2), (2,2,3), (2,3,3)$, $(3,3,3)$. Similar to the last case, the condition that $\Sigma_n(\widehat{b(p,q,r)})$ is an L-space implies that for each root $\zeta \in S^1$ of $\Delta_{\widehat b}(t)$ we have 
$$\mathfrak{Re}(\zeta) > \left\{ \begin{array}{ll} \cos(2\pi/4) = 0 & \mbox{ if } n = 4 \\ \cos(2\pi/5) > 0.3 & \mbox{ if } n = 5 \end{array} \right.$$

We saw above that both $\Sigma_4(\widehat{b(p,q,r)})$ and $\Sigma_5(\widehat{b(p,q,r)})$ are L-spaces when $(p,q,r)$ is $(1,1,1)$ or $(1,1,2)$, which verifies Proposition \ref{prop: 1 < r < n} in these cases.

\begin{subcase}
$(p,q,r) = (1,2,2)$
\end{subcase} 

The reader will verify that $\widehat{b(1,2,2)}$ is the link $6_3^2$. See Figure \ref{fig: b(1,2,2)}.  The root condition implies (cf. the third table in \S \ref{sec: lspace branched covers 3-braids}): 

\begin{figure}[!ht]
\centering
 \includegraphics[scale=0.7]{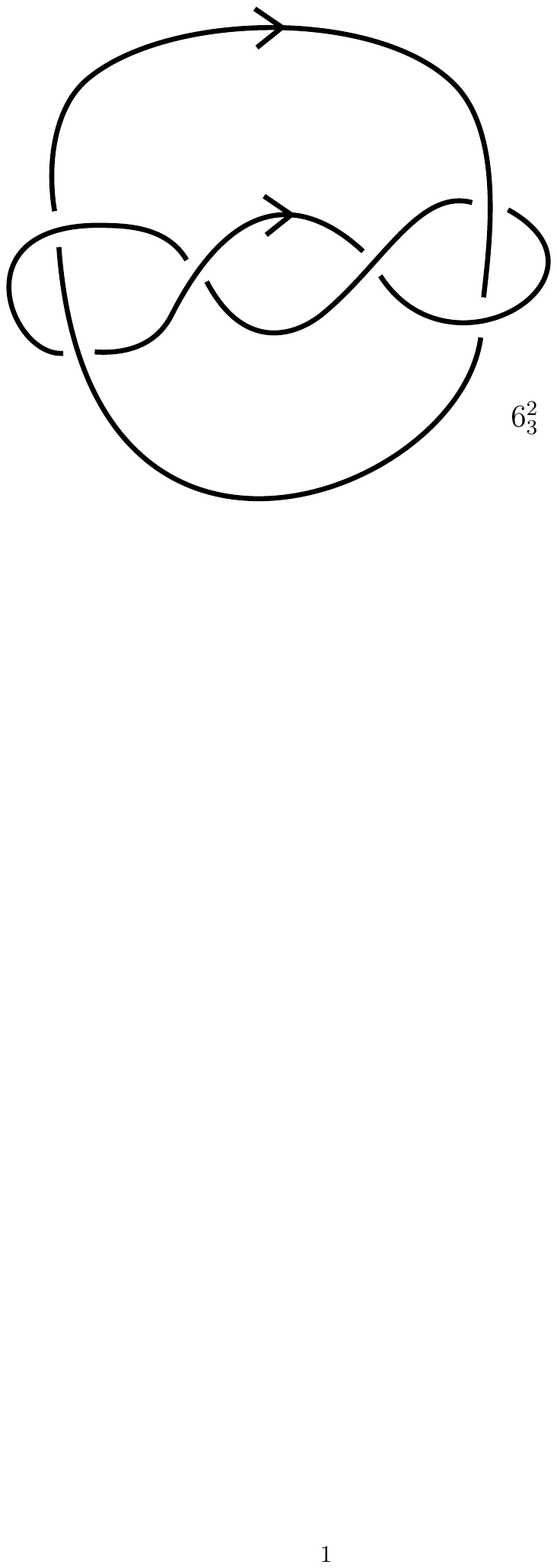} 
\caption{$\widehat{b(1,2,2)} = 6_3^2$} 
\label{fig: b(1,2,2)}
\end{figure} 

\begin{lemma}
$\Sigma_n(\widehat{b(1,2,2)})$ is not an L-space for $n \geq 6$.   
\qed
\end{lemma}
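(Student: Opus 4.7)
The plan is to apply the Alexander polynomial obstruction that has been used throughout \S\ref{sec: lspace branched covers 3-braids}, namely that if $\Sigma_n(\widehat b)$ is an L-space, then by \cite[Theorem 1.1]{BBG} the Hermitian form $\mathcal{S}_{F(b)}(\zeta)$ is definite for every $\zeta \in \overline{I}_-(\zeta_n)$, and in particular $\Delta_{\widehat b}(t)$ has no zero on the closed subarc $\overline{I}_-(\zeta_n)$. Recall that $\overline{I}_-(\zeta_n)$ consists precisely of those $\zeta \in S^1$ with $\mathfrak{Re}(\zeta) \leq \cos(2\pi/n)$.

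From the Conway polynomial table in \S\ref{subsec: calculating alex polys}, $\nabla(1,2,2)(z) = 2z(z^2+1)$, and the substitution $z = t - t^{-1}$ gives
\[
\Delta_{\widehat{b(1,2,2)}}(t) \; \doteq \; 2(t-1)(t^2 - t + 1).
\]
The roots of the second factor are the primitive sixth roots of unity $\zeta_6^{\pm 1} = \tfrac{1 \pm \sqrt{3}i}{2}$, each with real part $\tfrac{1}{2}$.

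Now I would observe that for every $n \geq 6$ one has $\cos(2\pi/n) \geq \cos(2\pi/6) = \tfrac{1}{2}$, so $\mathfrak{Re}(\zeta_6^{\pm 1}) = \tfrac{1}{2} \leq \cos(2\pi/n)$, placing $\zeta_6^{\pm 1} \in \overline{I}_-(\zeta_n)$ (with $\zeta_6^{\pm 1}$ actually sitting at the boundary of the arc when $n = 6$, which is still inside the closed arc). Hence $\Delta_{\widehat{b(1,2,2)}}$ vanishes at a point of $\overline{I}_-(\zeta_n)$, so $\mathcal{S}_{F(b(1,2,2))}(\zeta_6^{\pm 1})$ is degenerate and not definite. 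By \cite[Theorem 1.1]{BBG}, $\Sigma_n(\widehat{b(1,2,2)})$ fails to be an L-space for every $n \geq 6$.

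There is no genuine obstacle in this argument: the only point requiring care is the boundary case $n = 6$, where one must confirm that the endpoints $\zeta_n, \bar{\zeta}_n$ of $\overline{I}_-(\zeta_n)$ are included (which they are, since the arc is closed), so that the coincidence $\zeta_6 = $ root of $\Delta$ still yields an obstruction rather than a borderline situation. Everything else is a direct unwinding of the table of Alexander polynomials already assembled in \S\ref{subsec: calculating alex polys}.
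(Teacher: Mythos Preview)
Your argument is correct and follows essentially the same approach as the paper: the lemma in the paper is stated with a \qed\ and is justified only by the sentence ``The root condition implies (cf.\ the third table in \S\ref{sec: lspace branched covers 3-braids})'', i.e.\ by inspecting the roots of $\Delta_{\widehat{b(1,2,2)}}(t)=2(t-1)(t^2-t+1)$ against the arc $\overline{I}_-(\zeta_n)$. You have simply made that inspection explicit, including the boundary case $n=6$.
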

We do not know whether or not $\Sigma_n(\widehat{b(1,2,2)})$ is an L-space for $n = 3, 4, 5$. 

\begin{subcase}
$(p,q,r) = (2,2,2)$
\end{subcase} 

The closure of $b(2,2,2)$ is the link $7_1^3$ (cf. Figure \ref{fig: b(2,2,2)}). From the third table in \S \ref{subsec: calculating alex polys} we see that: 

\begin{figure}[!ht]
\centering
 \includegraphics[scale=0.7]{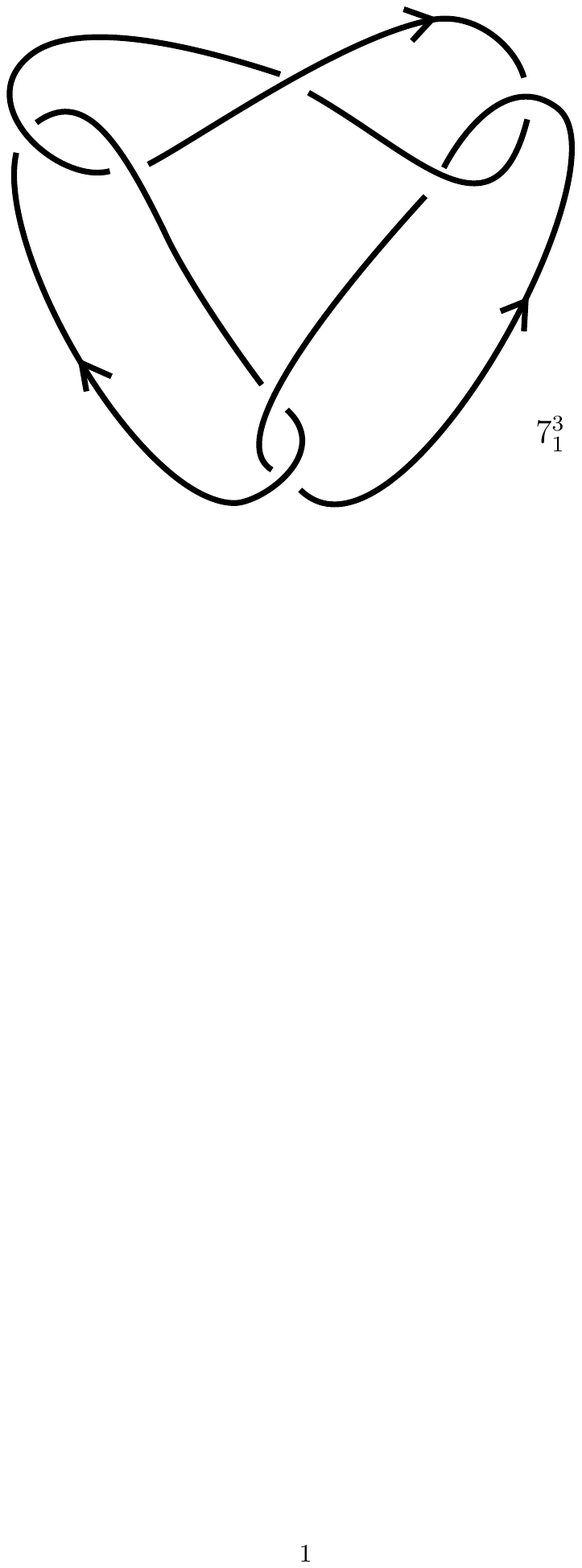} 
\caption{$\widehat{b(2,2,2)} = 7_1^3$} 
\label{fig: b(2,2,2)}
\end{figure} 

\begin{lemma}
$\Sigma_n(\widehat{b(2,2,2)})$ is not an L-space for $n \geq 5$.  
\qed
\end{lemma}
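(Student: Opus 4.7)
The plan is to read off the Alexander polynomial of $\widehat{b(2,2,2)}$ from the third table in \S \ref{subsec: calculating alex polys} and then invoke Theorem \ref{thm: bbg definite} in the form used repeatedly in \S \ref{subsec: k > 0}: if $\Sigma_n(\widehat b)$ is an L-space then $\mathcal{S}_{F(b)}(\zeta)$ is definite for every $\zeta \in \overline{I}_-(\zeta_n)$, and therefore $\Delta_{\widehat b}(t)$ has no zero in $\overline{I}_-(\zeta_n)$. So the entire question reduces to locating the unit-modulus zeros of $\Delta_{\widehat{b(2,2,2)}}(t)$ and comparing their arguments with $2\pi/n$.

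First I would record that
$$\Delta_{\widehat{b(2,2,2)}}(t) = (t-1)^2 (2t^2 - t + 2).$$
The quadratic factor has roots $\zeta_\pm = \tfrac{1 \pm \sqrt{15}\, i}{4}$, and a direct calculation gives $|\zeta_\pm|^2 = \tfrac{1+15}{16} = 1$ and $\mathfrak{Re}(\zeta_\pm) = \tfrac14$. Hence $\zeta_\pm \in S^1$, and writing $\zeta_\pm = e^{\pm i\theta_0}$ we have $\cos\theta_0 = \tfrac14$, so $\theta_0 = \arccos(\tfrac14) \approx 1.318$.

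Next I would compare $\theta_0$ with $2\pi/n$. Recall that $\overline{I}_-(\zeta_n)$ is precisely the set of unit complex numbers $\zeta$ with $\mathfrak{Re}(\zeta) \leq \cos(2\pi/n)$. For $n \geq 5$ we have $\cos(2\pi/n) \geq \cos(2\pi/5) = \tfrac{\sqrt 5 - 1}{4} > \tfrac14 = \mathfrak{Re}(\zeta_\pm)$, so $\zeta_\pm$ lies in $\overline{I}_-(\zeta_n)$. Since $\Delta_{\widehat{b(2,2,2)}}(\zeta_\pm) = 0$, Theorem \ref{thm: bbg definite} rules out $\Sigma_n(\widehat{b(2,2,2)})$ being an L-space for every $n \geq 5$.

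There is no real obstacle here beyond verifying the trigonometric inequality $\cos(2\pi/n) > 1/4$ for $n \geq 5$, which amounts to $2\pi/n < \arccos(1/4)$, i.e.\ $n > 2\pi/\arccos(1/4) \approx 4.77$. This is the same template used in \S \ref{subsec: calculating alex polys} to fill in the third column of the table, and it yields the stated bound sharply in the sense that the argument gives no information for $n = 3,4$, consistent with the ``?'' entry recorded for those cases in the table.
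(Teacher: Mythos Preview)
Your argument is correct and follows exactly the template the paper uses to populate the third column of the table in \S \ref{subsec: calculating alex polys}; indeed the paper's own proof is simply ``from the third table'' together with the root computation already recorded in the Case $n \geq 6$ discussion, where the roots $\{1, \tfrac{1 \pm \sqrt{15}\,i}{4}\}$ of $\Delta_{\widehat{b(2,2,2)}}$ are listed explicitly. Your write-up makes the $n=5$ threshold more explicit via $\cos(2\pi/5) = \tfrac{\sqrt 5 - 1}{4} > \tfrac14$, but the method is identical.
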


We do not know whether or not $\Sigma_n(\widehat{b(2,2,2)})$ is an L-space for $n = 3, 4$.  

Next suppose that $(p,q,r)$ is one of $(1,1,3), (1,2,3), (1,3,3), (2,2,3), (2,3,3), (3,3,3)$ and consider $b_0 = b(1,3,3)$. From above, $\nabla(1,3,3) =  2z(z^2 + 1)(z^2 + 2)$ and so $\Delta_{\widehat b_0}(t) = 2(t-1)(t^2 - t +1)(t^2 + 1)$. Hence $i$ is a root of $\Delta_{\widehat b_0}(t)$ so that $\mathcal{S}_{F(\widehat{b_0})}(i)$ is indefinite. It follows that $\mathcal{S}_{F(\widehat{b(p,q,r}))}(i)$ is indefinite if $b(p,q,r)$ contains $b_0$. But then neither $\Sigma_4(\widehat{b(p,q,r)})$ nor $\Sigma_5(\widehat{b(p,q,r)})$ is an L-space (\cite[Theorem 1.1]{BBG}), contrary to our assumptions. 

We are left with considering $(p,q,r) = (1,1,3), (1,2,3)$, or $(2,2,3)$. From the third table in \S \ref{subsec: calculating alex polys} we see that the roots of the associated Alexander polynomials are 
$$\left\{ 
\begin{array}{cl} 
\{1, \frac14 \pm \frac{\sqrt{15}}{4}i\} & \mbox{ if } (p,q,r) = (1,1,3)\\ 
\{0.14645 \pm 0.98922i, 0.85355 \pm 0.52101i\} & \mbox{ if } (p,q,r) = (1,2,3) \\
\{1, \pm i, \frac34 \pm \frac14 \sqrt{7} i\} & \mbox{ if } (p,q,r) = (2,2,3) 
\end{array} \right.$$
In each case there is a root $\zeta$ of $\Delta_{\widehat b}(t)$ for which $\mathfrak{Re}(\zeta) <  \frac{3}{10} < \cos(2\pi/5)$. Thus $n \ne 5$. Similarly $n = 4$ is ruled out for $(p,q,r) = (2,2,3)$, contrary to our assumptions. On the other hand, $n = 4$ remains a possibility for $(p,q,r) = (1,1,3)$ or $(1,2,3)$.

\begin{subcase}
$(p,q,r) = (1,1,3)$
\end{subcase} 

The closure of $b(1,1,3)$ is the link $6_2^2$. See Figure \ref{fig: b(1,3,3)}. 

\begin{figure}[!ht]
\centering
 \includegraphics[scale=0.7]{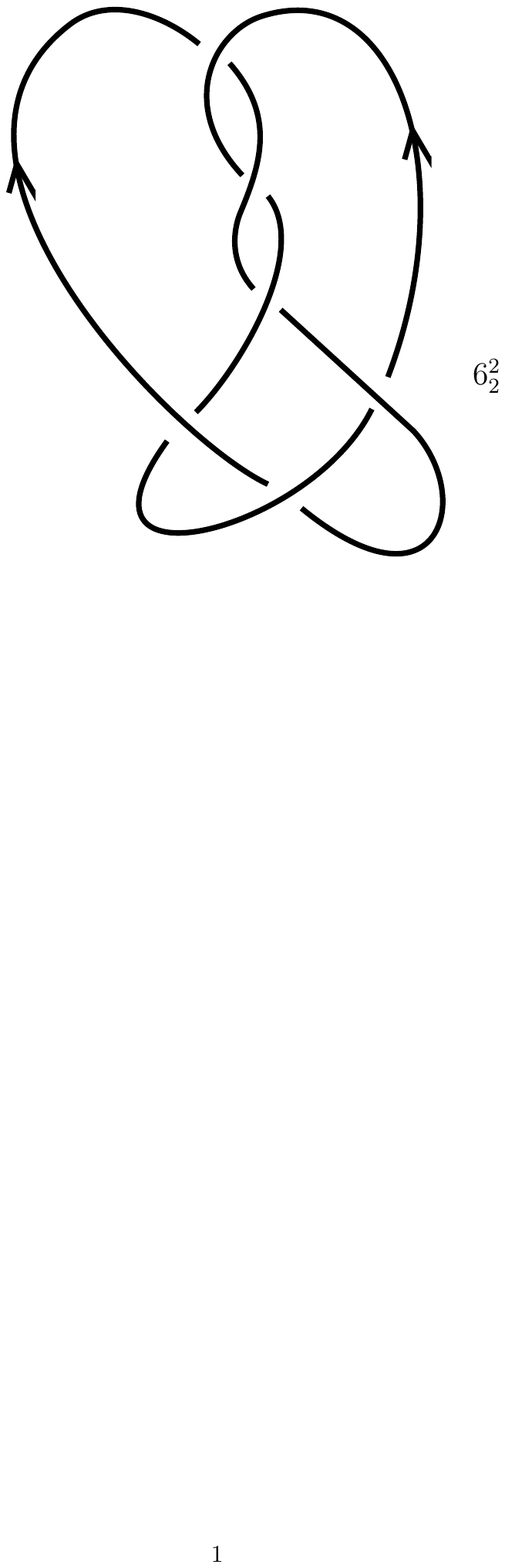} 
\caption{$\widehat{b(1,3,3)} = 6_2^2$} 
\label{fig: b(1,3,3)}
\end{figure} 

From the third table in \S \ref{subsec: calculating alex polys} we see that:

\begin{lemma}
$\Sigma_n(\widehat{b(1,1,3)})$ is not an L-space for $n \geq 5$. 
\qed
\end{lemma}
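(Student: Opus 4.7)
The plan is to apply \cite[Theorem 1.1]{BBG}, which tells us that if $\Sigma_n(\widehat{b})$ is an L-space, then the Hermitian form $\mathcal{S}_{F(b)}(\zeta)$ is definite for every $\zeta \in \overline{I}_-(\zeta_n)$. In particular, the Alexander polynomial $\Delta_{\widehat{b}}(t)$ must have no zero on the closed arc $\overline{I}_-(\zeta_n)$. So to show that $\Sigma_n(\widehat{b(1,1,3)})$ is not an L-space for $n \geq 5$, it suffices to exhibit a root of $\Delta_{\widehat{b(1,1,3)}}$ lying in $\overline{I}_-(\zeta_n)$ for each such $n$.

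From the third table in \S \ref{subsec: calculating alex polys}, $\Delta_{\widehat{b(1,1,3)}}(t) = (t-1)(2t^2 - t + 2)$. The roots of the quadratic factor $2t^2 - t + 2$ are $\zeta_\pm = \tfrac{1}{4} \pm \tfrac{\sqrt{15}}{4}\, i$, and a direct check gives $|\zeta_\pm|^2 = \tfrac{1}{16} + \tfrac{15}{16} = 1$, so both lie on $S^1$. Their common real part is $\tfrac14$.

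Recall that $\overline{I}_-(\zeta_n)$ consists of points $\zeta \in S^1$ with $\mathfrak{Re}(\zeta) \leq \cos(2\pi/n)$. For $n \geq 5$ we have
\[
\cos(2\pi/n) \;\geq\; \cos(2\pi/5) \;=\; \frac{\sqrt{5}-1}{4} \;>\; \frac{1}{4} \;=\; \mathfrak{Re}(\zeta_\pm),
\]
so $\zeta_\pm \in \overline{I}_-(\zeta_n)$. Hence by \cite[Theorem 1.1]{BBG}, $\Sigma_n(\widehat{b(1,1,3)})$ fails to be an L-space for every $n \geq 5$. There is no real obstacle here: the argument is purely the root-location criterion supplied by \cite[Theorem 1.1]{BBG}, combined with the explicit factorisation of $\Delta_{\widehat{b(1,1,3)}}$ already recorded in the table.
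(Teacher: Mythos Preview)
Your proof is correct and follows essentially the same approach as the paper. The paper's lemma is marked with an immediate \qed and simply refers back to the third table in \S\ref{subsec: calculating alex polys}; the surrounding discussion there computes the roots of $\Delta_{\widehat{b(1,1,3)}}$ as $\{1, \tfrac14 \pm \tfrac{\sqrt{15}}{4}i\}$ and observes that $\mathfrak{Re}(\zeta_\pm) = \tfrac14 < \cos(2\pi/5)$, exactly as you do.
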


We do not know whether or not $\Sigma_n(\widehat{b(1,1,3)})$ is an L-space for $n = 3, 4$.  

\begin{subcase}
$(p,q,r) = (1,2,3)$
\end{subcase} 

The closure of $b(1,2,3)$ is the knot $7_5$ as depicted in \cite[Appendix C, page 392]{Rlf}. 

\begin{lemma}
$\Sigma_n(\widehat{b(1,2,3)})$ is an L-space if $n = 3, 4$ and not an L-space for $n \geq 5$. Thus Proposition \ref{prop: 1 < r < n} holds when $(p,q,r) = (1,2,3)$. 
\end{lemma}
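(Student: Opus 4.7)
The plan is to handle the three ranges of $n$ separately after pinning down the link. First I would verify the identification $\widehat{b(1,2,3)} = 7_5$ already noted in the proof (this can be double-checked by matching the Alexander polynomial $2t^4 - 4t^3 + 5t^2 - 4t + 2$ and the signature with Rolfsen's table); in particular $7_5$ is a two-bridge knot with Schubert fraction $17/7$, a fact we will want to exploit below.

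For $n \geq 5$ the argument is identical to the one used repeatedly in earlier subcases. The four roots of $\Delta_{\widehat{b(1,2,3)}}(t)$ lie near $0.14645 \pm 0.98922\,i$ and $0.85355 \pm 0.52101\,i$. Since $\cos(2\pi/5) \approx 0.309 > 0.14645$, the pair of roots with small real part fall outside $\overline{I}_-(\zeta_n)$ for every $n \geq 5$. Hence $\mathcal{S}_{F(b(1,2,3))}(\zeta_n)$ is indefinite, and \cite[Theorem 1.1]{BBG} (our Theorem \ref{thm: bbg definite} in stronger form) obstructs $\Sigma_n(\widehat{b(1,2,3)})$ from being an L-space.

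The case $n = 2$ is immediate from part (1) of Theorem \ref{thm: definite 3-braids}, since $\widehat{b(1,2,3)}$ is a non-split, non-trivial, prime strongly quasipositive link of braid index $3$ that is definite (its symmetrised Seifert form is one of the Montesinos forms covered by Corollary \ref{cor: definiteness, fibredness, and forms}(1)). For $n = 3$ and $n = 4$ the plan is to follow the template already used for the knot $5_2$ in the case $(p,q,r) = (1,1,2)$: appeal to Peters \cite{Pe} for the $3$-fold branched cover and to Teragaito \cite{Te1} for the $4$-fold branched cover, verifying in each instance that $7_5$ lies in the family of two-bridge (or small-Seifert-parameter Montesinos) knots to which those results apply. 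Failing a direct citation, an alternative is to present $\Sigma_3(7_5)$ and $\Sigma_4(7_5)$ explicitly as surgeries on a framed link obtained from the two-bridge structure of $7_5$ and then verify the L-space condition $\dim \widehat{HF}(M;\mathbb{Z}/2) = |H_1(M;\mathbb{Z})|$ by a computer-assisted Heegaard Floer calculation.

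The principal obstacle is establishing that $\Sigma_3$ and $\Sigma_4$ \emph{are} L-spaces; the Alexander polynomial argument above only provides the obstruction in the opposite direction. Because the cyclic branched covers of a two-bridge knot are in general not Seifert fibered (unlike the double cover, which is always a lens space), the Boyer--Gordon--Watson criterion via co-oriented taut foliations (used effectively in Theorem \ref{thm: intro not a counterexample}) is not directly available here. The cleanest route is therefore via the cited computations of Peters and Teragaito, and this is the step I would expect to require the most care in adapting to the specific knot $7_5$.
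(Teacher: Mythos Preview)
Your treatment of $n \geq 5$ via the Alexander polynomial roots is exactly what the paper does. For the small-$n$ cases the paper takes a slightly different route: rather than Peters \cite{Pe} and Teragaito \cite{Te1}, it observes that $7_5$ is a \emph{genus two} two-bridge knot and invokes \cite[Theorem 1.2]{Ba} (Idrissa Ba's result on cyclic branched covers of two-bridge knots), which handles $n = 2$ and $n = 3$ in one stroke.

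Your proposed citation of \cite{Te1} for $n = 4$ will not go through: Teragaito's theorem there is specifically about \emph{genus one} two-bridge knots, and $7_5$ has genus $2$ (its Alexander polynomial $2t^4 - 4t^3 + 5t^2 - 4t + 2$ has degree $4$). You anticipated this possibility and proposed a computer fallback, which is reasonable. It is worth noting, though, that the paper's own proof does not address $n = 4$ either---indeed, the third table in \S\ref{subsec: calculating alex polys} records $n = 4$ as ``(?)'' for $(1,2,3)$---so the first clause of the lemma statement appears to overreach what is actually established. This does not affect the ``Thus Proposition~\ref{prop: 1 < r < n} holds'' conclusion: since $\Sigma_n(\widehat{b(1,2,3)})$ is not an L-space for $n \geq 5$, the only candidates are $n \in \{2,3,4\}$, and since $\Sigma_2$ and $\Sigma_3$ are L-spaces, for any such $n$ all $\Sigma_r$ with $2 \leq r \leq n$ are L-spaces regardless of the status of $\Sigma_4$.
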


\begin{proof}
The third table in \S \ref{subsec: calculating alex polys} shows  that $\Sigma_n(\widehat{b(1,2,3)})$ is not an L-space for $n \geq 5$. Since $7_2$ is a genus $2$ two-bridge knot, \cite[Theorem 1.2]{Ba} shows that $\Sigma_n(\widehat{b(1,2,3)})$ is an L-space for $n = 2$ and $3$.
\end{proof}

\begin{remark} 
\label{rmk: to do}
{\rm We summarise here what is needed to do to extend Proposition \ref{prop: 1 < r < n} to include the three exceptional links $6_2^2, 6_3^2$ and $7_3^1$. 
\vspace{-.2cm}
\begin{itemize}

\item If $L$ is the link $6_2^2 = \widehat{b(1,1,3)}$ (cf. Figure \ref{fig: b(1,3,3)}), it must be shown that either $\Sigma_3(L)$ is an L-space or $\Sigma_4(L)$ is not an L-space.  

\vspace{.2cm} \item If $L$ is the link $6_3^2 = \widehat{b(1,2,2)}$ (cf. Figure \ref{fig: b(1,2,2)}), it must be shown that either $\Sigma_3(L)$ and $\Sigma_4(L)$ are L-spaces or $\Sigma_4(L)$ and $\Sigma_5(L)$ are not L-spaces.  

\vspace{.2cm} \item If $L$ is the link $7_3^1 =  \widehat{b(2,2,2)}$ (cf. Figure \ref{fig: b(2,2,2)}), it must be shown that either $\Sigma_3(L)$ is an L-space or $\Sigma_4(L)$ is not an L-space. 

\end{itemize}
}
\end{remark}

\end{document}